\documentclass{amsart}
\usepackage[dvips,final]{graphics}
\usepackage{array}
\usepackage{arydshln}
\usepackage[makeroom]{cancel}
 \usepackage[all]{xy}
 \usepackage{url}
\usepackage{multirow, blkarray}
\usepackage{booktabs}
\usepackage{textcomp}
 \usepackage[final]{epsfig}
 \usepackage{color}
\usepackage[T1]{fontenc}      
\usepackage[english,french]{babel}
\usepackage[utf8]{inputenc}
\usepackage{blindtext}

\usepackage{amsfonts,amscd,array, mathdots, epigraph}
\usepackage{amsmath}
\usepackage{amssymb}
\usepackage{amsthm}
\usepackage{mathrsfs}
\usepackage{stmaryrd}
\usepackage{slashbox}
\usepackage{diagbox}
\usepackage{enumitem}

\usepackage{ulem}
\usepackage{tikz}
\usepackage{xcolor}
\usepackage{multicol}
\definecolor{ufogreen}{rgb}{0.24, 0.82, 0.44}

\vfuzz2pt 
\hfuzz2pt 
\setlength{\textwidth}{16truecm}
\setlength{\hoffset}{-1.5truecm}

\begin{document}


\newtheorem{theorem}{Théorème}[section]
\newtheorem{theore}{Théorème}
\newtheorem{definition}[theorem]{Définition}
\newtheorem{proposition}[theorem]{Proposition}
\newtheorem{corollary}[theorem]{Corollaire}
\newtheorem*{con}{Conjecture}
\newtheorem*{remark}{Remarque}
\newtheorem*{remarks}{Remarques}
\newtheorem*{pro}{Problème}
\newtheorem*{examples}{Exemples}
\newtheorem*{example}{Exemple}
\newtheorem{lemma}[theorem]{Lemme}


\title[Taille et irréductibilité des solutions monomiales minimales dans $SL_{2}(\mathbb{Z}/N\mathbb{Z})$]{Étude des liens entre la taille et l'irréductibilité des solutions monomiales minimales dans $SL_{2}(\mathbb{Z}/N\mathbb{Z})$}

\author{Flavien Mabilat}

\date{}

\keywords{modular group; minimal monomial solution; irreducibility; equivalence class modulo $N$}

\address{
}
\def\emailaddrname{{\itshape Courriel}}
\email{flavien.mabilat@univ-reims.fr}

\maketitle

\selectlanguage{french}
\begin{abstract}
Cet article a pour objet l'étude de certains $n$-uplets d'éléments d'un anneau $\mathbb{Z}/N\mathbb{Z}$ liés à la combinatoire des sous-groupes de congruence du groupe modulaire. Plus précisément, on va s'intéresser ici à la notion de solutions monomiales minimales. Celles-ci sont les solutions d'une équation matricielle (apparaissant également lors de l'étude des frises de Coxeter), modulo un entier $N$, dont toutes les composantes sont identiques et minimales pour cette propriété. Notre objectif ici est d'étudier les liens entre la taille des solutions monomiales minimales et une certaine propriété d'irréductibilité qui est centrale dans l'étude de la combinatoire du groupe modulaire. En particulier, on obtiendra une majoration de la taille des solutions monomiales irréductibles et on démontrera que certaines tailles entraînent automatiquement l'irréductibilité.
\\
\end{abstract}

\selectlanguage{english}
\begin{abstract}
This article aims to study some $n$-tuples of elements belonging to a ring $\mathbb{Z}/N\mathbb{Z}$ related to the combinatorics of congruence subgroups of the modular group. More precisely, we will focus here on the notion of minimal monomial solutions. These are the solutions of a matrix equation (also appearing during the study of Coxeter's friezes), modulo an integer $N$, all of whose components are identical and minimal for this property. Our objective here is to study the links between the size of minimal monomial solutions and a property of irreducibility which is central in the study of the combinatorics of the modular group. In particular, we will obtain an upper bound of the size of irreducible monomial solutions and we will prove that some sizes automatically lead to irreducibility.

\end{abstract}

\selectlanguage{french}

\thispagestyle{empty}

\noindent {\bf Mots clés:} groupe modulaire; solution monomiale minimale; irréductibilité; classes modulo $N$  
\\
\begin{flushright}
\og \textit{Une idée que j'ai, il faut que je la nie: c'est ma manière de l'essayer.} \fg
\\Alain, \textit{Histoire de mes pensées.}
\end{flushright}

\section{Introduction}

Depuis plus de deux siècles, le groupe modulaire \[SL_{2}(\mathbb{Z})=
\left\{
\begin{pmatrix}
a & b \\
c & d
   \end{pmatrix}
 \;\vert\;a,b,c,d \in \mathbb{Z},\;
 ad-bc=1
\right\}\] et ses nombreux sous-groupes ont été l'objet de multiples travaux et ont attiré l'attention de très nombreux mathématiciens. Grâce à ces derniers, on dispose d'une connaissance remarquable de ce groupe ainsi que d'une myriade de résultats fascinants sur ses propriétés. Parmi ceux-ci, l'un des plus notables, et des plus anciens, est la possibilité d'écrire $SL_{2}(\mathbb{Z})$ comme un groupe engendré par deux éléments. On peut envisager plusieurs façons de générer le groupe modulaire de cette manière. Cela dit, dans ce qui va suivre, on va s'intéresser uniquement aux deux matrices suivantes :
\[T=\begin{pmatrix}
 1 & 1 \\[2pt]
    0    & 1 
   \end{pmatrix}, S=\begin{pmatrix}
   0 & -1 \\[2pt]
    1    & 0 
   \end{pmatrix}.
 \] 

\noindent En effet, une fois ces deux générateurs choisis, on peut montrer (voir par exemple l'introduction de \cite{M}) que, pour toute matrice $A$ du groupe modulaire, il existe un entier strictement positif $n$ et des entiers strictement positifs $a_{1},\ldots,a_{n}$ tels que : \[A=T^{a_{n}}ST^{a_{n-1}}S\cdots T^{a_{1}}S=\begin{pmatrix}
   a_{n} & -1 \\[4pt]
    1    & 0 
   \end{pmatrix}
\begin{pmatrix}
   a_{n-1} & -1 \\[4pt]
    1    & 0 
   \end{pmatrix}
   \cdots
   \begin{pmatrix}
   a_{1} & -1 \\[4pt]
    1    & 0 
    \end{pmatrix}:=M_{n}(a_{1},\ldots,a_{n}).\]
		
\noindent Toutefois, il convient de remarquer que ce type d'écriture n'est pas unique. En effet, on dispose, par exemple, de l'égalité suivante : \[-Id=M_{3}(1,1,1)=M_{4}(1,2,1,2).\]

Bien que l'attrait pour le groupe modulaire ne se soit jamais tari et que la possibilité d'écrire les éléments de ce dernier sous la forme $M_{n}(a_{1},\ldots,a_{n})$ (avec $a_{1},\ldots,a_{n}$ des entiers strictement positifs) soit connu depuis longtemps, l'existence de cette écriture avait jusqu'à récemment été considéré comme une curiosité et n'avait pas été réellement exploité. Toutefois, on observe depuis quelque années un regain notable d'intérêt pour celle-ci, du fait notamment de l'apparition des matrices $M_{n}(a_{1},\ldots,a_{n})$ dans de nombreux domaines mathématiques. Elles interviennent en effet dans l'étude d'un très grand nombre d'objets, depuis l'expression des réduites des fractions continues de Hirzebruch-Jung jusqu'à la construction des frises de Coxeter en passant par l'écriture matricielle des équations de Sturm-Liouville discrètes (voir par exemple l'introduction de \cite{O}). En particulier, l'une des questions principales soulevées par l'existence de cette écriture est la recherche des diverses expressions associées à une même matrice et en particulier à $\pm Id$. En d'autres termes, on s'intéresse  à la résolution sur $\mathbb{N}^{*}$ de l'équation suivante (parfois appelée équation de Conway-Coxeter) : \begin{equation}
\label{a}
\tag{$E$}
M_{n}(a_1,\ldots,a_n)=\pm Id.
\end{equation} 

\noindent V.\ Ovsienko (voir \cite{O} Théorèmes 1 et 2) a donné une construction récursive des solutions de \eqref{a} sur les entiers naturels non nuls et a fourni une description combinatoire des solutions en terme de découpages de polygones (généralisant ainsi un théorème de Conway-Coxeter, voir \cite{CoCo} et \cite{MO} Théorème 3.3). On dispose également de résultats analogues pour les solutions des équations $M_{n}(a_1,\ldots,a_n)=\pm S$ et $M_{n}(a_1,\ldots,a_n)=\pm T$ (voir \cite{M} Théorèmes 2.2 et 2.4).
\\
\\ \indent À la lueur de ces résultats, plusieurs pistes de généralisations sont envisageables. Une de celles-ci, liée avec la construction des frises de Coxeter, consiste à résoudre \eqref{a} sur d'autres ensembles (voir par exemple \cite{C,CH}). C'est cette piste que l'on va exploiter ici en se plaçant sur les anneaux $\mathbb{Z}/N\mathbb{Z}$, c'est-à-dire que l'on va s'intéresser aux solutions sur $\mathbb{Z}/N\mathbb{Z}$ de l'équation :
\begin{equation}
\label{p}
\tag{$E_{N}$}
M_{n}(a_1,\ldots,a_n)=\pm Id.
\end{equation} 
\noindent On dira, en particulier, qu'une solution de \eqref{p} est de taille $n$ si cette solution est un $n$-uplet d'éléments de $\mathbb{Z}/N\mathbb{Z}$. En effet, outre l'intérêt intrinsèque de l'étude de \eqref{a} sur ces ensembles, les solutions de cette dernière permettent d'avoir des informations sur les $n$-uplets d'entiers strictement positifs pour lesquels les matrices $M_{n}(a_{1},\ldots,a_{n})$ appartiennent aux sous-groupes de congruence ci-dessous :
\[\hat{\Gamma}(N):=\{A \in SL_{2}(\mathbb{Z})~{\rm tel~que}~A= \pm Id~( {\rm mod}~N)\}.\] 
\noindent Notons que si on adopte ce point de vue, le théorème d'Ovsienko s'intéresse aux $n$-uplets d'entiers strictement positifs pour lesquels $M_{n}(a_{1},\ldots,a_{n})$ appartient au centre de $SL_{2}(\mathbb{Z})$.
\\
\\ \indent On dispose déjà de nombreux résultats sur l'équation \eqref{p}, comme par exemple le nombre de solutions de taille fixée lorsque $N$ est premier ou des descriptions combinatoires des solutions pour les petites valeurs de $N$. Cela dit, la plupart des éléments obtenus reposent sur une notion d'irréductibilité basée sur une opération entre uplets d'un même ensemble (voir la section suivante). Cette dernière, qui a été introduite à l'origine pour l'étude des frises de Coxeter (voir \cite{C}), nous permet de restreindre notre étude à la recherche d'informations sur les solutions irréductibles. Celles-ci sont d'autant plus intéressante à considérer que leur nombre est fini (voir \cite{M4} Théorème 1.1). Cela amène alors naturellement à s'intéresser à des classes particulières de solutions. Parmi celles-ci, on peut notamment citer les solutions $\overline{k}$-monomiales minimales qui sont les solutions pour lesquels toutes les composantes sont identiques et égales à $\overline{k}$ et dont la taille est la plus petite possible. On connaît en effet de nombreuses propriétés de ces solutions (voir notamment la section suivante) et elles sont à la source de plusieurs résultats intéressants d'irréductibilité. L'objectif ici est d'étudier les rapports entre l'irréductibilité des solutions monomiales minimales et leur taille. Après avoir redonné les définitions essentielles dans la section \ref{RP}, on montrera en particulier dans la section \ref{trois} que les solutions monomiales minimales irréductibles de \eqref{p} sont de taille inférieure à $N$, sauf si $N=2$ ou si $N=3m$ avec $m$ impair premier avec 3. Ensuite, dans la section \ref{quatre}, on établira que, pour $N \neq 2m$ avec $m$ impair, les solutions monomiales minimales de \eqref{p} de taille impaire sont irréductibles tandis qu'on montrera dans la section \ref{cinq} que les solutions monomiales minimales de \eqref{p} de taille $N$ sont irréductibles, sauf éventuellement si $N$ est égal à 2 ou est de la forme $2 \times 3^{a} \times b$ avec $a \geq 1$, $b>1$ et $b$ impair non divisible par 3.

\section{Définitions et résultats principaux}
\label{RP}    

Dans cette section, on va énoncer les résultats principaux, évoqués dans l'introduction, qui seront démontrés dans les parties suivantes. Cela dit, avant de rentrer dans les détails de ces derniers, on a besoin d'un certain nombres de définitions et de notations qui nous seront très utiles pour la suite. Dans tout cet article, $N$ est un entier naturel supérieur ou égal à 2. On aura régulièrement besoin de considérer des classes d'entiers modulo $N$ et modulo des diviseurs de $N$. Aussi, s'il n'y a pas d'ambiguïté sur $N$, on utilisera la notation $\overline{a}:=a+N\mathbb{Z}$ (avec $a \in \mathbb{Z}$) pour désigner les classes modulo $N$ et on écrira $a+\frac{N}{k}\mathbb{Z}$ pour représenter les classes modulo un diviseur de $N$. $\mathbb{P}$ désigne l'ensemble des nombres premiers. Dans la suite, les pgcd et les ppcm d'entiers strictement positifs, qui sont définis au signe près, seront pris dans $\mathbb{N}^{*}$. De plus, si $(x_{i})_{i \in I}$ est une famille d'entiers, on pose $\prod_{j \in \emptyset} x_{j}:=1$.
\\
\\Avant d'introduire la notion centrale de solutions irréductibles, on a besoin de deux définitions.

\begin{definition}[\cite{C}, lemme 2.7]
\label{21}

Soient $(\overline{a_{1}},\ldots,\overline{a_{n}}) \in (\mathbb{Z}/N \mathbb{Z})^{n}$ et $(\overline{b_{1}},\ldots,\overline{b_{m}}) \in (\mathbb{Z}/N \mathbb{Z})^{m}$. On définit l'opération ci-dessous: \[(\overline{a_{1}},\ldots,\overline{a_{n}}) \oplus (\overline{b_{1}},\ldots,\overline{b_{m}})= (\overline{a_{1}+b_{m}},\overline{a_{2}},\ldots,\overline{a_{n-1}},\overline{a_{n}+b_{1}},\overline{b_{2}},\ldots,\overline{b_{m-1}}).\] Le $(n+m-2)$-uplet obtenu est appelé la somme de $(\overline{a_{1}},\ldots,\overline{a_{n}})$ avec $(\overline{b_{1}},\ldots,\overline{b_{m}})$.

\end{definition}

\begin{examples}

{\rm On donne ci-dessous quelques exemples de sommes :
\begin{itemize}
\item $(\overline{1},\overline{1},\overline{3}) \oplus (\overline{-2},\overline{0},\overline{2})=(\overline{3},\overline{1},\overline{1},\overline{0})$;
\item $(\overline{2},\overline{2},\overline{1},\overline{0}) \oplus (\overline{1},\overline{-1},\overline{1})=(\overline{3},\overline{2},\overline{1},\overline{1},\overline{-1})$;
\item $(\overline{1},\overline{0},\overline{2},\overline{3}) \oplus (\overline{2},\overline{4},\overline{1},\overline{-1},\overline{5})=(\overline{6},\overline{0},\overline{2},\overline{5},\overline{4},\overline{1},\overline{-1})$;
\item $n \geq 2$, $(\overline{a_{1}},\ldots,\overline{a_{n}}) \oplus (\overline{0},\overline{0}) = (\overline{0},\overline{0}) \oplus (\overline{a_{1}},\ldots,\overline{a_{n}})=(\overline{a_{1}},\ldots,\overline{a_{n}})$.
\end{itemize}
}
\end{examples}

Cette notion de somme est particulièrement utile pour l'étude des solutions de l'équation \eqref{p} car elle possède la propriété remarquable suivante : si $(\overline{b_{1}},\ldots,\overline{b_{m}})$ est une solution de \eqref{p} alors la somme $(\overline{a_{1}},\ldots,\overline{a_{n}}) \oplus (\overline{b_{1}},\ldots,\overline{b_{m}})$ est une solution de \eqref{p} si et seulement si $(\overline{a_{1}},\ldots,\overline{a_{n}})$ est une solution de \eqref{p} (voir \cite{C,WZ} et \cite{M0} proposition 3.7). En revanche, $\oplus$ n'est ni commutative ni associative (voir \cite{WZ} exemple 2.1) et les uplets d'éléments de $\mathbb{Z}/N \mathbb{Z}$ différents de $(\overline{0},\overline{0})$ n'ont pas d'inverse pour $\oplus$.

\begin{definition}[\cite{C}, définition 2.5]
\label{22}

Soient $(\overline{a_{1}},\ldots,\overline{a_{n}}) \in (\mathbb{Z}/N \mathbb{Z})^{n}$ et $(\overline{b_{1}},\ldots,\overline{b_{n}}) \in (\mathbb{Z}/N \mathbb{Z})^{n}$. On dit que $(\overline{a_{1}},\ldots,\overline{a_{n}}) \sim (\overline{b_{1}},\ldots,\overline{b_{n}})$ si $(\overline{b_{1}},\ldots,\overline{b_{n}})$ est obtenu par permutations circulaires de $(\overline{a_{1}},\ldots,\overline{a_{n}})$ ou de $(\overline{a_{n}},\ldots,\overline{a_{1}})$.

\end{definition}

On constate assez rapidement que $\sim$ est une relation d'équivalence sur les $n$-uplets d'éléments de $\mathbb{Z}/N \mathbb{Z}$ (voir \cite{WZ} lemme 1.7). De plus, $\sim$ conserve la propriété d'être solution de \eqref{p}, c'est-à-dire que si $(\overline{a_{1}},\ldots,\overline{a_{n}}) \sim (\overline{b_{1}},\ldots,\overline{b_{n}})$ alors $(\overline{a_{1}},\ldots,\overline{a_{n}})$ est solution de \eqref{p} si et seulement si $(\overline{b_{1}},\ldots,\overline{b_{n}})$ l'est aussi (voir \cite{C} proposition 2.6).
\\
\\Muni de ces deux définitions, on peut maintenant définir la notion d'irréductibilité annoncée.

\begin{definition}[\cite{C}, définition 2.9]
\label{23}

Une solution $(\overline{c_{1}},\ldots,\overline{c_{n}})$ avec $n \geq 3$ de \eqref{p} est dite réductible s'il existe une solution de \eqref{p} $(\overline{b_{1}},\ldots,\overline{b_{l}})$ et un $m$-uplet $(\overline{a_{1}},\ldots,\overline{a_{m}})$ d'éléments de $\mathbb{Z}/N \mathbb{Z}$ tels que \begin{itemize}
\item $(\overline{c_{1}},\ldots,\overline{c_{n}}) \sim (\overline{a_{1}},\ldots,\overline{a_{m}}) \oplus (\overline{b_{1}},\ldots,\overline{b_{l}})$;
\item $m \geq 3$ et $l \geq 3$.
\end{itemize}
Une solution est dite irréductible si elle n'est pas réductible.

\end{definition}

\begin{remark} 

{\rm $(\overline{0},\overline{0})$ n'est pas considérée comme une solution irréductible de \eqref{p}.}

\end{remark}

\indent Une fois ce concept introduit, la recherche d'informations sur les solutions irréductibles devient naturellement un des principaux axes d'étude des solutions de \eqref{p}. Cela conduit notamment à s'intéresser à certaines classes particulières de solutions possédant de bonnes propriétés d'irréductibilité. En particulier, on a défini les solutions ci-dessous :

\begin{definition}[\cite{M1}, définition 3.9]
\label{24}

i)~Soient $n \in \mathbb{N}^{*}$ et $\overline{k} \in \mathbb{Z}/N\mathbb{Z}$. On appelle solution $(n,\overline{k})$-monomiale un $n$-uplet d'éléments de $\mathbb{Z}/ N \mathbb{Z}$ constitué uniquement de $\overline{k}$ et solution de \eqref{p}.
\\
\\ ii)~On appelle solution monomiale une solution pour laquelle il existe $m \in \mathbb{N}^{*}$ et $\overline{l} \in \mathbb{Z}/N\mathbb{Z}$ tels qu'elle est $(m,\overline{l})$-monomiale.
\\
\\ iii)~On appelle solution $\overline{k}$-monomiale minimale une solution $(n,\overline{k})$-monomiale avec $n$ le plus petit entier pour lequel il existe une solution $(n,\overline{k})$-monomiale.
\\
\\ iv)~On appelle solution monomiale minimale une solution $\overline{k}$-monomiale minimale pour un $\overline{k} \in \mathbb{Z}/N\mathbb{Z}$.

\end{definition}

Notons que pour tout $\overline{k} \in \mathbb{Z}/N\mathbb{Z}$ la solution $\overline{k}$-monomiale minimale de \eqref{p} existe toujours puisque $M_{1}(\overline{k})$ est d'ordre fini dans ${\rm PSL}_{2}(\mathbb{Z}/N\mathbb{Z})$. 
\\
\\ \indent On dispose d'un certain nombre de résultats d'irréductibilité pour ces solutions ainsi que de plusieurs théorèmes donnant des indications sur la taille de ces dernières (voir \cite{M0,M1,M2} et la section \ref{pre}). Ici, notre objectif est d'obtenir des résultats qui mêlent les deux types d'information. À la lueur de cet objectif, deux directions émergent, d'une part trouver des informations générales sur la taille des solutions monomiales minimales irréductibles, et, d'autre part, chercher des tailles entraînant automatiquement l'irréductibilité ou la réductibilité des solutions monomiales minimales. 
\\
\\ \indent On commencera par s'intéresser à la première piste d'étude en démontrant dans la section \ref{trois} le résultat suivant :

\begin{theorem}
\label{241}

Soit $N$ un entier différent de 2 qui n'est pas de la forme $3m$ avec $m$ premier avec 6. Les solutions monomiales minimales irréductibles de \eqref{p} sont de taille inférieure ou égale à $N$.

\end{theorem}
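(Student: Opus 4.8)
The plan is to pass to the projective picture. A $(n,\overline{k})$-monomial tuple solves \eqref{p} exactly when $M_{1}(\overline{k})^{n}=\pm Id$ in $SL_{2}(\mathbb{Z}/N\mathbb{Z})$, so the size of the $\overline{k}$-monomial minimal solution is the order $n_{\overline{k}}$ of $M_{1}(\overline{k})$ in ${\rm PSL}_{2}(\mathbb{Z}/N\mathbb{Z})$. Writing $N=\prod_{p}p^{a_{p}}$ and letting $c_{p}$ be the order of $M_{1}(\overline{k})$ in $SL_{2}(\mathbb{Z}/p^{a_{p}}\mathbb{Z})$ and $d_{p}$ its order in ${\rm PSL}_{2}(\mathbb{Z}/p^{a_{p}}\mathbb{Z})$, the requirement that $M_{1}(\overline{k})^{n}$ be the \emph{same} sign $\pm Id$ modulo every $p^{a_{p}}$ yields, via the identity $-Id=M_{1}(\overline{k})^{c_{p}/2}$ (available precisely when $c_{p}$ is even), the dichotomy $n_{\overline{k}}=\mathrm{lcm}_{p}(c_{p})$, except when the $c_{p}$ are all even with a common $2$-adic valuation, in which case it is the smaller quantity $\mathrm{lcm}_{p}(d_{p})=\tfrac12\mathrm{lcm}_{p}(c_{p})$. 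I would then argue by contraposition: for the admissible $N$, show that every $\overline{k}$ with $n_{\overline{k}}>N$ gives a reducible solution.

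The first half is essentially arithmetic. Classifying $M_{1}(\overline{k})$ modulo an odd prime $p$ by its trace $k$ (hyperbolic, elliptic, or parabolic when $k\equiv\pm 2$) and lifting through the $p$-group kernel of $SL_{2}(\mathbb{Z}/p^{a_{p}}\mathbb{Z})\to SL_{2}(\mathbb{Z}/p\mathbb{Z})$, one gets $c_{p}\mid (p\mp 1)p^{a_{p}-1}$ in the semisimple cases and $c_{p}\in\{p^{a_{p}},2p^{a_{p}}\}$ in the parabolic ones; since $(p\pm 1)/2\le p$, this gives $d_{p}\le p^{a_{p}}$ for every odd $p$. Consequently, in the synchronized branch, $n_{\overline{k}}=\mathrm{lcm}_{p}(d_{p})\le\prod_{p}d_{p}\le\prod_{p}p^{a_{p}}=N$, and there is nothing to prove. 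The excess $n_{\overline{k}}>N$ can therefore occur only in the branch $n_{\overline{k}}=\mathrm{lcm}_{p}(c_{p})$, where the same bounds give $n_{\overline{k}}\le 2N$; tracking where the surviving factor $2$ comes from shows it must be an eigenvalue $-1$ contribution (a factor with $k\equiv -2$, giving $c_{p}=2p^{a_{p}}$) whose halving is blocked by a factor of odd order (typically a parabolic $k\equiv 2$), and that to reach $n_{\overline{k}}>N$ essentially all prime-power factors must be parabolic with mixed signs. The prime $2$, for which $d_{2}$ may exceed $2^{a_{2}}$ (already $d_{2}=3$ when $k$ is odd), has to be inspected separately and is the source of the exception $N=2$.

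It remains to prove that these resonant solutions of size in $(N,2N]$ are reducible, except for the advertised families. Here one has, at $j=n_{\overline{k}}/2$, a central relation $M_{1}(\overline{k})^{j}=s\,Id$ with $s^{2}\equiv 1$ and $s\not\equiv\pm 1\pmod N$ (a nontrivial square root of unity, which exists exactly because the signs are unsynchronized). The aim is to combine this central element with a small auxiliary monomial solution attached to the doubling prime in order to realize $(\overline{k},\ldots,\overline{k})\sim(\overline{a_{1}},\ldots,\overline{a_{m}})\oplus(\overline{b_{1}},\ldots,\overline{b_{l}})$ with both factors solutions of \eqref{p}, all interior entries equal to $\overline{k}$, and the two junctions $\overline{a_{1}+b_{l}}=\overline{a_{m}+b_{1}}=\overline{k}$ corrected by $s$. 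Verifying the solution property of the second factor and the two junction conditions is the delicate point, and it is precisely here that $N=2$ and $N=3m$ (with $\gcd(m,6)=1$) are forced out: for these moduli the auxiliary solution imposed by the doubling prime is too short — for the prime $3$ it is the whole size-$3$ solution $(\overline{1},\overline{1},\overline{1})$ — so the complementary factor would have length below $3$ and no admissible decomposition exists.

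I expect the reduction of the last paragraph to be the main obstacle: converting the central relation $M_{1}(\overline{k})^{j}=s\,Id$ into a genuine $\oplus$-factorization into two pieces of length at least $3$, and proving that this construction degenerates exactly for $N=2$ and for $N=3m$ with $\gcd(m,6)=1$. By contrast, the order computation and the $\mathrm{lcm}$/halving dichotomy are bookkeeping over the Chinese remainder factors, and the synchronized case is settled outright by the bound $\prod_{p}d_{p}\le N$.
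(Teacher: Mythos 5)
Your opening two paragraphs --- identifying sizes of monomial minimal solutions with orders in ${\rm PSL}_{2}$, the local bounds on each prime-power factor, and the dichotomy ${\rm ppcm}$ versus $2\times{\rm ppcm}$ --- reconstruct the paper's preliminary material (Lemme \ref{33}, Proposition \ref{37}, Lemme \ref{39}), and they correctly reduce the theorem to showing that monomial minimal solutions of \eqref{p} of size in $(N,2N]$ are reducible. But at that point the proposal stops being a proof: you yourself flag the conversion of the half-order relation $M_{1}(\overline{k})^{j}=s\,Id$ ($s$ a nontrivial square root of unity) into an $\oplus$-factorization with both factors of length at least $3$ as ``the delicate point'' and ``the main obstacle'', and you give no construction for it. That construction is the entire content of the paper's proof (Propositions \ref{313} and \ref{314}). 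Concretely, the paper first forces the local sizes $l_{i}$ to be pairwise coprime and each equal to $p_{i}^{\alpha_{i}}$ or $\frac{p_{i}+1}{2}p_{i}^{\alpha_{i}-1}$, then splits the indices into $I$ (local power equal to $+Id$) and $J$ (local power equal to $-Id$), chooses --- by inverting $\prod_{i\in I}l_{i}$ modulo $\prod_{j\in J}l_{j}$ --- an integer $s$ with $3\leq s\leq l-1$, $s\equiv 0\,[l_{i}]$ for $i\in I$, $s\equiv 2\,[l_{j}]$ for $j\in J$ and suitable parity, and defines by the Chinese remainder theorem a boundary entry $a$ with $a\equiv k\,[p_{i}^{\alpha_{i}}]$ for $i\in I$ and $a\equiv 0\,[p_{j}^{\alpha_{j}}]$ for $j\in J$ (with the roles exchanged around $j_{0}$ when some $l_{j_{0}}$ is even). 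The identity $M_{2}(\overline{0},\overline{0})=-Id$ together with the parities of $\frac{s}{l_{i}}$ and $\frac{s-2}{l_{j}}$ then yields $M_{s}(\overline{a},\overline{k},\ldots,\overline{k},\overline{a})=\pm Id$, a solution of intermediate size that reduces the monomial one. Nothing equivalent to this tuple, or to the sign bookkeeping that makes it a solution, appears in your sketch.

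Beyond the missing construction, two of your structural claims are wrong. First, the assertion that in the synchronized branch $n_{\overline{k}}={\rm lcm}_{p}(d_{p})\leq\prod_{p}d_{p}\leq N$ so that ``there is nothing to prove'' fails for $N=2m$ with $m$ odd: there $d_{2}=3>2$ when $k$ is odd, sizes larger than $N$ do occur in that branch (this is exactly the first case of Proposition \ref{314}, which needs its own reduction construction, and it is why Proposition \ref{38} only gives the bound $3N$), so the prime $2$ is not merely ``the source of the exception $N=2$''. Second, your diagnosis of the exceptional family is inverted: for $N=3m$ with $\gcd(m,6)=1$, the obstruction is the size-$2$ solution $(\overline{0},\overline{0})$ modulo $3$, i.e.\ the case $l_{j_{0}}=2$ forced by $k\equiv 0\,[3]$, which makes the would-be reducing factor shorter than $3$ (Proposition \ref{317}); Proposition \ref{d1} shows the resulting solutions of size $N+\frac{N}{3}$ can genuinely be irreducible. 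The size-$3$ solution $(\overline{1},\overline{1},\overline{1})$ that you blame belongs to the prime $2$, and it drives the separate case $N=2m$ just discussed and the exclusion of $N=2$, not the case $N=3m$.
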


Puis, en utilisant le cas mis de côté par ce dernier, on résoudra la conjecture 2 de \cite{M0}. Dans cet article, on avait en effet émis deux conjectures portant sur les solutions irréductibles de \eqref{p}. La première indiquait que le nombre de solutions irréductibles sur $\mathbb{Z}N\mathbb{Z}$ était fini et a été résolue positivement dans \cite{M4}. La deuxième s'intéressait à l'existence d'une constante $K$ vérifiant la propriété suivante : pour tout $N \geq 2$, $\ell_{N} \leq N+K$, où $\ell_{N}$ est la taille maximale des solutions irréductibles de $(E_{N})$. C'est la question de l'existence de cette constante que l'on va trancher ici. Plus précisément, on effectuera la preuve du résultat général suivant :

\begin{theorem}
\label{25}

Il n'existe pas de constante $K \in \mathbb{N}^{*}$ tel que pour tout $N \geq 2$ les solutions irréductibles de \eqref{p} soient de taille inférieure à $N+K$.

\end{theorem}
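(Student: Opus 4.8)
The plan is to refute the existence of such a $K$ by exhibiting, for infinitely many moduli $N$, an irreducible monomial minimal solution whose size grows like $2N$. Since the excess of the size over $N$ is then unbounded, no constant $K$ can bound $\ell_{N}-N$. The natural source for such oversized solutions is precisely the family set aside by Theorem \ref{241}, namely $N=3m$ with $m$ coprime to $6$; this is why the statement is obtained ``en utilisant le cas mis de côté''.

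First I would fix such an $N=3m$ (so $m$ is odd and coprime to $3$) and choose $\overline{k}\in\mathbb{Z}/N\mathbb{Z}$ by the Chinese remainder theorem so that $k\equiv 1\pmod 3$ and $k\equiv 2\pmod m$. The size of the $\overline{k}$-monomial minimal solution is the least $n\geq 1$ with $M_{1}(\overline{k})^{n}=\pm Id$ in $SL_{2}(\mathbb{Z}/N\mathbb{Z})$, and this can be read off componentwise through the isomorphism $SL_{2}(\mathbb{Z}/N\mathbb{Z})\cong SL_{2}(\mathbb{Z}/3\mathbb{Z})\times SL_{2}(\mathbb{Z}/m\mathbb{Z})$. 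Modulo $3$ one has $M_{1}(\overline{1})^{3}=-Id$ and $M_{1}(\overline{1})^{6}=Id$ (this is the relation $M_{3}(1,1,1)=-Id$ recalled in the introduction), so $M_{1}(\overline{k})^{n}=Id$ iff $6\mid n$ and $M_{1}(\overline{k})^{n}=-Id$ iff $n\equiv 3\pmod 6$. Modulo $m$ the matrix $M_{1}(\overline{2})$ has trace $2$ and is unipotent, so $M_{1}(\overline{k})^{n}=Id$ iff $m\mid n$, whereas $M_{1}(\overline{k})^{n}=-Id$ never occurs, $-Id$ not being unipotent modulo the odd number $m$. Matching signs across the two factors forces $M_{1}(\overline{k})^{n}=-Id$ to be impossible modulo $N$, so $M_{1}(\overline{k})^{n}=\pm Id$ holds iff $M_{1}(\overline{k})^{n}=Id$, i.e. iff $\mathrm{lcm}(6,m)=6m$ divides $n$ (using $\gcd(6,m)=1$). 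Hence the $\overline{k}$-monomial minimal solution has size exactly $6m=2N$.

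The main obstacle is then to show that this solution $(\overline{k},\ldots,\overline{k})$ is irreducible, which is exactly what the analysis of the exceptional case in Section \ref{trois} is designed to provide. Concretely, I would argue that a reduction $(\overline{k},\ldots,\overline{k})\sim(\overline{a_{1}},\ldots,\overline{a_{\mu}})\oplus(\overline{b_{1}},\ldots,\overline{b_{\lambda}})$ with $\mu,\lambda\geq 3$ forces, by inspection of the operation of Definition \ref{21}, all inner components of the two summands to equal $\overline{k}$ together with the boundary relations $\overline{a_{1}+b_{\lambda}}=\overline{a_{\mu}+b_{1}}=\overline{k}$; in matrix terms this makes $M_{1}(\overline{k})^{\mu-2}$ and $M_{1}(\overline{k})^{\lambda-2}$ equal to prescribed products of the four boundary matrices, and the order and trace constraints attached to $M_{1}(\overline{k})$ in this particular modulus (the same constraints used to compute the size $2N$ above) rule out such a splitting. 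This is the genuinely delicate point and where I would lean on the exceptional-case results established in Section \ref{trois}.

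Finally I would conclude by contradiction: suppose a constant $K\in\mathbb{N}^{*}$ exists with $\ell_{N}\leq N+K$ for all $N\geq 2$. Choose any $m$ coprime to $6$ with $m>K$ and set $N=3m$; then the solution constructed above is irreducible of size $2N=N+N>N+K$, so $\ell_{N}\geq 2N>N+K$, contradicting the bound. Since there are infinitely many admissible $m$, the excess $\ell_{N}-N\geq N$ is unbounded, which is exactly the assertion of Theorem \ref{25}.
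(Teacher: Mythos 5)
Your size computation is correct: with $k\equiv 1\pmod 3$ and $k\equiv 2\pmod m$, the $\overline{k}$-monomial minimal solution of $(E_{3m})$ indeed has size $2N=6m$, because the two local signs disagree ($M_{3}(\overline{1},\overline{1},\overline{1})=-Id$ modulo $3$, whereas no power of $M_{1}(\overline{2})$ equals $-Id$ modulo the odd integer $m$, so the least $n$ with $M_{1}(\overline{k})^{n}=\pm Id$ is $\mathrm{ppcm}(6,m)=6m$). The fatal gap is the irreducibility claim, which you explicitly defer to ``the exceptional-case results established in Section \ref{trois}'': those results prove the exact opposite of what you need. Proposition \ref{317} states that for $N=3m$ with $m$ coprime to $6$, every monomial minimal solution of size $l>N$ with $l\neq N+\frac{N}{3}$ is \emph{reducible}; your solution has $l=2N\neq \frac{4N}{3}$, so it is reducible. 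Concretely, the sign mismatch that doubles the size to $2N$ is precisely what the reduction in case A of the proof of Proposition \ref{313} exploits: choose $s$ odd with $s\equiv 0\pmod m$ and $s\equiv 2\pmod 3$, and $a$ with $a\equiv k\pmod m$, $a\equiv 0\pmod 3$; then $(\overline{a},\overline{k},\ldots,\overline{k},\overline{a})$ of size $s<2N$ is a solution of \eqref{p}, since modulo $m$ it equals $M_{m}(\overline{k},\ldots,\overline{k})^{s/m}=Id$ and modulo $3$ it equals $M_{1}(\overline{0})\,(-Id)^{(s-2)/3}\,M_{1}(\overline{0})=Id$ (note $(s-2)/3$ is odd because $s$ is odd, and $M_{1}(\overline{0})^{2}=-Id$); this solution reduces your monomial solution.

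The paper's Proposition \ref{d1} dodges this trap by distributing the residues the other way: for $N=3p$ with $p$ an odd prime, $p\neq 3$, it takes $k\equiv 0\pmod 3$ and $k\equiv \pm 2\pmod p$, which gives size $4p=N+\frac{N}{3}$ — the unique size above $N$ not excluded by Proposition \ref{317}. Irreducibility then requires a genuine argument, supplied by Lemma \ref{plus}: a reducing solution $(\overline{x},\overline{k},\ldots,\overline{k},\overline{y})$ would have size $l'\in\{p,p+2,2p\}$ from the modulo-$p$ constraint (the $(\pm 2+p\mathbb{Z})$-monomial solution is irreducible of size $p$) and even size from the modulo-$3$ constraint (the $(0+3\mathbb{Z})$-monomial solution has size $2$), hence $l'=2p$; but then the product is $Id$ modulo $p$ and $-Id$ modulo $3$, a contradiction. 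Note also that the achievable excess is $\ell_{N}-N\geq N/3=p$, not $N$ as you claim: by Proposition \ref{317} (together with Theorem \ref{241}), an excess of $N$ is impossible. Your global skeleton — work in the excluded family $N=3m$, then let the modulus grow past $K$ — is the same as the paper's, but the specific solution you construct is reducible, so the proof does not go through as written.
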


\noindent Ensuite, on s'intéressera à certaines tailles particulières.

\begin{theorem}
\label{26}

Soit $N>2$. 
\\
\\i) On suppose que $N$ n'est pas de la forme $2m$ avec $m$ impair. Si \eqref{p} possède des solutions monomiales minimales de taille impaire alors elles sont irréductibles.
\\
\\ii) On suppose $N=2m$ avec $m$ impair. Si \eqref{p} possède des solutions monomiales minimales de taille $l$ impaire divisible par 9 alors elles sont irréductibles.

\end{theorem}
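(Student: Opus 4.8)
Introduisons la suite $(u_j)_{j\geq -1}$ définie par $u_{-1}=0$, $u_0=1$ et $u_{j+1}=k u_j-u_{j-1}$, de sorte que $M_{1}(\overline{k})^{j}=\begin{pmatrix} u_j & -u_{j-1}\\ u_{j-1} & -u_{j-2}\end{pmatrix}$. La taille minimale $n$ de la solution $\overline{k}$-monomiale est le plus petit entier $j\geq 1$ tel que $\overline{u_{j-1}}=\overline{0}$ et $\overline{u_{j}}=\pm\overline 1$; on note $\epsilon\in\{1,-1\}$ tel que $M_1(\overline k)^n=\epsilon\,\mathrm{Id}$. Mon plan est d'abord de ramener l'irréductibilité à une condition sur cette suite. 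Si la solution est réductible, elle s'écrit (à $\sim$ près) $(\overline{a_1},\ldots,\overline{a_m})\oplus(\overline{b_1},\ldots,\overline{b_l})$ avec $m,l\geq 3$, le second facteur étant solution. La classe de $(\overline k,\ldots,\overline k)$ pour $\sim$ étant réduite à ce seul uplet, la définition de $\oplus$ force les composantes intérieures de chaque facteur à valoir $\overline k$ et impose $\overline{a_1}+\overline{b_l}=\overline{a_m}+\overline{b_1}=\overline k$. Un calcul direct de $M_1(\overline{a'})M_1(\overline k)^{p}M_1(\overline a)$ montre qu'un uplet de la forme $(\overline a,\overline k,\ldots,\overline k,\overline{a'})$ de longueur $p+2$ est solution si et seulement si $\overline{u_p}=\pm\overline 1$ et $\overline a=\overline{a'}=\overline{u_{p-1}}\,\overline{u_p}^{-1}$. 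Les deux facteurs sont donc de cette forme, avec $p+q=n-2$ où $p=m-2$ et $q=l-2$.

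On conclut cette première étape à l'aide de deux identités des continuants : l'identité $u_j^2-u_{j+1}u_{j-1}=1$ et la symétrie $\overline{u_{n-2-j}}=-\epsilon\,\overline{u_j}$. La première, reportée dans $\overline a+\overline b=\overline k$, montre que cette condition de recollement équivaut à $\overline{u_{n-1}}=\overline 0$, donc est automatique. La seconde donne $\overline{u_q}=-\epsilon\,\overline{u_p}$, si bien que $\overline{u_p}=\pm\overline 1$ équivaut à $\overline{u_q}=\pm\overline 1$. On obtient la reformulation clé : \emph{la solution monomiale minimale de taille $n$ est réductible si et seulement s'il existe $p\in\{1,\ldots,n-3\}$ tel que $\overline{u_p}=\pm\overline 1$}. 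Il suffit donc de prouver que, sous les hypothèses, la valeur $\pm\overline 1$ n'est atteinte qu'en $p=0$ et $p=n-2$.

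J'analyserais cela par le lemme chinois. Pour chaque puissance de nombre premier $\ell^{e}$ divisant exactement $N$, soit $m_{\ell^e}$ l'ordre de $M_1(\overline k)$ dans ${\rm PSL}_2(\mathbb Z/\ell^e\mathbb Z)$ et $\epsilon_{\ell^e}\in\{1,-1\}$ le signe tel que $M_1(\overline k)^{m_{\ell^e}}=\epsilon_{\ell^e}\,\mathrm{Id}$. Comme $n$ est impair, chaque $m_{\ell^e}$ divise $n$ donc est impair, et $m_\ell\geq 3$ (la matrice $M_1(\overline k)$ n'étant jamais centrale). De $\overline{u_p}=\pm\overline 1$ on tire $u_{p+1}u_{p-1}\equiv 0\pmod{\ell^e}$ ; dans le corps $\mathbb{F}_\ell$ l'un des $u_{p\pm 1}$ s'annule, et $m_\ell\geq 3$ assure qu'un seul s'annule (l'autre étant inversible modulo $\ell$), ce qui force $\ell^e\mid u_{p\mp 1}$ : ainsi $u_p\equiv\pm 1\pmod{\ell^e}$ équivaut à $p\equiv 0$ ou $p\equiv -2$ modulo $m_{\ell^e}$, le signe étant régi par $\epsilon_{\ell^e}$ le long des périodes (de longueur $m_{\ell^e}$ si $\epsilon_{\ell^e}=1$, $2m_{\ell^e}$ sinon). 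La parité de $n$ donne $\epsilon_{\ell^e}^{\,n/m_{\ell^e}}=\epsilon_{\ell^e}$ ; pour que $M_1(\overline k)^n$ soit scalaire de signe global bien défini, tous les $\epsilon_{\ell^e}$ pour lesquels $+\mathrm{Id}\neq-\mathrm{Id}$ modulo $\ell^e$ (c'est-à-dire pour $\ell$ impair, et pour $2^e$ avec $e\geq 2$) doivent coïncider. Dans ce cas on vérifie qu'aucun panachage des branches entre composantes n'est possible — immédiatement si le signe commun vaut $+1$ (une seule branche par valeur), et par un argument de parité s'il vaut $-1$ (les périodes $2m_{\ell^e}$ étant toutes paires) — de sorte qu'il ne reste que $p\equiv 0$ (valeur $+\overline 1$) et $p\equiv -2$ (valeur $-\overline 1$) modulo $n$, soit $p\in\{0,n-2\}$. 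Ceci vaut dès que $N$ est impair ou $4\mid N$ : on obtient le point i).

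Toute la difficulté restante tient au facteur $2$. Si $N=2m$ avec $m$ impair, alors $+\mathrm{Id}=-\mathrm{Id}$ modulo $2$ : le signe en $2$ est libre et la composante $m_2=3$ se détache du reste, de sorte que $n=\mathrm{ppcm}(3,n')$ avec $n'=\mathrm{ppcm}_{\ell\ \mathrm{impair}}(m_{\ell^e})$ vaut $3n'$ lorsque $3\nmid n'$. Les congruences précédentes admettent alors des représentants intérieurs dans $\{1,\ldots,n-3\}$ (tel $p=n'-2$), ce qui rend la solution réductible — c'est exactement le cas écarté au point i). L'hypothèse $9\mid n$ force $3\mid n'$, donc $n=n'$, et ces représentants retombent sur les bords, rétablissant l'irréductibilité et prouvant le point ii). L'obstacle principal de la rédaction sera la comptabilité fine des valuations $2$-adique et $3$-adique (le rôle précis de $9$, et le traitement du facteur $2$ où le signe est ambigu), ainsi que la détermination, modulo les puissances de $2$, des positions exactes où $\overline{u_p}=\pm\overline 1$.
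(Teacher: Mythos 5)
Votre démonstration est correcte, mais elle emprunte une route réellement différente de celle de l'article. L'article raisonne par récurrence sur le nombre de facteurs premiers de $N$ : le cas des puissances d'un nombre premier est importé de \cite{M3} (théorème \ref{311b}, joint à la proposition \ref{311}), et l'étape de récurrence scinde $N=p_{1}^{\alpha_{1}} \times N/p_{1}^{\alpha_{1}}$ puis utilise le lemme \ref{plus} comme boîte noire pour forcer $l' \equiv 0$ ou $2$ modulo $l_{1}$ et modulo $l_{2}$, la contradiction finale étant une incompatibilité de signes via le lemme chinois ; le point ii) s'obtient en réduisant modulo $m$ et en invoquant le point i). Vous remplacez cette architecture par un critère explicite en termes de continuants, démontré directement et sans hypothèse d'irréductibilité : la solution monomiale minimale de taille $n$ est réductible si et seulement s'il existe $p \in \{1,\ldots,n-3\}$ tel que $\overline{u_{p}}=\pm\overline{1}$, le recollement $\overline{a}+\overline{b}=\overline{k}$ étant automatique grâce à la symétrie $\overline{u_{n-2-j}}=-\epsilon\,\overline{u_{j}}$ et à la récurrence $u_{p+1}+u_{p-1}=ku_{p}$ — c'est une version plus fine (une équivalence) du lemme \ref{plus}. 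J'ai vérifié vos étapes suivantes : la localisation des positions où $\overline{u_{p}}=\pm\overline{1}$ modulo chaque $\ell^{e}$ (classes $0$ et $-2$ modulo $m_{\ell^{e}}$, l'imparité de $n$ donnant $m_{\ell^{e}}\geq 3$ et $n={\rm ppcm}(m_{\ell^{e}})$), la coïncidence des signes $\epsilon_{\ell^{e}}$ dès qu'aucun $\ell^{e}$ ne vaut $2$ (ce qui est exactement l'hypothèse du point i)), et l'exclusion de tout $p$ intérieur fonctionnent bien dans les quatre cas (signe commun $+1$ ou $-1$, $p$ pair ou impair). Votre preuve est ainsi auto-contenue — ni récurrence, ni recours aux résultats extérieurs de \cite{M2} et \cite{M3} — et traite tous les facteurs premiers simultanément ; celle de l'article, en contrepartie, réutilise des lemmes déjà établis et évite tout calcul matriciel explicite. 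Deux réserves mineures, qui ne sont pas des lacunes : votre point ii) est très comprimé alors qu'il se rédige en une ligne dans votre cadre ($9 \mid l$ force la taille modulo $m$ à valoir $l$, et $\overline{u_{p}}=\pm\overline{1}$ modulo $N$ entraîne $\overline{u_{p}}=\pm\overline{1}$ modulo $m$, ce qu'exclut le point i) appliqué au module impair $m$) ; par ailleurs, dans le cas écarté $3\nmid n'$, le témoin $p=n'-2$ que vous proposez entre parenthèses peut échouer (si $n'\equiv 1~[3]$, alors $u_{n'-2}$ est pair), la réductibilité restant néanmoins vraie pour $n'>1$ (prendre par exemple $p=n'$ ou $p=2n'-2$ selon la classe de $n'$ modulo 3) — mais ce cas est hors de l'énoncé du théorème.
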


\begin{theorem}
\label{27}

Soit $N>2$. 
\\
\\i) Si $N$ n'est pas de la forme $2 \times 3^{a} \times b$, avec $a \geq 1$ et $b>1$ impair non divisible par 3, alors toutes les solutions monomiales minimales non nulles de \eqref{p} de taille $N$ sont irréductibles.
\\
\\ii) Si $N=2 \times 3^{a} \times b$, avec $a \geq 1$ et $b>1$ impair non divisible par 3, alors il existe des solutions monomiales minimales non nulles de \eqref{p} de taille $N$ réductibles.

\end{theorem}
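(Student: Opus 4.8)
The plan is to convert the reducibility of a size-$N$ monomial solution into a single arithmetic condition, then analyse it prime power by prime power. Put $A=M_1(\overline{k})=\begin{pmatrix}\overline{k}&-1\\1&0\end{pmatrix}$, so that $M_n(\overline{k},\dots,\overline{k})=A^{n}$ and the size of the $\overline{k}$-monomial minimal solution is the order $d$ of $A$ in $\mathrm{PSL}_2(\mathbb{Z}/N\mathbb{Z})$; ``taille $N$'' means $d=N$. Writing the continuants $p_{-1}=0$, $p_{0}=1$, $p_{i+1}=\overline{k}\,p_{i}-p_{i-1}$, one has $A^{i}=\begin{pmatrix}p_{i}&-p_{i-1}\\p_{i-1}&-p_{i-2}\end{pmatrix}$. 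I would first prove the reduction lemma: $(\overline{k},\dots,\overline{k})$ of size $N$ is réductible if and only if $p_{i}\equiv\pm 1\pmod N$ for some $i$ with $1\le i\le N-3$. Since the $\sim$-class of a constant tuple is trivial, any reduction is a genuine equality $(\overline{k},\dots,\overline{k})=(\overline{a_{1}},\dots,\overline{a_{m}})\oplus(\overline{b_{1}},\dots,\overline{b_{l}})$; comparing entries forces both factors to be palindromic and almost constant, of the form $(\overline{b},\overline{k},\dots,\overline{k},\overline{b})$, and computing $M_{l}=\begin{pmatrix}\overline{b}&-1\\1&0\end{pmatrix}A^{\,l-2}\begin{pmatrix}\overline{b}&-1\\1&0\end{pmatrix}$ shows this is a solution exactly when $p_{l-2}\equiv\pm1\pmod N$ (the determinant and off-diagonal relations then fix $\overline{b}$). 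The conditions $m,l\ge 3$ are precisely $1\le i:=l-2\le N-3$, and the stability of solutions under $\oplus$ makes the complementary factor automatically a solution.

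The structural tool is a description of the $\overline{k}$ of size exactly $N$ via the Chinese Remainder Theorem. For each maximal prime power $q=p^{v_p}$ dividing $N$, let $d_{q}$ be the $\mathrm{PSL}_2$-order of $A$ and $\varepsilon_{q}=\pm1$ the sign of $A^{d_{q}}$ modulo $q$; the global order equals the least common multiple of the $d_{q}$ \emph{provided} the signs $\varepsilon_{q}^{\,n/d_{q}}$ can be made simultaneously consistent, and is twice that otherwise. The decisive point is that modulo $2$ one has $-I=I$, so the prime $2$ never constrains the sign; hence two odd prime powers carrying opposite signs can force the order to double. Moreover a factor $p^{v_{p}}$ can only come from the component at $p$, which forces $A$ parabolic there, i.e. $\overline{k}\equiv\pm2\pmod p$, with $\varepsilon=+1$ for $\overline{k}\equiv2$ and $\varepsilon=-1$ for $\overline{k}\equiv-2$.

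For ii), with $N=2\cdot3^{a}\cdot b$ and $b=\prod_{j}q_{j}^{f_{j}}$ ($q_{j}\ge5$), the powers $3^{a}$ and $q_{j}^{f_{j}}$ supply at least two odd prime-power factors. I would take $\overline{k}$ odd and $\equiv\pm2$ modulo each odd $p^{v_{p}}$ with the signs not all equal: the naive lcm is then $N/2=3^{a}b$ (the order $3$ contributed at the prime $2$ by $\overline{k}$ odd being absorbed because $3\mid N$), while the sign mismatch doubles the true order to $N$; that $v_{2}(N)=1$ makes the doubling contribute exactly one factor $2$, and $b>1$ is what provides the second odd prime needed for a mismatch. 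It then remains to solve $p_{i}\equiv\varepsilon\pmod{p^{v_{p}}}$ for a single global $\varepsilon$; each $p_{i}\bmod p^{v_{p}}$ being an explicit periodic function of $i$, CRT yields a solution, and the mismatch forces it into the interior $1\le i\le N-3$ rather than onto the trivial endpoints $i=0,\,N-2$. The reduction lemma then gives réductibilité.

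For i) I would show the condition never holds off the exceptional family. If $N$ is odd the size is odd and Théorème \ref{26} i) already yields irréductibilité. If $N$ is even but non exceptional — $4\mid N$, or $3\nmid N$, or $N=2\cdot3^{a}$ — the doubling mechanism is unavailable: respectively, modulo $2^{v_{2}}$ with $v_{2}\ge2$ the prime $2$ does constrain signs; an odd $\overline{k}$ forces a factor $3\nmid N$ into the order, so size $N$ needs $\overline{k}$ even; or there is a single odd prime power, admitting no mismatch. In every case $\overline{k}$ must be parabolic and even at each odd prime, whence $p_{i}\equiv\pm(i+1)$ modulo each $p^{v_{p}}$ and $p_{i}\equiv\pm1\pmod N$ can occur only at $i\equiv0,-2$, which are excluded. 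The main obstacle is exactly this uniform verification: controlling the joint periods and signs of the continuants modulo all prime-power factors of $N$ and ruling out an interior simultaneous value $\pm1$ outside the exceptional case. This is the technical core, to be organised with the size formulae for monomial minimal solutions from the preliminary section and \cite{M1,M2}.
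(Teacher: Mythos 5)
Your reduction lemma is correct and is equivalent to what the paper uses: reducibility of the constant solution of size $N$ amounts to the existence of a solution $(\overline{x},\overline{k},\ldots,\overline{k},\overline{y})$ of \eqref{p} of size $l'$ with $3\le l'\le N-1$, i.e.\ to $p_{l'-2}\equiv\pm1\pmod N$ in your continuant notation. Your sketch of ii) is also essentially the paper's proof (proposition \ref{323}): take $k\equiv1\;[2]$, $k\equiv2\;[3^{a}]$, $k\equiv-2\;[b]$, get size $N$ by sign mismatch (lemme \ref{39}), then produce an interior reduction index by the Chinese remainder theorem. The problem is part i), which is where the substance of the theorem lies, and there your argument contains two false claims and one admitted omission.

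First, the claim that for $4\mid N$ "the doubling mechanism is unavailable" is contradicted by the paper's own example: for $N=12$, $k=4$, the local sizes are $2$ and $3$ and the global size is $12=2\times{\rm ppcm}(2,3)=N$; the paper needs a dedicated case for this (the case $l_{1}=2$, fourth step of the proof of proposition \ref{321}), which your sketch excludes a priori. Second, the assertion that size $p^{v_{p}}$ at an odd prime power forces $\overline{k}\equiv\pm2\pmod{p^{v_{p}}}$, whence $p_{i}\equiv\pm(i+1)$, is false: $k\equiv5\pmod 9$ has minimal size $9$ although $5\not\equiv\pm2\pmod 9$ (concretely $N=18$, $k=14$ gives a size-$N$ solution within the scope of i) for which your continuant formula fails). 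What does hold, and what the paper uses, is that size $p^{v_p}$ forces $p\nmid k$, hence irreducibility of the local minimal solution (théorème \ref{311b}), hence via the lemme \ref{plus} the constraint $i\equiv0,-2\pmod{p^{v_{p}}}$ — with no explicit formula for $p_{i}$. Third, and most importantly, even granting $i\equiv0,-2\pmod{d_{q}}$ for every prime power $q\mid N$, this does not confine $i$ to $0,-2\pmod N$: by CRT there are roughly $2^{r}$ admissible residues, most of them interior (e.g.\ $i\equiv0\pmod{q_{1}}$, $i\equiv-2\pmod{q_{2}}$). Excluding these mixed classes is precisely the sign bookkeeping (the local signs $\epsilon_{q}$, the forced $\epsilon=+1$ at the $2$-part by the lemme \ref{35}, and the incompatibilities $1\neq-1$ modulo $q\neq2$) that occupies the third steps of propositions \ref{321} and \ref{322}; you explicitly defer it as "the technical core, to be organised", so part i) is not proved. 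Note that it is exactly this sign mechanism whose failure, for $N=2\times3^{a}\times b$, produces the reducible solutions of ii), so it cannot be bypassed.
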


Le théorème \ref{26} sera prouvé dans la section \ref{quatre} tandis que la preuve du théorème \ref{27} est donnée dans la section \ref{cinq}. 

\section{Majoration de la taille des solutions monomiales minimales irréductibles}
\label{trois}

L'objectif de cette section est de démontrer les théorèmes \ref{241} et \ref{25} et de donner un certain nombre de résultats intermédiaires qui nous seront utiles dans tout l'article.

\subsection{Résultats préliminaires}
\label{pre}

Le but de cette sous-partie est d'énoncer plusieurs résultats préliminaires concernant la taille des solutions monomiales minimales. Le premier d'entre eux concerne le cas de l'équation $(E_{p})$ avec $p$ premier et est issu de la modification d'un résultat sur les ordres des éléments de $SL_{2}(\mathbb{Z}/N\mathbb{Z})$ donné dans \cite{CGL} (page 216).

\begin{theorem}[\cite{M1} Théorème 3.4]
\label{31}

Soient $p \in \mathbb{P}$ impair et $\overline{k} \in \mathbb{Z}/p\mathbb{Z}$. 
\begin{itemize}
\item Si $\overline{k}=\pm \overline{2}$ alors la taille de la solution $\overline{k}$-monomiale minimale de $(E_{p})$ est égale à $p$.
\item Si $\overline{k}\neq \pm \overline{2}$ alors la taille de la solution $\overline{k}$-monomiale minimale de $(E_{p})$ divise $\frac{p+1}{2}$ ou $\frac{p-1}{2}$.
\end{itemize}

\end{theorem}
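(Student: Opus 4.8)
Le plan est de remarquer d'abord que, par définition de $M_{n}$, on a $M_{n}(\overline{k},\ldots,\overline{k})=A^{n}$ où $A:=M_{1}(\overline{k})=\begin{pmatrix} \overline{k} & -1 \\ 1 & 0 \end{pmatrix}$. La taille de la solution $\overline{k}$-monomiale minimale de $(E_{p})$ est donc exactement le plus petit entier $n\geq 1$ tel que $A^{n}=\pm Id$, c'est-à-dire l'ordre de l'image de $A$ dans ${\rm PSL}_{2}(\mathbb{Z}/p\mathbb{Z})$. Comme $A$ est de trace $\overline{k}$ et de déterminant $\overline{1}$, son polynôme caractéristique vaut $X^{2}-\overline{k}X+\overline{1}$, de discriminant $\overline{k^{2}-4}$. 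Je distinguerais alors trois cas selon la nature de ce discriminant ; la dichotomie de l'énoncé correspond précisément à cette distinction.

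Je traiterais en premier le cas $\overline{k}=\pm\overline{2}$, pour lequel le discriminant s'annule et $A$ admet la valeur propre double $\varepsilon:=\pm\overline{1}$. La matrice $A$ n'étant pas scalaire, elle n'est pas diagonalisable et s'écrit $A=\varepsilon Id+N$ avec $N$ nilpotente non nulle vérifiant $N^{2}=0$. La formule du binôme donne $A^{n}=\varepsilon^{n}Id+n\varepsilon^{n-1}N$ ; comme $N$, nilpotente non nulle, n'est pas scalaire, l'égalité $A^{n}=\pm Id$ impose $p\mid n$. Un calcul direct montre que $A^{p}=\pm Id$, d'où une taille égale à $p$.

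Je traiterais ensuite conjointement les cas $\overline{k}\neq\pm\overline{2}$, pour lesquels le discriminant est non nul : $A$ possède deux valeurs propres distinctes, nécessairement de la forme $\lambda$ et $\lambda^{-1}$ puisque $\det(A)=\overline{1}$ (distinctes car $\lambda=\lambda^{-1}$ équivaudrait à $\overline{k}=\pm\overline{2}$). La matrice $A$ est alors diagonalisable, sur $\mathbb{F}_{p}$ si $\overline{k^{2}-4}$ est un carré non nul et sur $\mathbb{F}_{p^{2}}$ sinon, et la condition $A^{n}=\pm Id$ se ramène à $\lambda^{n}=\pm 1$, soit $\lambda^{2n}=1$, c'est-à-dire $d\mid 2n$ où $d$ désigne l'ordre multiplicatif de $\lambda$ ; le plus petit entier $n$ convenable est alors $n=d/\gcd(d,2)$. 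Il reste à localiser $d$ : si $\overline{k^{2}-4}$ est un carré non nul, alors $\lambda\in\mathbb{F}_{p}^{*}$ et $d\mid p-1$ ; sinon $\lambda\in\mathbb{F}_{p^{2}}\setminus\mathbb{F}_{p}$, son conjugué par le Frobenius $x\mapsto x^{p}$ est l'autre racine $\lambda^{-1}$, d'où $\lambda^{p+1}=1$ et $d\mid p+1$. Un argument de parité achève la preuve : si $d$ est pair, $n=d/2$ divise $(p\mp 1)/2$ ; si $d$ est impair, $n=d$ est premier avec $2$ tout en divisant l'entier pair $p\mp 1$, donc divise encore $(p\mp 1)/2$. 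La taille divise ainsi $\frac{p-1}{2}$ ou $\frac{p+1}{2}$ selon la nature du discriminant.

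Le point le plus délicat me paraît double : d'une part la justification soignée, dans le cas inerte, de l'égalité $\lambda^{p}=\lambda^{-1}$ via l'action du Frobenius sur les racines du polynôme caractéristique, et d'autre part le traitement uniforme de l'argument de parité reliant l'ordre $d$ (divisant $p\mp 1$) à la taille effective $d/\gcd(d,2)$ (devant diviser $(p\mp 1)/2$), en particulier la distinction entre les sous-cas $d$ pair et $d$ impair.
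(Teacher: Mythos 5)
Votre preuve est correcte : l'identification de la taille de la solution $\overline{k}$-monomiale minimale avec l'ordre de $M_{1}(\overline{k})$ dans ${\rm PSL}_{2}(\mathbb{Z}/p\mathbb{Z})$, puis la discussion selon que le discriminant $\overline{k^{2}-4}$ est nul (valeur propre double $\pm\overline{1}$, partie nilpotente forçant $p\mid n$), carré non nul (valeurs propres dans $\mathbb{F}_{p}^{*}$, d'où $d\mid p-1$) ou non-carré (Frobenius échangeant $\lambda$ et $\lambda^{-1}$, d'où $d\mid p+1$), ainsi que l'argument final de parité, sont tous justes et complets. Le présent article ne démontre pas cet énoncé (il est cité de \cite{M1}, où il provient d'un résultat sur les ordres des éléments de $SL_{2}(\mathbb{Z}/N\mathbb{Z})$ donné dans \cite{CGL}) ; votre argument est précisément la démonstration standard sous-jacente à ce résultat cité, donc essentiellement la même approche.
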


\noindent On dispose également du résultat général suivant : 

\begin{theorem}[\cite{M0} Théorème 2.6 et corollaire 3.22]
\label{30}

Soit $N \geq 3$. La solution $\pm \overline{2}$-monomiale minimale de \eqref{p} est irréductible de taille $N$. De plus, $M_{N}(\overline{2},\ldots,\overline{2})=Id$ et $M_{N}(\overline{-2},\ldots,\overline{-2})=(-1)^{N}Id$.

\end{theorem}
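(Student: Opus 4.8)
The plan is to separate the statement into three parts: the two explicit matrix identities, the claim that the minimal size is $N$, and the irreducibility. The first two are computational and rest on the fact that $A := \begin{pmatrix} 2 & -1 \\ 1 & 0\end{pmatrix}$ is unipotent: $(A - Id)^2 = 0$, whence $A^n = Id + n(A - Id) = \begin{pmatrix} 1+n & -n \\ n & 1-n\end{pmatrix}$ over $\mathbb{Z}$, and likewise $B := \begin{pmatrix} -2 & -1 \\ 1 & 0\end{pmatrix}$ satisfies $B^n = (-1)^n\big(Id - n(B + Id)\big)$. Since $M_n(2,\ldots,2) = A^n$ and $M_n(-2,\ldots,-2) = B^n$, reducing modulo $N$ gives at once $M_N(\overline{2},\ldots,\overline{2}) = Id$ and $M_N(\overline{-2},\ldots,\overline{-2}) = (-1)^N Id$.

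From the same explicit formula I would read off the minimal size. The off-diagonal entries of $A^n$ are $\pm n$, so $A^n \equiv \pm Id \pmod N$ forces $N \mid n$; and when $N \mid n$ one gets $A^n \equiv Id$, never $-Id$ for $N \geq 3$ (that would require $N \mid 2$). Hence the least $n \geq 1$ with $M_n(\overline 2,\ldots,\overline 2) = \pm Id$ is $n = N$, and the computation for $B$ is identical. This shows that both $\pm\overline 2$-monomial minimal solutions have size exactly $N$.

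Only the irreducibility needs an idea. Suppose the size-$N$ solution $(\overline 2,\ldots,\overline 2)$ were reducible. Its $\sim$-class being a singleton, this means there is an exact equality $(\overline 2,\ldots,\overline 2) = (\overline{a_1},\ldots,\overline{a_m}) \oplus (\overline{b_1},\ldots,\overline{b_l})$ with $m,l \geq 3$ and $(\overline{b_1},\ldots,\overline{b_l})$ a solution; by the characteristic property of $\oplus$ recalled just after Definition \ref{21}, $(\overline{a_1},\ldots,\overline{a_m})$ is then a solution as well. In the sum of Definition \ref{21} the entries $\overline{a_2},\ldots,\overline{a_{m-1}}$ appear unchanged, so they all equal $\overline 2$ and $(\overline{a_1},\ldots,\overline{a_m}) = (\overline{a_1},\overline 2,\ldots,\overline 2,\overline{a_m})$; moreover $m + l - 2 = N$ with $l \geq 3$ gives $3 \leq m \leq N-1$. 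The crux is that the lower-right entry of $M_m(\overline{a_1},\overline 2,\ldots,\overline 2,\overline{a_m}) = P(\overline{a_m})\,P(\overline 2)^{m-2}\,P(\overline{a_1})$, where $P(x) = \begin{pmatrix} x & -1 \\ 1 & 0\end{pmatrix}$, equals $\overline{1-m}$ independently of $a_1$ and $a_m$: the bottom row of $P(\overline{a_m})$ being $(\overline 1,\overline 0)$, this entry is minus the top-left entry $\overline{m-1}$ of $P(\overline 2)^{m-2}$. Being a solution it must be $\pm\overline 1$, forcing $m \equiv 0$ or $m \equiv 2 \pmod N$, impossible for $3 \leq m \leq N-1$. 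This contradiction yields irreducibility, and the $\overline{-2}$ case is identical with $P(\overline{-2})^{m-2}$, the only change being a sign $(-1)^m$ that does not affect the conclusion.

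The main obstacle is isolating the right invariant for irreducibility; once one sees that a single entry of the left summand is completely determined by the all-$\overline 2$ interior and cannot be $\pm\overline 1$ in the admissible range of $m$, everything else is routine bookkeeping. I would pay particular attention to the degenerate endpoint $N = 3$, where the admissible set of values of $m$ is empty so irreducibility holds vacuously, and to the sign tracking in the $\overline{-2}$ computation.
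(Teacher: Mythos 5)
Your proof is correct. Note first that the paper under review does not actually prove this statement: it is recalled as a known result, cited from \cite{M0} (Théorème 2.6 et corollaire 3.22), so there is no in-paper argument to compare yours against; your proposal must stand on its own, and it does. The computational core is right: the unipotence of $A=M_{1}(2)$ gives $A^{n}=Id+n(A-Id)$ over $\mathbb{Z}$, which yields both matrix identities and, by reading off the off-diagonal entries $\pm n$, the fact that the minimal size is exactly $N$ (same for $-2$ via $B^{n}=(-1)^{n}\bigl(Id-n(B+Id)\bigr)$). Your irreducibility argument is also sound: since the $\sim$-class of a constant tuple is a singleton, any reduction gives an exact equality $(\overline{2},\ldots,\overline{2})=(\overline{a_{1}},\overline{2},\ldots,\overline{2},\overline{a_{m}})\oplus(\overline{b_{1}},\ldots,\overline{b_{l}})$ in which the left factor is itself a solution of size $3\le m\le N-1$; its $(2,2)$ entry equals $-(m-1)$ independently of $\overline{a_{1}},\overline{a_{m}}$ (and up to the harmless sign $(-1)^{m}$ in the $\overline{-2}$ case), forcing $m\equiv 0,2\pmod N$, which is impossible in that range. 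This rigidity of solutions of the form $(\overline{a},\overline{k},\ldots,\overline{k},\overline{b})$ is precisely the phenomenon that Lemme \ref{plus} packages in general, and the present paper's own reducibility constructions (e.g.\ in the proofs of Propositions \ref{313} and \ref{314}) work with exactly this type of tuple, so your argument is very much in the spirit of the surrounding material. Your handling of the vacuous case $N=3$ is also correct.
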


On va maintenant considérer plusieurs résultats abordant les liens entre la taille d'une solution et les diviseurs de $N$.

\begin{theorem}[\cite{M1} Théorème 2.5]
\label{32}
Soit $N$ un entier pair supérieur à 4. On a deux cas :
\begin{itemize}
\item Si $4$ divise $N$ alors la solution $\overline{\frac{N}{2}}$-monomiale minimale de \eqref{p} est de taille 4 et \[M_{4}\left(\overline{\frac{N}{2}},\overline{\frac{N}{2}},\overline{\frac{N}{2}},\overline{\frac{N}{2}}\right)=Id;\]
\item Si 4 ne divise pas $N$ alors la solution $\overline{\frac{N}{2}}$-monomiale minimale de \eqref{p} est de taille 6 et \[M_{6}\left(\overline{\frac{N}{2}},\ldots,\overline{\frac{N}{2}}\right)=-Id.\]
\end{itemize}
\noindent De plus, dans les deux cas, la solution est irréductible.

\end{theorem}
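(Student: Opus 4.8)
The plan is to reduce everything to the single matrix
\[R:=M_1\!\left(\overline{\tfrac N2}\right)=\begin{pmatrix} h & \overline{-1} \\ \overline 1 & \overline 0\end{pmatrix},\qquad h:=\overline{\tfrac N2},\]
since a $\overline{\tfrac N2}$-monomial $n$-tuple yields $M_n(h,\ldots,h)=R^{n}$, so the size of the $h$-monomial minimal solution is exactly the order of $R$ in ${\rm PSL}_2(\mathbb Z/N\mathbb Z)$. First I would record the two relations that drive the computation: one always has $2h=\overline 0$, while $h^{2}=\overline 0$ when $4\mid N$ (as $(N/2)^2=N\cdot(N/4)$) and $h^{2}=h$ when $N=2m$ with $m$ odd (as $m^2\equiv m\pmod{2m}$). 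With these, a direct computation of the successive powers shows, when $4\mid N$, that $R,R^{2},R^{3}$ each have a nonzero off-diagonal entry, hence are $\neq\pm Id$, while $R^{4}=Id$; and when $4\nmid N$, that $R,\dots,R^{5}$ are all $\neq\pm Id$ while $R^{6}=-Id$. Since $h\neq\overline 0$ for $N\ge 4$, this gives the announced sizes $4$ and $6$ together with $M_4(h,\ldots,h)=Id$ and $M_6(h,\ldots,h)=-Id$.

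For irreducibility I would use a structural reduction valid for any constant solution. Because the $\sim$-class of $(h,\ldots,h)$ is the singleton $\{(h,\ldots,h)\}$ (a circular permutation or a reversal of a constant tuple is itself), a reduction $(h,\ldots,h)\sim A\oplus B$ forces the exact equality $A\oplus B=(h,\ldots,h)$. Reading off Definition \ref{21}, this means the two "interior" blocks of $A$ and of $B$ consist entirely of $h$, and the two boundary entries sum to $h$; moreover, by the remarkable property of $\oplus$ recalled after Definition \ref{21}, both $A$ and $B$ are themselves solutions of \eqref{p}. Thus reducibility would produce a shorter solution whose interior entries all equal $h$, and the argument reduces to showing no such short solution exists. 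The key input is the classification of size-$3$ solutions: solving $M_3(b_1,b_2,b_3)=\pm Id$ entry by entry forces $(b_1,b_2,b_3)=(\overline{-1},\overline{-1},\overline{-1})$ (value $Id$) or $(\overline 1,\overline 1,\overline 1)$ (value $-Id$), so the middle entry of any size-$3$ solution is $\pm\overline 1$. As $h\neq\pm\overline 1$ for $N\ge 4$ (because $|N/2\mp 1|<N$), no size-$3$ solution can have interior $h$.

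In the case $4\mid N$ (size $4$), a reduction forces $m=l=3$ (since $m+l=6$, $m,l\ge3$), and both factors would be size-$3$ solutions with middle entry $h$, impossible by the previous paragraph. In the case $4\nmid N$ (size $6$), the admissible splittings are $m+l=8$ with $m,l\ge3$, namely $(3,5)$, $(5,3)$ and $(4,4)$; those with a size-$3$ factor are excluded exactly as above. For $(4,4)$ both factors would be size-$4$ solutions of shape $(x,h,h,y)$, which I would rule out by the explicit product
\[M_4(x,h,h,y)=\begin{pmatrix} \ast & \ast \\ \ast & \overline 1-h\end{pmatrix},\]
whose lower-right entry equals $\overline 1-h$ regardless of $x,y$; for this to be $\pm\overline 1$ one would need $h=\overline 0$ or $h=\overline 2$, both impossible here ($h\neq\overline 0$, and $h=\overline 2$ would force $N=4$, contradicting $4\nmid N$). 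Hence no size-$4$ solution has its two interior entries equal to $h$, the $(4,4)$ splitting is excluded, and $(h,\ldots,h)$ is irreducible.

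I expect the main obstacle to be the $(4,4)$ sub-case of the size-$6$ statement: it is the only place where classifying short solutions is not enough and one must carry out the explicit matrix product and verify that a fixed entry can never equal $\pm\overline 1$. The size computations and the size-$3$ classification are routine once the relations $2h=\overline 0$ and $h^{2}\in\{\overline 0,h\}$ are in hand.
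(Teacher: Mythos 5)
Your proof is correct, but there is nothing in this paper to measure it against: Théorème \ref{32} is recalled from \cite{M1} (Théorème 2.5) and is used here as a black box, its proof living in that earlier article. Judged on its own, your argument is complete. The size computation checks out: with $h=\overline{N/2}$ one has $2h=\overline{0}$, and $h^{2}=\overline{0}$ if $4\mid N$, $h^{2}=h$ if $N=2m$ with $m$ odd; the powers of $R=M_{1}(h)$ then give $R^{4}=Id$, resp.\ $R^{6}=-Id$, every lower power having an off-diagonal entry equal to $\pm\overline{1}$, $h$ or $\pm(\overline{1}-h)$, all nonzero because $h\neq\overline{0},\overline{1}$ when $N\geq 4$. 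The structural step is the decisive one and you got it right: a constant tuple is alone in its $\sim$-class, so reducibility forces an exact factorization $(h,\ldots,h)=A\oplus B$, and the stability property of $\oplus$ then makes both $A$ and $B$ solutions of \eqref{p} of the form $(x,h,\ldots,h,y)$ with size $3$, $4$ or $5$; the classification of size-$3$ solutions (only $(\pm\overline{1},\pm\overline{1},\pm\overline{1})$) and the $(4,4)$ computation close every case. This is in substance the same mechanism that the present paper packages as Lemme \ref{plus} (analysis of solutions $(\overline{a},\overline{k},\ldots,\overline{k},\overline{b})$) and invokes in all of its irreducibility proofs, so your route is the expected one, merely made self-contained instead of quoting that lemma. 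Two points deserve an explicit word in a final write-up: the lower-right entry of $M_{4}(x,h,h,y)$ is $\overline{1}-h^{2}$ in general and equals $\overline{1}-h$ only after invoking $h^{2}=h$, which is legitimate precisely because the $(4,4)$ splitting arises only in the sub-case $4\nmid N$; and the exclusion $h\neq\overline{2}$ there amounts to $N\neq 4$, which again is exactly what $4\nmid N$ provides.
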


\begin{lemma}[\cite{M2} lemme 3.1]
\label{33}

Soient $N$ un entier supérieur à 2 et $\overline{k}$ un élément de $\mathbb{Z}/N\mathbb{Z}$. Soit $d$ un diviseur de $N$. La taille de la solution $\overline{k}$-monomiale minimale de \eqref{p} est un multiple de la taille de la solution $(k+d\mathbb{Z})$-monomiale minimale de $(E_{d})$.

\end{lemma}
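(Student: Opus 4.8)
Le plan est de ramener l'énoncé à un simple calcul d'ordre dans un groupe projectif, en exploitant le fait qu'une solution monomiale n'est rien d'autre qu'une puissance d'une unique matrice. Posons $A:=M_{1}(\overline{k})=\begin{pmatrix} \overline{k} & -1 \\ 1 & 0 \end{pmatrix}$, vue comme élément de $SL_{2}(\mathbb{Z}/N\mathbb{Z})$. Puisque toutes les composantes d'un uplet monomial valent $\overline{k}$, la définition de $M_{n}$ donne immédiatement $M_{n}(\overline{k},\ldots,\overline{k})=A^{n}$ pour tout $n \geq 1$. Ainsi, un entier $n$ est la taille d'une solution $(n,\overline{k})$-monomiale de \eqref{p} si et seulement si $A^{n}=\pm Id$ dans $SL_{2}(\mathbb{Z}/N\mathbb{Z})$, c'est-à-dire si et seulement si l'ordre de l'image $\overline{A}$ de $A$ dans $\mathrm{PSL}_{2}(\mathbb{Z}/N\mathbb{Z})$ divise $n$. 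Par conséquent, la taille de la solution $\overline{k}$-monomiale minimale de \eqref{p} est exactement l'ordre de $\overline{A}$ dans $\mathrm{PSL}_{2}(\mathbb{Z}/N\mathbb{Z})$ (cet ordre est fini, comme rappelé plus haut).

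Ensuite, j'utiliserais le morphisme de réduction. Comme $d$ divise $N$, la projection canonique $\mathbb{Z}/N\mathbb{Z} \to \mathbb{Z}/d\mathbb{Z}$ est un morphisme d'anneaux, qui induit un morphisme de groupes $\pi : SL_{2}(\mathbb{Z}/N\mathbb{Z}) \to SL_{2}(\mathbb{Z}/d\mathbb{Z})$ envoyant $A$ sur $A_{d}:=M_{1}(k+d\mathbb{Z})$ et $\pm Id$ sur $\pm Id$. Soit alors $n$ la taille de la solution $\overline{k}$-monomiale minimale de \eqref{p}, de sorte que $A^{n}=\pm Id$ modulo $N$. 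En appliquant $\pi$, on obtient $A_{d}^{\,n}=\pi(A^{n})=\pm Id$ modulo $d$. Notons $m$ la taille de la solution $(k+d\mathbb{Z})$-monomiale minimale de $(E_{d})$ ; d'après le premier paragraphe, $m$ est l'ordre de l'image de $A_{d}$ dans $\mathrm{PSL}_{2}(\mathbb{Z}/d\mathbb{Z})$. L'égalité $A_{d}^{\,n}=\pm Id$ signifie précisément que cet ordre divise $n$, d'où $m \mid n$, ce qui est exactement l'énoncé recherché.

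Il n'y a pas ici de difficulté véritablement profonde : le point décisif est la traduction d'une solution monomiale en puissance de la matrice fixe $A$, puis l'observation que la taille minimale coïncide avec un ordre dans $\mathrm{PSL}_{2}$. La seule vérification à soigner est le fait standard que l'ensemble $\{n \geq 1 : A_{d}^{\,n}=\pm Id\}$ est exactement l'ensemble des multiples de l'ordre de $\overline{A_{d}}$ dans $\mathrm{PSL}_{2}(\mathbb{Z}/d\mathbb{Z})$, et que la réduction modulo $d$ préserve bien la condition $A^{n}=\pm Id$ (ce qui découle de ce que $\pi$ est un morphisme). Une fois ces deux points acquis, la conclusion $m \mid n$ est immédiate.
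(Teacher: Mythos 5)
Your proof is correct. Note that this paper does not actually prove Lemma \ref{33}: it is imported without proof from \cite{M2} (lemme 3.1), so there is no internal argument to compare against. Your route --- identifying $M_{n}(\overline{k},\ldots,\overline{k})$ with $A^{n}$ for $A=M_{1}(\overline{k})$, observing that the minimal monomial size is exactly the order of the image of $A$ in $\mathrm{PSL}_{2}(\mathbb{Z}/N\mathbb{Z})$, and then pushing the relation $A^{n}=\pm Id$ through the reduction morphism $SL_{2}(\mathbb{Z}/N\mathbb{Z})\to SL_{2}(\mathbb{Z}/d\mathbb{Z})$ --- is the natural and standard one, and it is fully consistent with the paper's own remark (after Définition \ref{24}) that minimal monomial solutions exist precisely because $M_{1}(\overline{k})$ has finite order in $\mathrm{PSL}_{2}(\mathbb{Z}/N\mathbb{Z})$.
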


\begin{lemma}[\cite{M2} lemme 5.3]
\label{34}

Soient $p$ un nombre premier, $k$ un entier et $n \in \mathbb{N}^{*}$. Soit $r$ la taille de la solution $\overline{k}$-monomiale minimale de $(E_{p^{n}})$.
\\
\\i) La taille de la solution $\overline{k}$-monomiale minimale de $(E_{p^{n+1}})$ est égale à $r$ ou à $pr$.
\\
\\ii) Si $p$ est impair et si $r$ est pair alors $M_{r}(\overline{k},\ldots,\overline{k})=-Id$.

\end{lemma}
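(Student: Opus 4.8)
The plan is to reformulate everything in terms of the order of the matrix $A:=M_1(\overline{k})=\begin{pmatrix} k & -1\\ 1 & 0\end{pmatrix}$ inside the groups $\mathrm{PSL}_2(\mathbb{Z}/p^m\mathbb{Z})$. Since $M_n(\overline{k},\ldots,\overline{k})=A^n$, the size $r$ of the minimal $\overline{k}$-monomiale solution of $(E_{p^n})$ is exactly the order of $A$ in $\mathrm{PSL}_2(\mathbb{Z}/p^n\mathbb{Z})$, i.e.\ the least $r>0$ with $A^r=\pm Id$ modulo $p^n$; write $r'$ for the analogous order modulo $p^{n+1}$ (both are finite by the remark following Définition \ref{24}). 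For i), I would first apply the reduction homomorphism $\mathrm{PSL}_2(\mathbb{Z}/p^{n+1}\mathbb{Z})\to\mathrm{PSL}_2(\mathbb{Z}/p^n\mathbb{Z})$: it sends the class of $A$ to the class of $A$, so from $A^{r'}=\pm Id\ (\mathrm{mod}\ p^{n+1})$ we get $A^{r'}=\pm Id\ (\mathrm{mod}\ p^n)$, whence $r\mid r'$. The remaining content is the upper bound $r'\mid pr$.

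For that bound I would exploit the structure of the kernel $K$ of the reduction $SL_2(\mathbb{Z}/p^{n+1}\mathbb{Z})\to SL_2(\mathbb{Z}/p^n\mathbb{Z})$, which consists of the matrices $Id+p^nB$. A binomial expansion shows that $(Id+p^nB)^p\equiv Id\ (\mathrm{mod}\ p^{n+1})$ for every such $B$: the linear term is $p^{n+1}B\equiv 0$, and the terms of index $j\geq 2$ carry a factor $p^{nj}$ with $nj\geq 2n\geq n+1$ because $n\geq 1$. Thus $K$ has exponent $p$. Now $A^r\equiv\pm Id\ (\mathrm{mod}\ p^n)$ means that, modulo $p^{n+1}$, $\pm A^r$ lies in $K$; raising to the $p$-th power kills the $p^n$-part and gives $A^{pr}\equiv\pm Id\ (\mathrm{mod}\ p^{n+1})$ (using $p$ odd to control the sign $(-1)^p=-1$, with a direct squaring check when $p=2$). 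Hence $r'\mid pr$, and together with $r\mid r'$ this forces $r'\in\{r,pr\}$, proving i).

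For ii) I would argue by contradiction. Suppose $p$ is odd, $r=2s$ is even, and $A^r\equiv Id\ (\mathrm{mod}\ p^n)$ rather than $-Id$. Then $A^s$ is a square root of $Id$ in $SL_2(\mathbb{Z}/p^n\mathbb{Z})$. The key claim, which I expect to be the main obstacle, is that for $p$ odd the only involutions of $SL_2(\mathbb{Z}/p^n\mathbb{Z})$ are $\pm Id$: this is clean over a field but must be handled over the local ring $\mathbb{Z}/p^n\mathbb{Z}$. I would use that $2$ is invertible, so from $(A^s-Id)(A^s+Id)=0$ the elements $e_{\pm}=\tfrac12(Id\pm A^s)$ are orthogonal idempotents with $e_++e_-=Id$; they split $(\mathbb{Z}/p^n\mathbb{Z})^2$ as a direct sum of two free submodules (projective over a local ring is free) on which $A^s$ acts as $+Id$ and $-Id$ respectively, and $\det A^s=1$ forces the $(-1)$-part to have even rank, i.e.\ rank $0$ or $2$, so $A^s=\pm Id$.

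Finally I would close the contradiction: $A^s=\pm Id\ (\mathrm{mod}\ p^n)$ means the class of $A$ has order dividing $s=r/2$ in $\mathrm{PSL}_2(\mathbb{Z}/p^n\mathbb{Z})$, contradicting the minimality of $r$ as that order (since $r\mid r/2$ is impossible for $r\geq 1$). Therefore the assumption was false and $A^r\equiv -Id\ (\mathrm{mod}\ p^n)$, which is precisely $M_r(\overline{k},\ldots,\overline{k})=-Id$. The delicate points to watch are the exponent-$p$ estimate for $K$ (especially the marginal case $p=2$, $n=1$, needed only in i)) and the involution classification over $\mathbb{Z}/p^n\mathbb{Z}$, where one must avoid implicitly assuming a field.
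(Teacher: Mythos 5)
This lemma is not proved in the paper at all: it is imported verbatim from \cite{M2} (lemme 5.3), so there is no internal proof to compare your argument against. Judged on its own, your proof is correct and self-contained. In part i), the identification of the minimal monomial size with the order of $M_{1}(\overline{k})$ in ${\rm PSL}_{2}(\mathbb{Z}/p^{m}\mathbb{Z})$ is exactly the observation the paper itself makes after Définition \ref{24}; the reduction map gives $r \mid r'$, and your computation that the kernel of $SL_{2}(\mathbb{Z}/p^{n+1}\mathbb{Z})\to SL_{2}(\mathbb{Z}/p^{n}\mathbb{Z})$ has exponent dividing $p$ (valid for all $n\geq 1$, including $p=2$, since $nj\geq 2n\geq n+1$ for $j\geq 2$) gives $r' \mid pr$; since $p$ is prime and $r\mid r'$, indeed $r'\in\{r,pr\}$, and the sign issue you flag is harmless because one only needs $A^{pr}\in\{\pm Id\}$, not a specific sign. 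In part ii), your classification of the square roots of $Id$ in $SL_{2}(\mathbb{Z}/p^{n}\mathbb{Z})$ for $p$ odd via the orthogonal idempotents $e_{\pm}=\tfrac12(Id\pm A^{s})$ is sound: the summands are projective hence free over the local ring $\mathbb{Z}/p^{n}\mathbb{Z}$, ranks add up to $2$, and $\det A^{s}=1$ forces the $(-1)$-eigenpart to have rank $0$ or $2$, whence $A^{s}=\pm Id$ and the contradiction with minimality of $r$. A shortcut worth knowing: Cayley--Hamilton gives $B^{2}-\mathrm{tr}(B)B+Id=0$ for $B=A^{s}\in SL_{2}$, so $B^{2}=Id$ yields $\mathrm{tr}(B)B=2\,Id$ and $\mathrm{tr}(B)^{2}=4$; for $p$ odd, $x^{2}=4$ in $\mathbb{Z}/p^{n}\mathbb{Z}$ forces $x=\pm 2$ (the factors $x-2$ and $x+2$ cannot both be divisible by $p$), hence $B=\pm Id$ directly, avoiding the module-theoretic machinery.
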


\begin{lemma}
\label{35}

Soient $k$ un entier et $n \geq 2$. Soit $r$ la taille de la solution $\overline{k}$-monomiale minimale de $(E_{2^{n}})$. Si $\overline{k} \neq \overline{0}$ et si $r$ est pair alors $M_{r}(\overline{k},\ldots,\overline{k})=Id$.

\end{lemma}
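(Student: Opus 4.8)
The plan is to translate everything into a single matrix power. Set
\[
A=\begin{pmatrix} \overline{k} & \overline{-1} \\ \overline{1} & \overline{0}\end{pmatrix}\in SL_{2}(\mathbb{Z}/2^{n}\mathbb{Z}),
\]
so that $M_{s}(\overline{k},\ldots,\overline{k})=A^{s}$ for every $s\geq 1$ and $r=\min\{s\geq 1 : A^{s}=\pm Id\}$. Since the minimal solution satisfies $M_{r}(\overline{k},\ldots,\overline{k})=\pm Id$, it suffices to exclude $A^{r}=-Id$; I argue by contradiction and assume $A^{r}=-Id$. Minimality of $r$ then forces $\mathrm{ord}(A)=2r$: indeed $A^{s}\neq Id$ for $0<s<r$, and for $r<s<2r$ a relation $A^{s}=Id$ would give $A^{s-r}=-Id$ with $0<s-r<r$, contradicting the choice of $r$; as $A^{r}=-Id\neq Id$ (here $n\geq 2$), the least positive exponent killing $A$ is $2r$.

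The engine of the proof will be the congruence filtration. For $1\leq j\leq n$ put $K_{j}=\{M\in SL_{2}(\mathbb{Z}/2^{n}\mathbb{Z}) : M\equiv Id \pmod{2^{j}}\}$, a chain of normal subgroups with $K_{1}\supseteq\cdots\supseteq K_{n}=\{Id\}$. Writing $M=Id+2^{j}X$ gives $M^{2}=Id+2^{j+1}\bigl(X+2^{j-1}X^{2}\bigr)$, so for $j\geq 1$ one has the squaring property $M\in K_{j}\Rightarrow M^{2}\in K_{j+1}$; iterating, $M\in K_{1}\Rightarrow M^{2^{i}}\in K_{i+1}$. Two facts then drive the contradiction: first, $|K_{1}|=|SL_{2}(\mathbb{Z}/2^{n}\mathbb{Z})|/|SL_{2}(\mathbb{Z}/2\mathbb{Z})|=2^{3n-3}$, so $K_{1}$ is a $2$-group; second, because $-1\equiv 1\pmod 2$ but $-1\equiv 3\pmod 4$, we have $-Id\in K_{1}\setminus K_{2}$ (using $n\geq 2$).

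Next I reduce modulo $2$. The image $\overline{A}=\begin{pmatrix}\overline{k}&\overline{1}\\\overline{1}&\overline{0}\end{pmatrix}$ in $SL_{2}(\mathbb{Z}/2\mathbb{Z})$ has order $d=2$ if $k$ is even and $d=3$ if $k$ is odd. Since $\langle A\rangle\cap K_{1}$ is a subgroup of the $2$-group $K_{1}$ and $\langle A\rangle/(\langle A\rangle\cap K_{1})\cong\langle\overline{A}\rangle$, we get $\mathrm{ord}(A)=d\cdot 2^{b}$ for some $b\geq 0$, whence $r=\mathrm{ord}(A)/2$. If $k$ is odd then $d=3\mid r$ and, $r$ being even, $6\mid r$, so $3\mid r/2$ and $C:=A^{r/2}$ reduces to $\overline{A}^{\,r/2}=Id$, i.e. $C\in K_{1}$; the squaring property then yields $-Id=A^{r}=C^{2}\in K_{2}$, contradicting $-Id\notin K_{2}$. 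If $k$ is even then $d=2$, so $\mathrm{ord}(A)=2^{b+1}$ and $r=2^{b}$ is a power of $2$; for $r\geq 4$ we again have $2\mid r/2$, so $C=A^{r/2}\in K_{1}$ and the identical contradiction appears, while the only surviving case $r=2$ is killed by the direct computation $A^{2}=\begin{pmatrix}\overline{k^{2}-1}&\overline{-k}\\\overline{k}&\overline{-1}\end{pmatrix}$, which equals $-Id$ only when $\overline{k}=\overline{0}$. As $\overline{k}\neq\overline{0}$ by hypothesis, every branch is contradictory, so $A^{r}=Id$.

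The step I expect to be the real obstacle is the even-$k$ case. There the filtration argument by itself breaks down when $v_{2}(r)=1$, because $C=A^{r/2}$ need not lie in $K_{1}$; it is precisely the $2$-group structure of $K_{1}$ — which forces $\mathrm{ord}(A)$, and hence $r$, to be a power of $2$ — that eliminates the dangerous configuration $r=2u$ with $u>1$ odd and collapses it to the single harmless value $r=2$ handled by hand. Establishing cleanly that $\mathrm{ord}(A)=2r$ and that the squaring map respects the filtration are the other points that will need to be written with care.
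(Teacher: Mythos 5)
Your proof is correct, and it takes a genuinely different route from the paper's. You work directly with the matrix $A=M_{1}(\overline{k})$ in $SL_{2}(\mathbb{Z}/2^{n}\mathbb{Z})$: assuming $A^{r}=-Id$, you show $\mathrm{ord}(A)=2r$, then combine the congruence filtration $K_{j}$ (squaring sends $K_{j}$ into $K_{j+1}$; $-Id\in K_{1}\setminus K_{2}$ because $n\geq 2$) with reduction modulo $2$ (where $A$ has order $2$ or $3$ according to the parity of $k$) to force $A^{r/2}\in K_{1}$, hence $-Id=A^{r}\in K_{2}$, a contradiction; the boundary case $r=2$ is excluded by direct computation using $\overline{k}\neq\overline{0}$. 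The paper instead argues by induction on $n$: the base case $n=2$ is read off from the théorème \ref{32}, and the inductive step uses the lemme \ref{34} (the minimal size $l$ modulo $2^{n+1}$ equals $r$ or $2r$, where $r$ is the minimal size modulo $2^{n}$), so that reducing $M_{l}$ modulo $2^{n}$ gives $(\epsilon+2^{n}\mathbb{Z})Id=(1+2^{n}\mathbb{Z})Id$ and hence $\epsilon=1$ since $2^{n}>2$. What your approach buys: it is essentially self-contained (no appeal to the section's preparatory results beyond the definition of $r$), and it isolates the structural reason for the sign, namely that $-Id$ survives reduction modulo $2$ but not modulo $4$, while any even-order element whose half power reduces to the identity modulo $2$ must send its $\pm Id$ power into $K_{2}$. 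What it costs: you invoke $|K_{1}|=2^{3n-3}$, which silently uses the surjectivity of the reduction $SL_{2}(\mathbb{Z}/2^{n}\mathbb{Z})\to SL_{2}(\mathbb{Z}/2\mathbb{Z})$; this is standard, and in fact avoidable, since your own squaring property already shows that every element of $K_{1}$ has order dividing $2^{n-1}$, which is all you need to conclude that $\langle A\rangle\cap K_{1}$ is a $2$-group. The paper's induction is shorter given the toolkit it has already assembled (lemme \ref{33}, lemme \ref{34}, théorème \ref{32}) and matches the style of the neighbouring lemme \ref{361}, but your argument would stand on its own and generalizes more transparently.
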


\begin{proof}

On raisonne par récurrence sur $n$. 
\\
\\Si $n=2$ alors les hypothèses impliquent $k+4\mathbb{Z}=2+4\mathbb{Z}$ et $r=4$ et on a $M_{4}(k+4\mathbb{Z},\ldots,k+4\mathbb{Z})=Id$ (Théorème \ref{32}). 
\\
\\Supposons qu'il existe un $n \geq 2$ tel que toutes les solutions non nulles de taille paire de $(E_{2^{n}})$  vérifient la propriété souhaitée. Soit $k$ un entier non divisible par $N=2^{n+1}$ tel que la solution $(k+2^{n+1}\mathbb{Z})$-monomiale minimale de $(E_{2^{n+1}})$ est de taille paire égale à $l$. Il existe $\epsilon \in \{-1,1\}$ tel que 
\[M_{l}(k+2^{n+1}\mathbb{Z},\ldots,k+2^{n+1}\mathbb{Z})=(\epsilon+2^{n+1}\mathbb{Z})Id.\]

\noindent Si $k+2^{n}\mathbb{Z}=0+2^{n}\mathbb{Z}$ alors on a $k+2^{n+1}\mathbb{Z}=\frac{N}{2}+2^{n+1}\mathbb{Z}$. On a alors $l=4$ (voir Théorème \ref{32}) et $M_{l}(k+2^{n+1}\mathbb{Z},\ldots,k+2^{n+1}\mathbb{Z})=Id$. On suppose donc maintenant $k+2^{n}\mathbb{Z}\neq 0+2^{n}\mathbb{Z}$. 
\\
\\Soit $r$ la taille de la solution $(k+2^{n}\mathbb{Z})$-monomiale minimale de $(E_{2^{n}})$. Par le lemme \ref{34}, $l=r$ ou $l=2r$. On distingue les deux cas :
\begin{itemize}
\item Si $l=r$ alors $r$ est pair. Par l'hypothèse de récurrence, $M_{l}(k+2^{n}\mathbb{Z},\ldots,k+2^{n}\mathbb{Z})=(1+2^{n}\mathbb{Z})Id$. De plus, comme $2^{n}$ divise $2^{n+1}$, $M_{l}(k+2^{n}\mathbb{Z},\ldots,k+2^{n}\mathbb{Z})=(\epsilon+2^{n}\mathbb{Z})Id$. Donc, ($1+2^{n}\mathbb{Z})=(\epsilon+2^{n}\mathbb{Z})$. Comme $2^{n} \geq 3$, on a $\epsilon=1$.
\item Si $l=2r$. $M_{l}(k+2^{n}\mathbb{Z},\ldots,k+2^{n}\mathbb{Z})=(M_{r}(k+2^{n}\mathbb{Z},\ldots,k+2^{n}\mathbb{Z}))^{2}=(1+2^{n}\mathbb{Z})Id$. De plus, comme $2^{n}$ divise $2^{n+1}$, $M_{l}(k+2^{n}\mathbb{Z},\ldots,k+2^{n}\mathbb{Z})=(\epsilon+2^{n}\mathbb{Z})Id$. Donc, ($1+2^{n}\mathbb{Z})=(\epsilon+2^{n}\mathbb{Z})$. Comme $2^{n} \geq 3$, on a $\epsilon=1$.
\end{itemize}

\noindent Par récurrence, le résultat est démontré.

\end{proof}

\begin{lemma}
\label{36}

Soient $p$ un nombre premier impair, $k$ un entier et $n \in \mathbb{N}^{*}$. Soit $r$ la taille de la solution $\overline{k}$-monomiale minimale de $(E_{p^{n}})$. Si $r>\frac{p-1}{2}p^{n-1}$ alors $r \in \{p^{n}, \frac{p+1}{2}p^{n-1}\}$.
\\
\\En particulier, on a :
\begin{itemize}
\item $r \leq p^{n}$;
\item si $r>\frac{p+1}{2}p^{n-1}$ alors $r=p^{n}$;
\item si $r<p^{n}$ alors $r \leq \frac{p+1}{2}p^{n-1}$.
\end{itemize}

\end{lemma}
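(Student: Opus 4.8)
Le plan est de procéder par récurrence sur $n$, en utilisant le théorème \ref{31} pour l'initialisation et le lemme \ref{34} pour l'hérédité. Avant de lancer la récurrence, je noterais une observation qui sera le ressort de toute la preuve : l'énoncé principal au rang $n$ entraîne la majoration \emph{inconditionnelle} $r \leq p^{n}$. En effet, si $r > \frac{p-1}{2}p^{n-1}$, alors $r \in \{p^{n}, \frac{p+1}{2}p^{n-1}\}$, et ces deux valeurs sont $\leq p^{n}$ (car $\frac{p+1}{2} \leq p$) ; sinon $r \leq \frac{p-1}{2}p^{n-1} < p^{n}$ puisque $p \geq 3$. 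De même, les trois points « En particulier » se déduiront formellement de l'énoncé principal, et je ne les traiterais qu'à la fin.

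Pour l'initialisation ($n=1$), il s'agit de montrer que $r > \frac{p-1}{2}$ force $r \in \{p, \frac{p+1}{2}\}$. J'appliquerais le théorème \ref{31}. Si $\overline{k} = \pm\overline{2}$, alors $r = p$, qui convient. Sinon $r$ divise $\frac{p-1}{2}$ ou $\frac{p+1}{2}$ ; le premier cas donnerait $r \leq \frac{p-1}{2}$ et est exclu par l'hypothèse, tandis que dans le second $r \mid \frac{p+1}{2}$ avec $r > \frac{p-1}{2} = \frac{p+1}{2} - 1$ force $r = \frac{p+1}{2}$ (c'est le seul diviseur de $\frac{p+1}{2}$ strictement supérieur à $\frac{p+1}{2}-1$, les quantités en jeu étant entières car $p$ est impair).

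Pour l'hérédité, je suppose l'énoncé acquis au rang $n$ et je note $r$ (resp. $r'$) la taille de la solution $\overline{k}$-monomiale minimale de $(E_{p^{n}})$ (resp. de $(E_{p^{n+1}})$), avec l'hypothèse $r' > \frac{p-1}{2}p^{n}$. Le lemme \ref{34} donne $r' = r$ ou $r' = pr$. La branche $r' = r$ est impossible : la majoration $r \leq p^{n}$ tirée de l'hypothèse de récurrence et le fait que $p \geq 3$ donnent $r = r' > \frac{p-1}{2}p^{n} \geq p^{n} \geq r$, ce qui est absurde. Il reste donc $r' = pr$, d'où $r > \frac{p-1}{2}p^{n-1}$ ; l'hypothèse de récurrence fournit $r \in \{p^{n}, \frac{p+1}{2}p^{n-1}\}$, et en multipliant par $p$ on obtient $r' \in \{p^{n+1}, \frac{p+1}{2}p^{n}\}$, ce qui conclut.

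Les points « En particulier » suivent alors immédiatement : $r \leq p^{n}$ a déjà été justifié ; si $r > \frac{p+1}{2}p^{n-1}$ alors \emph{a fortiori} $r > \frac{p-1}{2}p^{n-1}$, et la seule valeur compatible avec l'énoncé principal est $p^{n}$ ; enfin le dernier point est la contraposée du précédent. Je ne m'attends pas à une difficulté calculatoire sérieuse : l'obstacle est plutôt structurel, et réside dans la reconnaissance que la majoration « gratuite » $r \leq p^{n}$ est exactement ce qui permet d'éliminer la branche $r' = r$ du lemme \ref{34} et de réduire ainsi l'hérédité à une simple multiplication par $p$.
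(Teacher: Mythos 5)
Your proof is correct and follows essentially the same route as the paper's: induction on $n$, with Théorème \ref{31} for the base case and Lemme \ref{34} for the inductive step, followed by the same formal deduction of the three ``en particulier'' items. The only difference is organizational — you discard the branch $r'=r$ upfront via the unconditional bound $r \leq p^{n}$ drawn from the induction hypothesis, whereas the paper keeps both branches and eliminates the values $p^{n}$ and $\frac{p+1}{2}p^{n-1}$ at the end using $l > p^{n}$; the two arguments are logically equivalent.
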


\begin{proof}

On raisonne par récurrence sur $n$. Supposons que $n=1$. Par le théorème \ref{31}, $r=p$ ou $r$ divise $\frac{p+1}{2}$ ou $\frac{p-1}{2}$. Donc, si $r> \frac{p-1}{2}$ alors $r \in \{p, \frac{p+1}{2}\}$.
\\
\\Supposons qu'il existe un $n$ dans $\mathbb{N}^{*}$ tel que, pour tout entier $k$, la propriété suivante suivante soit vérifiée : si $r$ est la taille de la solution $(k+p^{n}\mathbb{Z})$-monomiale minimale de $(E_{p^{n}})$ et si $r> \frac{p-1}{2}p^{n-1}$ alors $r \in \{p^{n}, \frac{p+1}{2}p^{n-1}\}$.
\\
\\Soit $k$ un entier et $l$ la taille de la solution $(k+p^{n+1}\mathbb{Z})$-monomiale minimale de $(E_{p^{n+1}})$. On suppose que $l> \frac{p-1}{2}p^{n}$ (notons que l'on peut toujours choisir un tel $k$ en prenant par exemple $k=2$). Notons $r$ la taille de la solution $(k+p^{n}\mathbb{Z})$-monomiale minimale de $(E_{p^{n}})$. Par le lemme \ref{34}, $l=r$ ou $l=pr$. Ainsi, $r > \frac{p-1}{2}p^{n-1}$. Par hypothèse de récurrence, $r \in \{p^{n}, \frac{p+1}{2}p^{n-1}\}$. Donc, $l \in \{p^{n}, \frac{p+1}{2}p^{n-1}, p^{n+1}, \frac{p+1}{2}p^{n}\}$. Or, $\frac{p-1}{2}\geq 1$ (puisque $p \geq 3$) et donc $l> p^{n}> \frac{p+1}{2}p^{n-1}$. Par conséquent, $l \in \{p^{n+1}, \frac{p+1}{2}p^{n}\}$.
\\
\\Par récurrence, le résultat est démontré.
\\
\\Soient $p$ un nombre premier impair, $k$ un entier et $n \in \mathbb{N}^{*}$. Soit $r$ la taille de la solution $\overline{k}$-monomiale minimale de $(E_{p^{n}})$. 
\begin{itemize}
\item On a deux cas : soit $r \leq \frac{p-1}{2}p^{n-1} \leq p^{n}$ soit $r > \frac{p-1}{2}p^{n-1}$ et alors par ce qui précède $r \leq p^{n}$.
\item Si $r>\frac{p+1}{2}p^{n-1}$ alors, comme $\frac{p+1}{2}p^{n-1} > \frac{p-1}{2}p^{n-1}$, on a par ce qui précède $r=p^{n}$;
\item Si $r \neq p^{n}$ alors soit $r \leq \frac{p-1}{2}p^{n-1}<\frac{p+1}{2}p^{n-1}$ soit $r > \frac{p-1}{2}p^{n-1}$ et alors par ce qui précède $r=\frac{p+1}{2}p^{n-1}$.
\end{itemize}

\end{proof}

\begin{lemma}
\label{361}

Soient $p=2$, $k$ un entier et $n \geq 2$. Soit $r$ la taille de la solution $\overline{k}$-monomiale minimale de $(E_{p^{n}})$. Si $r>p^{n-1}$ alors $r \in \{p^{n}, \frac{p+1}{2}p^{n-1}\}$. En particulier, on a $r \leq p^{n}$.

\end{lemma}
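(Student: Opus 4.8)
Le plan est de raisonner par récurrence sur $n \geq 2$, en reprenant le schéma de la preuve du lemme \ref{36}, mais avec le seuil plus grossier $p^{n-1}=2^{n-1}$ à la place de $\frac{p-1}{2}p^{n-1}$. En effet, pour $p=2$ la quantité $\frac{p-1}{2}$ n'est pas entière et le théorème \ref{31} (qui fournissait le cas de base au niveau premier dans le lemme \ref{36}) n'est plus disponible ; il faudra donc traiter l'initialisation à part, au niveau $(E_4)$.

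Pour l'initialisation ($n=2$, c'est-à-dire $(E_4)$), je déterminerais la taille de la solution $\overline{k}$-monomiale minimale pour chaque $\overline{k} \in \mathbb{Z}/4\mathbb{Z}$. Cette taille vaut $2$ pour $\overline{k}=\overline{0}$ (car $M_1(\overline{0})=S$ et $S^2=-Id$), elle vaut $4$ pour $\overline{k}=\overline{2}=\pm\overline{2}$ (théorème \ref{30} ou théorème \ref{32}), et elle vaut $3$ pour $\overline{k}=\overline{1}$ et $\overline{k}=\overline{3}$ (un calcul direct donne $M_3(\overline{\pm 1},\overline{\pm 1},\overline{\pm 1})=\pm Id$, aucune taille inférieure ne convenant). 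Ainsi, les seules tailles strictement supérieures à $p^{n-1}=2$ sont $3=\frac{p+1}{2}p^{n-1}$ et $4=p^{n}$, ce qui établit l'initialisation et donne au passage $r \leq p^{n}=4$.

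Pour l'hérédité, je supposerais la propriété acquise pour un certain $n \geq 2$. Soit $k$ un entier et $l$ la taille de la solution $(k+2^{n+1}\mathbb{Z})$-monomiale minimale de $(E_{2^{n+1}})$, avec $l>2^{n}$. En notant $r$ la taille de la solution $(k+2^{n}\mathbb{Z})$-monomiale minimale de $(E_{2^{n}})$, le lemme \ref{34}~i) donne $l=r$ ou $l=2r$. Le cas $l=r$ est impossible : il forcerait $r=l>2^{n}>2^{n-1}$, alors que l'hypothèse de récurrence impose $r \leq 2^{n}$. On a donc $l=2r$, d'où $r=l/2>2^{n-1}$, et l'hypothèse de récurrence fournit $r \in \{2^{n}, 3\cdot 2^{n-2}\}$ ; par conséquent $l=2r \in \{2^{n+1}, 3\cdot 2^{n-1}\}=\{p^{n+1}, \frac{p+1}{2}p^{n}\}$, ce qui est la conclusion voulue, la majoration $l \leq p^{n+1}$ en découlant.

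Le point principal à soigner est l'initialisation : c'est l'analyse explicite de $(E_4)$ qui constituera l'essentiel du travail, puisque l'on ne dispose pas ici de l'analogue du théorème \ref{31}. Une fois ce cas de base acquis, l'hérédité n'est qu'une application directe du lemme \ref{34}~i) jointe à l'inégalité élémentaire $3\cdot 2^{n-2}<2^{n}$, qui garantit à la fois l'impossibilité du cas $l=r$ et l'inclusion finale.
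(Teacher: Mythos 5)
Votre preuve est correcte et suit essentiellement le même chemin que celle de l'article : récurrence sur $n$ avec initialisation par l'analyse des solutions monomiales minimales de $(E_4)$ (dont les tailles sont 2, 3 ou 4), puis hérédité via le lemme \ref{34}~i), l'inégalité $\frac{p+1}{2}p^{n-1}<p^{n}$ excluant les valeurs trop petites. La seule différence est cosmétique : vous écartez d'abord le cas $l=r$ avant de conclure, tandis que l'article énumère les quatre valeurs candidates de $l$ et élimine celles qui sont $\leq p^{n}$ — deux formulations équivalentes du même argument.
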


\begin{proof}

On raisonne par récurrence sur $n$. Si $n=2$, on a $p^{n}=4$. Les solutions monomiales minimales de $(E_{4})$ sont de taille 2, 3 ou 4. Donc, si la taille de la solution $\overline{k}$-monomiale minimale de $(E_{4})$ est strictement supérieure à 2, elle est égale à $3=\frac{p+1}{2}p^{n-1}$ ou $4=p^{n}$.
\\
\\Supposons qu'il existe un entier $n$ supérieur à 2 tel que, pour tout entier $k$, la propriété suivante suivante soit vérifiée : si $r$ est la taille de la solution $(k+p^{n}\mathbb{Z})$-monomiale minimale de $(E_{p^{n}})$ et si $r> p^{n-1}$ alors $r \in \{p^{n}, \frac{p+1}{2}p^{n-1}\}$.
\\
\\Soit $k$ un entier et $l$ la taille de la solution $(k+p^{n+1}\mathbb{Z})$-monomiale minimale de $(E_{p^{n+1}})$. On suppose que $l> p^{n}$ (notons que l'on peut toujours choisir un tel $k$ en prenant par exemple $k=2$). Notons $r$ la taille de la solution $(k+p^{n}\mathbb{Z})$-monomiale minimale de $(E_{p^{n}})$. Par le lemme \ref{34}, $l=r$ ou $l=pr$. Ainsi, $r > p^{n-1}$. Par hypothèse de récurrence, $r \in \{p^{n}, \frac{p+1}{2}p^{n-1}\}$. Donc, $l \in \{p^{n}, \frac{p+1}{2}p^{n-1}, p^{n+1}, \frac{p+1}{2}p^{n}\}$. Or, $l> p^{n}> \frac{p+1}{2}p^{n-1}$. Par conséquent, $l \in \{p^{n+1}, \frac{p+1}{2}p^{n}\}$.
\\
\\Par récurrence, le résultat est démontré.

\end{proof}

\begin{remark}

{\rm Si $N=2$ et si $k$ est impair alors la taille de la solution $\overline{k}$-monomiale minimale de $(E_{2})$ est égale à 3. Ce cas particulier, en apparence insignifiant, aura des conséquences importantes dans la suite.
}

\end{remark}

On va maintenant donner quelques informations sur la taille maximale des solutions monomiales minimales. Jusqu'ici on ne disposait que de la borne générale $3N$ qui correspond à l'ordre maximal des éléments de $SL_{2}(\mathbb{Z}/N\mathbb{Z})$ (voir \cite{CGL} page 216). Dans ce qui suit, on va obtenir des éléments plus précis qui permettront notamment de redémontrer et même d'améliorer cette borne.

\begin{proposition}
\label{37}

Soit $N=p_{1}^{\alpha_{1}}\ldots p_{r}^{\alpha_{r}}$, avec, pour tout $i$, $p_{i}$ premier, $\alpha_{i} \geq 1$ et $p_{i} \neq p_{j}$ si $i \neq j$. Soient $\overline{k} \in \mathbb{Z}/N\mathbb{Z}$ et $l_{i}$ la taille de la solution $(k+p_{i}^{\alpha_{i}}\mathbb{Z})$-monomiale minimale de $(E_{p_{i}^{\alpha_{i}}})$. La taille $l$ de la solution $\overline{k}$-monomiale minimale de \eqref{p} est égale à $ppcm(l_{i},~1 \leq i \leq r)$ ou à $2 \times ppcm(l_{i},~1 \leq i \leq r)$. En particulier, si $N$ n'est pas de la forme $2u$ avec $u$ impair, alors $l \leq 2N$.

\end{proposition}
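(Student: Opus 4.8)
Le plan est de se ramener à un calcul d'ordre dans $\mathrm{PSL}_2$ via le théorème des restes chinois. Je commence par remarquer que $M_n(\overline{k},\ldots,\overline{k}) = M_1(\overline{k})^n$, de sorte que $l$ est le plus petit entier $n \geq 1$ tel que $M_1(\overline{k})^n = \pm Id$ dans $SL_2(\mathbb{Z}/N\mathbb{Z})$, c'est-à-dire l'ordre de $M_1(\overline{k})$ dans $\mathrm{PSL}_2(\mathbb{Z}/N\mathbb{Z})$; de même $l_i$ est l'ordre de la réduction de $M_1(\overline{k})$ dans $\mathrm{PSL}_2(\mathbb{Z}/p_i^{\alpha_i}\mathbb{Z})$. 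L'isomorphisme $SL_2(\mathbb{Z}/N\mathbb{Z}) \simeq \prod_{i=1}^{r} SL_2(\mathbb{Z}/p_i^{\alpha_i}\mathbb{Z})$ permettra de recoller ces informations locales, à ceci près qu'il faudra surveiller les signes.

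Posons $L = ppcm(l_i,\ 1 \leq i \leq r)$. Je montrerais d'abord que $L$ divise $l$ : par le lemme \ref{33} appliqué à chaque diviseur $d = p_i^{\alpha_i}$ de $N$, l'entier $l_i$ divise $l$, donc $L = ppcm(l_i)$ divise $l$. Pour la divisibilité inverse, j'écrirais pour chaque $i$ l'égalité $M_{l_i}(\overline{k},\ldots,\overline{k}) = \epsilon_i Id \pmod{p_i^{\alpha_i}}$ avec $\epsilon_i \in \{-1,1\}$ (par définition d'une solution monomiale). Comme $l_i$ divise $2L$ et que $\epsilon_i^2 = 1$, on obtient $M_{2L}(\overline{k},\ldots,\overline{k}) = Id \pmod{p_i^{\alpha_i}}$ pour tout $i$, d'où, par le théorème chinois, $M_{2L}(\overline{k},\ldots,\overline{k}) = Id \pmod{N}$. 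Ainsi $l$ divise $2L$. Des deux divisibilités $L \mid l \mid 2L$ on déduit immédiatement $l \in \{L, 2L\}$, ce qui est la première assertion.

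Pour la majoration, il restera à borner $L$. Si $N$ n'est pas de la forme $2u$ avec $u$ impair, alors pour chaque $i$ on a $l_i \leq p_i^{\alpha_i}$ : pour $p_i$ impair c'est le lemme \ref{36}, et pour $p_i = 2$ (auquel cas $\alpha_i \geq 2$ sous cette hypothèse) c'est le lemme \ref{361}. Par conséquent $L = ppcm(l_i) \leq \prod_{i=1}^{r} l_i \leq \prod_{i=1}^{r} p_i^{\alpha_i} = N$, et donc $l \leq 2L \leq 2N$.

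L'étape la plus délicate est la gestion des signes $\epsilon_i$ : c'est précisément le passage au carré (le facteur $2$) qui les élimine simultanément sur toutes les composantes, ce qui explique à la fois l'alternative $l \in \{L, 2L\}$ et la présence du facteur $2$ dans la majoration. Le cas exclu $N = 2u$ ($u$ impair) correspond au fait que la solution $\overline{k}$-monomiale minimale de $(E_2)$ peut être de taille $3 > 2$ lorsque $k$ est impair, ce qui fait échouer la borne $l_i \leq p_i^{\alpha_i}$ pour le facteur premier $2$ et rend l'argument de majoration inopérant dans ce cas.
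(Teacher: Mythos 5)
Votre démonstration est correcte et suit essentiellement la même démarche que celle de l'article : le lemme \ref{33} donne que ${\rm ppcm}(l_{i})$ divise $l$, le passage au carré combiné au lemme chinois donne que $l$ divise $2\,{\rm ppcm}(l_{i})$, d'où l'alternative, puis les lemmes \ref{36} et \ref{361} fournissent $l_{i} \leq p_{i}^{\alpha_{i}}$ et la majoration $l \leq 2N$. La reformulation en termes d'ordre dans ${\rm PSL}_{2}$ n'est qu'un habillage du même argument.
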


\begin{proof}

Pour tout $1 \leq i \leq r$, $p_{i}^{\alpha_{i}}$ divise $N$. Donc, par le lemme \ref{33}, $l$ est un multiple de $l_{i}$. Ainsi, $l$ est un multiple de $m={\rm ppcm}(l_{i},~1 \leq i \leq r)$. 
\\
\\Soit $1 \leq i \leq r$. Il existe $a_{i} \in \mathbb{N}$ tel que $m=a_{i}l_{i}$ et il existe $\epsilon_{i} \in \{-1, 1\}$ tel que 
\[M_{l_{i}}(k+p_{i}^{\alpha_{i}}\mathbb{Z},\ldots,k+p_{i}^{\alpha_{i}}\mathbb{Z})=(\epsilon_{i}+p_{i}^{\alpha_{i}}\mathbb{Z})Id.\]
\noindent De plus,
\begin{eqnarray*}
M &=& M_{2m}(k+p_{i}^{\alpha_{i}}\mathbb{Z},\ldots,k+p_{i}^{\alpha_{i}}\mathbb{Z}) \\
  &=& M_{2a_{i}l_{i}}(k+p_{i}^{\alpha_{i}}\mathbb{Z},\ldots,k+p_{i}^{\alpha_{i}}\mathbb{Z}) \\
	&=& M_{l_{i}}(k+p_{i}^{\alpha_{i}}\mathbb{Z},\ldots,k+p_{i}^{\alpha_{i}}\mathbb{Z})^{2a_{i}} \\
	&=& (\epsilon_{i}+p_{i}^{\alpha_{i}}\mathbb{Z})^{2a_{i}}Id \\
	&=& (1+p_{i}^{\alpha_{i}}\mathbb{Z})Id.
\end{eqnarray*}

\noindent Ainsi, par le lemme chinois, $M_{2m}(\overline{k},\ldots,\overline{k})=Id$. On en déduit que $l$ divise $2m$. Or, $l$ est également un multiple de $m$. Donc, $l=m$ ou $l=2m$.
\\
\\De plus, si $m$ n'est pas de la forme $2u$ avec $u$ impair, $m \leq l_{1}\ldots l_{r}$ et, pour tout $1\leq i \leq r$, $l_{i} \leq p_{i}^{\alpha_{i}}$ (lemmes \ref{36} et \ref{361}). Ainsi, $m \leq N$ et $l \leq 2N$.

\end{proof}

\begin{proposition}
\label{38}

Soient $N=2u$ avec $u \geq 3$ impair et $k$ un entier. Soient $l_{1}$ la taille de la solution $(k+2\mathbb{Z})$-monomiale minimale de $(E_{2})$, $h$ la taille de la solution $(k+u)$-monomiale minimale de $(E_{u})$ et $l$ la taille de la solution $\overline{k}$-monomiale minimale de \eqref{p}. 
\[l=ppcm(l_{1},h).\]
\noindent En particulier, on a les inégalités suivantes :
\begin{itemize}
\item si $k$ est pair $l \leq N$;
\item si $k$ est impair $l \leq 3N$.
\end{itemize}

\end{proposition}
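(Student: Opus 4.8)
Le plan est d'établir que $l={\rm ppcm}(l_{1},h)=:m$ en montrant que $l$ est à la fois un multiple et un diviseur de $m$. Puisque $2$ et $u$ divisent tous deux $N=2u$, le lemme \ref{33} assure d'emblée que $l$ est un multiple de $l_{1}$ et de $h$, donc de $m$. Toute la substance réside donc dans l'inégalité $l \leq m$ : il suffit de vérifier que le $m$-uplet $(\overline{k},\ldots,\overline{k})$ est lui-même une solution de \eqref{p}, car on aura alors $l \leq m$ par minimalité, d'où $l=m$.

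Pour cette vérification, je procéderais par le lemme chinois en traitant séparément les composantes modulo $2$ et modulo $u$, l'observation décisive étant que modulo $2$ les matrices $Id$ et $-Id$ sont égales. Comme $l_{1}$ divise $m$ et que $M_{l_{1}}(k+2\mathbb{Z},\ldots,k+2\mathbb{Z})=\pm Id=Id$ modulo $2$, on obtient $M_{m}(k+2\mathbb{Z},\ldots,k+2\mathbb{Z})=Id$. Comme $h$ divise $m$, il existe $\epsilon \in \{-1,1\}$ avec $M_{h}(k+u\mathbb{Z},\ldots,k+u\mathbb{Z})=(\epsilon+u\mathbb{Z})Id$, et donc $M_{m}(k+u\mathbb{Z},\ldots,k+u\mathbb{Z})=(\epsilon^{m/h}+u\mathbb{Z})Id$. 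En posant $\eta=\epsilon^{m/h} \in \{-1,1\}$, on a $M_{m}=\eta\, Id$ modulo $2$ (car $\eta \equiv 1$ modulo $2$) et modulo $u$ ; le lemme chinois fournit alors $M_{m}(\overline{k},\ldots,\overline{k})=\eta\, Id$ dans $\mathbb{Z}/N\mathbb{Z}$, ce qui conclut. C'est exactement cette absence de contrainte de signe modulo $2$ qui constitue le point clé et qui distingue ce résultat du cas général de la proposition \ref{37}, où l'on n'obtient que $l=m$ ou $l=2m$ : ici les deux seules composantes sont celles modulo $2$ et modulo $u$, et la première étant insensible au signe, les signes s'alignent toujours.

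Pour les inégalités, je commencerais par identifier $l_{1}$ : si $k$ est pair, $k+2\mathbb{Z}=\overline{0}$ et $l_{1}=2$ ; si $k$ est impair, $k+2\mathbb{Z}=\overline{1}$ et $l_{1}=3$ (remarque précédente). Je bornerais ensuite $h$ en appliquant la proposition \ref{37} à l'entier impair $u=\prod_{i} q_{i}^{\gamma_{i}}$ : si $h_{i}$ désigne la taille associée à $(E_{q_{i}^{\gamma_{i}}})$, le lemme \ref{36} donne $h_{i} \leq q_{i}^{\gamma_{i}}$, d'où ${\rm ppcm}(h_{i}) \leq \prod_{i} h_{i} \leq u$ et $h \leq 2u=N$ ; surtout, si $h$ est impair, il ne peut valoir $2\,{\rm ppcm}(h_{i})$, donc $h={\rm ppcm}(h_{i}) \leq u$. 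On conclut alors directement : pour $k$ pair, $l={\rm ppcm}(2,h)$ vaut $h \leq N$ si $h$ est pair et $2h \leq 2u=N$ si $h$ est impair ; pour $k$ impair, $l={\rm ppcm}(3,h) \leq 3h \leq 6u=3N$. La seule subtilité à ne pas manquer est l'emploi de la borne fine $h \leq u$ dans le cas $k$ pair, $h$ impair, où la borne grossière $h \leq N$ ne suffirait pas.
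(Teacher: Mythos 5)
Votre preuve est correcte et suit essentiellement la même démarche que celle de l'article : le lemme \ref{33} donne que $l$ est multiple de $m={\rm ppcm}(l_{1},h)$, puis l'argument clé identique (modulo $2$, $Id=-Id$, donc le signe $\epsilon^{m/h}$ se recolle toujours par le lemme chinois) montre que le $m$-uplet est solution, d'où $l=m$. Les majorations sont également traitées comme dans l'article, y compris la subtilité du cas $k$ pair et $h$ impair où l'on doit utiliser la borne fine $h={\rm ppcm}(h_{i})\leq u$ issue de la proposition \ref{37} et du lemme \ref{36}.
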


\begin{proof}

Il existe $\epsilon \in \{-1,1\}$ tel que $M_{h}(k+u\mathbb{Z},\ldots,k+u\mathbb{Z})=(\epsilon+u\mathbb{Z}) Id$. 2 et $u$ divisent $N$. Donc, par le lemme \ref{33}, $l$ est un multiple de $l_{1}$ et de $h$, c'est-à-dire $l$ est un multiple de $m={\rm ppcm}(l_{1},h)$. Or, 

\[M_{m}(k+u\mathbb{Z},\ldots,k+u\mathbb{Z})=M_{h}(k+u\mathbb{Z},\ldots,k+u\mathbb{Z})^{\frac{m}{h}}=(\epsilon+u\mathbb{Z})^{\frac{m}{h}}Id=(\epsilon^{\frac{m}{h}}+u\mathbb{Z})Id.\]
\noindent et 
\[M_{m}(k+2\mathbb{Z},\ldots,k+2\mathbb{Z})=M_{l_{1}}(k+2\mathbb{Z},k+2\mathbb{Z})^{\frac{m}{l_{1}}}=Id=-Id=(\epsilon^{\frac{m}{h}}+2\mathbb{Z})Id.\]
\noindent Par le lemme chinois, $l=m$. 
\\
\\Si $k$ est impair, $l_{1}=3$. Donc, $l={\rm ppcm}(3,h) \leq 3h \leq 3 \times (2u)=3N$ ($h \leq 2u$ par la proposition \ref{37}).
\\
\\Si $k$ est pair, $l_{1}=2$. Donc, $l={\rm ppcm}(2,h)$. Si $h$ est pair alors $l=h$ et, par la proposition \ref{37}, $l \leq 2u=N$. Si $h$ est impair. Comme $u \neq 1$, on peut écrire $u$ sous la forme $u=p_{2}^{\alpha_{2}}\ldots p_{r}^{\alpha_{r}}$, avec, pour tout $i$, $p_{i}$ premier impair, $\alpha_{i} \geq 1$ et $p_{i} \neq p_{j}$ si $i \neq j$. On note $l_{i}$ la taille de la solution $(k+p_{i}^{\alpha_{i}}\mathbb{Z})$-monomiale minimale de $(E_{p_{i}^{\alpha_{i}}})$.  Par le lemme \ref{36}, $l_{i} \leq p_{i}^{\alpha_{i}}$ pour tout $i$ dans $[\![1;r]\!]$. Par la proposition \ref{37}, $h={\rm ppcm}(l_{i},~2 \leq i \leq r)$ ou $h=2 \times {\rm ppcm}(l_{i},~2 \leq i \leq r)$. Comme $h$ est impair, $h={\rm ppcm}(l_{i},~2 \leq i \leq r)$. Ainsi :
\[l={\rm ppcm}(2,h)=2h=2 \times {\rm ppcm}(l_{i},~2 \leq i \leq r) \leq 2\times (p_{2}^{\alpha_{2}}\ldots p_{r}^{\alpha_{r}})=2u=N.\]

\end{proof}

\begin{remark}
{\rm 
Les inégalités des deux résultats précédents sont optimales. En effet, on peut notamment considérer les deux exemples ci-dessous :
\begin{itemize}
\item si $N=35=5 \times 7$ et $k=23$ alors la taille de la solution $\overline{k}$-monomiale minimale de \eqref{p} est égale à $70=2 \times 35$.
\item si $N=70=2 \times (5 \times 7)$ et $k=23$ alors la taille de la solution $\overline{k}$-monomiale minimale de \eqref{p} est égale à $210=3 \times 70$.
\end{itemize}
}
\end{remark}

\begin{lemma}
\label{39}

Soit $N=p_{1}^{\alpha_{1}} \ldots p_{r}^{\alpha_{r}}$ avec $r \geq 2$, $p_{i}$ des nombres premiers deux à deux distincts et $\alpha_{i}$ des entiers naturels non nuls. Soient $k$ un entier naturel et $l_{i}$ la taille de la solution $(k+p_{i}^{\alpha_{i}}\mathbb{Z})$-monomiale minimale de $(E_{p_{i}^{\alpha_{i}}})$. 
\\
\\i) On suppose que $p_{1}=2$. Si $l_{1}$ est pair différent de 2 et si, pour tout $i$ dans $[\![2;r]\!]$, $l_{i}$ est impair alors la taille de la solution $\overline{k}$-monomiale minimale de \eqref{p} est égale à $l=ppcm(l_{i},~1 \leq i \leq r)$.
\\
\\ii) On suppose que, pour tout $i$ dans $[\![1;r]\!]$, $l_{i}$ est impair. La taille de la solution $\overline{k}$-monomiale minimale de \eqref{p} est égale à $l=ppcm(l_{i},~1 \leq i \leq r)$ si et seulement s'il existe $\epsilon \in \{-1,1\}$ tel que, pour tout $i$ dans $[\![1;r]\!]$, $M_{l_{i}}(k+p_{i}^{\alpha_{i}}\mathbb{Z},\ldots,k+p_{i}^{\alpha_{i}}\mathbb{Z})=(\epsilon+p_{i}^{\alpha_{i}}\mathbb{Z})Id$.

\end{lemma}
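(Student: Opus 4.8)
Le plan est de ramener, via le lemme chinois, le calcul de $l$ à l'analyse du signe de $M_m(\overline{k},\ldots,\overline{k})$ dans chaque composante $p_i^{\alpha_i}$, où l'on pose $m=ppcm(l_i,\,1 \leq i \leq r)$. Pour chaque $i$, la définition de $l_i$ fournit un $\epsilon_i \in \{-1,1\}$ tel que $M_{l_i}(k+p_i^{\alpha_i}\mathbb{Z},\ldots,k+p_i^{\alpha_i}\mathbb{Z})=(\epsilon_i+p_i^{\alpha_i}\mathbb{Z})Id$. D'après la proposition \ref{37}, $l \in \{m,2m\}$, de sorte que $l=m$ si et seulement si $M_m(\overline{k},\ldots,\overline{k})=\pm Id$ modulo $N$. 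Comme $l_i$ divise $m$, on dispose pour tout $i$ de la formule centrale
\[M_m(k+p_i^{\alpha_i}\mathbb{Z},\ldots,k+p_i^{\alpha_i}\mathbb{Z})=M_{l_i}(k+p_i^{\alpha_i}\mathbb{Z},\ldots,k+p_i^{\alpha_i}\mathbb{Z})^{m/l_i}=(\epsilon_i^{m/l_i}+p_i^{\alpha_i}\mathbb{Z})Id,\]
si bien que le signe dans la $i$-ème composante ne dépend que de la parité de $m/l_i$.

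Pour le point i), je remarquerais d'abord que l'hypothèse sur $l_1$ (pair et différent de 2) impose $\alpha_1 \geq 2$, car la taille modulo 2 vaut seulement 2 ou 3, et impose aussi $k+2^{\alpha_1}\mathbb{Z} \neq \overline{0}$, car sinon $M_1(k+2^{\alpha_1}\mathbb{Z})=S$ et l'on aurait $l_1=2$. Le lemme \ref{35} donne alors $\epsilon_1=1$. Puisque seul $l_1$ est pair, on a $v_2(m)=v_2(l_1) \geq 1$, donc $m/l_1$ est impair tandis que $m/l_i$ est pair pour tout $i \geq 2$. La formule centrale fournit ainsi $\epsilon_1^{m/l_1}=\epsilon_1=1$ et $\epsilon_i^{m/l_i}=1$ pour $i \geq 2$, d'où $M_m(\overline{k},\ldots,\overline{k})=Id$ par le lemme chinois, et finalement $l=m$.

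Pour le point ii), tous les $l_i$ étant impairs, $m$ est impair et chaque $m/l_i$ l'est aussi, de sorte que $\epsilon_i^{m/l_i}=\epsilon_i$. La formule centrale donne alors $M_m(k+p_i^{\alpha_i}\mathbb{Z},\ldots,k+p_i^{\alpha_i}\mathbb{Z})=(\epsilon_i+p_i^{\alpha_i}\mathbb{Z})Id$, et le lemme chinois montre que $M_m(\overline{k},\ldots,\overline{k})$ vaut $\pm Id$ modulo $N$ si et seulement si les $\epsilon_i$ sont tous égaux à un même $\epsilon$ (qui sera $1$ dans le cas $Id$ et $-1$ dans le cas $-Id$). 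Comme $l=m$ équivaut à $M_m(\overline{k},\ldots,\overline{k})=\pm Id$, on obtient exactement l'équivalence recherchée.

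Le point délicat se situe au i) : il faut établir $\epsilon_1=1$ à l'aide du lemme \ref{35}, ce qui réclame de vérifier avec soin les hypothèses $\alpha_1 \geq 2$ et $k+2^{\alpha_1}\mathbb{Z} \neq \overline{0}$ à partir de la seule condition « $l_1$ pair et différent de 2 », puis de coordonner ce résultat avec le suivi des valuations 2-adiques des quotients $m/l_i$. Le point ii), une fois la formule centrale en main, ne requiert plus que des vérifications de parité immédiates.
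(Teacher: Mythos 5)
Votre démonstration est correcte et suit essentiellement la même démarche que celle de l'article : proposition \ref{37} pour obtenir $l \in \{m,2m\}$, calcul des puissances composante par composante avec suivi des parités des $m/l_{i}$, lemme \ref{35} (avec la vérification $\alpha_{1} \geq 2$ et $k+2^{\alpha_{1}}\mathbb{Z} \neq 0+2^{\alpha_{1}}\mathbb{Z}$) pour obtenir $\epsilon_{1}=1$ au point i), puis lemme chinois et minimalité pour conclure. L'unique différence est de présentation : l'article établit le sens direct du point ii) par contraposée (deux indices de signes contraires conduisent à une contradiction lorsque $l=m$), tandis que vous le déduisez de l'équivalence $l=m \Leftrightarrow M_{m}(\overline{k},\ldots,\overline{k})=\pm Id$, ce qui constitue le même argument sous une autre forme.
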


\begin{proof}

Soit $l$ la taille de la solution $\overline{k}$-monomiale minimale de \eqref{p}. 
\\
\\i) Par la proposition \ref{37}, $l$ est un multiple de $m={\rm ppcm}(l_{i},~1 \leq i \leq r)$. Comme $m$ est un multiple de $l_{1}$, $m$ est pair. Soit $i$ dans $[\![2;r]\!]$. Comme $l_{i}$ est impair, $\frac{m}{l_{i}}$ est pair. Il existe $\epsilon_{i} \in \{-1, 1\}$ tel que 
\[M_{l_{i}}(k+p_{i}^{\alpha_{i}}\mathbb{Z},\ldots,k+p_{i}^{\alpha_{i}}\mathbb{Z})=(\epsilon_{i}+p_{i}^{\alpha_{i}}\mathbb{Z})Id.\]
\noindent On a
\[M_{m}(k+p_{i}^{\alpha_{i}}\mathbb{Z},\ldots,k+p_{i}^{\alpha_{i}}\mathbb{Z})=M_{l_{i}}(k+p_{i}^{\alpha_{i}}\mathbb{Z},\ldots,k+p_{i}^{\alpha_{i}}\mathbb{Z})^{\frac{m}{l_{i}}}=(\epsilon_{i}+p_{i}^{\alpha_{i}}\mathbb{Z})^{\frac{m}{l_{i}}}Id=(1+p_{i}^{\alpha_{i}}\mathbb{Z})Id.\]

\noindent De plus, en utilisant le lemme \ref{35} ($l_{1} \neq 2$ pair donc $\alpha_{1} \geq 2$ et $k+p_{1}^{\alpha_{1}}\mathbb{Z} \neq 0+p_{1}^{\alpha_{1}}\mathbb{Z}$), on a 
\[M_{m}(k+p_{1}^{\alpha_{1}}\mathbb{Z},\ldots,k+p_{1}^{\alpha_{1}}\mathbb{Z})=M_{l_{1}}(k+p_{1}^{\alpha_{1}}\mathbb{Z},\ldots,k+p_{1}^{\alpha_{1}}\mathbb{Z})^{\frac{m}{l_{1}}}=(1+p_{1}^{\alpha_{1}}\mathbb{Z})^{\frac{m}{l_{1}}}Id=(1+p_{1}^{\alpha_{1}}\mathbb{Z})Id.\]

\noindent Par le lemme chinois, $M_{m}(\overline{k},\ldots,\overline{k})=Id$. En particulier, $l=m$.
\\
\\ii) S'il existe $\epsilon \in \{-1,1\}$ tel que, pour tout $i$ dans $[\![1;r]\!]$, $M_{l_{i}}(k+p_{i}^{\alpha_{i}}\mathbb{Z},\ldots,k+p_{i}^{\alpha_{i}}\mathbb{Z})=(\epsilon+p_{i}^{\alpha_{i}}\mathbb{Z})Id$. Soit $m={\rm ppcm}(l_{i},~1 \leq i \leq r)$. Comme tous les $l_{i}$ sont impairs, $m$ est impair.
\\
\\Par la proposition \ref{37}, $l$ est un multiple de $m$. De plus, pour tout $i$ dans $[\![1;r]\!]$, $\frac{m}{l_{i}}$ est impair. Donc, pour tout $i$ dans $[\![1;r]\!]$, $(\epsilon+p_{i}^{\alpha_{i}}\mathbb{Z})^{\frac{m}{l_{i}}}=(\epsilon+p_{i}^{\alpha_{i}}\mathbb{Z})$, et on a
\[M_{m}(k+p_{i}^{\alpha_{i}}\mathbb{Z},\ldots,k+p_{i}^{\alpha_{i}}\mathbb{Z})=M_{l_{i}}(k+p_{i}^{\alpha_{i}}\mathbb{Z},\ldots,k+p_{i}^{\alpha_{i}}\mathbb{Z})^{\frac{m}{l_{i}}}=(\epsilon+p_{i}^{\alpha_{i}}\mathbb{Z})^{\frac{m}{l_{i}}}Id=(\epsilon+p_{i}^{\alpha_{i}}\mathbb{Z})Id.\]

\noindent Par le lemme chinois, $M_{m}(\overline{k},\ldots,\overline{k})=\overline{\epsilon}Id$. En particulier, $l=m$.
\\
\\S'il existe $j$ et $h$ dans $[\![1;r]\!]$ tels que $M_{l_{j}}(k+p_{j}^{\alpha_{j}}\mathbb{Z},\ldots,k+p_{j}^{\alpha_{j}}\mathbb{Z})=(1+p_{j}^{\alpha_{j}}\mathbb{Z})Id \neq (-1+p_{j}^{\alpha_{j}}\mathbb{Z})Id$ et $M_{l_{h}}(k+p_{h}^{\alpha_{h}}\mathbb{Z},\ldots,k+p_{h}^{\alpha_{h}}\mathbb{Z})=(1+p_{h}^{\alpha_{h}}\mathbb{Z})Id \neq (-1+p_{h}^{\alpha_{h}}\mathbb{Z})Id$. Si $l=m={\rm ppcm}(l_{i},~1 \leq i \leq r)$. Il existe $\alpha \in \{-1,1\}$ tel que $M_{m}(\overline{k},\ldots,\overline{k})=\overline{\alpha}Id$. En particulier, puisque $\frac{m}{l_{j}}$ et $\frac{m}{l_{h}}$ sont impairs, on a :
\begin{eqnarray*}
(\alpha+p_{j}^{\alpha_{j}}\mathbb{Z})Id &=& M_{m}(k+p_{j}^{\alpha_{j}}\mathbb{Z},\ldots,k+p_{j}^{\alpha_{j}}\mathbb{Z}) \\
                                        &=& M_{l_{j}}(k+p_{j}^{\alpha_{j}}\mathbb{Z},\ldots,k+p_{j}^{\alpha_{j}}\mathbb{Z})^{\frac{m}{l_{j}}} \\
																				&=& (1+p_{j}^{\alpha_{j}}\mathbb{Z})^{\frac{m}{l_{j}}}Id \\
																				&=& (1+p_{j}^{\alpha_{j}}\mathbb{Z})Id,
\end{eqnarray*}
\noindent et
\begin{eqnarray*}
(\alpha+p_{h}^{\alpha_{h}}\mathbb{Z})Id &=& M_{m}(k+p_{h}^{\alpha_{h}}\mathbb{Z},\ldots,k+p_{h}^{\alpha_{h}}\mathbb{Z}) \\
                                        &=& M_{l_{h}}(k+p_{h}^{\alpha_{h}}\mathbb{Z},\ldots,k+p_{h}^{\alpha_{h}}\mathbb{Z})^{\frac{m}{l_{h}}} \\
																				&=& (-1+p_{h}^{\alpha_{h}}\mathbb{Z})^{\frac{m}{l_{h}}}Id \\
																				&=& (-1+p_{h}^{\alpha_{h}}\mathbb{Z})Id.
\end{eqnarray*}		

\noindent Ainsi, si $\alpha=1$, on a $1+p_{h}^{\alpha_{h}}\mathbb{Z}=-1+p_{h}^{\alpha_{h}}\mathbb{Z}$ et si $\alpha=-1$, on a $-1+p_{j}^{\alpha_{j}}\mathbb{Z}=1+p_{j}^{\alpha_{j}}\mathbb{Z}$. Dans les deux cas, on arrive à une absurdité. Donc, $l \neq m$.													

\end{proof}

\begin{remark}
{\rm Si $l_{1}=2$ dans le cas i) alors le résultat n'est plus vrai en général. Par exemple, prenons $N=12$ et $k=4$. La solution $(k+4\mathbb{Z})$-monomiale minimale de $(E_{4})$ est de taille 2, la solution $(k+3\mathbb{Z})$-monomiale minimale de $(E_{3})$ est de taille 3 et la solution $\overline{k}$-monomiale miniale de \eqref{p} est de taille $12=2\times {\rm ppcm}(2,3)$.
}
\end{remark} 

\begin{lemma}[\cite{M2} lemme 3.2]
\label{plus}

Soient $N$ un entier supérieur à 2 et $\overline{k}$ un élément de $\mathbb{Z}/N\mathbb{Z}$. Soit $n$ la taille de la solution $\overline{k}$-monomiale minimale de $(E_{N})$.
\\
\\A) Soit $m$ un entier naturel.
\\i) Si $(\overline{a},\overline{k},\ldots,\overline{k},\overline{b}) \in (\mathbb{Z}/N\mathbb{Z})^{nm}$ est une solution de $(E_{N})$ alors $\overline{a}=\overline{b}=\overline{k}$.
\\ii) Il n'y a pas de solution de $(E_{N})$ de la forme $(\overline{a},\overline{k},\ldots,\overline{k},\overline{b})$ de taille $nm+1$.
\\iii) Si $(\overline{a},\overline{k},\ldots,\overline{k},\overline{b}) \in (\mathbb{Z}/N\mathbb{Z})^{nm+2}$ est une solution de $(E_{N})$ alors $\overline{a}=\overline{b}=\overline{0}$.
\\
\\B)  On suppose que la solution $\overline{k}$-monomiale minimale de $(E_{N})$ est irréductible. $(\overline{a},\overline{k},\ldots,\overline{k},\overline{b}) \in (\mathbb{Z}/N\mathbb{Z})^{l}$ est une solution de $(E_{N})$ si et seulement si une deux conditions suivantes est vérifiée :
\begin{itemize}
\item $l \equiv 0 [n]$ et $\overline{a}=\overline{b}=\overline{k}$;
\item $l \equiv 2 [n]$ et $\overline{a}=\overline{b}=\overline{0}$.

\end{itemize}

\end{lemma}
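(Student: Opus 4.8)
\emph{Overall tool.} My plan rests on the single identity $M_l(\overline a,\overline k,\dots,\overline k,\overline b)=m(\overline b)\,K^{\,l-2}\,m(\overline a)$, where I write $m(x):=\left(\begin{smallmatrix}x&-1\\1&0\end{smallmatrix}\right)$ and $K:=m(\overline k)$, together with $K^{n}=\pm Id$ (the defining property of $n$). A tuple of the stated shape is thus a solution iff $m(\overline b)K^{l-2}m(\overline a)=\pm Id$, and inverting this rewrites it as $m(\overline a)m(\overline b)=\pm K^{2-l}$, an exponent one may reduce modulo $n$. The left-hand side equals $\left(\begin{smallmatrix}\overline{ab}-\overline1&-\overline a\\ \overline b&-\overline1\end{smallmatrix}\right)$, whose bottom-right entry is always $-\overline1$; this one observation drives everything. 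For Part A, $l\in\{nm,nm+1,nm+2\}$ gives $2-l\equiv 2,1,0\ [n]$, so the right-hand side is $\pm K^{2},\pm K,\pm Id$. Comparing the four entries: $m(\overline a)m(\overline b)=\pm K^{2}$ forces $\overline a=\overline b=\overline k$ (case i); $m(\overline a)m(\overline b)=\pm K$ is impossible, the bottom-right entries being $-\overline1$ and $\overline0$ (case ii); and $m(\overline a)m(\overline b)=\pm Id$ forces $\overline a=\overline b=\overline0$ (case iii). No irreducibility intervenes here.

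\emph{Part B, reduction.} The ``if'' direction is again a plug-in. For ``only if'' I would introduce the continuant sequence $W_r$ with $W_{-1}=\overline0$, $W_0=\overline1$, $W_r=\overline k\,W_{r-1}-W_{r-2}$, for which one checks $K^{r}=\left(\begin{smallmatrix}W_r&-W_{r-1}\\ W_{r-1}&-W_{r-2}\end{smallmatrix}\right)$ and $W_{-r}=-W_{r-2}$. Comparing bottom-right entries in $m(\overline a)m(\overline b)=\pm K^{2-l}$ then yields $\pm W_{l-2}=-\overline1$, so a solution of this shape and size $l$ exists if and only if $W_{l-2}=\pm\overline1$, and when it does the endpoints are uniquely determined. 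Writing $l\equiv\rho\ [n]$, Part A disposes of $\rho\in\{0,1,2\}$, so the whole content remaining is to show that irreducibility forbids $\rho\in\{3,\dots,n-1\}$, equivalently forbids $W_\alpha=\pm\overline1$ for $\alpha:=\rho-2\in\{1,\dots,n-3\}$.

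\emph{Part B, crux.} I would prove the contrapositive. Suppose $W_\alpha=\pm\overline1$ with $1\le\alpha\le n-3$; set $p=\alpha+2$, $q=n-\alpha$ (so $p,q\ge3$ and $p+q-2=n$) and let $X,Y$ be the unique solutions of the given shape of sizes $p,q$ furnished by the criterion. They both exist because $W_{p-2}=W_\alpha=\pm\overline1$ and, by the reflection identity $W_{n-1-j}=-\delta\,W_{j-1}$ (where $K^n=\delta\,Id$, itself coming from $W_{n-1}=\overline0$), also $W_{q-2}=-\delta W_\alpha=\pm\overline1$. Computing the determined endpoints gives $\overline{x_1}=\overline{x_p}=W_\alpha W_{\alpha-1}$ and $\overline{y_1}=\overline{y_q}=W_{q-2}W_{q-3}=W_\alpha W_{\alpha+1}$ (again via the reflection identity), whence $\overline{x_1}+\overline{y_q}=\overline{x_p}+\overline{y_1}=W_\alpha(W_{\alpha-1}+W_{\alpha+1})=\overline k\,W_\alpha^{2}=\overline k$. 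These are exactly the two equalities that make $X\oplus Y=(\overline k,\dots,\overline k)=:\mu$, so the minimal monomial solution $\mu$ is reducible, a contradiction; combined with Part A this yields Part B.

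\emph{The main obstacle} is precisely this last paragraph: turning the existence of a ``spurious'' intermediate-size solution into a genuine decomposition $\mu=X\oplus Y$ with both factors solutions of size $\ge3$. This needs, at once, that the complementary size $q$ again satisfy $W_{q-2}=\pm\overline1$ and that the two uniquely-determined endpoints glue back to $\overline k$; both hinge on the reflection identity $W_{n-1-j}=-\delta W_{j-1}$, and the pleasant collapse $\overline k\,W_\alpha^{2}=\overline k$ (using $W_\alpha^2=\overline1$) is what ultimately makes the gluing work. I expect the delicate points to be establishing that reflection identity cleanly and keeping the signs $\delta$, $\sigma_X$, $\sigma_Y$ and the index bookkeeping consistent throughout.
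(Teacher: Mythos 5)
This lemma is not proved in the paper at all: it is recalled verbatim from \cite{M2} (lemme 3.2), so there is no in-paper argument to compare yours against; I can only judge your proposal on its own merits, and it is correct. Part A is the right elementary computation: from $M_l(\overline{a},\overline{k},\dots,\overline{k},\overline{b})=m(\overline{b})K^{l-2}m(\overline{a})$ one gets $m(\overline{a})m(\overline{b})=\pm K^{2-l}$, and the bottom-right entry $-\overline{1}$ of $m(\overline{a})m(\overline{b})$ rules out $\pm K$ (size $nm+1$) and pins down the sign in the two other cases; the only point you gloss over is that the wrong sign is excluded precisely because $-\overline{1}\neq\overline{1}$ when $N>2$, while for $N=2$ the two signs coincide and the conclusion holds anyway. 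For Part B, your criterion (a solution of this shape and size $l$ exists iff $W_{l-2}=\pm\overline{1}$, with endpoints forced to equal $W_{l-2}W_{l-3}$) is correct, but note that the sufficiency half requires the continuant determinant identity $W_{l-3}^{2}-W_{l-2}W_{l-4}=\overline{1}$ (i.e.\ $\det K^{l-2}=1$) to verify the fourth matrix entry; this should be made explicit. The crux also goes through as you describe: $W_{-r}=-W_{r-2}$ gives the reflection identity $W_{n-1-j}=-\delta W_{j-1}$, hence $W_{q-2}=-\delta W_{\alpha}=\pm\overline{1}$ and $W_{q-3}=-\delta W_{\alpha+1}$, so the complementary solution $Y$ of size $q=n-\alpha\geq 3$ exists with endpoints $W_{\alpha}W_{\alpha+1}$, and the gluing computation $\overline{x_{1}}+\overline{y_{q}}=W_{\alpha}(W_{\alpha-1}+W_{\alpha+1})=\overline{k}\,W_{\alpha}^{2}=\overline{k}$ (via the recurrence and $W_{\alpha}^{2}=\overline{1}$) shows $X\oplus Y$ is exactly the minimal monomial solution, contradicting irreducibility since both factors have size at least $3$. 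So the deferred "delicate points" in your last paragraph all check out; they are genuine parts of the proof and should be written out, but no step fails.
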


On conclut cette séquence de résultats intermédiaires par le rappel des deux résultats importants ci-dessous.

\begin{theorem}[\cite{M3} Théorème 2.6]
\label{311b}

Soient $p$ un nombre premier, $n$ un entier naturel non nul et $N=p^{n}$. Soit $k \in \mathbb{Z}$.
\\
\\i) Si $p \neq 2$. La solution $\overline{k}$-monomiale minimale de $(E_{N})$ est irréductible si et seulement si $p$ ne divise pas $k$.
\\
\\ii) Si $p=2$. La solution $\overline{k}$-monomiale minimale de $(E_{N})$ est irréductible si et seulement si  une des conditions suivantes est vérifiée :
\begin{itemize}
\item $k$ est impair; 
\item $\overline{k}=\overline{2^{n-1}}$;
\item $n \geq 2$ et il existe un entier impair $a$ tel que $k=2a$.
\end{itemize}

\end{theorem}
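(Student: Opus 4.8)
La stratégie consiste à traduire l'irréductibilité en une condition arithmétique portant sur une unique suite récurrente, puis à analyser cette condition au moyen des valeurs propres de $M_{1}(\overline{k})$. Posons $G=M_{1}(\overline{k})$ et introduisons la suite $(\alpha_{j})$ définie par $\alpha_{0}=0$, $\alpha_{1}=1$ et $\alpha_{j+1}=k\alpha_{j}-\alpha_{j-1}$. Une récurrence immédiate donne $G^{j}=\left(\begin{smallmatrix}\alpha_{j+1} & -\alpha_{j}\\ \alpha_{j} & -\alpha_{j-1}\end{smallmatrix}\right)$, de sorte que la taille $r$ de la solution $\overline{k}$-monomiale minimale est le plus petit indice strictement positif vérifiant $\alpha_{r}\equiv 0$. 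Comme $(\overline{k},\ldots,\overline{k})$ est un uplet constant, toute réduction s'écrit, après application de $\sim$, comme une somme $(\overline{a_{1}},\ldots,\overline{a_{m}})\oplus(\overline{b_{1}},\ldots,\overline{b_{l}})$ avec $m,l\geq 3$ dont le résultat est \emph{exactement} $(\overline{k},\ldots,\overline{k})$ ; par définition de $\oplus$, ceci force les deux facteurs à être de la forme $(\overline{a},\overline{k},\ldots,\overline{k},\overline{b})$. En écrivant $M_{s}(\overline{a},\overline{k},\ldots,\overline{k},\overline{b})=M_{1}(\overline{b})\,G^{s-2}\,M_{1}(\overline{a})$ et en résolvant l'équation $=\pm Id$, un calcul direct montre qu'un tel facteur de taille $s$ est solution si et seulement si $\alpha_{s-1}=\pm 1$, ses deux extrémités valant alors $\alpha_{s-2}/\alpha_{s-1}$ ; le lemme \ref{plus} est cohérent avec cette description.

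Posons $i=m-1\in\{2,\ldots,r-2\}$ et $i'=l-1=r-i$. Les identités $\alpha_{-j}=-\alpha_{j}$ et $G^{r-i}=\epsilon G^{-i}$ (où $G^{r}=\epsilon\,Id$) donnent $\alpha_{r-i}=-\epsilon\alpha_{i}$ et $\alpha_{r-i-1}=-\epsilon\alpha_{i+1}$ ; la condition de recollement aux deux jointures se réduit donc à $(\alpha_{i-1}+\alpha_{i+1})/\alpha_{i}=k$, automatiquement vérifiée par la récurrence. On obtient ainsi le critère pivot : la solution $\overline{k}$-monomiale minimale (de taille $r\geq 3$) est réductible si et seulement s'il existe $i\in\{2,\ldots,r-2\}$ tel que $\alpha_{i}\equiv\pm 1\pmod{p^{n}}$.

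Pour $p$ impair avec $k\not\equiv\pm 2\pmod{p}$, le discriminant $k^{2}-4$ est inversible et l'on peut écrire $\alpha_{i}=(\mu^{i}-\mu^{-i})/(\mu-\mu^{-1})$, où $\mu,\mu^{-1}$ sont les racines de $X^{2}-kX+1$ dans $\mathbb{Z}/p^{n}\mathbb{Z}$ ou dans son extension quadratique. Un calcul donne $\alpha_{i}=\pm 1\Leftrightarrow(\mu^{i-1}\mp 1)(\mu^{i+1}\pm 1)=0$. Pour un indice intermédiaire, aucun des deux facteurs ne peut s'annuler exactement (cela imposerait $i\equiv\pm 1\pmod{r}$) ; le produit ne peut donc être nul modulo $p^{n}$ que si les deux facteurs sont divisibles par $p$, ce qui force $\overline{\mu}^{2}=-1$ modulo $p$, soit $\overline{k}=\overline{\mu}+\overline{\mu}^{-1}=0$, c'est-à-dire $p\mid k$. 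Ainsi, si $p\nmid k$ (et $k\not\equiv\pm 2$), aucun indice intermédiaire ne convient et la solution est irréductible ; le cas $k\equiv\pm 2\pmod{p}$ se règle à part via le théorème \ref{30}.

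Réciproquement, pour $p\mid k$ impair on a $\overline{\mu}^{2}=-1$, de sorte que $\mu^{2}+1$ est diviseur de zéro modulo $p$ ; il reste à produire un indice $i\in\{2,\ldots,r-2\}$ (intervalle non vide car $r$ est pair et $\geq 4$ d'après les lemmes \ref{33} et \ref{34}) pour lequel $(\mu^{i-1}\mp 1)(\mu^{i+1}\pm 1)$ s'annule exactement modulo $p^{n}$, et non seulement modulo $p^{2}$. C'est ici le point le plus délicat : il faut contrôler la valuation $p$-adique de $\mu^{a}\mp 1$ le long de $a$, en exploitant la croissance de l'ordre de $\mu$ d'un facteur $p$ à chaque palier (lemmes \ref{34} et \ref{36}), afin d'atteindre la précision $p^{n}$ en restant dans l'intervalle. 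Le cas $p=2$ doit être isolé, l'analyse par valeurs propres dégénérant modulo $2$ ; la trichotomie de l'énoncé ($k$ impair ; $\overline{k}=\overline{2^{n-1}}$ ; $n\geq 2$ et $k=2a$ avec $a$ impair) s'obtient alors par un calcul $2$-adique explicite s'appuyant sur le lemme \ref{35}, le théorème \ref{32} et le lemme \ref{361}. L'obstacle principal est donc double : la montée en précision $p$-adique pour produire l'indice intermédiaire dans la direction réductible, et la disjonction fine des cas lorsque $p=2$.
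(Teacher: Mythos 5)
Your proposal cannot be checked against a proof in the paper, because the paper contains none: Theorem \ref{311b} is imported from \cite{M3} and used as a black box. Judged on its own merits, the framework you build is correct, and I could verify it in full: with $G=M_{1}(\overline{k})$ and $(\alpha_{j})$ the associated sequence, any reduction of the constant tuple must split it into two tuples of the form $(\overline{a},\overline{k},\ldots,\overline{k},\overline{b})$; such a tuple of size $s$ is a solution of $(E_{p^{n}})$ if and only if $\alpha_{s-1}\equiv\pm1$, its two ends then being $\overline{\alpha_{s-2}}\,\overline{\alpha_{s-1}}^{-1}$; and the symmetry $\alpha_{r-i}=-\epsilon\alpha_{i}$ makes the complementary factor a solution automatically. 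Your pivot criterion (reducibility iff $\alpha_{i}\equiv\pm1\pmod{p^{n}}$ for some $2\leq i\leq r-2$) is therefore correct for $p$ odd, and the Galois-ring argument settles the generic case honestly: in the local ring $GR(p^{n},2)$ a product of two nonzero elements can vanish only if both factors lie in the maximal ideal $(p)$, which forces $\mu^{2}\equiv-1\pmod{p}$ and hence $p\mid k$.

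The proposal nevertheless falls short of a proof, for three reasons, the first of which you do not acknowledge. (1) You dispose of $k\equiv\pm2\pmod{p}$ by invoking Theorem \ref{30}, but that theorem concerns $\overline{k}=\pm\overline{2}$ in $\mathbb{Z}/p^{n}\mathbb{Z}$, i.e. $k\equiv\pm2\pmod{p^{n}}$. When $p\nmid k$ and $k\equiv\pm2\pmod{p}$ but $k\not\equiv\pm2\pmod{p^{n}}$ (say $N=25$, $k=7$), the theorem asserts irreducibility, yet neither Theorem \ref{30} nor your eigenvalue computation applies: $k^{2}-4$ is then divisible by $p$, so $\mu-\mu^{-1}$ is a zero divisor and the closed form $\alpha_{i}=(\mu^{i}-\mu^{-i})/(\mu-\mu^{-1})$ is unavailable. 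This ramified case needs a genuinely different argument. (2) For the converse direction ($p$ odd, $p\mid k$ implies reducible), you only describe the task --- produce an intermediate index $i$ with $\alpha_{i}\equiv\pm1$ exactly modulo $p^{n}$ --- and carry out no valuation estimate; note that $\alpha_{3}=k^{2}-1$ settles it when $2v_{p}(k)\geq n$, but the case $2v_{p}(k)<n$ is untouched. (3) The whole case $p=2$, which contains the entire trichotomy of part ii), is deferred; worse, your opening assertion (that the minimal monomial size $r$ is the first index with $\alpha_{r}\equiv0$) is false there: for $N=8$, $k=3$ one has $\alpha_{3}\equiv0$ but $G^{3}=\overline{5}\,Id\neq\pm Id$, and the minimal monomial solution has size $6$, because $c^{2}=1$ admits the roots $c=2^{n-1}\pm1$ in $\mathbb{Z}/2^{n}\mathbb{Z}$ besides $\pm1$. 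Items (1)--(3) are where the substance of the theorem of \cite{M3} lies; what you have is a correct and useful reduction of the problem, not a proof of it.
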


\begin{proposition}[\cite{M3} proposition 4.1]
\label{311}

Soit $N=p_{1}^{\alpha_{1}} \ldots p_{r}^{\alpha_{r}}$ avec $r \geq 1$, $p_{i}$ des nombres premiers deux à deux distincts et $\alpha_{i}$ des entiers naturels non nuls. Soit $k=ap_{1}^{\beta_{1}} \ldots p_{r}^{\beta_{r}}$ avec, pour tout $i$, $1 \leq \beta_{i} \leq \alpha_{i}$ et $a$ un entier qui n'est divisible par aucun des $p_{i}$. La solution $\overline{k}$-monomiale minimale de \eqref{p} est de taille $2p_{1}^{\alpha_{1}-\beta_{1}} \ldots p_{r}^{\alpha_{r}-\beta_{r}}$.

\end{proposition}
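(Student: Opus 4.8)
The plan is to decompose the problem through the Chinese Remainder Theorem. Write $l_i$ for the size of the $(k+p_i^{\alpha_i}\mathbb{Z})$-monomial minimal solution of $(E_{p_i^{\alpha_i}})$ and $l$ for the size we want. Proposition \ref{37} already guarantees that $l$ equals $m:={\rm ppcm}(l_i,\,1\le i\le r)$ or $2m$, so the work splits into two independent tasks: computing each $l_i$, and then deciding between $m$ and $2m$. Since the $p_i$ are pairwise distinct primes, once $l_i=2p_i^{\alpha_i-\beta_i}$ is known a one-line computation gives $m=2p_1^{\alpha_1-\beta_1}\cdots p_r^{\alpha_r-\beta_r}$ (the factors $2$ merge, the prime-power parts are coprime), which is exactly the announced value; hence everything reduces to proving $l_i=2p_i^{\alpha_i-\beta_i}$ and $l=m$.

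For the first task I would isolate one prime power $N=p^{\alpha}$, $k=a'p^{\beta}$ with $a'$ invertible modulo $p$ and $1\le\beta\le\alpha$ (this is the shape of $k$ modulo $p_i^{\alpha_i}$). As all entries coincide, $M_n(\overline{k},\ldots,\overline{k})=A^{n}$ with $A=M_1(\overline{k})$, so $l$ is the order of $A$ in ${\rm PSL}_2(\mathbb{Z}/p^{\alpha}\mathbb{Z})$. The Cayley--Hamilton relation $A^{2}=kA-Id=-(Id-a'p^{\beta}A)$ exhibits $-A^{2}=V$ with $V=Id+p^{\beta}W$, $W=-a'A$ invertible; thus $V\equiv Id$ modulo $p^{\beta}$ but not modulo $p^{\beta+1}$ (for $\beta<\alpha$). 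Expanding $V^{p^{j}}$ shows $V$ has order exactly $p^{\alpha-\beta}$, and since $A^{2t}=(-1)^{t}V^{t}$ with $V^{t}\equiv Id$ modulo $p$ never equal to $-Id$, one gets $A^{2t}=\pm Id\iff p^{\alpha-\beta}\mid t$; no odd power of $A$ is $\pm Id$ because $A$ is congruent modulo $p$ to $S$, whose odd powers are never $\pm Id$. Hence $l=2p^{\alpha-\beta}$, in agreement with the special cases recorded in Theorems \ref{30} and \ref{32}.

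For the second task I would exploit the sign information assembled in the preliminary lemmas. Fix $\epsilon_i\in\{-1,1\}$ with $M_{l_i}(k+p_i^{\alpha_i}\mathbb{Z},\ldots)=(\epsilon_i+p_i^{\alpha_i}\mathbb{Z})Id$. Each odd $p_i$ has $l_i$ even, so Lemma \ref{34}(ii) gives $\epsilon_i=-1$; for $p_1=2$, Lemma \ref{35} gives $\epsilon_1=+1$ when $\beta_1<\alpha_1$, whereas $\beta_1=\alpha_1$ forces $\overline{k}=\overline{0}$ and $M_2(\overline{0},\overline{0})=-Id$, i.e.\ $\epsilon_1=-1$. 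Writing $M_m(k+p_i^{\alpha_i}\mathbb{Z},\ldots)=(\epsilon_i+p_i^{\alpha_i}\mathbb{Z})^{m/l_i}Id$ and tracking the parity of each $m/l_i$, a short case check (all $p_i$ odd; $p_1=2$ with $\beta_1<\alpha_1$; $p_1=2$ with $\beta_1=\alpha_1$) shows the signs $\epsilon_i^{\,m/l_i}$ all coincide. The Chinese Remainder Theorem then yields $M_m(\overline{k},\ldots,\overline{k})=\pm Id$, whence $l\mid m$; since Lemma \ref{33} gives $m\mid l$, we conclude $l=m=2p_1^{\alpha_1-\beta_1}\cdots p_r^{\alpha_r-\beta_r}$.

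The main obstacle is the exact order computation in the prime-power step, and concretely the case $p=2$, $\beta=1$. There $V=Id+2W$ only, and the first squaring $V^{2}=Id+4W(Id+W)$ loses a factor of $2$, so the clean expansion $V^{p^{j}}\equiv Id+p^{\beta+j}W$ valid for $p$ odd or $\beta\ge2$ fails; one must verify directly that $W(Id+W)\not\equiv0$ modulo $2$ — which holds because $A\equiv\left(\begin{smallmatrix}0&1\\1&0\end{smallmatrix}\right)$ modulo $2$ — to recover that the $2$-adic valuation increases by exactly one at each subsequent squaring, and hence that $V$ still has order $2^{\alpha-1}$. This delicate point is precisely the source of the exceptional behaviour of the moduli $N=2u$ with $u$ odd that recurs throughout the paper.
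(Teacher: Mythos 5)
This paper never proves Proposition \ref{311}: it is imported from \cite{M3} (proposition 4.1) and used as a black box, so there is no internal proof to compare yours against, and I can only judge your argument on its own merits. On that basis it is essentially correct, and its architecture is the natural one: Lemme \ref{33} and Proposition \ref{37} reduce everything to (a) the prime-power computation $l_i=2p_i^{\alpha_i-\beta_i}$ and (b) deciding between $l=m$ and $l=2m$, where $m={\rm ppcm}(l_i,\,1\le i\le r)$. Your step (a) holds up: with $V:=-A^{2}=Id+p^{\beta}W$, $W=-a'A$ invertible modulo $p$, all matrices involved are polynomials in $A$ and hence commute, so the binomial expansion legitimately gives $V^{p^{j}}=Id+p^{\beta+j}W_{j}$ with $W_{j}\equiv W\ ({\rm mod}\ p)$, whence $V$ has order exactly $p^{\alpha-\beta}$; and your separate treatment of $p=2$, $\beta=1$ via $W(Id+W)\equiv A+A^{2}\equiv\left(\begin{smallmatrix}1&1\\1&1\end{smallmatrix}\right)\not\equiv 0\ ({\rm mod}\ 2)$ is exactly what is needed there. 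Your step (b) also checks out: $\epsilon_i=-1$ for odd $p_i$ (Lemme \ref{34} ii)), $\epsilon_1=+1$ when $p_1=2$ and $\beta_1<\alpha_1$ (Lemme \ref{35}), $\epsilon_1=-1$ when $\beta_1=\alpha_1$ (since then $M_{2}(\overline{0},\overline{0})=-Id$), and in each of your three cases the parities of the $m/l_i$ do make all the signs $\epsilon_i^{m/l_i}$ agree, so the Chinese remainder theorem gives $M_m(\overline{k},\dots,\overline{k})=\pm Id$ and $l=m=2p_1^{\alpha_1-\beta_1}\cdots p_r^{\alpha_r-\beta_r}$.

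The one place you are too quick is the exclusion of $V^{t}=-Id$, which you justify by ``$V^{t}\equiv Id$ modulo $p$''. That reasoning proves nothing when $p=2$, because $-Id\equiv Id\ ({\rm mod}\ 2)$; yet the exclusion is needed at the prime $2$ as well, since otherwise a premature relation $A^{2t}=\pm Id$ with $t<2^{\alpha-\beta}$ is not ruled out and the minimality of $l_1$ is not established. The claim is still true, and the repair is in the same spirit as the fix you already make for the order computation: for $\beta\ge 2$ one has $V^{t}\equiv Id\ ({\rm mod}\ 4)$ while $-Id\not\equiv Id\ ({\rm mod}\ 4)$; for $\beta=1$, $V^{t}\equiv Id+2tW\ ({\rm mod}\ 4)$, and $Id+2tW\equiv -Id$ would force $W\equiv Id\ ({\rm mod}\ 2)$, contradicting $W\equiv\left(\begin{smallmatrix}0&1\\1&0\end{smallmatrix}\right)\ ({\rm mod}\ 2)$. (Your argument that odd powers of $A$ are never $\pm Id$ is fine for all $p$, including $p=2$, since $\pm S\not\equiv \pm Id$ modulo $p$.) With that one line added, your proof is complete.
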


Notons que cette sous-partie regroupe tous les résultats connus jusqu'à maintenant sur la taille des solutions monomiales minimales.

\subsection{Démonstration du théorème principal}
\label{preuve}

Maintenant que tous ces résultats sont fixés, on peut revenir à notre objectif principal. Avant de commencer la preuve, on donne un dernier petit lemme.

\begin{lemma}
\label{312}

$f:x \longmapsto \frac{x+1}{x}$ est strictement décroissante sur $\mathbb{R}^{+*}$.

\end{lemma}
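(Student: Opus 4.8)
The plan is to reduce the claim to the elementary fact that $x \mapsto 1/x$ is strictly decreasing on $\mathbb{R}^{+*}$. The key observation is the algebraic simplification
\[
f(x) = \frac{x+1}{x} = 1 + \frac{1}{x},
\]
which displays $f$ as a constant plus the reciprocal function. Since adding a constant preserves monotonicity, the strict decrease of $f$ is equivalent to the strict decrease of $x \mapsto 1/x$ on $\mathbb{R}^{+*}$.

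From here two routes are available, and I would take whichever fits the paper's conventions best. The differential route computes $f'(x) = -1/x^{2}$, which is strictly negative for every $x > 0$; since $\mathbb{R}^{+*}$ is an interval, a strictly negative derivative on it forces $f$ to be strictly decreasing. The purely algebraic route avoids calculus entirely: for any $0 < x_{1} < x_{2}$ one writes
\[
f(x_{1}) - f(x_{2}) = \frac{1}{x_{1}} - \frac{1}{x_{2}} = \frac{x_{2} - x_{1}}{x_{1} x_{2}},
\]
and observes that the numerator is positive by hypothesis while the denominator is a product of two positive reals, whence $f(x_{1}) > f(x_{2})$.

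There is no genuine obstacle here: the statement is a standard one-line calculus fact included only as a technical convenience for later estimates (presumably comparisons of the type $\frac{p+1}{2}p^{n-1}$ against similar quantities appearing in Lemmas \ref{36} and \ref{361}). I would favor the algebraic comparison, since it is self-contained, requires no appeal to differentiability, and makes the strictness of the inequality completely transparent.
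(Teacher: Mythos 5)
Your proof is correct. The paper's own proof is exactly your first (differential) route: it computes $f'(x)=\frac{x-(x+1)}{x^{2}}=\frac{-1}{x^{2}}<0$ by the quotient rule and concludes that $f$ is strictly decreasing on $\mathbb{R}^{+*}$. The route you say you would favor — writing $f(x)=1+\frac{1}{x}$ and checking directly that $f(x_{1})-f(x_{2})=\frac{x_{2}-x_{1}}{x_{1}x_{2}}>0$ whenever $0<x_{1}<x_{2}$ — is a genuinely different and slightly more elementary argument: it needs neither differentiability nor the mean-value-type theorem that a strictly negative derivative on an interval forces strict decrease. Both arguments are complete; the paper's version is shorter to state, while yours is self-contained and makes the strictness transparent. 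Your reading of the lemma's role is also accurate: it is invoked only to bound ratios of the form $\frac{p+1}{p}$ by their value at the smallest admissible prime (e.g.\ $\frac{p+1}{p}\leq\frac{4}{3}$ for $p\geq 3$) in the size estimates of the propositions of sections \ref{trois} and \ref{cinq}.
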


\begin{proof}

Pour tout $x \in \mathbb{R}^{+*}$, $f'(x)=\frac{x-(x+1)}{x^{2}}=\frac{-1}{x^{2}}<0$. Donc, $f$ est strictement décroissante sur $\mathbb{R}^{+*}$.

\end{proof}

\begin{theorem}
\label{prin}

Soit $N$ un entier différent de 2 qui n'est pas de la forme $3m$ avec $m$ premier avec 6. Les solutions monomiales minimales irréductibles de \eqref{p} sont de taille inférieure ou égale à $N$.

\end{theorem}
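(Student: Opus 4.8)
The plan is to fix $\overline{k}$, to write $l$ for the size of the $\overline{k}$-monomial minimal solution of \eqref{p}, to assume it irreducible, and to prove $l\leq N$. Throughout I set $T_x=\begin{pmatrix}x&-1\\1&0\end{pmatrix}$, so that $M_n(\overline{k},\ldots,\overline{k})=T_k^{\,n}$ and $l$ is the least $n$ with $T_k^{\,n}=\pm Id$. I would first clear the cases where the known estimates already give the bound. If $N=p^{n}$, Lemmas \ref{36} and \ref{361} give $l\leq p^{n}=N$ (the only prime power where this fails is $N=2$, which is excluded). If $N$ has at least two prime factors and is not of the form $2u$ with $u$ odd, Proposition \ref{37} gives $l=m$ or $l=2m$ with $m={\rm ppcm}(l_{i})\leq\prod l_{i}\leq\prod p_{i}^{\alpha_{i}}=N$, so the subcase $l=m$ is done; and if $N=2u$ with $u$ odd, Proposition \ref{38} settles $k$ even ($l\leq N$) and leaves, for $k$ odd, $l={\rm ppcm}(3,h)$. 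Hence only two configurations remain: (A) $l=2m>N$, and (B) $N=2u$ odd, $k$ odd, $l=3h>N$ (with $3\nmid h$); in each I must contradict irreducibility.

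The engine is a cutting criterion. In case (A) one has $T_k^{\,m}=\lambda\,Id$ with $\lambda^{2}\equiv1$ and $\lambda\not\equiv\pm1\ (\mathrm{mod}\ N)$ — the disagreement of the local signs across the $p_i^{\alpha_i}$ is exactly what forces $l=2m$ rather than $l=m$. I claim it suffices to exhibit $j$ with $1\leq j\leq 2m-3$ and $(T_k^{\,j})_{1,1}\equiv\pm1\ (\mathrm{mod}\ N)$. Indeed, since $\det T_k^{\,j}=1$, the condition $(T_k^{\,j})_{1,1}=\pm1$ forces $T_k^{\,j}=\pm(T_xT_y)^{-1}$ with $x=\pm(T_k^{\,j})_{1,2}$, $y=\mp(T_k^{\,j})_{2,1}$, i.e. $(\overline{x},\overline{k},\ldots,\overline{k},\overline{y})$ of size $j+2$ is a solution of \eqref{p}; calling it $(\overline{b})$ and setting $(\overline{a})=(\overline{k-y},\overline{k},\ldots,\overline{k},\overline{k-x})$ of size $2m-j$, a direct evaluation of $\oplus$ yields $(\overline{a})\oplus(\overline{b})=(\overline{k},\ldots,\overline{k})$ of size $2m$. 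As $(\overline{b})$ and the sum are solutions, $(\overline{a})$ is automatically one by the defining property of $\oplus$, and since $j+2\geq3$ and $2m-j\geq3$ this displays the monomial solution as reducible. Conversely, Lemma \ref{plus}(B) shows that if the monomial minimal solution is irreducible then no solution $(\overline{a},\overline{k},\ldots,\overline{k},\overline{b})$ of size $\not\equiv0,2\ (\mathrm{mod}\ 2m)$ exists; so irreducibility is equivalent to the absence of such a cut index, and the whole problem reduces to producing one.

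It remains to find a cut index whenever $l>N$, away from the excluded family. Here I would study $u_j:=(T_k^{\,j})_{1,1}$ modulo each $p_i^{\alpha_i}$ and recombine by the Chinese remainder theorem. The equation $u_j\equiv\pm1$ always has the forced roots $j\equiv0$ and $j\equiv 2m-2\ (\mathrm{mod}\ 2m)$ (the latter because $(T_k^{-2})_{1,1}=-1$), both of which produce a trivial factor of size $2$; the real task is to produce an \emph{interior} root. When $k\equiv\pm2\ (\mathrm{mod}\ p_i^{\alpha_i})$ (the case $l_i=p_i^{\alpha_i}$ of Theorem \ref{31}), $T_{\pm2}$ is $\pm$-unipotent and $u_j\equiv 1+j$, resp. $(-1)^{j}(1+j)$, so $u_j\equiv\pm1$ on full arithmetic progressions in $j$; when $l_i$ is small, e.g. $k\equiv0\ (\mathrm{mod}\ p_i)$ with $T_k$ of $S$-type, $u_j$ is $4$-periodic and hits $\pm1$ only on the even residues with alternating sign. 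The existence of a common interior root with a common sign is thus a compatibility between these progressions, and I expect the bookkeeping to show that it fails exactly when an $S$-type factor of rigid period $4$ comes from $3\parallel N$ together with $k\equiv0\ (\mathrm{mod}\ 3)$ and $k\equiv\pm2$ at every other prime power: this is the family $N=3m$ with $m$ coprime to $6$, where the only roots are the forced ones. In every other case with $l>N$ the milder periods or the extra length — in particular the factor $2$ of case (B), whose mod-$2$ constraint is merely $j\not\equiv2\ (\mathrm{mod}\ 3)$ — push an admissible interior root into $[1,2m-3]$.

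The decisive difficulty is precisely this last point: a uniform argument that the progression-compatibility succeeds for every $N$ outside the excluded family as soon as $l>N$, and fails for that family. This demands careful control of the periods and phases of $u_j$ modulo each $p_i^{\alpha_i}$, with the prime $3$ to the first power singled out as the unique source of the rigidity capable of annihilating all interior roots.
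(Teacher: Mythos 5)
Your reduction of the theorem is set up correctly, and it coincides with the paper's strategy: argue by contraposition, and show that whenever the minimal monomial size $l$ exceeds $N$ (outside the excluded family) there exists a solution $(\overline{x},\overline{k},\ldots,\overline{k},\overline{y})$ of size $s$ with $3\le s\le l-1$, which exhibits the monomial solution as reducible. Your cutting criterion — $(T_k^{\,j})_{1,1}\equiv\mp1\pmod{N}$ for some $1\le j\le l-3$ produces such a solution of size $j+2$, and the $\oplus$-decomposition $(\overline{k-y},\overline{k},\ldots,\overline{k},\overline{k-x})\oplus(\overline{x},\overline{k},\ldots,\overline{k},\overline{y})=(\overline{k},\ldots,\overline{k})$ — is correct, and it is exactly the mechanism the paper exploits (via Lemma \ref{plus} and the Chinese remainder theorem).

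However, the proof stops where the real work begins, and you say so yourself (``the decisive difficulty is precisely this last point''). The paper's Propositions \ref{313} and \ref{314} spend essentially all of their effort on the step you leave as an expectation. Concretely, two things are missing. First, the structural constraints that $l>N$ imposes on the local sizes $l_i$: the paper shows (using Lemmas \ref{36}, \ref{361} and \ref{312}) that $l>N$ forces $l=2\,\mathrm{ppcm}(l_i)$, forces each $l_i$ to equal $p_i^{\alpha_i}$ or $\frac{p_i+1}{2}p_i^{\alpha_i-1}$ with at most one index of the second kind, and forces the $l_i$ to be pairwise coprime — otherwise crude products of the $l_i$ already give $l\le N$. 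You never derive these facts, yet without them the construction cannot even be formulated: one needs pairwise coprime moduli to solve the congruences $s\equiv0\ [l_i]$ ($i\in I$), $s\equiv2\ [l_j]$ ($j\in J$), and one needs the product bounds to place $s$ strictly inside $[3,l-1]$. Second, the sign bookkeeping: producing an interior index is not enough, one must make the local evaluations recombine to a \emph{single} $\pm Id$ modulo $N$; the paper achieves this by splitting into the cases ``all $l_i$ odd'' / ``one $l_{j_0}$ even'' (where Lemma \ref{34}~ii) and Lemma \ref{35} pin down the local signs) and by adjusting the parity of the multiplier $m$ defining $s$, so that the exponents $s/l_i$ and $(s-2)/l_j$ have the required parities. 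This is precisely the ``common sign'' compatibility you identify as an obstruction but do not resolve; the distinction between the excluded family $N=3m$ ($m$ coprime to $6$, where $l_{j_0}=2$ blocks the construction) and all other cases emerges from this analysis, not prior to it. So the proposal is a correct reduction of the theorem to the correct key claim, but the key claim — the heart of the paper's proof — is not proved.
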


Pour démontrer le théorème \ref{prin}, on va considérer deux cas qui seront chacun détaillé dans une proposition dédiée. On commence par le résultat suivant :

\begin{proposition}
\label{313}

Soit $N$ un entier naturel qui n'est pas de la forme $2u$ avec $u$ impair ou $3v$ avec $v$ impair non divisible par 3, c'est-à-dire un entier $N$ qui vérifie une des condition suivantes :
\begin{itemize}
\item $N$ impair non divisible par 3;
\item $N$ impair divisible par 9;
\item $N$ pair divisible par 4.
\end{itemize}

\noindent Les solutions monomiales minimales irréductibles de \eqref{p} sont de taille inférieure ou égale à $N$.

\end{proposition}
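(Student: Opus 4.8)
Le plan est de se ramener, via la proposition \ref{37}, au cas d'une taille strictement supérieure à $N$, puis de montrer que la solution correspondante est nécessairement réductible ; par contraposée, on obtiendra la majoration annoncée pour les solutions irréductibles. Fixons $\overline{k}$ et écrivons $N=p_{1}^{\alpha_{1}}\ldots p_{r}^{\alpha_{r}}$, en notant $l_{i}$ la taille de la solution $(k+p_{i}^{\alpha_{i}}\mathbb{Z})$-monomiale minimale de $(E_{p_{i}^{\alpha_{i}}})$ et $l$ celle de la solution $\overline{k}$-monomiale minimale de \eqref{p}. Dans chacun des trois cas, $N$ n'est pas de la forme $2u$ avec $u$ impair (dans le cas pair, $4$ divise $N$, de sorte que le facteur $2$ apparaît avec un exposant $\geq 2$ et que les lemmes \ref{36} et \ref{361} s'appliquent à tous les $p_{i}$). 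La proposition \ref{37} donne alors $l=m$ ou $l=2m$ avec $m={\rm ppcm}(l_{i})\leq N$. Si $l=m$, ou si $l=2m$ avec $m\leq N/2$, alors $l\leq N$. Il ne reste donc qu'à traiter le cas $l=2m$ avec $m>N/2$.

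Supposons $l=2m>N$. Puisque $l$ est minimale, $M_{m}(\overline{k},\ldots,\overline{k})\neq\pm Id$ alors que $M_{2m}(\overline{k},\ldots,\overline{k})=Id$. Comme $M_{m}(k+p_{i}^{\alpha_{i}}\mathbb{Z},\ldots)=(\epsilon_{i}+p_{i}^{\alpha_{i}}\mathbb{Z})^{m/l_{i}}Id$ est scalaire modulo chaque $p_{i}^{\alpha_{i}}$, le lemme chinois montre que $M_{m}(\overline{k},\ldots,\overline{k})=\overline{c}\,Id$ avec $\overline{c}^{2}=\overline{1}$ et $\overline{c}\neq\pm\overline{1}$ : c'est une racine non triviale de l'unité modulo $N$ (son existence traduit, via le lemme \ref{39}, une incompatibilité des signes $\epsilon_{i}$). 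On écrit $N=N_{1}N_{2}$ avec $N_{1},N_{2}>1$ premiers entre eux, $c\equiv 1\ [N_{1}]$ et $c\equiv -1\ [N_{2}]$. De plus, grâce au lemme \ref{312} et aux lemmes \ref{36} et \ref{361}, la condition $m>N/2$ force chaque $l_{i}$ à valoir $p_{i}^{\alpha_{i}}$ ou $\tfrac{p_{i}+1}{2}p_{i}^{\alpha_{i}-1}$ : en effet, si l'un des $l_{i}$ était $\leq\tfrac{p_{i}-1}{2}p_{i}^{\alpha_{i}-1}$, on aurait $m\leq\tfrac{p_{i}-1}{2p_{i}}N<N/2$.

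Pour exhiber une réduction, j'utiliserais les continuants associés à $M_{1}(\overline{k})$. En posant $q_{j}=\big(M_{1}(\overline{k})^{j}\big)_{11}$ (suite vérifiant $q_{j}=\overline{k}\,q_{j-1}-q_{j-2}$, $q_{0}=\overline{1}$, $q_{-1}=\overline{0}$), un calcul direct dans $SL_{2}(\mathbb{Z}/N\mathbb{Z})$ montre qu'un uplet $(\overline{a},\overline{k},\ldots,\overline{k},\overline{b})$ de taille $s$ est solution de \eqref{p} si et seulement si $q_{s-2}=\pm\overline{1}$, les valeurs de $\overline{a},\overline{b}$ étant alors imposées par $M_{1}(\overline{k})^{s-2}$. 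Or l'égalité $M_{2m}(\overline{k},\ldots,\overline{k})=Id$ entraîne la relation d'antisymétrie $q_{2m-2-i}=-q_{i}$. Ainsi, dès qu'il existe $j\in[\![1;2m-3]\!]$ avec $q_{j}=\pm\overline{1}$, les deux uplets $(\overline{a},\overline{k},\ldots,\overline{k},\overline{a})$ de taille $j+2$ et $(\overline{b},\overline{k},\ldots,\overline{k},\overline{b})$ de taille $2m-j$ sont tous deux solutions (leurs coins valant respectivement $q_{j}$ et $q_{2m-j-2}=-q_{j}$), de tailles $\geq 3$, et leur somme au sens de la définition \ref{21} redonne $(\overline{k},\ldots,\overline{k})$ : la condition de recollement se réduit à $\overline{k}\,q_{j}^{2}=\overline{k}$, automatiquement satisfaite. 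Par la définition \ref{23}, la solution est donc réductible.

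Il reste à établir l'existence d'un tel $j$, ce qui constitue le principal obstacle. Par le lemme chinois, $q_{j}=\pm\overline{1}$ équivaut à ce que le coin $\big(M_{1}(\overline{k})^{j}\big)_{11}$ vaille simultanément $+1$ (resp. $-1$) modulo $N_{1}$ et modulo $N_{2}$. Les indices « scalaires » (multiples communs des tailles minimales sur $N_{1}$ et $N_{2}$) ne suffisent pas, car, dans $[\![1;2m-1]\!]$, ils ne fournissent que $j=m$, pour lequel $q_{m}=\overline{c}\neq\pm\overline{1}$ ; il faudra donc repérer un indice où $M_{1}(\overline{k})^{j}$ n'est pas scalaire mais possède néanmoins un coin égal à $\pm 1$. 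C'est ici que l'on exploitera la contrainte obtenue au deuxième paragraphe sur les $l_{i}$, ainsi que les théorèmes \ref{31} et \ref{30} (qui donnent les tailles et les signes sur chaque facteur premier) : on analysera la suite $(q_{j})$ modulo chaque facteur premier via les valeurs propres de $M_{1}(\overline{k})$, en traitant séparément les trois formes possibles de $N$, la parité des quotients $m/l_{i}$ gouvernant l'apparition de la valeur $\pm 1$.
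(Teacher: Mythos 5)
Votre mise en place est correcte et recoupe la première étape de la preuve de l'article : réduction, via la proposition \ref{37} et les lemmes \ref{36} et \ref{361}, au cas $l=2m>N$ avec $m={\rm ppcm}(l_{i})$, contrainte $l_{i}\in\{p_{i}^{\alpha_{i}},\frac{p_{i}+1}{2}p_{i}^{\alpha_{i}-1}\}$, et dichotomie des signes donnée par $M_{m}(\overline{k},\ldots,\overline{k})=\overline{c}\,Id$ avec $\overline{c}^{2}=\overline{1}$, $\overline{c}\neq\pm\overline{1}$. Votre reformulation par continuants est elle aussi exacte : un uplet $(\overline{a},\overline{k},\ldots,\overline{k},\overline{b})$ de taille $s$ est solution si et seulement si $q_{s-2}=\pm\overline{1}$ (les coins étant alors imposés), l'antisymétrie $q_{2m-2-i}=-q_{i}$ est valide, et l'existence d'un $j\in[\![1;2m-3]\!]$ avec $q_{j}=\pm\overline{1}$ entraîne bien la réductibilité.

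Le problème est que toute la difficulté de la proposition est concentrée dans l'existence de ce $j$, que vous ne démontrez pas et que vous reconnaissez vous-même comme l'obstacle principal : en l'état, la preuve est incomplète. L'article produit ce $j$ (sous la forme d'une solution de taille $s=j+2$) par une construction explicite : on pose $I=\{i~:~\epsilon_{i}=+1\}$ et $J=\{j~:~\epsilon_{j}=-1\}$ (c'est votre factorisation $N=N_{1}N_{2}$), on établit d'abord que les $l_{i}$ sont deux à deux premiers entre eux (point que vous ne vérifiez pas mais qui est indispensable), puis on choisit par le lemme chinois un entier $s$ de parité contrôlée tel que $s\equiv 0~[l_{i}]$ pour $i\in I$ et $s\equiv 2~[l_{j}]$ pour $j\in J$, avec des coins $a\equiv k~[p_{i}^{\alpha_{i}}]$ ($i\in I$) et $a\equiv 0~[p_{j}^{\alpha_{j}}]$ ($j\in J$) ; la parité des quotients $s/l_{i}$ et $(s-2)/l_{j}$ donne alors $M_{s}(\overline{a},\overline{k},\ldots,\overline{k},\overline{a})=\pm Id$. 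Un second cas, où l'un des $l_{j_{0}}$ est pair (traité via le lemme \ref{34} ii)), est inévitable, et c'est là --- et seulement là --- qu'intervient l'hypothèse excluant $N=3v$ avec $v$ impair non divisible par 3 : elle sert à exclure $l_{j_{0}}=2$, cas où la construction échoue réellement et où l'énoncé est d'ailleurs faux (cf. propositions \ref{317} et \ref{d1}, qui donnent pour $N=3p$ une solution monomiale minimale irréductible de taille $4p>N$). Votre plan de complétion (analyse de $(q_{j})$ par valeurs propres, parité des $m/l_{i}$) ne dit ni comment produire concrètement l'indice $j$, ni où l'hypothèse sur $N$ serait utilisée ; or toute démonstration correcte doit l'exploiter de façon cruciale, ce qui confirme que le trou est substantiel et non rédactionnel.
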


\begin{proof}

On raisonne par contraposée. Soit $N=p_{1}^{\alpha_{1}}\ldots p_{r}^{\alpha_{r}}$, avec pour tout $i$ dans $[\![1;r]\!]$ $p_{i} \in \mathbb{P}$, $\alpha_{i} \geq 1$ et $p_{i} \neq p_{j}$ si $i \neq j$, un entier naturel qui n'est pas de la forme $2u$ avec $u$ impair ou $3v$ avec $v$ impair non divisible par 3. On suppose qu'il existe un entier $k$ tel que la taille de la solution $\overline{k}$-monomiale minimale est de taille $l$ strictement supérieure à $N$ (notons que l'existence d'un tel $k$ n'est pas garantie, par exemple pour $N=12$). Par les lemmes \ref{36} et \ref{361}, $r \geq 2$. On note, pour tout $i$ dans $[\![1;r]\!]$, $l_{i}$ la taille de la solution $(k+p_{i}^{\alpha_{i}}\mathbb{Z})$-monomiale minimale de $(E_{p_{i}^{\alpha_{i}}})$. On scinde la preuve en deux étapes.
\\
\\\uwave{$1^{{\rm \grave{e}re}}$ étape :} On étudie les conséquences de l'hypothèse $l>N$ sur les $l_{i}$.
\\
\\Par la proposition \ref{37}, $l={\rm ppcm}(l_{i},~1 \leq i \leq r)$ ou $l=2 \times {\rm ppcm}(l_{i},~1 \leq i \leq r)$. Si $l={\rm ppcm}(l_{i},~1 \leq i \leq r)$ alors, par les lemmes \ref{36} et \ref{361} :
\[l \leq \prod_{i=1}^{r} l_{i} \leq \prod_{i=1}^{r} p_{i}^{\alpha_{i}}=N.\]

\noindent Ainsi, $l=2 \times {\rm ppcm}(l_{i},~1 \leq i \leq r)$. 
\\
\\Supposons par l'absurde qu'il existe $j$ dans $[\![1;r]\!]$ tel que $l_{j} \leq p_{j}^{\alpha_{j}-1}$. Comme $2 p_{j}^{\alpha_{j}-1}\leq p_{j}^{\alpha_{j}}$, on a :
\[l=2 \times {\rm ppcm}(l_{i},~1 \leq i \leq r) \leq 2 \prod_{i=1}^{r} l_{i} \leq 2p_{j}^{\alpha_{j}-1} \prod_{i\neq j} p_{i}^{\alpha_{i}} \leq N.\]
\noindent Donc, pour tout $i \in [\![1;r]\!]$, $l_{i} > p_{i}^{\alpha_{i}-1}$.
\\
\\Supposons par l'absurde qu'il existe $j$ dans $[\![1;r]\!]$ tel que $p_{j}$ est impair et $l_{j} \leq \frac{p_{j}-1}{2}p_{j}^{\alpha_{j}-1}$. Comme $(p_{j}-1)p_{j}^{\alpha_{j}-1}<p_{j}^{\alpha_{j}}$, on a :
\[l=2 \times {\rm ppcm}(l_{i},~1 \leq i \leq r) \leq 2 \prod_{i=1}^{r} l_{i} \leq (p_{j}-1)p_{j}^{\alpha_{j}-1} \prod_{i\neq j} p_{i}^{\alpha_{i}}<\prod_{i=1}^{r} p_{i}^{\alpha_{i}}=N.\]
\noindent Donc, pour tout $i \in [\![1;r]\!]$ tel que $p_{i}$ est impair on a $l_{i} > \frac{p_{i}-1}{2}p_{i}^{\alpha_{i}-1}$.
\\
\\Ainsi, par les lemmes \ref{36} et \ref{361}, on a, pour tout $i$ dans $[\![1;r]\!]$, $l_{i}=\frac{p_{i}+1}{2}p_{i}^{\alpha_{i}-1}$ ou $l_{i}=p_{i}^{\alpha_{i}}$. 
\\
\\Supposons par l'absurde qu'il existe $j$ et $h$ dans $[\![1;r]\!]$, $j<h$, tel que $l_{j}=\frac{p_{j}+1}{2}p_{j}^{\alpha_{j}-1}$ et $l_{h}=\frac{p_{h}+1}{2}p_{h}^{\alpha_{h}-1}$. On a :
\begin{eqnarray*}
l &=& 2 \times {\rm ppcm}(l_{i},~1 \leq i \leq r) \\
  &\leq& 2\prod_{i=1}^{r} l_{i} \\
	&\leq& 2\left(\frac{p_{j}+1}{2}p_{j}^{\alpha_{j}-1}\right)\left(\frac{p_{h}+1}{2}p_{h}^{\alpha_{h}-1}\right)\prod_{i \neq j,h} p_{i}^{\alpha_{i}} \\
	&=& \frac{1}{2} \times \frac{p_{j}+1}{p_{j}} \times \frac{p_{h}+1}{p_{h}}\prod_{i=1}^{r} p_{i}^{\alpha_{i}} \\
	&=& \frac{1}{2} \times \frac{p_{j}+1}{p_{j}} \times \frac{p_{h}+1}{p_{h}} \times N \\
	&\leq& \frac{1}{2} \times \frac{3}{2} \times \frac{4}{3} \times N~({\rm lemme~\ref{312}}~{\rm avec}~p_{j}\geq 2~{\rm et}~p_{h}\geq 3) \\
	&=& N.
\end{eqnarray*}

\noindent Ainsi, on a deux cas :
\begin{itemize}
\item pour tout $i$ dans $[\![1;r]\!]$ $l_{i}=p_{i}^{\alpha_{i}}$;
\item il existe $j_{0}$ dans $[\![1;r]\!]$ tel que $l_{j_{0}}=\frac{p_{j_{0}}+1}{2}p_{j_{0}}^{\alpha_{j_{0}}-1}$ et, pour tout $i$ dans $[\![1;r]\!]$ différent de $j_{0}$, $l_{i}=p_{i}^{\alpha_{i}}$.
\\
\end{itemize}

\noindent Soient $i$ et $j$ dans $[\![1;r]\!]$. Supposons que $l_{i}$ et $l_{j}$ ne sont pas premiers entre eux. Il existe un nombre premier $p$ qui divise $l_{i}$ et $l_{j}$ et donc : 
\[l=2 \times {\rm ppcm}(l_{k},~1 \leq k \leq r) \leq \frac{2}{p} \prod_{k=1}^{r} l_{k} \leq \prod_{k=1}^{r} l_{k} \leq \prod_{k=1}^{r} p_{k}^{\alpha_{k}}=N.\]

\noindent Donc, les $l_{i}$ sont deux à deux premiers entre eux.
\\
\\ \uwave{$2^{{\rm \grave{e}me}}$ étape :} On considère deux cas.
\\
\\A) Si pour tout $i$ dans $[\![1;r]\!]$ $l_{i}$ est impair.
\\
\\Pour tout $i \neq j$ , $l_{i}$ et $l_{j}$ sont premiers entre eux et $l=2\prod_{i=1}^{r} l_{i}$. Notons :
\begin{itemize}
\item $I:=\{i \in [\![1;r]\!], M_{l_{i}}(k+p_{i}^{\alpha_{i}}\mathbb{Z},\ldots,k+p_{i}^{\alpha_{i}}\mathbb{Z})=(1+p_{i}^{\alpha_{i}}\mathbb{Z})Id\}$;
\item $J:=\{j \in [\![1;r]\!], M_{l_{j}}(k+p_{j}^{\alpha_{j}}\mathbb{Z},\ldots,k+p_{j}^{\alpha_{j}}\mathbb{Z})=(-1+p_{j}^{\alpha_{j}}\mathbb{Z})Id\}$.
\end{itemize}
\noindent Par le lemme \ref{39} ii), $I$ et $J$ sont non vides. De plus, $I \cup J=[\![1;r]\!]$ et $I \cap J=\emptyset$ (puisque, pour tout $i$, $p_{i}^{\alpha_{i}} \neq 2$).
\\
\\Comme $\prod_{i \in I} l_{i}$ est inversible modulo $\prod_{j \in J} l_{j}$, il existe un entier $m$ compris entre $0$ et $\left(\prod_{j \in J} l_{j}\right)-1$ vérifiant 
\[\left(\prod_{i \in I} l_{i}\right)m \equiv 2 \left[\prod_{j \in J} l_{j}\right].\]

\noindent Si $m$ est impair, on pose $s=\left(\prod_{i \in I} l_{i}\right)m$. Si $m$ est pair, on pose $s=\left(\prod_{i \in I} l_{i}\right)m+\left(\prod_{i=1}^{r} l_{i}\right)$. Dans les deux cas, $s$ est impair. Puisque $l_{i}$ est impair, $l_{i}>2$ pour tout $i$. En particulier, on a $m \neq 0$. Donc, $s \geq 3$, puisque $I$ est non vide et $l_{i} \geq 3$ pour tout $i$. De plus, 
\begin{eqnarray*}
s &\leq& \left(\prod_{i \in I} l_{i}\right)\left(\left(\prod_{j \in J} l_{j}\right)-1\right)+\left(\prod_{i=1}^{r} l_{i}\right) \\
  &=&\left(\prod_{i \in I} l_{i}\right)\left(\prod_{j \in J} l_{j}\right)+\left(\prod_{i=1}^{r} l_{i}\right)-\left(\prod_{i \in I} l_{i}\right) \\
	&=&l-\left(\prod_{i \in I} l_{i}\right) \\
	&<& l.
\end{eqnarray*}

\noindent Pour tout $i \in I$, on a, puisque $l_{i}$ divise $s$ : \[M_{s}(k+p_{i}^{\alpha_{i}}\mathbb{Z},\ldots,k+p_{i}^{\alpha_{i}}\mathbb{Z})=(M_{l_{i}}(k+p_{i}^{\alpha_{i}}\mathbb{Z},\ldots,k+p_{i}^{\alpha_{i}}\mathbb{Z}))^{\frac{s}{l_{i}}}=(1+p_{i}^{\alpha_{i}}\mathbb{Z})^{\frac{s}{l_{i}}} Id=(1+p_{i}^{\alpha_{i}}\mathbb{Z})Id.\]

\noindent Pour tout $j \in J$, on a, puisque $l_{j}$ divise $s-2$ : 
\begin{eqnarray*}
M &=& M_{s}(0+p_{j}^{\alpha_{j}}\mathbb{Z},k+p_{j}^{\alpha_{j}}\mathbb{Z},\ldots,k+p_{j}^{\alpha_{j}}\mathbb{Z},0+p_{j}^{\alpha_{j}}\mathbb{Z}) \\
  &=& M_{1}(0+p_{j}^{\alpha_{j}}\mathbb{Z})M_{l_{j}}(k+p_{j}^{\alpha_{j}}\mathbb{Z},\ldots,k+p_{j}^{\alpha_{j}}\mathbb{Z})^{\frac{s-2}{l_{j}}}M_{1}(0+p_{j}^{\alpha_{j}}\mathbb{Z}) \\
	&=& M_{1}(0+p_{j}^{\alpha_{j}}\mathbb{Z})((-1+p_{j}^{\alpha_{j}})Id)^{\frac{s-2}{l_{j}}}M_{1}(0+p_{j}^{\alpha_{j}}\mathbb{Z}) \\
	&=&-M_{2}(0+p_{j}^{\alpha_{j}}\mathbb{Z},0+p_{j}^{\alpha_{j}}\mathbb{Z})~\left({\rm car}~\frac{s-2}{l_{j}}~{\rm impair}\right) \\
	&=& (1+p_{j}^{\alpha_{j}}\mathbb{Z})Id.
\end{eqnarray*}

\noindent Notons $a$ l'unique entier compris entre 0 et $N-1$ tel que $a \equiv k [p_{i}^{\alpha_{i}}]$ pour tout $i \in I$ et $a \equiv 0 [p_{j}^{\alpha_{j}}]$ pour tout $j \in J$. Par le lemme chinois, on a $M_{s}(\overline{a},\overline{k},\ldots,\overline{k},\overline{a})=Id$. Comme $3 \leq s \leq l-1$, on peut utiliser cette solution pour réduire la solution $\overline{k}$-monomiale minimale de \eqref{p} qui est donc réductible.
\\
\\B) S'il existe un $j_{0}$ dans $[\![1;r]\!]$ tel que $l_{j_{0}}$ est pair. 
\\
\\Comme les $l_{i}$ sont deux à deux premiers entre eux, $l_{i}$ est impair pour tout $i$ dans $[\![1;r]\!]$ différent de $j_{0}$. Supposons que $p_{j_{0}}=2$. On a $\alpha_{j_{0}} \geq 2$ et $l_{j_{0}}=2^{\alpha_{j_{0}}} \geq 4$ ou $l_{j_{0}}=3\times 2^{\alpha_{j_{0}}-1} \geq 6$ (par la première étape). En particulier, $l_{j_{0}} \neq 2$. Ainsi, par le lemme \ref{39} i), on a $l={\rm ppcm}(l_{j},~1 \leq j \leq r)$. Donc, $p_{j_{0}} \neq 2$. De plus, par la première étape, on a nécessairement $l_{j_{0}}=\frac{p_{j_{0}}+1}{2}p_{j_{0}}^{\alpha_{j_{0}}-1}$ (puisque $p_{j_{0}}$ est impair). 
\\
\\Montrons que $l_{j_{0}} \neq 2$. Pour cela, on suppose par l'absurde que $l_{j_{0}}=2$. Comme $\frac{p_{j_{0}}+1}{2} \geq 2$, $l_{j_{0}} \geq 2 \times p_{j_{0}}^{\alpha_{j_{0}}-1}$. Donc, si $\alpha_{j_{0}} \geq 2$, $l_{j_{0}}\geq 6$. On a donc $\alpha_{j_{0}}=1$ et $2=l_{j_{0}}=\frac{p_{j_{0}}+1}{2}$. Ainsi, $p_{j_{0}}=3$ et $\alpha_{j_{0}}=1$. L'hypothèse de départ sur $N$ implique donc que 4 divise $N$. Donc, il existe $h$ dans $[\![1;r]\!]$ tel que $p_{h}=2$. Par la première étape, on a $l_{h}=2^{\alpha_{h}} \neq 1$. Par conséquent, $l_{h}$ et $l_{j_{0}}$ ne sont pas premiers entre eux, ce qui est absurde. Ainsi, $l_{j_{0}} \neq 2$.
\\
\\Par le lemme \ref{34} ii), $M_{l_{j_{0}}}(k+p_{j_{0}}^{\alpha_{j_{0}}} \mathbb{Z},\ldots,k+p_{j_{0}}^{\alpha_{j_{0}}} \mathbb{Z})=(-1+p_{j_{0}}^{\alpha_{j_{0}}} \mathbb{Z})Id$. Pour tout $i \neq j_{0}$, il existe $\epsilon_{i} \in \{-1,1\}$ tel que $M_{l_{i}}(k+p_{i}^{\alpha_{i}} \mathbb{Z},\ldots,k+p_{i}^{\alpha_{i}} \mathbb{Z})=(\epsilon_{i}+p_{i}^{\alpha_{i}} \mathbb{Z})Id$ (notons que contrairement au cas A il est tout fait possible que tous les $\epsilon_{i}$ soient égaux à -1).
\\
\\$l_{j_{0}}$ est inversible modulo $\prod_{i \neq j_{0}} l_{i}$ Donc, il existe un entier $m$ compris entre 0 et $\left(\prod_{i \neq j_{0}} l_{i}\right)-1$ tel que 
\[l_{j_{0}}m \equiv 2 \left[\prod_{i \neq j_{0}} l_{i}\right].\]

\noindent Si $m$ est impair, on pose $s=l_{j_{0}}m$. Si $m$ est pair, on pose $s=l_{j_{0}}\left(m+\prod_{i\neq j_{0}} l_{i}\right)$. Puisque $\prod_{i \neq j_{0}} l_{i}>2$, on a $m \neq 0$. Donc, $s \geq 3$, puisque $l_{j_{0}} \geq 3$. De plus, 
\[s\leq l_{j_{0}}\left(\left(\prod_{i \neq j_{0}} l_{i}\right)-1+\prod_{i\neq j_{0}} l_{i}\right)=2\left(\prod_{i=1}^{r} l_{i}\right)-l_{j_{0}}<l.\]

\noindent Comme $\frac{s}{l_{j_{0}}}$ est impair, on a :
\[M_{s}(k+p_{j_{0}}^{\alpha_{j_{0}}}\mathbb{Z},\ldots,k+p_{j_{0}}^{\alpha_{j_{0}}}\mathbb{Z})=(M_{l_{j_{0}}}(k+p_{j_{0}}^{\alpha_{j_{0}}}\mathbb{Z},\ldots,k+p_{j_{0}}^{\alpha_{j_{0}}}\mathbb{Z}))^{\frac{s}{l_{j_{0}}}}=(-Id)^{\frac{s}{l_{j_{0}}}}=(-1+p_{j_{0}}^{\alpha_{j_{0}}}\mathbb{Z})Id.\]

\noindent De plus, pour tout $i \neq j_{0}$, $\frac{s-2}{l_{i}}$ est pair et donc :
\begin{eqnarray*}
M &=& M_{s}(0+p_{i}^{\alpha_{i}}\mathbb{Z},k+p_{i}^{\alpha_{i}}\mathbb{Z},\ldots,k+p_{i}^{\alpha_{i}}\mathbb{Z},0+p_{i}^{\alpha_{i}}\mathbb{Z}) \\
  &=& M_{1}(0+p_{i}^{\alpha_{i}}\mathbb{Z})M_{l_{i}}(k+p_{i}^{\alpha_{i}}\mathbb{Z},\ldots,k+p_{i}^{\alpha_{i}}\mathbb{Z})^{\frac{s-2}{l_{i}}}M_{1}(0+p_{i}^{\alpha_{i}}\mathbb{Z}) \\
  &=& M_{1}(0+p_{i}^{\alpha_{i}}\mathbb{Z})((\epsilon_{i}+p_{i}^{\alpha_{i}} \mathbb{Z})Id)^{\frac{s-2}{l_{i}}}M_{1}(0+p_{i}^{\alpha_{i}}\mathbb{Z}) \\
	&=& M_{2}(0+p_{i}^{\alpha_{i}}\mathbb{Z},0+p_{i}^{\alpha_{i}}\mathbb{Z}) \\
	&=& (-1+p_{i}^{\alpha_{i}}\mathbb{Z})Id.
\end{eqnarray*}

\noindent Notons $a$ l'unique entier compris entre 0 et $N-1$ tel que $a \equiv 0 [p_{i}^{\alpha_{i}}]$ pour tout $i \neq j_{0}$ et $a \equiv k [p_{j_{0}}^{\alpha_{j_{0}}}]$. Par le lemme chinois, on a $M_{s}(\overline{a},\overline{k},\ldots,\overline{k},\overline{a})=-Id$. Comme $3 \leq s \leq l-1$, on peut utiliser cette solution pour réduire la solution $\overline{k}$-monomiale minimale de \eqref{p} qui est donc réductible.

\end{proof}

\begin{remarks}
{\rm i) Dans le cas B de la preuve ci-dessus, il est possible, comme on l'a indiqué, qu'on ait, pour tout $i$ dans $[\![1;r]\!]$, $M_{l_{i}}(k+p_{i}^{\alpha_{i}}\mathbb{Z},\ldots,k+p_{i}^{\alpha_{i}}\mathbb{Z})=(-1+p_{i}^{\alpha_{i}}\mathbb{Z}) Id$. Par exemple, si $N=35=5 \times 7$ et $k=3$. La solution $\overline{k}$-monomiale minimale de \eqref{p} est de taille 40, la solution $(k+7\mathbb{Z})$-monomiale minimale de $(E_{7})$ est de taille 4 et la solution $(k+5\mathbb{Z})$-monomiale minimale de $(E_{5})$ est de taille 5. On est donc bien dans le cas B de la preuve et on a : 
\[M_{4}(k+7\mathbb{Z},\ldots,k+7\mathbb{Z})=(-1+7\mathbb{Z})Id,~~~~~~~~M_{5}(k+5\mathbb{Z},\ldots,k+5\mathbb{Z})=(-1+5\mathbb{Z})Id.\]

\noindent ii) Dans la preuve précédente, la solution $(k+p_{i}^{\alpha_{i}}\mathbb{Z})$-monomiale minimale de $(E_{p_{i}}^{\alpha_{i}})$ peut être irréductible pour tout $i$ dans $[\![1;r]\!]$. Par exemple, si $N=35=5 \times 7$ et $k=23$. Par la proposition \ref{313}, la solution $\overline{k}$-monomiale minimale de \eqref{p} est réductible. En revanche, comme $k+5\mathbb{Z}=-2+5\mathbb{Z}$, la solution $(k+5\mathbb{Z})$-monomiale minimale de $(E_{5})$ est irréductible (Théorème \ref{30}). De même, la solution $(k+7\mathbb{Z})$-monomiale minimale de $(E_{7})$ est irréductible puisque $k+7\mathbb{Z}=2+7\mathbb{Z}$ (Théorème \ref{30}).
\\
\\iii) Si $N$ n'est pas de la forme $2u$ avec $u$ impair ou $3v$ avec $v$ impair non divisible par 3 alors les solutions de \eqref{p} de taille strictement supérieure à $N$ sont de taille paire.
}
\end{remarks}

Avant de poursuivre la démonstration du théorème \ref{prin}, on va utiliser quelques éléments présentés dans la preuve de la proposition précédente pour obtenir un résultat qui va affiner la proposition \ref{37}.

\begin{proposition}

Si $N$ est un entier naturel divisible par 8 alors les solutions monomiales minimales de \eqref{p} sont de taille inférieure ou égale à $N$.

\end{proposition}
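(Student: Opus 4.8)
Le plan est de raisonner par contraposée. Je suppose donc qu'il existe un entier $k$ tel que la solution $\overline{k}$-monomiale minimale de \eqref{p} soit de taille $l$ strictement supérieure à $N$ et j'en cherche une absurdité. J'écris $N=2^{\alpha_{1}}p_{2}^{\alpha_{2}}\ldots p_{r}^{\alpha_{r}}$ avec $p_{1}=2$ et $p_{2},\ldots,p_{r}$ premiers impairs deux à deux distincts; comme $8$ divise $N$, on a $\alpha_{1}\geq 3$. Le cas $r=1$ est immédiat puisqu'alors $l\leq 2^{\alpha_{1}}=N$ par le lemme \ref{361}, donc je me place dans le cas $r\geq 2$. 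En notant $l_{i}$ la taille de la solution $(k+p_{i}^{\alpha_{i}}\mathbb{Z})$-monomiale minimale de $(E_{p_{i}^{\alpha_{i}}})$, la proposition \ref{37} donne $l=m$ ou $l=2m$ avec $m={\rm ppcm}(l_{i},~1\leq i\leq r)$. Comme $m\leq\prod_{i=1}^{r}l_{i}\leq\prod_{i=1}^{r}p_{i}^{\alpha_{i}}=N$ (lemmes \ref{36} et \ref{361}), l'hypothèse $l>N$ impose $l=2m$.

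Je reprendrais ensuite les idées de la première étape de la proposition \ref{313} pour contraindre les $l_{i}$. D'abord, si l'on avait $l_{1}\leq 2^{\alpha_{1}-1}$, alors $m\leq 2^{\alpha_{1}-1}\prod_{i\geq 2}p_{i}^{\alpha_{i}}=\frac{N}{2}$ et donc $l=2m\leq N$, ce qui est exclu; ainsi $l_{1}>2^{\alpha_{1}-1}$ et le lemme \ref{361} donne $l_{1}\in\{2^{\alpha_{1}},3\cdot 2^{\alpha_{1}-2}\}$. C'est ici qu'intervient de façon cruciale la divisibilité par $8$ : comme $\alpha_{1}\geq 3$, l'exposant $\alpha_{1}-2$ est $\geq 1$, de sorte que $l_{1}$ est \emph{pair} dans les deux cas (et $l_{1}\neq 2$, d'où $k+2^{\alpha_{1}}\mathbb{Z}\neq 0+2^{\alpha_{1}}\mathbb{Z}$). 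Ensuite, si deux des $l_{i}$ étaient pairs, le facteur $2$ serait compté deux fois dans $\prod_{i}l_{i}$ mais une seule dans $m$, d'où $m\leq\frac{1}{2}\prod_{i}l_{i}$ et $l=2m\leq N$, absurde. Par conséquent $l_{1}$ est le seul indice pair : $l_{i}$ est impair pour tout $i\in[\![2;r]\!]$.

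Il me resterait à établir que $M_{m}(\overline{k},\ldots,\overline{k})=Id$, ce qui forcera $l\mid m$, donc $l=m$ (puisque $m\mid l$), en contradiction avec $l=2m$. Pour chaque $i$, j'introduis $\epsilon_{i}\in\{-1,1\}$ tel que $M_{l_{i}}(k+p_{i}^{\alpha_{i}}\mathbb{Z},\ldots,k+p_{i}^{\alpha_{i}}\mathbb{Z})=(\epsilon_{i}+p_{i}^{\alpha_{i}}\mathbb{Z})Id$, de sorte que $M_{m}(k+p_{i}^{\alpha_{i}}\mathbb{Z},\ldots,k+p_{i}^{\alpha_{i}}\mathbb{Z})=(\epsilon_{i}+p_{i}^{\alpha_{i}}\mathbb{Z})^{m/l_{i}}Id$. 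Comme $l_{1}$ est le seul $l_{i}$ pair, on a $v_{2}(m)=v_{2}(l_{1})\geq 1$, donc $m/l_{i}$ est pair pour tout $i\geq 2$ et ce facteur vaut $(1+p_{i}^{\alpha_{i}}\mathbb{Z})Id$. Pour $i=1$, le lemme \ref{35} s'applique (car $\alpha_{1}\geq 2$, $l_{1}$ est pair et $k+2^{\alpha_{1}}\mathbb{Z}\neq 0+2^{\alpha_{1}}\mathbb{Z}$) et donne $\epsilon_{1}=1$, d'où ce facteur vaut aussi $(1+2^{\alpha_{1}}\mathbb{Z})Id$ même si $m/l_{1}$ est impair. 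Le lemme chinois fournit alors $M_{m}(\overline{k},\ldots,\overline{k})=Id$ et la contradiction recherchée.

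La principale difficulté, et la raison d'être de l'hypothèse \og $8$ divise $N$\fg{}, réside dans le contrôle conjoint de la $2$-valuation et du signe $\epsilon_{1}$. D'une part, c'est la condition $\alpha_{1}\geq 3$ qui garantit que $l_{1}$ reste pair y compris lorsque $l_{1}=3\cdot 2^{\alpha_{1}-2}$ — alors que pour $\alpha_{1}=2$ cette valeur vaut $3$, qui est impair, et toute l'argumentation s'effondrerait. D'autre part, le fait que $l_{1}$ soit l'unique $l_{i}$ pair assure que le seul indice pour lequel $m/l_{i}$ puisse être impair est $i=1$; or c'est précisément pour cet indice que le lemme \ref{35} interdit le signe $-1$. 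Ces deux faits réunis éliminent toute possibilité d'obtenir $M_{m}(\overline{k},\ldots,\overline{k})=-Id$, et donc font disparaître le facteur $2$ de la proposition \ref{37}.
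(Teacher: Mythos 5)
Your proof is correct and follows essentially the same strategy as the paper's: the hypothesis $l>N$ forces $l=2\times{\rm ppcm}(l_{i},~1\leq i\leq r)$ with $l_{1}$ even and different from $2$ (exactly where $\alpha_{1}\geq 3$ is used) while all the other $l_{i}$ are odd, and this configuration is then shown to be impossible. The only differences are expository: where the paper imports the first step of the proof of Proposition \ref{313} and then concludes by citing Lemma \ref{39} i) in each case, you establish the parity facts directly (Lemma \ref{361} plus a bound counting the factor $2$ in ${\rm ppcm}(l_{i},~1\leq i\leq r)$) and re-prove the content of Lemma \ref{39} i) inline via Lemma \ref{35} and the Chinese remainder theorem.
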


\begin{proof}

Soit $N=p_{1}^{\alpha_{1}}\ldots p_{r}^{\alpha_{r}}$ avec $p_{1}=2$ et $\alpha_{1} \geq 3$. Pour tout $i$ dans $[\![1;r]\!]$, on note $l_{i}$ la taille de la solution $(k+p_{i}^{\alpha_{i}}\mathbb{Z})$-monomiale minimale de $(E_{p_{i}^{\alpha_{i}}})$ et $l$ la taille de la solution $\overline{k}$-monomiale minimale de \eqref{p}. 
\\
\\On suppose par l'absurde qu'il existe un entier $k$ tel que la taille de la solution $\overline{k}$-monomiale minimale de \eqref{p} est strictement supérieure à $N$.
\\
\\Par la première étape de la démonstration de la proposition précédente, $l=2\times {\rm ppcm}(l_{i},~1\leq i \leq r)$ et on a deux cas possibles : 
\begin{itemize}
\item pour tout $i$ dans $[\![1;r]\!]$ $l_{i}=p_{i}^{\alpha_{i}}$;
\item il existe $j_{0}$ dans $[\![1;r]\!]$ tel que $l_{j_{0}}=\frac{p_{j_{0}}+1}{2}p_{j_{0}}^{\alpha_{j_{0}}-1}$ et, pour tout $i$ dans $[\![1;r]\!]$ différent de $j_{0}$, $l_{i}=p_{i}^{\alpha_{i}}$.
\\
\end{itemize}

\noindent Dans les deux cas, on a également, pour tout $i\neq j$, $l_{i}$ et $l_{j}$ premiers entre eux.
\\
\\Si pour tout $i$ dans $[\![1;r]\!]$ $l_{i}=p_{i}^{\alpha_{i}}$ alors $l_{1}$ est pair et différent de 2 et pour tout $i \geq 2$ $l_{i}$ est impair. Par le lemme \ref{39} i), $l={\rm ppcm}(l_{i},~1\leq i \leq r)$, ce qui absurde. 
\\
\\Donc, il existe $j_{0}$ dans $[\![1;r]\!]$ tel que $l_{j_{0}}=\frac{p_{j_{0}}+1}{2}p_{j_{0}}^{\alpha_{j_{0}}-1}$ et, pour tout $i$ dans $[\![1;r]\!]$ différent de $j_{0}$, $l_{i}=p_{i}^{\alpha_{i}}$. Si $j_{0}=1$ alors, comme $\alpha_{1} \geq 3$, $l_{1}$ est pair  et différent de 2 et pour tout $i \geq 2$ $l_{i}$ est impair. Par le lemme \ref{39} i), $l={\rm ppcm}(l_{i},~1\leq i \leq r)$, ce qui absurde. Ainsi, $j_{0} \geq 2$. Donc, $l_{j_{0}}$ est premier avec $l_{1}$ et en particulier $l_{j_{0}}$ est impair. Ainsi, $l_{1}$ est pair et différent de 2 et pour tout $i \geq 2$ $l_{i}$ est impair. Par le lemme \ref{39} i), $l={\rm ppcm}(l_{i},~1\leq i \leq r)$, ce qui absurde. 

\end{proof}

\noindent On considère maintenant le cas manquant :

\begin{proposition}
\label{314}

Soit $N=2m$ avec $m$ impair différent de 1. Les solutions monomiales minimales irréductibles de \eqref{p} sont de taille inférieure ou égale à $N$.

\end{proposition}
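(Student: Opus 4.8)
Le plan est de raisonner par contraposée, dans le même esprit que la proposition \ref{313} : je suppose que la solution $\overline{k}$-monomiale minimale de \eqref{p} est de taille $l > N = 2m$ et je construis une réduction propre. Le point de départ est la proposition \ref{38}, qui sépare nettement les cas $k$ pair et $k$ impair. Si $k$ est pair, alors $l_{1}=2$ et la proposition \ref{38} donne déjà $l \leq N$ : il n'y a rien à réduire. On peut donc supposer $k$ impair, de sorte que $l_{1}=3$ et que la proposition \ref{38} fournit l'égalité \emph{exacte} $l = {\rm ppcm}(3,h)$, où $h$ désigne la taille de la solution $(k+m\mathbb{Z})$-monomiale minimale de $(E_{m})$. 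Il est essentiel de noter qu'aucun facteur $2$ parasite n'apparaît ici, précisément parce que $2$ ne divise $N$ qu'à la puissance $1$ ; c'est ce qui rend le cas $N=2m$ réellement différent de celui de la proposition \ref{313}.

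On discute ensuite selon la divisibilité de $h$ par $3$. Si $3 \mid h$, alors $l={\rm ppcm}(3,h)=h \leq 2m = N$ par la proposition \ref{37} appliquée à l'entier impair $m$, et l'hypothèse $l>N$ est impossible. Le seul cas à traiter est donc $3 \nmid h$, pour lequel $l=3h$ ; l'hypothèse $l>N=2m \geq 6$ entraîne alors $h \geq 3$. Le point central est la construction explicite d'une solution réductrice de la forme $(\overline{a},\overline{k},\ldots,\overline{k},\overline{a})$. Pour cela, je choisis $a$ par le lemme chinois de sorte que $a \equiv 0 \; [2]$ et $a \equiv k \; [m]$, et je prends pour $s$ celui des deux entiers $h$ et $2h$ qui est congru à $2$ modulo $3$ (un tel choix existe car $3 \nmid h$).

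Modulo $m$, le uplet est monomial, donc $M_{s}(\overline{a},\overline{k},\ldots,\overline{k},\overline{a}) \equiv \epsilon^{s/h} Id \; [m]$, où $\epsilon \in \{-1,1\}$ vérifie $M_{h}(\overline{k},\ldots,\overline{k}) \equiv \epsilon Id \; [m]$. Modulo $2$, ce même uplet vaut $(\overline{0},\overline{1},\ldots,\overline{1},\overline{0})$ ; comme $M_{1}(\overline{0})=S$, comme $M_{1}(\overline{1})^{3}=Id$ dans $SL_{2}(\mathbb{Z}/2\mathbb{Z})$ et comme $s \equiv 2\;[3]$, on obtient $M_{s}(\overline{0},\overline{1},\ldots,\overline{1},\overline{0}) = S\,M_{1}(\overline{1})^{s-2}\,S = S^{2}=-Id=Id \;[2]$. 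Puisque $-Id$ et $Id$ coïncident modulo $2$, les deux congruences sont compatibles quel que soit le signe $\epsilon^{s/h}$, et le lemme chinois donne $M_{s}(\overline{a},\overline{k},\ldots,\overline{k},\overline{a})=\pm Id$ dans $SL_{2}(\mathbb{Z}/N\mathbb{Z})$ : c'est une solution de \eqref{p} de taille $s$.

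Il reste à vérifier que cette solution réduit bien la solution monomiale. On a $3 \leq s \leq 2h \leq l-1$, et l'identité \[(\overline{k},\ldots,\overline{k}) = (\overline{k-a},\overline{k},\ldots,\overline{k},\overline{k-a}) \oplus (\overline{a},\overline{k},\ldots,\overline{k},\overline{a})\] exprime la solution de taille $l$ comme somme de deux solutions de tailles respectives $l-s+2 \geq 3$ et $s \geq 3$ (le premier facteur étant une solution car $\oplus$ conserve cette propriété), ce qui établit la réductibilité voulue. Le principal obstacle est ici conceptuel plutôt que calculatoire : il faut reconnaître que c'est le nombre premier $2$, porteur de la valeur impaire $l_{1}=3>2$, qui peut faire dépasser $N$ à la taille $l$, tandis que l'effondrement $-Id=Id\;[2]$ fournit simultanément la liberté de signe nécessaire pour recoller une solution plus courte entre les facteurs $2$ et $m$. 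Le seul point délicat est le contrôle de l'intervalle admissible pour $s$ et de la congruence $s \equiv 2\;[3]$.
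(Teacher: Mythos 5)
Votre démonstration est correcte, mais elle emprunte un chemin réellement différent --- et nettement plus court --- que celui de l'article. Le point commun est le schéma général : raisonnement par contraposée et construction, via le lemme chinois, d'une solution de la forme $(\overline{a},\overline{k},\ldots,\overline{k},\overline{a})$ de taille $3 \leq s \leq l-1$ servant à réduire la solution monomiale. Mais l'article travaille avec la décomposition complète $N=2\times p_{2}^{\alpha_{2}}\cdots p_{r}^{\alpha_{r}}$ et la proposition \ref{37}, ce qui l'oblige à distinguer $l={\rm ppcm}(l_{i},~1 \leq i \leq r)$ de $l=2\times{\rm ppcm}(l_{i},~1 \leq i \leq r)$, à établir dans chaque cas des contraintes fines sur les $l_{i}$ (valeurs exactes $p_{i}^{\alpha_{i}}$ ou $\frac{p_{i}+1}{2}p_{i}^{\alpha_{i}-1}$, coprimalité deux à deux, signe commun via le lemme \ref{39}), puis à réadapter les cas A) et B) de la preuve de la proposition \ref{313}. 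Vous court-circuitez tout cela en prenant comme outil central la formule exacte $l={\rm ppcm}(l_{1},h)$ de la proposition \ref{38}, où $h$ est la taille modulo la partie impaire $m$ tout entière : cette formule supprime d'emblée l'ambiguïté du facteur 2. Si $3$ divise $h$, la majoration $h \leq 2m$ (proposition \ref{37} appliquée à l'entier impair $m$) conclut immédiatement ; sinon $l=3h$ et une seule construction suffit, le choix de $s \in \{h,2h\}$ avec $s \equiv 2~[3]$ rendant le uplet monomial modulo $m$ (donc de matrice $(M_{h})^{s/h}=\pm Id$ puisque $h$ divise $s$), tandis que l'égalité $-Id=Id$ modulo 2 recolle les signes quel que soit $\epsilon^{s/h}$. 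Vos vérifications des points délicats sont exactes : existence de $s$, minoration $h \geq 3$ tirée de $3h>2m\geq 6$, encadrement $3 \leq s \leq 2h \leq l-1$, et identité explicite pour $\oplus$ qui justifie la réductibilité au sens de la définition \ref{23}. Ce que l'approche de l'article apporte en contrepartie, ce sont les renseignements structurels sur les tailles locales $l_{i}$ obtenus au passage ; ils ne sont toutefois pas nécessaires pour l'énoncé lui-même, et votre preuve, plus conceptuelle, traite uniformément des situations que l'article doit séparer.
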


\begin{proof}

On raisonne par contraposée. Soit $N=2\times p_{2}^{\alpha_{2}}\ldots p_{r}^{\alpha_{r}}$, avec $r\geq 2$ et, pour tout $i$ dans $[\![2;r]\!]$, $p_{i} \in \mathbb{P}$ impair, $\alpha_{i} \geq 1$ et $p_{i} \neq p_{j}$ si $i \neq j$. On suppose qu'il existe un entier $k$ tel que la taille de la solution $\overline{k}$-monomiale minimale est de taille $l$ strictement supérieure à $N$ (notons que l'existence d'un tel $k$ n'est pas garantie, par exemple pour $N=6$). On note $l_{1}$ la taille de la solution $(k+2\mathbb{Z})$-monomiale minimale de $(E_{2})$, et, pour tout $i$ dans $[\![2;r]\!]$, $l_{i}$ la taille de la solution $(k+p_{i}^{\alpha_{i}}\mathbb{Z})$-monomiale minimale de $(E_{p_{i}^{\alpha_{i}}})$. On a $l_{1}=2$ ou $l_{1}=3$. Si $l_{1}=2$, on a, par la proposition \ref{38}, $l \leq N$. Donc, $l_{1}=3$.
\\
\\Par la proposition \ref{37}, on a $l={\rm ppcm}(l_{i},~1 \leq i \leq r)$ ou $l=2 \times {\rm ppcm}(l_{i},~1 \leq i \leq r)$. On considère chaque cas séparément.
\\
\\ \uwave{$1^{{\rm er}}$ cas :} On suppose que $l={\rm ppcm}(l_{i},~1 \leq i \leq r)$.
\\
\\S'il existe $j$ dans $[\![2;r]\!]$ tel que $l_{j} \leq \frac{p_{j}+1}{2}p_{j}^{\alpha_{j}-1}$. $p_{j} \geq 3$ donc, par le lemme \ref{312}, $\frac{p_{j}+1}{p_{j}} \leq \frac{4}{3}$. On a : 
\[l \leq \prod_{i=1}^{r} l_{i} \leq 3 \times \frac{p_{j}+1}{2}p_{j}^{\alpha_{j}-1} \prod_{i \neq j} p_{i}^{\alpha_{i}} = \frac{3}{4} \times \frac{p_{j}+1}{p_{j}} \times N \leq N.\]

\noindent Ainsi, pour tout $i$ dans $[\![2;r]\!]$, $l_{i} > \frac{p_{i}+1}{2}p_{i}^{\alpha_{i}-1}$. Donc, par le lemme \ref{36}, $l_{i}=p_{i}^{\alpha_{i}}$ pour tout $i$ dans $[\![2;r]\!]$. S'il existe $j$ dans $[\![2;r]\!]$ tel que $l_{j}=3^{\alpha_{j}}$, on a 
\[l \leq 3^{\alpha_{j}} \times \prod_{i \neq 1,j} l_{i}=m < N.\]

\noindent Comme $l={\rm ppcm}(l_{i},~1 \leq i \leq r)$, il existe $\epsilon \in \{-1,1\}$ tel que $M_{l_{i}}(k+p_{i}^{\alpha_{i}}\mathbb{Z},\ldots,k+p_{i}^{\alpha_{i}}\mathbb{Z})=(\epsilon+p_{i}^{\alpha_{i}}\mathbb{Z})Id$ pour tout $i$ dans $[\![2;r]\!]$, puisque $l_{i}$ est impair pour tout $i$ dans $[\![1;r]\!]$ (lemme \ref{39} ii)). 
\\
\\Par ce qui précède, 3 ne divise pas $\prod_{i=2}^{r} l_{i}$. On a donc $\prod_{i=2}^{r} l_{i} \equiv 1,2 [3]$. On distingue les deux cas :
\\
\\i) $\prod_{i=2}^{r} l_{i} \equiv 1 [3]$. On pose $s=\left(\prod_{i=2}^{r} l_{i}\right)+2$. On a $s \geq 3$ et $s \leq m+2<2m=N <l$. On a :
\[M_{s}(k+2\mathbb{Z},\ldots,k+2\mathbb{Z})=(M_{3}(k+2\mathbb{Z},\ldots,k+2\mathbb{Z}))^{\frac{s}{3}}=Id=(-Id)=(-\epsilon+2\mathbb{Z})Id.\]

\noindent De plus, pour tout $i \neq 1$, $\frac{s-2}{l_{i}}$ est impair et donc :
\begin{eqnarray*}
M &=& M_{s}(0+p_{i}^{\alpha_{i}}\mathbb{Z},k+p_{i}^{\alpha_{i}}\mathbb{Z},\ldots,k+p_{i}^{\alpha_{i}}\mathbb{Z},0+p_{i}^{\alpha_{i}}\mathbb{Z}) \\
  &=& M_{1}(0+p_{i}^{\alpha_{i}}\mathbb{Z})M_{l_{i}}(k+p_{i}^{\alpha_{i}}\mathbb{Z},\ldots,k+p_{i}^{\alpha_{i}}\mathbb{Z})^{\frac{s-2}{l_{i}}}M_{1}(0+p_{i}^{\alpha_{i}}\mathbb{Z}) \\
  &=& M_{1}(0+p_{i}^{\alpha_{i}}\mathbb{Z})((\epsilon+p_{i}^{\alpha_{i}} \mathbb{Z})Id)^{\frac{s-2}{l_{i}}}M_{1}(0+p_{i}^{\alpha_{i}}\mathbb{Z}) \\
	&=& (\epsilon+p_{i}^{\alpha_{i}} \mathbb{Z})M_{2}(0+p_{i}^{\alpha_{i}}\mathbb{Z},0+p_{i}^{\alpha_{i}}\mathbb{Z}) \\
	&=&(-\epsilon+p_{i}^{\alpha_{i}} \mathbb{Z})Id.
\end{eqnarray*}

\noindent Notons $a$ l'unique entier compris entre 0 et $N-1$ tel que $a \equiv 0 [p_{i}^{\alpha_{i}}]$ pour tout $2 \leq i \leq r$ et $a \equiv k [2]$. Par le lemme chinois, on a $M_{s}(\overline{a},\overline{k},\ldots,\overline{k},\overline{a})=\overline{-\epsilon}Id$. Comme $3 \leq s \leq l-1$, on peut utiliser cette solution pour réduire la solution $\overline{k}$-monomiale minimale de \eqref{p} qui est donc réductible.
\\
\\ii) $\prod_{i=2}^{r} l_{i} \equiv 2 [3]$. On pose $s=\prod_{i=2}^{r} l_{i}$. On a $s \geq 3$ et $s \leq m<2m=N <l$. On a :
\begin{eqnarray*}
M &=& M_{s}(0+2\mathbb{Z},k+2\mathbb{Z},\ldots,k+2\mathbb{Z},0+2\mathbb{Z}) \\
  &=& M_{1}(0+2\mathbb{Z})(M_{3}(k+2\mathbb{Z},\ldots,k+2\mathbb{Z}))^{\frac{s-2}{3}}M_{1}(0+2\mathbb{Z}) \\
	&=& Id \\
	&=& -Id \\
	&=&(\epsilon+2\mathbb{Z})Id.
\end{eqnarray*}

\noindent De plus, pour tout $i \neq 1$, $\frac{s}{l_{i}}$ est impair et donc :
\[M_{s}(k+p_{i}^{\alpha_{i}}\mathbb{Z},\ldots,k+p_{i}^{\alpha_{i}}\mathbb{Z})=M_{l_{i}}(k+p_{i}^{\alpha_{i}}\mathbb{Z},\ldots,k+p_{i}^{\alpha_{i}}\mathbb{Z})^{\frac{s}{l_{i}}}=((\epsilon+p_{i}^{\alpha_{i}} \mathbb{Z})Id)^{\frac{s}{l_{i}}}=(\epsilon+p_{i}^{\alpha_{i}} \mathbb{Z})Id.\]

\noindent Notons $a$ l'unique entier compris entre 0 et $N-1$ tel que $a \equiv k [p_{i}^{\alpha_{i}}]$ pour tout $2 \leq i \leq r$ et $a \equiv 0 [2]$. Par le lemme chinois, on a $M_{s}(\overline{a},\overline{k},\ldots,\overline{k},\overline{a})=\overline{\epsilon}Id$. Comme $3 \leq s \leq l-1$, on peut utiliser cette solution pour réduire la solution $\overline{k}$-monomiale minimale de \eqref{p} qui est donc réductible.
\\
\\ \uwave{$2^{{\rm \grave{e}me}}$ cas :} On suppose que $l=2\times{\rm ppcm}(l_{i},~1 \leq i \leq r)$. Par la proposition \ref{38}, $r \geq 3$. On étudie les conséquences de l'hypothèse $l>N$ sur les $l_{i}$.
\\
\\Soient $j$ et $h$ dans $[\![1;r]\!]$ avec $j<h$. Si $l_{j}$ et $l_{h}$ ne sont pas premiers entre eux. Il existe un nombre premier $p$ qui divise $l_{j}$ et $l_{h}$. Si $p$ est impair, on a $p \geq 3$ et donc : 
\[l \leq 2 \times \frac{3}{p} \times \prod_{i=2}^{r} l_{i} \leq 2 \times \frac{3}{p} \prod_{i=2}^{r} p_{i}^{\alpha_{i}}= \frac{3}{p}\times N \leq N.\]

\noindent Si $p$ est pair. On a nécessairement $j,h \neq 1$ et donc $l_{j} \leq p_{j}^{\alpha_{j}}$ et $l_{h} \leq p_{h}^{\alpha_{h}}$. De plus, $l_{j} \neq p_{j}^{\alpha_{j}}$ et $l_{h} \neq p_{h}^{\alpha_{h}}$ Donc, $l_{j} \leq \frac{p_{j}+1}{2}p_{j}^{\alpha_{j}-1}$ et $l_{h} \leq \frac{p_{h}+1}{2}p_{h}^{\alpha_{h}-1}$. De plus, puisque 2 divise $l_{j}$ et $l_{h}$, on a 
\begin{eqnarray*}
l &\leq& 2 \times \frac{3}{2} \times \prod_{i=2}^{r} l_{i} \\
	&\leq& 2 \times \frac{3}{2} \times \frac{p_{j}+1}{2}p_{j}^{\alpha_{j}-1} \times \frac{p_{h}+1}{2}p_{h}^{\alpha_{h}-1} \times \prod_{i \neq j,h} p_{i}^{\alpha_{i}} \\
	&=& \frac{3}{8} \times \frac{p_{j}+1}{p_{j}} \times \frac{p_{h}+1}{p_{h}} \times N \\
	&\leq& \frac{3}{8} \times \frac{4}{3} \times \frac{6}{5} \times N~({\rm lemme~\ref{312}}~{\rm avec}~p_{j}\geq 3~{\rm et}~p_{h} \geq 5) \\
	&=& \frac{3}{5}N \\
	&<& N.
\end{eqnarray*}

\noindent Ainsi, pour tout $i \neq j$, $l_{i}$ et $l_{j}$ sont premiers entre eux et $l=2\prod_{i=1}^{r} l_{i}$.
\\
\\On distingue deux cas :
\\
\\A) Si tous les $l_{i}$ sont impairs. La méthode est globalement la même que celle utilisée dans le cas A) de la démonstration de la proposition \ref{313}. Il suffit simplement d'y ajouter une petite subtilité. En effet, si on reprend les définitions de $I$ et $J$ données dans cette preuve alors on a $I \cap J=\{1\}$. Donc, on retire 1 de $I$. $I-\{1\} \neq \emptyset$ car sinon on aurait $J=[\![1;r]\!]$ et $l={\rm ppcm}(l_{i},~1 \leq i \leq r)$, par le lemme \ref{39} ii).
\\
\\B) S'il existe $j_{0}$ dans $[\![1;r]\!]$  tel que $l_{j_{0}}$ est pair. Nécessairement $j_{0} \neq 1$ puisque $l_{1}=3$. De plus, comme les $l_{i}$ sont deux à deux premiers entre eux, on a, pour tout $i \neq j_{0}$, $l_{i}$ impair.
\\
\\Par le lemme \ref{34} ii), $M_{l_{j_{0}}}(k+p_{j_{0}}^{\alpha_{j_{0}}} \mathbb{Z},\ldots,k+p_{j_{0}}^{\alpha_{j_{0}}} \mathbb{Z})=(-1+p_{j_{0}}^{\alpha_{j_{0}}} \mathbb{Z})Id$. Pour tout $i \neq j_{0}$, il existe $\epsilon_{i} \in \{-1,1\}$ tel que $M_{l_{i}}(k+p_{i}^{\alpha_{i}} \mathbb{Z},\ldots,k+p_{i}^{\alpha_{i}} \mathbb{Z})=(\epsilon_{i}+p_{i}^{\alpha_{i}} \mathbb{Z})Id$.
\\
\\$3l_{j_{0}}$ est inversible modulo $\prod_{i \neq 1,j_{0}} l_{i}$ ($\prod_{i \neq 1,j_{0}} l_{i}>1$ car $r \geq 3$). Par conséquent, il existe un entier $m$ compris entre 0 et $\left(\prod_{i \neq 1,j_{0}} l_{i}\right)-1$ tel que 
\[3l_{j_{0}}m \equiv 2 \left[\prod_{i \neq 1, j_{0}} l_{i}\right].\]

\noindent Si $m$ est impair, on pose $s=3l_{j_{0}}m$. Si $m$ est pair, on pose $s=3l_{j_{0}}\left(m+\prod_{i\neq 1,j_{0}} l_{i}\right)$. $r \geq 3$ et $l_{i}$ impair, pour $i \neq 1, j_{0}$. Donc, $\prod_{i \neq 1,j_{0}} l_{i}>2$ et on a $m \neq 0$. Ainsi, $s \geq 3$. De plus, 
\[s\leq 3l_{j_{0}}\left(\left(\prod_{i \neq 1,j_{0}} l_{i}\right)-1+\prod_{i\neq 1,j_{0}} l_{i}\right)=2\left(\prod_{i=1}^{r} l_{i}\right)-3l_{j_{0}}<l.\]

\noindent Comme $\frac{s}{l_{j_{0}}}$ est impair, on a :
\[M_{s}(k+p_{j_{0}}^{\alpha_{j_{0}}}\mathbb{Z},\ldots,k+p_{j_{0}}^{\alpha_{j_{0}}}\mathbb{Z})=(M_{l_{j_{0}}}(k+p_{j_{0}}^{\alpha_{j_{0}}}\mathbb{Z},\ldots,k+p_{j_{0}}^{\alpha_{j_{0}}}\mathbb{Z}))^{\frac{s}{l_{j_{0}}}}=(-Id)^{\frac{s}{l_{j_{0}}}}=(-1+p_{j_{0}}^{\alpha_{j_{0}}}\mathbb{Z})Id.\]

\noindent Comme $s$ est un multiple de 3, on a
\[M_{s}(k+2\mathbb{Z},\ldots,k+2\mathbb{Z})=(M_{3}(k+2\mathbb{Z},\ldots,k+2\mathbb{Z}))^{\frac{s}{3}}=(1+2\mathbb{Z})Id=(-1+2\mathbb{Z})Id.\]

\noindent De plus, pour tout $i \neq 1,j_{0}$, $\frac{s-2}{l_{i}}$ est pair et donc :
\begin{eqnarray*}
M &=& M_{s}(0+p_{i}^{\alpha_{i}}\mathbb{Z},k+p_{i}^{\alpha_{i}}\mathbb{Z},\ldots,k+p_{i}^{\alpha_{i}}\mathbb{Z},0+p_{i}^{\alpha_{i}}\mathbb{Z}) \\
  &=& M_{1}(0+p_{i}^{\alpha_{i}}\mathbb{Z})M_{l_{i}}(k+p_{i}^{\alpha_{i}}\mathbb{Z},\ldots,k+p_{i}^{\alpha_{i}}\mathbb{Z})^{\frac{s-2}{l_{i}}}M_{1}(0+p_{i}^{\alpha_{i}}\mathbb{Z}) \\
  &=& M_{1}(0+p_{i}^{\alpha_{i}}\mathbb{Z})((\epsilon_{i}+p_{i}^{\alpha_{i}} \mathbb{Z})Id)^{\frac{s-2}{l_{i}}}M_{1}(0+p_{i}^{\alpha_{i}}\mathbb{Z}) \\
	&=& M_{2}(0+p_{i}^{\alpha_{i}}\mathbb{Z},0+p_{i}^{\alpha_{i}}\mathbb{Z}) \\
	&=& (-1+p_{i}^{\alpha_{i}}\mathbb{Z})Id.
\end{eqnarray*}

\noindent Notons $a$ l'unique entier compris entre 0 et $N-1$ tel que $a \equiv 0 [p_{i}^{\alpha_{i}}]$ pour tout $i \neq 1,j_{0}$, $a \equiv k [p_{j_{0}}^{\alpha_{j_{0}}}]$ et $a \equiv k [2]$. Par le lemme chinois, on a $M_{s}(\overline{a},\overline{k},\ldots,\overline{k},\overline{a})=-Id$. Comme $3 \leq s \leq l-1$, on peut utiliser cette solution pour réduire la solution $\overline{k}$-monomiale minimale de \eqref{p} qui est donc réductible.

\end{proof}

\begin{remark}
{\rm À la lueur du théorème \ref{30}, on voit que la borne du théorème \ref{prin} est optimale.
}
\end{remark}

\begin{examples}
{\rm On donne ci-dessous quelques exemples simples d'applications du théorème \ref{prin} :
\begin{itemize}
\item La solution $\overline{3}$-monomiale minimale de $(E_{70})$ est de taille 120 donc elle est réductible;
\item La solution $\overline{7}$-monomiale minimale de $(E_{75})$ est de taille 150 donc elle est réductible;
\item La solution $\overline{17}$-monomiale minimale de $(E_{100})$ est de taille 150 donc elle est réductible;
\item La solution $\overline{38}$-monomiale minimale de $(E_{175})$ est de taille 200 donc elle est réductible.
\end{itemize}
}
\end{examples}

La réciproque du théorème \ref{prin} est fausse en général, même si on exclut le cas de la solution $\overline{0}$-monomiale minimale de \eqref{p}. Par exemple, pour $N=9$ et $k=3$. La solution $\overline{3}$-monomiale minimale de \eqref{p} est de taille 6 et on peut la réduire avec la solution $(\overline{6},\overline{3},\overline{3},\overline{6})$. Cependant, on peut essayer d'aller plus loin et tenter de savoir s'il y a beaucoup d'entier $N$ pour lesquels la réciproque est fausse. Pour cela, on peut constater que ce problème est étroitement lié à la recherche des entiers monomialement réductibles, qui sont les entiers $N$ pour lesquels toutes les solutions monomiales minimales non nulles de \eqref{p} sont irréductibles. Plus précisément, on dispose des deux résultat ci-dessous qui ont servi à établir la classification de ces entiers :

\begin{proposition}[\cite{M2} proposition 3.7]
\label{315}

Soient $N=nm$ avec $n$ et $m$ des entiers naturels premiers entre eux différents de 1. On suppose que $m$ est impair et non divisible par 3. Il existe un entier $1 \leq k \leq N-1$ tel que $k$ est premier avec $N$ et tel que la solution $\overline{k}$-monomiale minimale de \eqref{p} est réductible, de taille $6m$ si $n>2$, et $3m$ sinon.

\end{proposition}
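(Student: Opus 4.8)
The plan is to produce an explicit residue $k$ by the Chinese remainder theorem, to read the size of its minimal monomial solution off the prime-power data, and then to exhibit a decomposition witnessing reducibility. Using $\gcd(n,m)=1$, I would fix the integer $k\in\{1,\dots,N-1\}$ with $k\equiv 1\pmod n$ and $k\equiv 2\pmod m$. Since $m$ is odd, $k$ is prime to $n$ and to $m$, hence to $N$, and $k\neq 0$; this is the claimed $k$. Its point is that modulo every prime power dividing $m$ one has $\overline k=\overline 2$, whereas modulo every prime power dividing $n$ one has $\overline k=\overline 1$.

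To compute the size $l$, note that for each prime power $p^{\alpha}$ in the factorisation of $m$ Theorem \ref{30} gives that the $\overline 2$-monomial minimal solution of $(E_{p^{\alpha}})$ has size $p^{\alpha}$ and value $Id$, while for each prime power $q^{\beta}$ in the factorisation of $n$ the identity $M_{3}(\overline 1,\overline 1,\overline 1)=-Id$ from the introduction shows that the $\overline 1$-monomial minimal solution of $(E_{q^{\beta}})$ has size $3$ and value $-Id$. All these local sizes are odd and their ppcm is $\mathrm{ppcm}(m,3)=3m$, as $m$ is prime to $3$. If $n=2$, Proposition \ref{38} gives $l=\mathrm{ppcm}(3,m)=3m$ at once. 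If $n>2$, I would combine Proposition \ref{37} with Lemma \ref{39}(ii): since all local sizes are odd, $l$ equals this ppcm if and only if the local values $M_{l_{i}}$ carry one common sign. Here the $m$-part carries $+1$ and, because $n>2$ forces a prime power $>2$ dividing $n$, the $n$-part carries a genuine $-1$; no common sign exists, so $l=2\,\mathrm{ppcm}=6m$.

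For reducibility, the case $n=2$ is immediate, since $l=3m>2m=N$ and Proposition \ref{314} then forbids irreducibility. For $n>2$ I would build an auxiliary solution $Y=(\overline a,\overline k,\dots,\overline k,\overline a)$ and use the stability of solutions under $\oplus$. Choose $a$ with $a\equiv 0\pmod m$ and $a\equiv 1\pmod n$, and $s$ with $s\equiv 2\pmod m$ and $s\equiv 3\pmod 6$, which is possible because $\gcd(6,m)=1$; the representative then satisfies $3\le s\le 6m-3$. Modulo $m$ the two end entries are $\overline 0$ and the inner block gives $M_{s-2}(\overline 2,\dots,\overline 2)=Id$, whence $M_{s}=M_{1}(\overline 0)^{2}=-Id$; modulo $n$ every entry is $\overline 1$ and $M_{s}=M_{1}(\overline 1)^{s}=-Id$ because $s\equiv 3\pmod 6$. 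Thus $Y$ is a solution. Setting $X=(\overline{k-a},\overline k,\dots,\overline k,\overline{k-a})$ of size $t=l-s+2\ge 3$, a direct expansion of Definition \ref{21} gives $X\oplus Y=(\overline k,\dots,\overline k)$ of size $l$; as $Y$ and $X\oplus Y$ are solutions, the fundamental property of $\oplus$ makes $X$ a solution too, and with $s,t\ge 3$ Definition \ref{23} yields reducibility.

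The step I expect to be the main obstacle is the size computation for $n>2$: one must show that the doubling genuinely occurs, i.e. that the local signs cannot be reconciled, which relies on Lemma \ref{39}(ii) together with the remark that $n>2$ always supplies a local factor exceeding $2$ to carry the sign $-1$. The second delicate point is arranging the three constraints on $s$ at once, namely the congruences modulo $m$ and modulo $6$ that force $M_{s}=-Id$ and the bound $s\le l-1$ that guarantees $t\ge 3$; it is this balance that makes $Y$ both a solution and small enough to reduce the minimal monomial solution.
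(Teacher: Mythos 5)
This paper never actually proves Proposition \ref{315}: it is imported from \cite{M2} (proposition 3.7) as a known ingredient, so there is no internal proof to compare yours with, and your argument must be judged on its own merits. It holds up. Your CRT choice $k\equiv 1\ [n]$, $k\equiv 2\ [m]$ does give $\gcd(k,N)=1$ (coprimality to $n$ comes from $k\equiv 1\ [n]$, coprimality to $m$ from $m$ odd), and the local data are right: size $p_i^{\alpha_i}$ with value $Id$ at each prime power of $m$ by Theorem \ref{30}, size $3$ with value $-Id$ at each prime power of $n$ — though you should add the one-line check that $M_{1}(\overline{1})$ and $M_{2}(\overline{1},\overline{1})$ are never $\pm Id$ modulo $q^{\beta}$, so that the minimal size there really is $3$. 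The crux is the size computation for $n>2$, and you handle it correctly: all local sizes are odd, so Lemma \ref{39} ii) applies, any $n>2$ admits a maximal prime-power divisor exceeding $2$ which carries a genuine $-1$, while the prime powers of $m$ (all $\geq 5$) carry a genuine $+1$; hence no common sign exists and Proposition \ref{37} forces $l=2\times 3m=6m$, whereas Proposition \ref{38} gives $l=3m$ at once when $n=2$. Your reduction for $n>2$ — the tuple $(\overline{a},\overline{k},\dots,\overline{k},\overline{a})$ with $a\equiv 0\ [m]$, $a\equiv 1\ [n]$, of size $s\equiv 2\ [m]$, $s\equiv 3\ [6]$, which evaluates to $-Id$ on both factors and satisfies $3\leq s\leq 6m-3$, hence splits the monomial solution as an $\oplus$-sum of two blocks of size $\geq 3$ — is precisely the surgery this paper performs in its own proofs of Propositions \ref{313} and \ref{314}, so it fits the available toolkit; note that reducibility already follows from $Y$ being a solution, the intermediate deduction that $X$ is also a solution is harmless but not needed by Definition \ref{23}. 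Finally, your appeal to Proposition \ref{314} in the case $n=2$ (size $3m>N$ excludes irreducibility) is legitimate and non-circular, since that proposition is proved in this paper without any use of the present statement.
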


\begin{proposition}[\cite{M2} proposition 3.5]
\label{316}

Soit $N \geq 2$. 
\begin{itemize}
\item Si $N$ est divisible par 16 alors la solution $\overline{\frac{N}{4}}$-monomiale minimale de \eqref{p} est réductible de taille 8.
\item Si $N$ est divisible par le carré d'un nombre premier $p$ alors la solution $\overline{\frac{N}{p}}$-monomiale minimale de \eqref{p} est réductible de taille $2p$.
\end{itemize}

\end{proposition}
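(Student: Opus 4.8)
The plan is to treat the two assertions separately and, in each, first read off the size from Proposition~\ref{311} and then establish reducibility by exhibiting the monomial solution explicitly as a $\oplus$-sum of two genuine solutions of size at least $3$, which is exactly the content of Definition~\ref{23}. For the sizes: in the first case write $N=2^{\alpha}p_2^{\alpha_2}\cdots p_r^{\alpha_r}$ with $\alpha\geq 4$; then $\frac{N}{4}=2^{\alpha-2}p_2^{\alpha_2}\cdots p_r^{\alpha_r}$ is divisible by every prime divisor of $N$ (the exponent of $2$ being $\alpha-2\geq 1$), so Proposition~\ref{311} applies with $a=1$ and yields the size $2\cdot 2^{\alpha-(\alpha-2)}=8$. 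Likewise, for $p$ an odd prime with $p^{2}\mid N$, the integer $\frac{N}{p}$ is divisible by every prime divisor of $N$, and Proposition~\ref{311} gives the size $2p$. I would point out that the prime $2$ is genuinely excluded from the second item: for $p=2$ one has $\overline{N/2}$, whose minimal solution is \emph{irreducible} by Theorem~\ref{32}, which is precisely why the first item deals with $\overline{N/4}$ and requires $16\mid N$.

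The engine of the reducibility part is the observation that $\overline{k}^{\,2}=\overline{0}$ in both cases: indeed $(N/4)^{2}=N\cdot(N/16)\equiv 0$ and $(N/p)^{2}=N\cdot(N/p^{2})\equiv 0 \pmod{N}$, and moreover $4\overline{k}=\overline{0}$ (resp.\ $p\overline{k}=\overline{0}$). Hence $M_{1}(\overline{k})=\left(\begin{smallmatrix}\overline{k}&-1\\ 1&0\end{smallmatrix}\right)$ is a nilpotent perturbation of $S=\left(\begin{smallmatrix}0&-1\\ 1&0\end{smallmatrix}\right)$, and its powers become affine in $\overline{k}$; in particular $M_{1}(\overline{k})^{2}=-Id+\overline{k}\,S$ and $M_{1}(\overline{k})^{4}=Id-2\overline{k}\,S$. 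These two identities keep all the matrix products below very short.

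Next I would produce two building blocks. A direct computation using $\overline{k}^{\,2}=\overline{0}$ shows that $(\overline{-k},\overline{k},\overline{k},\overline{-k})$ is \emph{always} a solution, with $M_{4}=Id$. More generally one checks that $(\overline{2k},\overline{k},\ldots,\overline{k},\overline{2k})$ of size $2s$ is a solution exactly when $(s+1)\overline{k}=\overline{0}$, again with $M_{2s}=Id$. Taking $s=3$ (size $6$) works in the first case since $4\overline{k}=\overline{0}$, and taking $s=p-1$ (size $2p-2$) works in the second since $p\overline{k}=\overline{0}$; both blocks have size $\geq 3$, the bound $2p-2\geq 4$ holding because $p\geq 3$.

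Finally I would assemble the decomposition. Computing $(\overline{2k},\overline{k},\ldots,\overline{k},\overline{2k})\oplus(\overline{-k},\overline{k},\overline{k},\overline{-k})$ via Definition~\ref{21}, the two glued endpoints become $\overline{2k}+\overline{-k}=\overline{k}$ while every remaining entry is $\overline{k}$, so the sum is the all-$\overline{k}$ tuple, of size $(2s)+4-2$, that is $8$ in the first case and $2p$ in the second. Since both summands are solutions of size $\geq 3$, Definition~\ref{23} shows the monomial minimal solution is reducible. The hard part is the middle step, namely verifying that the endpoint-modified tuples are themselves solutions: this reduces to multiplying out the $2\times 2$ products modulo $N$ and checking that the coefficient of $\overline{k}$ collapses to a multiple $(s+1)\overline{k}$ (resp.\ $0\cdot\overline{k}$), which is $\overline{0}$ precisely by the divisibility relations $4\overline{k}=\overline{0}$ and $p\overline{k}=\overline{0}$ recorded above; everything else is bookkeeping.
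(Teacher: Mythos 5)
You should first note that the paper itself offers no proof of Proposition \ref{316}: it is imported verbatim from \cite{M2} (proposition 3.5), so your argument is not competing with an internal proof but replacing a citation — and it does so correctly. The size computations via Proposition \ref{311} are right ($a=1$, with $\beta_i=\alpha_i$ at every prime except the distinguished one, where the exponent drops by $2$, resp.\ $1$). The reducibility mechanism is also sound: under $\overline{k}^{2}=\overline{0}$ one has $M_{1}(\overline{k})^{2}=-Id+\overline{k}S$, whence $M_{4}(\overline{-k},\overline{k},\overline{k},\overline{-k})=Id$ and $M_{2s}(\overline{2k},\overline{k},\ldots,\overline{k},\overline{2k})=(-1)^{s}\bigl(Id-(s+1)\overline{k}S\bigr)$; the latter is $\pm Id$ exactly when $(s+1)\overline{k}=\overline{0}$, which holds for $s=3$ (since $4\overline{k}=\overline{0}$) and for $s=p-1$ (since $p\overline{k}=\overline{0}$). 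Your $\oplus$-computation then reproduces the constant tuple of size $8$, resp.\ $2p$, with both summands of size at least $3$ and the right-hand summand a genuine solution, which is exactly what Definition \ref{23} demands. Two remarks. First, a sign slip: the long block satisfies $M_{2s}=(-1)^{s}Id$, not $Id$; for $s=3$ this is $-Id$, which is harmless because a solution of \eqref{p} only needs to equal $\pm Id$. Second, your exclusion of $p=2$ from the second item is the correct reading of the statement: as you observe, Theorem \ref{32} makes the $\overline{N/2}$-monomial minimal solution irreducible when $4$ divides $N$, which is precisely why the first item treats $\overline{N/4}$ under the hypothesis $16\mid N$. Finally, your decomposition is the one the paper itself gestures at elsewhere: the reductions by $(-\overline{l},\overline{l},\overline{l},-\overline{l})$ in Section \ref{quatre} and by $(-\overline{5},\overline{5},\overline{5},-\overline{5})$ for $N=25$ (remark following Proposition \ref{326}) force, as complementary summand, exactly your block $(\overline{2k},\overline{k},\ldots,\overline{k},\overline{2k})$.
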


Ces résultats permettent de constater qu'il existe de nombreux entiers $N$ pour lesquels il existe une solution monomiale minimale non nulle de \eqref{p} réductible de taille inférieure à $N$.

\subsection{Le cas des entiers de la forme $3m$ avec $m$ premier avec 6}
\label{preuve2}

On va ici considérer le cas laissé en suspens dans le théorème \ref{prin} en démontrant le résultat ci-dessous :

\begin{proposition}
\label{317}

Soit $N=3m$ avec $m$ impair non divisible par 3. Si une solution monomiale minimale de \eqref{p} est de taille $l$ strictement supérieure à $N$ et si $l \neq N+\frac{N}{3}$ alors la solution est réductible.

\end{proposition}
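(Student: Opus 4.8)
The plan is to argue by contraposition: I fix $k$, let $l$ be the size of the $\overline{k}$-monomial minimal solution of \eqref{p}, and assume $l>N$ together with $l\neq N+\frac{N}{3}$. The goal is then to produce a solution $M_{s}(\overline{a},\overline{k},\ldots,\overline{k},\overline{a})=\pm Id$ of size $s$ with $3\leq s\leq l-1$, which reduces the minimal solution exactly as at the end of the proof of Proposition \ref{313}. Write $N=3\,p_{2}^{\alpha_{2}}\cdots p_{r}^{\alpha_{r}}$ with the $p_{i}\geq 5$ odd; by Lemmas \ref{36} and \ref{361} a single prime power forces $l\leq N$, so necessarily $r\geq 2$ (equivalently $m>1$). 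I denote by $l_{1}$ the size of the $(k+3\mathbb{Z})$-monomial minimal solution of $(E_{3})$ and by $l_{i}$, for $i\geq 2$, the size modulo $p_{i}^{\alpha_{i}}$. Applying Theorem \ref{31} to $p=3$ gives $l_{1}=3$ when $3\nmid k$ and $l_{1}=2$ when $3\mid k$.

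The first step is to read off the shape of the $l_{i}$ from the hypothesis $l>N$, imitating the opening of the proof of Proposition \ref{313}. By Proposition \ref{37}, $l$ equals $\mathrm{ppcm}(l_{i})$ or $2\,\mathrm{ppcm}(l_{i})$; the former yields $l\leq\prod_{i}l_{i}\leq\prod_{i}p_{i}^{\alpha_{i}}=N$, so $l=2\,\mathrm{ppcm}(l_{i})$. The product estimates of that proposition, together with Lemmas \ref{36} and \ref{312}, then show that each $l_{i}$ is either $p_{i}^{\alpha_{i}}$ or $\frac{p_{i}+1}{2}p_{i}^{\alpha_{i}-1}$ (with $l_{1}\in\{2,3\}$), that at most one index is of the second type — for two such indices $j<h$ one gets $l\leq\frac12\cdot\frac{p_{j}+1}{p_{j}}\cdot\frac{p_{h}+1}{p_{h}}\,N\leq\frac12\cdot\frac43\cdot\frac65\,N<N$ since the smallest available primes are $3$ and $5$ — and that the $l_{i}$ are pairwise coprime, since a common prime factor $p$ would give $l\leq\frac{2}{p}N\leq N$. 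Hence $l=2\prod_{i}l_{i}$ and exactly three configurations survive: (a) every $l_{i}=p_{i}^{\alpha_{i}}$, so $l=2N=6m$; (b) $l_{1}=2$ while $l_{i}=p_{i}^{\alpha_{i}}$ for $i\geq 2$, so $l=4m=N+\frac{N}{3}$; (c) a single $j_{0}\geq 2$ has $l_{j_{0}}=\frac{p_{j_{0}}+1}{2}p_{j_{0}}^{\alpha_{j_{0}}-1}$ and $l_{i}=p_{i}^{\alpha_{i}}$ otherwise. A short computation shows that $l=N+\frac{N}{3}$ forces $p_{j_{0}}=3$, hence occurs only in (b); so the hypothesis $l\neq N+\frac{N}{3}$ discards exactly case (b), and it remains to reduce the minimal solution in cases (a) and (c).

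For the reduction I distinguish the two templates of Proposition \ref{313}, writing $\epsilon_{i}\in\{-1,1\}$ for the sign determined by $M_{l_{i}}(\overline{k},\ldots,\overline{k})=(\epsilon_{i}+p_{i}^{\alpha_{i}}\mathbb{Z})Id$. Since $l_{1}=3$ and each maximal $l_{i}=p_{i}^{\alpha_{i}}$ is odd, the only possible even value is $l_{j_{0}}$ in case (c) when $p_{j_{0}}\equiv 3\pmod 4$. If all $l_{i}$ are odd (case (a), and case (c) with $l_{j_{0}}$ odd) I replay case A of Proposition \ref{313}: by Lemma \ref{39} ii) the index sets $I=\{i:\epsilon_{i}=1\}$ and $J=\{i:\epsilon_{i}=-1\}$ are both nonempty because $l=2\,\mathrm{ppcm}\neq\mathrm{ppcm}$, I choose an odd $s$ with $3\leq s\leq l-1$ satisfying $l_{i}\mid s$ on $I$ and $l_{i}\mid s-2$ on $J$, and the Chinese remainder theorem together with $S^{2}=-Id$, where $S=M_{1}(\overline{0})$, yields $M_{s}(\overline{a},\overline{k},\ldots,\overline{k},\overline{a})=Id$. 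If instead $l_{j_{0}}$ is even, Lemma \ref{34} ii) forces $\epsilon_{j_{0}}=-1$ and I replay case B: I choose $s$ with $s/l_{j_{0}}$ odd and $l_{i}\mid s-2$ for $i\neq j_{0}$, again $3\leq s\leq l-1$, obtaining $M_{s}(\overline{a},\overline{k},\ldots,\overline{k},\overline{a})=-Id$. In either template $3\leq s\leq l-1$ lets the constructed solution reduce the $\overline{k}$-monomial minimal solution, which is therefore reducible.

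The delicate point is the first step, the reductions being faithful transcriptions of Proposition \ref{313}. Everything hinges on locating which configuration produces the forbidden size: because $3$ is now the smallest prime dividing $N$, the sole non-maximal possibility at the prime $3$ is $l_{1}=2$, and this is exactly the degenerate value $l_{j_{0}}=2$ that the proof of Proposition \ref{313} had to exclude by hand. Checking that this degeneracy coincides precisely with $l=N+\frac{N}{3}=4m$, and that no configuration in cases (a) or (c) can also equal $4m$, is what makes the statement sharp and is where I would spend the most care.
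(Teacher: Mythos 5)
Your proof is correct and takes essentially the same route as the paper's: the paper likewise carries over the first step of Proposition \ref{313} unchanged, observes that the only degenerate possibility $l_{j_{0}}=2$ forces $p_{j_{0}}=3$, $\alpha_{j_{0}}=1$ and hence $l=4m=N+\frac{N}{3}$ (exactly your configuration (b), discarded by hypothesis), and then replays cases A and B of Proposition \ref{313} to construct the reducing solution. Your explicit enumeration of the three surviving configurations with sizes $2N$, $4m$ and $\frac{p_{j_{0}}+1}{p_{j_{0}}}N$ is only a more detailed bookkeeping of that same argument.
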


\begin{proof}

Soit $N=3m$ avec $m$ impair non divisible par 3. On a $N=3p_{2}^{\alpha_{2}}\ldots p_{r}^{\alpha_{r}}$ avec $r \geq 2$ et, pour tout $i$ dans $[\![2;r]\!]$, $p_{i}$ des nombres premiers impairs différents de $p_{1}=3$. On note, pour tout $i$ dans $[\![1;r]\!]$, $l_{i}$ la taille de la solution $(k+p_{i}^{\alpha_{i}}\mathbb{Z})$-monomiale minimale de $(E_{p_{i}^{\alpha_{i}}})$. Supposons qu'il existe un entier $k$ tel que \eqref{p} possède une solution $\overline{k}$-monomiale minimale de \eqref{p} de taille $l$ strictement supérieure à $N$ mais différente de $N+\frac{N}{3}$. On va montrer que cette solution est réductible.
\\
\\La preuve est globalement la même que celle de la proposition \ref{313}. En effet, les éléments développés dans la première étape demeure. Ainsi, $l=2\times {\rm ppcm}(l_{i},~1 \leq i \leq r)$, tous les $l_{i}$ sont premiers entre eux et on a deux cas :
\begin{itemize}
\item pour tout $i$ dans $[\![1;r]\!]$, $l_{i}=p_{i}^{\alpha_{i}}$;
\item il existe $j_{0}$ dans $[\![1;r]\!]$ tel que $l_{j_{0}}=\frac{p_{j_{0}}+1}{2}p_{j_{0}}^{\alpha_{j_{0}}-1}$ et pour tout $i \neq j_{0}$ $l_{i}=p_{i}^{\alpha_{i}}$.
\\
\end{itemize}

\noindent Les éléments de la deuxième étape sont également toujours valides. En effet, si tous les $l_{i}$ sont impairs alors on peut procéder comme dans la partie A) de la preuve de la proposition \ref{313}. Supposons maintenant qu'un des $l_{i}$ est pair. Il y a en un seul et c'est nécessairement $l_{j_{0}}$ (puisque tous les $p_{i}$ sont impairs). Si $l_{j_{0}}=2$ alors $\alpha_{j_{0}}=1$, $p_{j_{0}}=p_{1}=3$ et $l=2\prod_{i=1}^{r} l_{i}=4\prod_{i=2}^{r} p_{i}^{\alpha_{i}}=N+\frac{N}{3}$. Donc, $l_{j_{0}} \neq 2$ et on peut reprendre les éléments développés dans la partie B) de la preuve de la proposition \ref{313}.

\end{proof}

Ce résultat laisse entrevoir la possibilité de démontrer le théorème \ref{25}. On va donc donc exploiter cette piste en effectuant la preuve de la proposition ci-dessous :

\begin{proposition}
\label{d1}

Soient $p$ un nombre premier impair différent de 3 et $N=3p$. Il existe une solution irréductible de \eqref{p} de taille $4p$.

\end{proposition}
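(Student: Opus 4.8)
The plan is to produce, as the announced irreducible solution, the $\overline{k}$-monomiale minimale solution of \eqref{p} for a suitably chosen $k$, and to check it has size exactly $4p=N+\frac{N}{3}$, that is, the single size left aside by la proposition \ref{317}. First I would fix, par le lemme chinois, an integer $k$ with $k\equiv 0~[3]$ and $k\equiv 2~[p]$ (so $\overline{k}\neq\overline{0}$, car $p\geq 5$). Modulo $3$ one has $\overline{k}=\overline{0}\neq\pm\overline{2}$, whence (théorème \ref{31}) the minimal monomial size is $l_{1}=2$ with $M_{2}(\overline{0},\overline{0})=S^{2}=-Id$; modulo $p$, par le théorème \ref{30}, the $\overline{2}$-monomiale minimale solution of $(E_{p})$ is irréductible de taille $l_{2}=p$ avec $M_{p}(\overline{2},\ldots,\overline{2})=Id$. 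Par la proposition \ref{37}, the size $l$ of the $\overline{k}$-monomiale minimale solution of \eqref{p} is ${\rm ppcm}(2,p)=2p$ ou $2\times 2p=4p$. To discriminate, I evaluate $M_{2p}(\overline{k},\ldots,\overline{k})$ composante par composante: modulo $3$ it equals $(-Id)^{p}=-Id$ (car $p$ impair) and modulo $p$ it equals $Id^{2}=Id$, so it is not $\pm Id$ modulo $N$; hence $l\neq 2p$, i.e. $l=4p$.

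The heart of the proof is irreducibility, which I would first reduce to a statement about monomial blocks. If $C=(\overline{k},\ldots,\overline{k})$ of size $4p$ were réductible, then $C\sim A\oplus B$ with $B$ une solution; since the $\sim$-class of an all-equal tuple is reduced to itself, one gets $A\oplus B=C$ as tuples, and reading off the definition of $\oplus$ forces $B=(\overline{a},\overline{k},\ldots,\overline{k},\overline{b})$ for some endpoints and some size $s$, with $3\leq s\leq 4p-1$ (the upper bound coming from $A$ having size $4p-s+2\geq 3$). Conversely any such solution reduces $C$. So I only need to show: \eqref{p} has no solution of the form $(\overline{a},\overline{k},\ldots,\overline{k},\overline{b})$ of size $s$ for $3\leq s\leq 4p-1$.

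I would rule these out par le lemme chinois, treating the two prime factors separately and, crucially, tracking the sign. Modulo $3$ the interior entries are $\overline{0}$, so $M_{s}(\overline{a},\overline{0},\ldots,\overline{0},\overline{b})=M_{1}(\overline{b})\,S^{s-2}\,M_{1}(\overline{a})$; a short $2\times 2$ computation shows this can equal $\pm Id$ only when $s$ est pair, and then $\overline{a}=\overline{b}=\overline{0}~[3]$ with sign $+1$ si $s\equiv 0~[4]$ and $-1$ si $s\equiv 2~[4]$. Modulo $p$ the interior entries are $\overline{2}$; since that monomial minimal solution of $(E_{p})$ is irréductible de taille $p$ (théorème \ref{30}), le lemme \ref{plus} B) applies and gives that $(\overline{a},\overline{2},\ldots,\overline{2},\overline{b})$ is a solution of $(E_{p})$ only for $s\equiv 0~[p]$ (then the tuple is $(\overline{2},\ldots,\overline{2})$, sign $+1$) or $s\equiv 2~[p]$ (then it is $(\overline{0},\overline{2},\ldots,\overline{2},\overline{0})$ and $M_{s}=S\,Id\,S=-Id$, sign $-1$).

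The final step — the one I expect to be the delicate part — is the sign bookkeeping. A solution modulo $N$ needs the mod-$3$ and mod-$p$ signs to agree: if the common sign is $+1$ then $s\equiv 0~[4]$ et $s\equiv 0~[p]$, donc $s\equiv 0~[4p]$; if it is $-1$ then $s\equiv 2~[4]$ et $s\equiv 2~[p]$, donc $s\equiv 2~[4p]$ (lemme chinois, $\gcd(4,p)=1$). Neither class meets $3\leq s\leq 4p-1$, the only representatives being $s=0$ and $s=2$, both too small. Hence no intermediate block solution exists, $C$ is irréductible, and the $\overline{k}$-monomiale minimale solution furnishes an irreducible solution of \eqref{p} de taille $4p$. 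I expect the only real obstacle to be precisely this forced agreement of the two sign conditions: it is the matching of the parity of $s$ modulo $4$ with its class modulo $p$, not the endpoint values, that eliminates every size strictly between $2$ and $4p$.
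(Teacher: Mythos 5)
Your proposal is correct and takes essentially the same route as the paper: the same choice of $k$ ($k\equiv 0~[3]$, $k\equiv \pm 2~[p]$, which you fix by the lemme chinois while the paper writes it explicitly as $k=p+2$ or $k=p-2$ after a case split on $p$ modulo 3), the same computation of the size $4p$ via la proposition \ref{37} and a sign mismatch at $2p$, and the same irreducibility argument reducing to block solutions $(\overline{a},\overline{k},\ldots,\overline{k},\overline{b})$ and deriving a sign contradiction modulo 3 and modulo $p$ using le lemme \ref{plus}. The only differences are in execution: you eliminate every size $s\in[3,4p-1]$ uniformly through the congruences $s\equiv 0~[4p]$ (sign $+1$) and $s\equiv 2~[4p]$ (sign $-1$), computing the mod-3 signs directly from powers of $S$, whereas the paper first restricts to the smaller summand ($l'\leq 2p+1$), pins down $l'=2p$, and then reaches the same contradiction at that single size.
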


\begin{proof}

Comme $p$ est premier différent de 3, on a $p \equiv 1 [3]$ ou $p \equiv 2 [3]$. On va distinguer les deux cas :
\\
\\i) Si $p \equiv 1 [3]$. On pose $k=p+2$ et $l$ la taille de la solution $\overline{k}$-monomiale minimale de \eqref{p}. 
\\
\\$k \equiv 0 [3]$ donc la solution $(k+3\mathbb{Z})$-monomiale minimale de $(E_{3})$ est de taille 2 et $k \equiv 2 [p]$ donc la solution $(k+p\mathbb{Z})$-monomiale minimale de $(E_{p})$ est de taille $p$. Ainsi, $l={\rm ppcm}(2,p)=2p$ ou $l=2 \times {\rm ppcm}(2,p)=4p$ (proposition \ref{37}). Si $l=2p$ alors il existe $\epsilon \in \{-1,1\}$ tel que $M_{2p}(\overline{k},\ldots,\overline{k})=\overline{\epsilon} Id$. Or, on a :
\[M_{2p}(k+3\mathbb{Z},\ldots,k+3\mathbb{Z})=(-1+3\mathbb{Z})^{p}Id=(-1+3\mathbb{Z})Id;\]
\[M_{2p}(k+p\mathbb{Z},\ldots,k+p\mathbb{Z})=(1+p\mathbb{Z})^{2}Id=(1+p\mathbb{Z})Id.\]

\noindent Si $\epsilon=1$ alors $\epsilon+3\mathbb{Z}=1+3\mathbb{Z}$. Or, par ce qui précède, $\epsilon+3\mathbb{Z}=-1+3\mathbb{Z}$. Si $\epsilon=-1$ alors $\epsilon+p\mathbb{Z}=-1+p\mathbb{Z}$. Or, par ce qui précède, $\epsilon+p\mathbb{Z}=1+p\mathbb{Z} \neq -1+p\mathbb{Z}$ (car $p \neq 2$). Donc, $l \neq 2p$. Ainsi, $l=4p$. 
\\
\\Supposons par l'absurde que la solution $\overline{k}$-monomiale minimale de \eqref{p} est réductible. Il existe une solution $(\overline{x},\overline{k},\ldots,\overline{k},\overline{y})$ de \eqref{p} de taille $l' \leq 2p+1$.
\\
\\Comme $p$ divise $N$, $(x+p\mathbb{Z},k+p\mathbb{Z},\ldots,k+p\mathbb{Z},y+p\mathbb{Z})$ est une solution de $(E_{p})$. Or, la solution $(2+p\mathbb{Z})$-monomiale minimale de $(E_{p})$ est irréductible de taille $p$. Donc, par le lemme \ref{plus}, $l' \in \{p, p+2, 2p\}$.
\\
\\De plus, 3 divise $N$. Donc, $(x+3\mathbb{Z},k+3\mathbb{Z},\ldots,k+3\mathbb{Z},y+3\mathbb{Z})$ est une solution de $(E_{3})$. Comme la solution $(0+3\mathbb{Z})$-monomiale minimale de $(E_{3})$ est de taille $2$, on a, par le lemme \ref{plus}, $l'$ pair, c'est-à-dire, $l'=2p$.
\\
\\Ainsi, par le lemme \ref{plus}, on a $x+p\mathbb{Z}=y+p\mathbb{Z}=k+p\mathbb{Z}$ et $x+3\mathbb{Z}=y+3\mathbb{Z}=0+3\mathbb{Z}$. Donc,
\[M_{2p}(x+p\mathbb{Z},k+p\mathbb{Z},\ldots,k+p\mathbb{Z},y+p\mathbb{Z})=(1+p\mathbb{Z})Id,\]
\[M_{2p}(x+3\mathbb{Z},k+3\mathbb{Z},\ldots,k+3\mathbb{Z},y+3\mathbb{Z})=(-1+3\mathbb{Z})Id.\]

\noindent Ceci est absurde. Donc, la solution $\overline{k}$-monomiale minimale de \eqref{p} est irréductible.
\\
\\ii) Si $p \equiv 2 [3]$. On pose $k=p-2$ et $l$ la taille de la solution $\overline{k}$-monomiale minimale de \eqref{p}. 
\\
\\$k \equiv 0 [3]$ donc la solution $(k+3\mathbb{Z})$-monomiale minimale de $(E_{3})$ est de taille 2 et $k \equiv -2 [p]$ donc la solution $(k+p\mathbb{Z})$-monomiale minimale de $(E_{p})$ est de taille $p$. Ainsi, $l={\rm ppcm}(2,p)=2p$ ou $l=2 \times {\rm ppcm}(2,p)=4p$ (proposition \ref{37}). Si $l=2p$ alors il existe $\epsilon \in \{-1,1\}$ tel que $M_{2p}(\overline{k},\ldots,\overline{k})=\overline{\epsilon} Id$. Or, on a :
\[M_{2p}(k+3\mathbb{Z},\ldots,k+3\mathbb{Z})=(-1+3\mathbb{Z})^{p}Id=(-1+3\mathbb{Z})Id;\]
\[M_{2p}(k+p\mathbb{Z},\ldots,k+p\mathbb{Z})=(-1+p\mathbb{Z})^{2}Id=(1+p\mathbb{Z})Id.\]

\noindent Si $\epsilon=1$ alors $\epsilon+3\mathbb{Z}=1+3\mathbb{Z}$. Or, par ce qui précède, $\epsilon+3\mathbb{Z}=-1+3\mathbb{Z}$. Si $\epsilon=-1$ alors $\epsilon+p\mathbb{Z}=-1+p\mathbb{Z}$. Or, par ce qui précède, $\epsilon+p\mathbb{Z}=1+p\mathbb{Z} \neq -1+p\mathbb{Z}$ (car $p \neq 2$). Donc, $l \neq 2p$. Ainsi, $l=4p$.
\\
\\Supposons par l'absurde que la solution $\overline{k}$-monomiale minimale de \eqref{p} est réductible. Il existe une solution $(\overline{x},\overline{k},\ldots,\overline{k},\overline{y})$ de \eqref{p} de taille $l' \leq 2p+1$.
\\
\\Comme $p$ divise $N$, $(x+p\mathbb{Z},k+p\mathbb{Z},\ldots,k+p\mathbb{Z},y+p\mathbb{Z})$ est une solution de $(E_{p})$. Or, la solution $(-2+p\mathbb{Z})$-monomiale minimale de $(E_{p})$ est irréductible de taille $p$. Ainsi, par le lemme \ref{plus}, $l' \in \{p, p+2, 2p\}$.
\\
\\De plus, 3 divise $N$. Donc, $(x+3\mathbb{Z},k+3\mathbb{Z},\ldots,k+3\mathbb{Z},y+3\mathbb{Z})$ est une solution de $(E_{3})$. Comme la solution $(0+3\mathbb{Z})$-monomiale minimale de $(E_{3})$ est de taille $2$, on a, par le lemme \ref{plus}, $l'$ pair, c'est-à-dire, $l'=2p$.
\\
\\Ainsi, par le lemme \ref{plus}, on a $x+p\mathbb{Z}=y+p\mathbb{Z}=k+p\mathbb{Z}$ et $x+3\mathbb{Z}=y+3\mathbb{Z}=0+3\mathbb{Z}$. Donc,
\[M_{2p}(x+p\mathbb{Z},k+p\mathbb{Z},\ldots,k+p\mathbb{Z},y+p\mathbb{Z})=(1+p\mathbb{Z})Id,\]
\[M_{2p}(x+3\mathbb{Z},k+3\mathbb{Z},\ldots,k+3\mathbb{Z},y+3\mathbb{Z})=(-1+3\mathbb{Z})Id.\]

\noindent Ceci est absurde. Donc, la solution $\overline{k}$-monomiale minimale de \eqref{p} est irréductible.

\end{proof}

\noindent Grâce à cette proposition, on peut démontrer le résultat souhaité.

\begin{theorem}
\label{25bis}

Il n'existe pas de constante $K \in \mathbb{N}^{*}$ tel que pour tout $N \geq 2$ les solutions irréductibles de \eqref{p} soient de taille inférieure à $N+K$.

\end{theorem}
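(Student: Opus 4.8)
The plan is to argue by contradiction, feeding the family of examples furnished by Proposition \ref{d1} against the hypothetical bound. Suppose, for contradiction, that such a constant $K \in \mathbb{N}^{*}$ exists, so that for every $N \geq 2$ all irreducible solutions of $(E_{N})$ have size strictly less than $N + K$. The idea is to produce, for arbitrarily large $N$, an irreducible solution whose size exceeds $N$ by an amount that itself grows without bound, thereby overwhelming any fixed $K$.

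First I would invoke the infinitude of primes: there are infinitely many odd primes, hence in particular there exists an odd prime $p$ with $p \neq 3$ and $p > K$. Fixing such a prime, I set $N = 3p$. By Proposition \ref{d1}, since $p$ is an odd prime different from $3$, the equation $(E_{N})$ admits an irreducible solution of size $4p$. The crucial observation is then purely arithmetical: one rewrites this size as
\[
4p = 3p + p = N + p,
\]
so that the excess of the size over $N$ equals $p$, which we have chosen larger than $K$.

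Combining these facts gives the contradiction. On one hand, the assumed property forces the size of this irreducible solution to satisfy $4p < N + K = 3p + K$, that is $p < K$. On the other hand, we chose $p > K$. This is absurd, so no constant $K$ with the stated property can exist, which is precisely the assertion of the theorem.

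I expect there to be essentially no genuine obstacle at this stage: the entire analytic and combinatorial difficulty has already been absorbed into Proposition \ref{d1}, whose proof constructs the explicit irreducible solutions of size $4p$ for $N = 3p$ via the values $k = p \pm 2$ and a careful application of Lemma \ref{plus} together with the Chinese remainder theorem. Once that proposition is in hand, the present theorem is a one-line consequence of the fact that $4p - N = N/3$ is unbounded as $p$ ranges over the odd primes distinct from $3$. The only point deserving a word of care is to make explicit that $N = 3p$ indeed satisfies $N \geq 2$ and that $p$ may be chosen simultaneously odd, distinct from $3$, and larger than $K$, all of which follow immediately from there being infinitely many primes.
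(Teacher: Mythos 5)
Your proposal is correct and follows essentially the same route as the paper: both argue by contradiction, invoke Proposition \ref{d1} to produce an irreducible solution of size $4p = 3p + p$ for $N = 3p$ with $p$ an odd prime different from $3$, and derive the contradiction from the infinitude of primes (the paper phrases it as ``all such primes satisfy $p \leq K$, hence there are finitely many primes,'' while you pick a single prime $p > K$; these are the same argument). No gap; the distinction between strict and non-strict inequality in the bound is immaterial here.
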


\begin{proof}

Supposons par l'absurde qu'il existe une constante $K \in \mathbb{N}^{*}$ tel que pour tout $N \in \mathbb{N}^{*}$, $N \geq 2$ les solutions irréductibles de  \eqref{p} sont de taille inférieure à $N+K$.
\\
\\Par la proposition \ref{d1}, on a pour tout nombre premier $p$ impair différent de 3, $3p+p \leq 3p+K$. Donc, $p \leq K$. Ainsi, la suite des nombres premiers $p$ impairs différents de 3 est une suite d'entiers strictement positifs bornée. Donc, il n'y a qu'un nombre fini de nombres premiers, ce qui est absurde.

\end{proof}

\begin{remark}

{\rm On pourrait en fait se contenter dans la proposition \ref{d1} du cas $p \equiv 1 [3]$ et utiliser ensuite le théorème faible de la progression arithmétique de Dirichlet (voir \cite{G} proposition VII.13). Cependant, ce dernier étant beaucoup plus difficile à démontrer que l'infinité des nombres premiers, il semble que la preuve ci-dessus est plus intéressante car plus élémentaire.
}

\end{remark}

\section{Solutions monomiales minimales de taille impaire}
\label{quatre}

Le théorème \ref{prin} montre que, dans la plupart des cas, les solutions de taille strictement supérieure à $N$ sont réductibles. On peut également se demander si certaines tailles conduisent systématiquement à la réductibilité ou à l'irréductibilité d'une solution monomiale minimale. 
\\
\\ \indent Commençons par remarquer que le théorème \ref{30} implique qu'il n'existe pas d'entier $m>2$ qui entraîne automatiquement la réductibilité d'une solution monomiale minimale de taille $m$. On va donc s'intéresser à la potentielle existence d'entiers entraînant l'irréductibilité. Le cas pair est assez rapide. En effet, si $l \geq 3$ alors la solution $\overline{l}$-monomiale minimale de $(E_{l^{2}})$ est réductible, puisqu'on peut la réduire avec $(-\overline{l},\overline{l},\overline{l},-\overline{l})$ ($\overline{l} \neq -\overline{l}$ puisque $l \geq 3$), et elle est de taille $2l$, par la proposition \ref{311}. On sait également qu'une solution monomiale minimale de taille 2 est réductible, puisqu'elle est nécessairement égale à $(\overline{0},\overline{0})$. En revanche, pour tout $N$, les solutions $\overline{k}$-monomiales minimales de $(E_{N})$ de taille 4 sont irréductibles, puisqu'une solution de $(E_{N})$ de taille 4 est réductible si et seulement si elle contient $\pm \overline{1}$, ce qui n'est jamais le cas pour une solution monomiale minimale de taille 4. Il ne reste donc plus qu'à considérer le cas des entiers impairs.

\begin{theorem}
\label{318}

Soit $N>2$. 
\\
\\i) On suppose que $N$ n'est pas de la forme $2m$ avec $m$ impair. Si \eqref{p} possède des solutions monomiales minimales de taille impaire alors elles sont irréductibles.
\\
\\ii) On suppose $N=2m$ avec $m$ impair différent de 1. Si \eqref{p} possède des solutions monomiales minimales de taille $l$ impaire divisible par 9 alors elles sont irréductibles.

\end{theorem}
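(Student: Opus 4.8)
Le plan est de raisonner par contraposée dans les deux cas ; notons $n$ la taille (impaire) de la solution $\overline{k}$-monomiale minimale considérée et supposons-la réductible. Par la Définition \ref{23} il existe alors une solution $(\overline{b_{1}},\ldots,\overline{b_{t}})$ de \eqref{p} et un $s$-uplet $(\overline{a_{1}},\ldots,\overline{a_{s}})$, avec $s,t\geq 3$, tels que $(\overline{k},\ldots,\overline{k})\sim(\overline{a_{1}},\ldots,\overline{a_{s}})\oplus(\overline{b_{1}},\ldots,\overline{b_{t}})$. La classe de $\sim$ d'un uplet constant étant réduite à ce seul uplet, cette somme est \emph{égale} à $(\overline{k},\ldots,\overline{k})$ ; en explicitant $\oplus$ (Définition \ref{21}) on en tire que les deux uplets sont de la forme $(\overline{a_{1}},\overline{k},\ldots,\overline{k},\overline{a_{s}})$ et $(\overline{b_{1}},\overline{k},\ldots,\overline{k},\overline{b_{t}})$, et que tous deux sont des solutions de \eqref{p} (propriété de $\oplus$ rappelée après la Définition \ref{21}). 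Comme $s+t=n+2$, on a $3\leq s,t\leq n-1$ et, $n$ étant impair, l'un des deux uplets — disons celui de taille $t$ — est pair. Tout revient donc à exclure l'existence d'une solution de \eqref{p} de la forme $(\overline{b_{1}},\overline{k},\ldots,\overline{k},\overline{b_{t}})$ avec $t$ pair et $3\leq t\leq n-1$.

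L'étape décisive consiste à ramener l'analyse aux facteurs premiers de $N$. J'écris $N=p_{1}^{\alpha_{1}}\cdots p_{r}^{\alpha_{r}}$ et je note $l_{i}$ la taille de la solution $(k+p_{i}^{\alpha_{i}}\mathbb{Z})$-monomiale minimale de $(E_{p_{i}^{\alpha_{i}}})$. Puisque $n$ est impair, la proposition \ref{37} donne $n={\rm ppcm}(l_{i})$, donc tous les $l_{i}$ sont impairs. Par la proposition \ref{311}, si un $p_{i}$ divisait $k$ alors $l_{i}$ serait pair ; par conséquent $\gcd(k,N)=1$. Le théorème \ref{311b} assure alors que la solution $(k+p_{i}^{\alpha_{i}}\mathbb{Z})$-monomiale minimale de $(E_{p_{i}^{\alpha_{i}}})$ est irréductible \emph{pour tout} $i$, ce qui autorise l'emploi du lemme \ref{plus} B) modulo chaque $p_{i}^{\alpha_{i}}$.

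Je fixe un indice $i$ tel que $p_{i}^{\alpha_{i}}\geq 3$ et je réduis la solution de taille $t$ modulo $p_{i}^{\alpha_{i}}$. Par le lemme \ref{plus} B), on a $t\equiv 0$ ou $t\equiv 2 \pmod{l_{i}}$, la réduction étant respectivement $(\overline{k},\ldots,\overline{k})$ ou $(\overline{0},\overline{k},\ldots,\overline{k},\overline{0})$. Soit $\eta\in\{-1,1\}$ tel que $M_{t}(\overline{b_{1}},\ldots,\overline{b_{t}})=\overline{\eta}\,Id$. Comme $t$ est pair et $l_{i}$ impair, $t/l_{i}$ et $(t-2)/l_{i}$ sont pairs ; un calcul direct (en utilisant $M_{l_{i}}(\ldots)=\pm Id$ et, dans le second cas, $S^{2}=-Id$) donne
\[
M_{t}(\overline{b_{1}},\ldots,\overline{b_{t}})\equiv
\begin{cases}
Id & \text{si } t\equiv 0\ [l_{i}],\\
-Id & \text{si } t\equiv 2\ [l_{i}],
\end{cases}
\pmod{p_{i}^{\alpha_{i}}}.
\]
Comme $p_{i}^{\alpha_{i}}\geq 3$, il vient $\eta=1$ dans le premier cas et $\eta=-1$ dans le second. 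Le signe $\eta$ étant \emph{global}, il ne peut exister simultanément un indice où $t\equiv 0$ et un indice où $t\equiv 2$ : ainsi $t$ est congru à $0$ modulo tous ces $l_{i}$, ou bien congru à $2$ modulo tous ces $l_{i}$.

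Il reste à conclure, et c'est là qu'interviennent les hypothèses. Dans le cas i), $N$ n'étant pas de la forme $2m$ avec $m$ impair, ou bien $N$ est impair, ou bien $4\mid N$ : dans tous les cas chaque $p_{i}^{\alpha_{i}}\geq 3$, le raisonnement précédent porte sur \emph{tous} les indices et fournit $t\equiv 0$ ou $t\equiv 2 \pmod{{\rm ppcm}(l_{i})=n}$, ce qui est impossible pour $3\leq t\leq n-1$. Dans le cas ii), $N=2m$ avec $m$ impair : modulo $2$ on a $Id=-Id$, de sorte que le facteur premier $2$ (d'exposant $1$) n'impose aucune contrainte de signe ; le raisonnement ne porte alors que sur les indices associés aux premiers impairs et donne $t\equiv 0$ ou $t\equiv 2 \pmod{h}$, où $h={\rm ppcm}(l_{i},\ i\geq 2)$ est la taille de la solution $(k+m\mathbb{Z})$-monomiale minimale de $(E_{m})$ (proposition \ref{38}). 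Or l'hypothèse $9\mid n$ entraîne $9\mid h$ (le facteur $3$ issu de $l_{1}=3$ ne fournit qu'une seule puissance de $3$ dans $n={\rm ppcm}(3,h)$), donc $3\mid h$ et $h={\rm ppcm}(3,h)=n$ ; on retombe sur $t\equiv 0$ ou $2\pmod n$, de nouveau impossible. Le point réellement délicat — et le principal obstacle attendu — est précisément ce traitement du facteur $2$ : l'ambiguïté $Id=-Id$ modulo $2$ est inoffensive dès que $4\mid N$ (cas i)), mais doit être compensée dans le cas ii) par la divisibilité par $9$, qui garantit que la partie impaire de $N$ porte à elle seule toute la taille $n$.
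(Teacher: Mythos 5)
Votre démonstration est correcte, mais son organisation diffère réellement de celle de l'article. L'article prouve i) par récurrence sur le nombre de facteurs premiers de $N$ : l'étape de récurrence scinde $N$ en $p_{1}^{\alpha_{1}}\times N/p_{1}^{\alpha_{1}}$, applique le lemme \ref{plus} B) à ces deux modules — l'irréductibilité modulo $N/p_{1}^{\alpha_{1}}$ venant de l'hypothèse de récurrence —, puis conclut par une analyse de signes comportant des sous-cas selon la parité de $l'/l_{1}$ ; le point ii) s'en déduit ensuite en montrant, grâce à la divisibilité par 9, que la taille modulo $N$ égale la taille de la solution modulo $m$, si bien qu'une réduction modulo $N$ induirait une réduction modulo $m$, contredisant i) appliqué à $m$. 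Vous n'utilisez aucune récurrence : chez vous, l'irréductibilité modulo chaque $p_{i}^{\alpha_{i}}$ provient directement du théorème \ref{311b} (après avoir déduit de la proposition \ref{311} que $k$ est premier avec $N$, tous les $l_{i}$ étant impairs), et le lemme \ref{plus} B) est appliqué simultanément à tous les facteurs $p_{i}^{\alpha_{i}}\geq 3$. Votre apport décisif est d'extraire de la réductibilité une solution $(\overline{b_{1}},\overline{k},\ldots,\overline{k},\overline{b_{t}})$ de taille \emph{paire} $t$ — possible précisément parce que $n$ est impair —, ce qui rend $t/l_{i}$ et $(t-2)/l_{i}$ automatiquement pairs et réduit l'analyse de signes à la dichotomie nette $\eta=1$ si $t\equiv 0\ [l_{i}]$, $\eta=-1$ si $t\equiv 2\ [l_{i}]$, sans aucun sous-cas de parité ; les points i) et ii) relèvent alors du même argument, l'hypothèse de divisibilité par 9 ne servant qu'à garantir ${\rm ppcm}(l_{i},\ i\geq 2)=n$ par un calcul de valuation $3$-adique. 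En contrepartie de cette uniformité, l'approche de l'article obtient ii) presque gratuitement une fois i) acquis. Un seul micro-détail de rédaction chez vous : l'identification de ${\rm ppcm}(l_{i},\ i\geq 2)$ avec la taille de la solution modulo $m$ mérite une ligne de justification (cette taille vaut ce ppcm ou son double par la proposition \ref{37}, et elle divise $n$ impair, donc vaut le ppcm) ; mais votre conclusion n'en dépend pas vraiment, puisque l'égalité $n={\rm ppcm}\bigl(3,{\rm ppcm}(l_{i},\ i\geq 2)\bigr)$ fournie par la proposition \ref{37} suffit à votre argument de valuation.
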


\begin{proof}

i) On raisonne par récurrence sur le nombre $r$ de facteurs premiers de $N$. 
\\
\\Soit $N \neq 2$ un entier égal à la puissance d'un nombre premier. On suppose qu'il existe un $k \in \mathbb{N}$ tel que la solution $\overline{k}$-monomiale minimale de $(E_{N})$ est de taille impaire $l$. Comme $l$ est impair, $k$ est premier avec $N$ (proposition \ref{311}) et donc la solution $\overline{k}$-monomiale minimale de $(E_{N})$ est irréductible (théorème \ref{311b}).
\\
\\On suppose qu'il existe un $r$ dans $\mathbb{N}^{*}$ tel que, pour tout entier $N$ qui possède $r$ facteurs premiers et qui n'est pas de la forme $2m$ avec $m$ impair, toutes les solutions monomiales minimales de $(E_{N})$ de taille impaire sont irréductibles. Soit $N \geq 2$ un entier qui n'est de la forme $2m$ avec $m$ impair et qui possède $r+1$ facteurs premiers. $N=p_{1}^{\alpha_{1}}\ldots p_{r+1}^{\alpha_{r+1}}$, avec pour tout $i$ dans $[\![1;r+1]\!]$ $p_{i} \in \mathbb{P}$, $\alpha_{i} \geq 1$ et $p_{i} \neq p_{j}$ si $i \neq j$. Soit $k \in \mathbb{N}$ tel que la solution $\overline{k}$-monomiale minimale de $(E_{N})$ est de taille impaire $l$ (notons qu'un tel $k$ existe toujours, par exemple $k=1$). Notons $l_{1}$ la taille de la solution $(k+p_{1}^{\alpha_{1}}\mathbb{Z})$-monomiale minmale de $(E_{p_{1}^{\alpha_{1}}})$ et $l_{2}$ la taille de la solution $\left(k+\frac{N}{p_{1}^{\alpha_{1}}}\mathbb{Z}\right)$-monomiale minmale de $\left(E_{\frac{N}{p_{1}^{\alpha_{1}}}}\right)$.
\\
\\En procédant comme dans les preuves précédentes, on montre que $l={\rm ppcm}(l_{1},l_{2})$ ou $l=2 \times {\rm ppcm}(l_{1},l_{2})$. Comme $l$ est impair, on a nécessairement $l={\rm ppcm}(l_{1},l_{2})$ et $l_{1}, l_{2}$ impairs. En reprenant les éléments donnés dans la preuve du lemme \ref{39}, on montre qu'il existe $\epsilon \in \{-1,1\}$ tel que $M_{l_{1}}(k+p_{1}^{\alpha_{1}}\mathbb{Z},\ldots,k+p_{1}^{\alpha_{1}}\mathbb{Z})=(\epsilon+p_{1}^{\alpha_{1}}\mathbb{Z})Id$ et $M_{l_{2}}\left(k+\frac{N}{p_{1}^{\alpha_{1}}}\mathbb{Z},\ldots,k+\frac{N}{p_{1}^{\alpha_{1}}}\mathbb{Z}\right)=\left(\epsilon+\frac{N}{p_{1}^{\alpha_{1}}}\mathbb{Z}\right)Id$.
\\
\\Supposons par l'absurde que la solution $\overline{k}$-monomiale minimale de $(E_{N})$ est réductible. Il existe une solution $(\overline{x},\overline{k},\ldots,\overline{k},\overline{y})$ de $(E_{N})$ de taille $3 \leq l' \leq l-1$. Il existe $\beta \in \{-1,1\}$ tel que $M_{l'}(\overline{x},\overline{k},\ldots,\overline{k},\overline{y})=\overline{\beta} Id$.
\\
\\Comme $p_{1}^{\alpha_{1}}$ divise $N$, $(x+p_{1}^{\alpha_{1}}\mathbb{Z},k+p_{1}^{\alpha_{1}}\mathbb{Z},\ldots,k+p_{1}^{\alpha_{1}}\mathbb{Z},y+p_{1}^{\alpha_{1}}\mathbb{Z})$ est une solution de $(E_{p_{1}^{\alpha_{1}}})$. Or, par l'initialisation de la récurrence, la solution $(k+p_{1}^{\alpha_{1}}\mathbb{Z})$-monomiale minimale est irréductible, puisque $l_{1}$ est impair. Donc, par le lemme \ref{plus}, on a $l' \equiv 0 [l_{1}]$ et $x+p_{1}^{\alpha_{1}}\mathbb{Z}=y+p_{1}^{\alpha_{1}}\mathbb{Z}=k+p_{1}^{\alpha_{1}}\mathbb{Z}$ ou bien $l' \equiv 2 [l_{1}]$ et $x+p_{1}^{\alpha_{1}}\mathbb{Z}=y+p_{1}^{\alpha_{1}}\mathbb{Z}=0+p_{1}^{\alpha_{1}}\mathbb{Z}$.
\\
\\Comme $\frac{N}{p_{1}^{\alpha_{1}}}$ divise $N$, $\left(x+\frac{N}{p_{1}^{\alpha_{1}}}\mathbb{Z},k+\frac{N}{p_{1}^{\alpha_{1}}}\mathbb{Z},\ldots,k+\frac{N}{p_{1}^{\alpha_{1}}}\mathbb{Z},y+\frac{N}{p_{1}^{\alpha_{1}}}\mathbb{Z}\right)$ est une solution de $\left(E_{\frac{N}{p_{1}^{\alpha_{1}}}}\right)$. Or, par l'hypothèse de récurrence, la solution $\left(k+\frac{N}{p_{1}^{\alpha_{1}}}\mathbb{Z}\right)$-monomiale minimale est irréductible, puisque $\frac{N}{p_{1}^{\alpha_{1}}}$ n'a que $r$ facteurs premiers, $\frac{N}{p_{1}^{\alpha_{1}}}$ n'est pas de la forme $2m$ avec $m$ impair et $l_{2}$ est impair. Donc, par le lemme \ref{plus}, on a $l' \equiv 0 [l_{2}]$ et $x+\frac{N}{p_{1}^{\alpha_{1}}}\mathbb{Z}=y+\frac{N}{p_{1}^{\alpha_{1}}}\mathbb{Z}=k+\frac{N}{p_{1}^{\alpha_{1}}}\mathbb{Z}$ ou bien $l' \equiv 2 [l_{2}]$ et $x+\frac{N}{p_{1}^{\alpha_{1}}}\mathbb{Z}=y+\frac{N}{p_{1}^{\alpha_{1}}}\mathbb{Z}=0+\frac{N}{p_{1}^{\alpha_{1}}}\mathbb{Z}$.
\\
\\Si $l' \equiv 0 [l_{1}]$ et $l'\equiv 0 [l_{2}]$ alors $l' \equiv 0 [{\rm ppcm}(l_{1},l_{2})]$, ce qui est impossible puisque $3 \leq l' \leq l-1$. Si $l' \equiv 2 [l_{1}]$ et $l'\equiv 2 [l_{2}]$ alors $l' \equiv 2 [{\rm ppcm}(l_{1},l_{2})]$, ce qui est impossible puisque $3 \leq l' \leq l-1$.  Ainsi, on a deux cas : $l' \equiv 0 [l_{1}]$ et $l'\equiv 2 [l_{2}]$ ou bien $l' \equiv 2 [l_{1}]$ et $l'\equiv 0 [l_{2}]$.
\\
\\Supposons que $l' \equiv 0 [l_{1}]$ et $l'\equiv 2 [l_{2}]$. Il existe $(a,b) \in (\mathbb{N}^{*})^{2}$ tel que $l'=al_{1}$ et $l'=bl_{2}+2$. On a $x+p_{1}^{\alpha_{1}}\mathbb{Z}=y+p_{1}^{\alpha_{1}}\mathbb{Z}=k+p_{1}^{\alpha_{1}}\mathbb{Z}$ et $x+\frac{N}{p_{1}^{\alpha_{1}}}\mathbb{Z}=y+\frac{N}{p_{1}^{\alpha_{1}}}\mathbb{Z}=0+\frac{N}{p_{1}^{\alpha_{1}}}\mathbb{Z}$. Supposons que $a$ est pair. On a $l'$ pair et, comme $l_{2}$ est impair, $b$ est pair. On a :
\begin{eqnarray*}
M &=& M_{l'}(x+p_{1}^{\alpha_{1}}\mathbb{Z},k+p_{1}^{\alpha_{1}}\mathbb{Z},\ldots,k+p_{1}^{\alpha_{1}}\mathbb{Z},y+p_{1}^{\alpha_{1}}\mathbb{Z}) \\
  &=& (M_{l_{1}}(k+p_{1}^{\alpha_{1}}\mathbb{Z},\ldots,k+p_{1}^{\alpha_{1}}\mathbb{Z}))^{a} \\
	&=& ((\epsilon+p_{1}^{\alpha_{1}}\mathbb{Z})Id)^{a} \\
	&=& (1+p_{1}^{\alpha_{1}}\mathbb{Z})Id.
\end{eqnarray*}
\noindent De plus, on a :
\begin{eqnarray*}
N &=& M_{l'}\left(x+\frac{N}{p_{1}^{\alpha_{1}}}\mathbb{Z},k+\frac{N}{p_{1}^{\alpha_{1}}}\mathbb{Z},\ldots,k+\frac{N}{p_{1}^{\alpha_{1}}}\mathbb{Z},y+\frac{N}{p_{1}^{\alpha_{1}}}\mathbb{Z}\right) \\
\end{eqnarray*}
\begin{eqnarray*}
N  &=& M_{1}\left(0+\frac{N}{p_{1}^{\alpha_{1}}}\mathbb{Z}\right)\left(M_{l_{2}}\left(k+\frac{N}{p_{1}^{\alpha_{1}}}\mathbb{Z},\ldots,k+\frac{N}{p_{1}^{\alpha_{1}}}\mathbb{Z}\right)\right)^{b}M_{1}\left(0+\frac{N}{p_{1}^{\alpha_{1}}}\mathbb{Z}\right) \\
	&=& M_{1}\left(0+\frac{N}{p_{1}^{\alpha_{1}}}\mathbb{Z}\right)\left(\left(\epsilon+\frac{N}{p_{1}^{\alpha_{1}}}\mathbb{Z}\right)Id\right)^{b}M_{1}\left(0+\frac{N}{p_{1}^{\alpha_{1}}}\mathbb{Z}\right) \\
	&=& M_{1}\left(0+\frac{N}{p_{1}^{\alpha_{1}}}\mathbb{Z}\right)\left(\left(1+\frac{N}{p_{1}^{\alpha_{1}}}\right)Id\right)M_{1}\left(0+\frac{N}{p_{1}^{\alpha_{1}}}\mathbb{Z}\right) \\
	&=& \left(-1+\frac{N}{p_{1}^{\alpha_{1}}}\mathbb{Z}\right)Id.
\end{eqnarray*}

\noindent Si $\beta=1$ alors $\beta+\frac{N}{p_{1}^{\alpha_{1}}}\mathbb{Z}=1+\frac{N}{p_{1}^{\alpha_{1}}}\mathbb{Z}$. Or, par ce qui précède, $\beta+\frac{N}{p_{1}^{\alpha_{1}}}\mathbb{Z}=-1+\frac{N}{p_{1}^{\alpha_{1}}}\mathbb{Z} \neq 1+\frac{N}{p_{1}^{\alpha_{1}}}\mathbb{Z}$ puisque $\frac{N}{p_{1}^{\alpha_{1}}} \neq 2$. Si $\beta=-1$ alors $\beta+p_{1}^{\alpha_{1}}\mathbb{Z}=-1+p_{1}^{\alpha_{1}}\mathbb{Z}$. Or, par ce qui précède, $\beta+p_{1}^{\alpha_{1}}\mathbb{Z}=1+p_{1}^{\alpha_{1}}\mathbb{Z} \neq -1+p_{1}^{\alpha_{1}}\mathbb{Z}$ puisque $p_{1}^{\alpha_{1}} \neq 2$. 
\\
\\Supposons que $a$ est impair. On a $l'$ impair et, comme $l_{2}$ est impair, $b$ est impair. On a :
\begin{eqnarray*}
M &=& M_{l'}(x+p_{1}^{\alpha_{1}}\mathbb{Z},k+p_{1}^{\alpha_{1}}\mathbb{Z},\ldots,k+p_{1}^{\alpha_{1}}\mathbb{Z},y+p_{1}^{\alpha_{1}}\mathbb{Z}) \\
  &=& (M_{l_{1}}(k+p_{1}^{\alpha_{1}}\mathbb{Z},\ldots,k+p_{1}^{\alpha_{1}}\mathbb{Z}))^{a} \\
	&=& ((\epsilon+p_{1}^{\alpha_{1}}\mathbb{Z})Id)^{a} \\
	&=& (\epsilon+p_{1}^{\alpha_{1}}\mathbb{Z})Id.
\end{eqnarray*}
\noindent De plus, on a :
\begin{eqnarray*}
N &=& M_{l'}\left(x+\frac{N}{p_{1}^{\alpha_{1}}}\mathbb{Z},k+\frac{N}{p_{1}^{\alpha_{1}}}\mathbb{Z},\ldots,k+\frac{N}{p_{1}^{\alpha_{1}}}\mathbb{Z},y+\frac{N}{p_{1}^{\alpha_{1}}}\mathbb{Z}\right) \\
  &=& M_{1}\left(0+\frac{N}{p_{1}^{\alpha_{1}}}\mathbb{Z}\right)\left(M_{l_{2}}\left(k+\frac{N}{p_{1}^{\alpha_{1}}}\mathbb{Z},\ldots,k+\frac{N}{p_{1}^{\alpha_{1}}}\mathbb{Z}\right)\right)^{b}M_{1}\left(0+\frac{N}{p_{1}^{\alpha_{1}}}\mathbb{Z}\right) \\
	&=& M_{1}\left(0+\frac{N}{p_{1}^{\alpha_{1}}}\mathbb{Z}\right)\left(\left(\epsilon+\frac{N}{p_{1}^{\alpha_{1}}}\mathbb{Z}\right)Id\right)^{b}M_{1}\left(0+\frac{N}{p_{1}^{\alpha_{1}}}\mathbb{Z}\right) \\
	&=& M_{1}\left(0+\frac{N}{p_{1}^{\alpha_{1}}}\mathbb{Z}\right)\left(\left(\epsilon+\frac{N}{p_{1}^{\alpha_{1}}}\mathbb{Z}\right)Id\right) M_{1}\left(0+\frac{N}{p_{1}^{\alpha_{1}}}\mathbb{Z}\right) \\
	&=& \left(-\epsilon+\frac{N}{p_{1}^{\alpha_{1}}}\mathbb{Z}\right)Id.
\end{eqnarray*}

\noindent Si $\beta=\epsilon$ alors $\beta+\frac{N}{p_{1}^{\alpha_{1}}}\mathbb{Z}=\epsilon+\frac{N}{p_{1}^{\alpha_{1}}}\mathbb{Z}$. Or, par ce qui précède, $\beta+\frac{N}{p_{1}^{\alpha_{1}}}\mathbb{Z}=-\epsilon+\frac{N}{p_{1}^{\alpha_{1}}}\mathbb{Z} \neq \epsilon+\frac{N}{p_{1}^{\alpha_{1}}}\mathbb{Z}$ puisque $\frac{N}{p_{1}^{\alpha_{1}}} \neq 2$. Si $\beta=-\epsilon$ alors $\beta+p_{1}^{\alpha_{1}}\mathbb{Z}=-\epsilon+p_{1}^{\alpha_{1}}\mathbb{Z}$. Or, par ce qui précède, $\beta+p_{1}^{\alpha_{1}}\mathbb{Z}=\epsilon+p_{1}^{\alpha_{1}}\mathbb{Z} \neq -\epsilon+p_{1}^{\alpha_{1}}\mathbb{Z}$ puisque $p_{1}^{\alpha_{1}} \neq 2$.
\\
\\Si $l' \equiv 2 [l_{1}]$ et $l'\equiv 0 [l_{2}]$ alors on procède d'une façon analogue. Ainsi, on arrive à une absurdité dans les deux cas. Donc, la solution $\overline{k}$-monomiale minimale de $(E_{N})$ est irréductible. 
\\
\\Par récurrence, le résultat est démontré.
\\
\\ii) On suppose que $N=2m$ avec $m \neq 1$ impair. On suppose qu'il existe un $k \in \mathbb{N}$ tel que la solution $\overline{k}$-monomiale minimale de $(E_{N})$ est de taille $l$ impaire divisible par 9. Soit $u$ la taille de la solution $(k+2\mathbb{Z})$-monomiale minimale de $(E_{2})$ et $v$ la taille de la solution $(k+m\mathbb{Z})$-monomiale minimale de $(E_{m})$.
\\
\\Par la proposition \ref{38}, $l={\rm ppcm}(u,v)$. Comme $l$ est impair, $u$ est impair. Comme $u \in \{2,3\}$, on a $u=3$. Puisque $l$ est divisible par $9$, $v$ est nécessairement divisible par 9. En particulier, $l=v$.
\\
\\Supposons par l'absurde que la solution $\overline{k}$-monomiale minimale de $(E_{N})$ est réductible. Il existe une solution $(\overline{x},\overline{k},\ldots,\overline{k},\overline{y})$ de $(E_{N})$ de taille $l'$ avec $3 \leq l' \leq v-1$. Comme $m$ divise $N$, le $l'$-uplet $(x+m\mathbb{Z},k+m\mathbb{Z},\ldots,k+m\mathbb{Z},y+m\mathbb{Z})$ est une solution de $(E_{m})$. Donc, on peut l'utiliser pour réduire la solution $(k+m\mathbb{Z})$-monomiale minimale de $(E_{m})$. Or, puisque $m$ et $v$ sont impairs, cette dernière est irréductible, par le point i). On arrive donc à une contradiction.
\\
\\Ainsi, la solution $\overline{k}$-monomiale minimale de $(E_{N})$ est irréductible. 

\end{proof}

\begin{examples}
{\rm \begin{itemize}
\item La solution $\overline{5}$-monomiale minimale de $(E_{52})$ est de taille 21. Par le théorème précédent, elle est irréductible.
\item La solution $\overline{3}$-monomiale minimale de $(E_{65})$ est de taille 35. Par le théorème précédent, elle est irréductible.
\end{itemize}
}
\end{examples}

À la lueur du résultat précédent, on constate que les solutions monomiales minimales dont la taille est impaire non divisible par 3 ou divisible par 9 sont irréductibles, puisque les solutions monomiales minimales de taille impaire de $(E_{2m})$ ($m$ impair) ont une taille divisible par 3. Pour répondre à la question initiale, il reste donc à considérer le cas des solutions de taille $3h$ avec $h$ impair non divisible par 3. Si $h=1$ alors les solutions monomiales minimales de taille $3h$ de $(E_{N})$ sont les solutions $\pm \overline{1}$-monomiales minimales qui sont irréductibles. Si $h \neq 1$ alors la proposition \ref{315} nous donne l'existence d'une solution monomiale minimale réductible de $(E_{2h})$ de taille $3h$.

\begin{remark}
{\rm Il n'est en réalité pas étonnant que des solutions monomiales minimales réductibles de taille impaire puisse exister modulo $2m$ puisqu'il est possible d'avoir des solutions monomiales minimales de $(E_{2m})$ de taille impaire strictement supérieure à $2m$, qui sont donc nécessairement réductible par le théorème \ref{prin}.
}
\end{remark}

Les solutions de taille $3h$, avec $h>1$ impair non divisible par 3, de $(E_{2m})$ ne sont pas nécessairement réductibles. Par exemple, pour $N=62=2 \times 31$, la solution $\overline{3}$-monomiale minimale de $(E_{62})$ est irréductible de taille 15. On peut donc légitimement se demander si on peut trouver des conditions permettant de déterminer les cas dans lesquels il y a réductibilité. Le résultat ci-dessous apporte quelques éléments susceptibles d'éclairer notre lanterne :

\begin{proposition}

Soit $m$ un entier naturel impair. S'il existe une solution $\overline{k}$-monomiale minimale de $(E_{2m})$ de taille $3h$ avec $h \neq 1$ impair non divisible par 3 alors elle est irréductible si et seulement si la taille de la solution $(k+m\mathbb{Z})$-monomiale minimale de $(E_{m})$ est divisible par 3.

\end{proposition}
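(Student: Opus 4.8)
The plan is to reduce everything to the interplay between $(E_{2m})$, $(E_2)$ and $(E_m)$ through the Chinese remainder theorem, exactly as in the proofs of Propositions \ref{313} and \ref{314}. Write $l=3h$ for the size of the given $\overline{k}$-monomiale minimal solution, and let $u$ (resp. $v$) be the size of the $(k+2\mathbb{Z})$-monomiale minimal solution of $(E_2)$ (resp. the $(k+m\mathbb{Z})$-monomiale minimal solution of $(E_m)$). By Proposition \ref{38}, $l={\rm ppcm}(u,v)$; since $l$ is odd both $u$ and $v$ are odd, so $u=3$ (hence $k$ is odd) and ${\rm ppcm}(3,v)=3h$. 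This produces the two cases of the statement: if $3\mid v$ then $v=3h=l$, whereas if $3\nmid v$ then $v=h$ (and $v\geq 5$, since $h>1$ is odd and prime to $3$). In both cases $v$ is odd, so Theorem \ref{318} i), applied to the odd integer $m\geq 3$ (which is not of the form $2m'$ with $m'$ impair), shows that the $(k+m\mathbb{Z})$-monomiale minimal solution of $(E_m)$ is irreducible; this lets me invoke Lemma \ref{plus} B) modulo $m$. Throughout I use the criterion already exploited in the proofs above: the monomiale solution is reducible if and only if $(E_{2m})$ admits a solution $(\overline{x},\overline{k},\ldots,\overline{k},\overline{y})$ of some size $l'$ with $3\leq l'\leq l-1$.

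For the implication $3\mid v\Rightarrow$ irreducible, I argue by contradiction: if such a reducing solution $(\overline{x},\overline{k},\ldots,\overline{k},\overline{y})$ of size $l'$ exists, reducing it modulo $m$ gives a solution of $(E_m)$ of the same shape and size. Since the $(k+m\mathbb{Z})$-monomiale minimal solution of $(E_m)$ is irreducible of size $v=3h$, Lemma \ref{plus} B) forces $l'\equiv 0$ or $l'\equiv 2 \pmod{v}$. But $3\leq l'\leq l-1=v-1$, so $l'$ lies strictly between $2$ and $v$ and satisfies neither congruence, a contradiction. This is the short, easy half.

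For the implication $3\nmid v\Rightarrow$ reducible (the substantial part) I build an explicit reducing solution. Here $v=h\geq 5$ with $\gcd(3,v)=1$, so I may pick $s\in\{1,\ldots,v-1\}$ with $3s\equiv 2\pmod v$ (this forces $s\not\equiv 0$ since $v\nmid 2$) and set $l'=3s$, so that $3\leq l'\leq 3(v-1)=l-3\leq l-1$, with $l'$ divisible by $u=3$ and $l'-2$ divisible by $v$. Let $\overline{x}\in\mathbb{Z}/2m\mathbb{Z}$ be the class with $x\equiv k\pmod 2$ and $x\equiv 0\pmod m$, and write $M_v(\overline{k},\ldots,\overline{k})=\delta\,Id\pmod m$ with $\delta\in\{-1,1\}$. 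I then check that $(\overline{x},\overline{k},\ldots,\overline{k},\overline{x})$ of size $l'$ solves $(E_{2m})$: modulo $2$ it equals $(\overline{k},\ldots,\overline{k})$ with $l'$ a multiple of $3$, so $M_{l'}=(M_3(\overline{k},\overline{k},\overline{k}))^{s}=\pm Id$; modulo $m$ it equals $(\overline{0},\overline{k},\ldots,\overline{k},\overline{0})$, and since $l'-2$ is a multiple of $v$,
\[
M_{l'}(\overline{0},\overline{k},\ldots,\overline{k},\overline{0})=M_1(\overline{0})\,(\delta\,Id)^{\frac{l'-2}{v}}\,M_1(\overline{0})=-\delta^{\frac{l'-2}{v}}\,Id=\pm Id .
\]
By the lemme chinois, $M_{l'}(\overline{x},\overline{k},\ldots,\overline{k},\overline{x})=\pm Id$ in $SL_2(\mathbb{Z}/2m\mathbb{Z})$, so this is a genuine solution of $(E_{2m})$ of size $l'$ with $3\leq l'\leq l-1$ (note $\overline{x}\neq\overline{k}$, since $k\equiv 0\pmod m$ would make $v$ even); it reduces the monomiale minimal solution, which is therefore réductible.

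Combining the two implications gives the equivalence. The technical heart — and the only delicate step — is the construction in the third paragraph: choosing $l'$ simultaneously divisible by $3$ and congruent to $2$ modulo $v$ while keeping it inside the window $[3,l-1]$, and then verifying via the Chinese remainder theorem that gluing the ``full'' monomiale block modulo $2$ to the ``zero-capped'' block modulo $m$ yields a bona fide solution of $(E_{2m})$. Everything else is bookkeeping around Proposition \ref{38}, Theorem \ref{318} i), and Lemma \ref{plus}.
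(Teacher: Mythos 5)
Your proof is correct and follows essentially the same architecture as the paper's: the decomposition via Proposition \ref{38} forcing $u=3$ and the dichotomy $3\mid v$ (whence $v=l$) versus $3\nmid v$ (whence $v=h$); irreducibility in the first case deduced from the irreducibility of the $(k+m\mathbb{Z})$-monomial minimal solution of $(E_m)$ (Théorème \ref{318} i), $m$ odd, $v$ odd) combined with Lemme \ref{plus}; and an explicit CRT-glued solution $(\overline{x},\overline{k},\ldots,\overline{k},\overline{x})$ witnessing reducibility in the second case. The one point where you genuinely diverge is the reducibility construction: the paper splits into $h\equiv 1\ [3]$ (reducing solution of size $h+2$, zero-cap placed modulo $m$, monomial block modulo $2$) and $h\equiv 2\ [3]$ (reducing solution of size $h$, with the roles of $2$ and $m$ swapped), whereas you handle both subcases at once by solving the congruence $3s\equiv 2\ [v]$ and always putting the zero-cap modulo $m$ and the full monomial block modulo $2$ --- the inversion trick the paper itself uses in the proofs of Propositions \ref{313} and \ref{314}. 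Both are valid: your version is more uniform and avoids the case split on $h \bmod 3$ (the sign bookkeeping goes through because $\overline{1}=-\overline{1}$ modulo $2$, exactly as in your CRT step), while the paper's version yields smaller reducing solutions (size $h$ rather than $3s$) in the subcase $h\equiv 2\ [3]$.
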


\begin{proof}

Soient $N=2m$, $l_{1}$ la taille de la solution $(k+2\mathbb{Z})$-monomiale minimale de $(E_{2})$ et $l_{2}$ la taille de la solution $(k+m\mathbb{Z})$-monomiale minimale de $(E_{m})$. Par la proposition \ref{38}, on a $3h={\rm ppcm}(l_{1},l_{2})$. $l_{1}=2$ ou $l_{1}=3$. Si $l_{1}=2$ alors $3h$ serait pair. Donc, $l_{1}=3$.
\\
\\Si $l_{2}$ est divisible par 3. On a $3h={\rm ppcm}(l_{1},l_{2})=l_{2}$. Supposons par l'absurde que la solution $\overline{k}$-monomiale minimale de $(E_{2m})$ est réductible. Il existe une solution $(\overline{x},\overline{k},\ldots,\overline{k},\overline{y})$ de $(E_{N})$ de taille $3 \leq l' \leq 3h-1$. Comme $m$ divise $N$, $(x+m\mathbb{Z},k+m\mathbb{Z},\ldots,k+m\mathbb{Z},y+m\mathbb{Z})$ est une solution de $(E_{m})$. On peut l'utiliser pour réduire la solution $(k+m\mathbb{Z})$-monomiale minimale de $(E_{m})$, ce qui est absurde. Ainsi, la solution $\overline{k}$-monomiale minimale de $(E_{N})$ est irréductible.
\\
\\Si $l_{2}$ n'est pas divisible par 3. On $3h=3l_{2}$, c'est-à-dire $l_{2}=h$. Il existe $\epsilon \in \{-1,1\}$ tel que 
\[M_{h}(k+m\mathbb{Z},\ldots,k+m\mathbb{Z})=(\epsilon+m\mathbb{Z}) Id.\]
\noindent Comme $h$ n'est pas divisible par 3, on a $h \equiv 1 [3]$ ou $h \equiv 2 [3]$. On distingue les deux cas. 
\\
\\i) Si $h \equiv 1 [3]$, on pose $x$ l'unique élément de $[\![0;N-1]\!]$ tel que $x \equiv 0 [m]$ et $x \equiv k [2]$. On a :
\begin{eqnarray*}
M &=& M_{h+2}(x+m\mathbb{Z},k+m\mathbb{Z},\ldots,k+m\mathbb{Z},x+m\mathbb{Z}) \\
  &=& M_{1}(0+m\mathbb{Z})M_{h}(k+m\mathbb{Z},\ldots,k+m\mathbb{Z})M_{1}(0+m\mathbb{Z}) \\
  &=& (-\epsilon+m\mathbb{Z})Id.
\end{eqnarray*}

\noindent On a également :
\begin{eqnarray*}
M_{h+2}(x+2\mathbb{Z},k+2\mathbb{Z},\ldots,k+2\mathbb{Z},x+2\mathbb{Z}) &=& (M_{3}(k+2\mathbb{Z},\ldots,k+2\mathbb{Z}))^{\frac{h+2}{3}} \\
                                                                        &=&Id \\
																																				&=&-Id\\
																																				&=&(-\epsilon+2\mathbb{Z})Id.
\end{eqnarray*}

\noindent Par le lemme chinois, on a $M_{h+2}(\overline{x},\overline{k},\ldots,\overline{k},\overline{x})=-\overline{\epsilon} Id$. De plus, on a $3 \leq h+2 \leq 3h-1$. Ainsi, la solution $\overline{k}$-monomiale minimale de $(E_{N})$ est réductible.
\\
\\ii) Si $h \equiv 2 [3]$, on pose $x$ l'unique élément de $[\![0;N-1]\!]$ tel que $x \equiv k [m]$ et $x \equiv 0 [2]$. On a :
\[M_{h}(x+m\mathbb{Z},k+m\mathbb{Z},\ldots,k+m\mathbb{Z},x+m\mathbb{Z})=M_{h}(k+m\mathbb{Z},\ldots,k+m\mathbb{Z})=(\epsilon+m\mathbb{Z})Id.\]

\noindent On a également :
\begin{eqnarray*}
M &=& M_{h}(x+2\mathbb{Z},k+2\mathbb{Z},\ldots,k+2\mathbb{Z},x+2\mathbb{Z}) \\
  &=& M_{1}(0+2\mathbb{Z})(M_{3}(k+2\mathbb{Z},\ldots,k+2\mathbb{Z}))^{\frac{h-2}{3}}M_{1}(0+2\mathbb{Z}) \\
	&=& Id \\
	&=&-Id \\
	&=& (\epsilon+2\mathbb{Z})Id.
\end{eqnarray*}

\noindent Par le lemme chinois, on a $M_{h}(\overline{x},\overline{k},\ldots,\overline{k},\overline{x})=\overline{\epsilon} Id$. De plus, on a $3 \leq h \leq 3h-1$. Ainsi, la solution $\overline{k}$-monomiale minimale de $(E_{N})$ est réductible.
\\
\\Le résultat est donc démontré.

\end{proof}

\section{Étude de certaines tailles dépendant de $N$}
\label{cinq}

Dans cette section, on va s'intéresser à l'irréductibilité des solutions monomiales minimales de \eqref{p} dont la taille vérifie certaines propriétés liées à $N$.

\subsection{Solutions monomiales minimales de \eqref{p} taille $N$}

Dans le théorème \ref{prin}, on a obtenu la borne $N$ comme taille maximale des solutions monomiales minimales irréductibles lorsque $N$est différent de 2 et n'est pas de la forme $3m$ avec $m$ impair non divisible par 3. En utilisant le théorème \ref{30}, on a constaté que cette borne était optimale. Toutefois, une question reste en suspens : que peut-on dire des solutions de $(E_{N})$ de taille $N$ ? Plus exactement, l'interrogation principale est de savoir si ces dernières sont systématiquement irréductibles ou non. Pour répondre à cette question, on va démontrer le résultat ci-dessous :

\begin{theorem}
\label{320}

Soit $N>2$. 
\\
\\i) Si $N$ n'est pas de la forme $2 \times 3^{a} \times b$, avec $a \geq 1$ et $b>1$ impair non divisible par 3, alors toutes les solutions monomiales minimales de $(E_{N})$ de taille $N$ sont irréductibles.
\\
\\ii) Si $N=2 \times 3^{a} \times b$, avec $a \geq 1$ et $b>1$ impair non divisible par 3, alors il existe des solutions monomiales minimales de $(E_{N})$ de taille $N$ réductibles.

\end{theorem}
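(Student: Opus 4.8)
The plan is to fix a residue $\overline{k}$ whose minimal monomial solution has size exactly $N$ and to reconstruct everything from local data. Writing $N=p_{1}^{\alpha_{1}}\cdots p_{r}^{\alpha_{r}}$, let $l_{i}$ be the size of the $(k+p_{i}^{\alpha_{i}}\mathbb{Z})$-monomial minimal solution of $(E_{p_{i}^{\alpha_{i}}})$ and $l=N$ the global size. Proposition \ref{37} gives $l={\rm ppcm}(l_{i})$ or $2\,{\rm ppcm}(l_{i})$, and Lemmes \ref{36}, \ref{361} give $l_{i}\le p_{i}^{\alpha_{i}}$, whence ${\rm ppcm}(l_{i})\le\prod l_{i}\le N$. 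The condition $l=N$ is therefore very rigid: in the branch $l={\rm ppcm}(l_{i})$ it forces $l_{i}=p_{i}^{\alpha_{i}}$ for all $i$, with the $l_{i}$ pairwise coprime, while the branch $l=2\,{\rm ppcm}(l_{i})=N$ only occurs when $2\mid N$ and is where the ``size $3$ modulo $2$ for odd $k$'' phenomenon of the Remarque after Lemme \ref{361} intervenes. The first step is to catalogue the admissible profiles $(l_{i})$ yielding $l=N$, organised by the $2$-adic valuation of $N$ and by the parity of $k$.

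For part (i) I argue by contraposition: assuming the size-$N$ solution reducible, there is a solution $(\overline{a},\overline{k},\ldots,\overline{k},\overline{b})$ of \eqref{p} of size $l'$ with $3\le l'\le N-1$ and $M_{l'}(\overline{a},\ldots,\overline{b})=\overline{\beta}\,Id$ for some $\beta\in\{-1,1\}$. Odd $N$ is settled at once: the size $N$ is then odd, so Théorème \ref{318} already gives irreducibility. In the remaining (even $N$) cases I reduce $(\overline{a},\overline{k},\ldots,\overline{k},\overline{b})$ modulo each $p_{i}^{\alpha_{i}}$. When $l_{i}=p_{i}^{\alpha_{i}}$ the local solution is irreducible ($l_{i}=p_{i}^{\alpha_{i}}$ forces $p_{i}\nmid k$, for otherwise Proposition \ref{311} would give a strictly smaller $l_{i}$, so Théorème \ref{311b} applies) and Lemme \ref{plus} B) forces either $l'\equiv0\ [l_{i}]$ with $\overline{a}\equiv\overline{b}\equiv\overline{k}$, or $l'\equiv2\ [l_{i}]$ with $\overline{a}\equiv\overline{b}\equiv\overline{0}$; at a prime where the local solution degenerates to $(\overline{0},\overline{0})$ (i.e. $p_{i}^{\alpha_{i}}=2$ with $k$ even) Lemme \ref{plus} A) applies instead and merely forces $l'$ even with zero borders. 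Splitting the indices into the class $I$ of the first type and $J$ of the second, a Chinese-remainder count shows both classes nonempty (else $l'\equiv0$ or $2$ modulo $N$, impossible in $[3,N-1]$).

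The decisive point is the sign computation. Writing $M_{l_{i}}(\overline{k},\ldots,\overline{k})=\overline{\epsilon_{i}}\,Id$ modulo $p_{i}^{\alpha_{i}}$ and using $M_{1}(\overline{0})^{2}=-Id$, one finds $M_{l'}(\overline{a},\ldots,\overline{b})\equiv\epsilon_{i}^{\,l'/l_{i}}Id$ on $I$ and $\equiv-\epsilon_{j}^{\,(l'-2)/l_{j}}Id$ on $J$, and these must all coincide with $\overline{\beta}\,Id$. Now if $l'$ is even then every exponent $l'/l_{i}$ and $(l'-2)/l_{j}$ is even (the odd prime powers $l_{i}$ being odd), so the local value is $+Id$ throughout $I$ and $-Id$ throughout $J$; as soon as $I$ and $J$ both meet the odd primes this is contradictory, and the leftover configurations are excluded by a parity-and-range check on $l'$. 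Since $k$ even forces $l'$ even (Lemme \ref{plus} A) at the prime $2$), this disposes of every even $N$ with $k$ even. The only way to escape is $k$ odd, forcing $l_{1}=3$: then $3\mid l_{1}\mid l=N$ (Lemme \ref{33}), so $3\mid N$, the global factor $2$ arises from Lemme \ref{39} through mixed signs (hence a second odd prime must occur, i.e. $b>1$), and $l'$ may be taken odd, so that the genuine sign $\epsilon=-1$ at the prime $3$ survives and cancels the $-1$ coming from $M_{1}(\overline{0})^{2}$. Showing that this cancellation closes up exactly for $N=2\cdot3^{a}\cdot b$ — and is obstructed, by a rigid sign constraint, as soon as $4\mid N$ or the odd part lacks either the factor $3$ or a further prime — is the main obstacle of the proof.

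Part (ii) makes the escape explicit. Choose $k$ odd with $k\equiv-2\ [3^{a}]$ and $k\equiv2$ modulo each prime power of $b$; then $l_{1}=3$, the size modulo $3^{a}$ is $3^{a}$ and each size modulo a prime of $b$ is $p_{i}^{\alpha_{i}}$ (Théorème \ref{30}), all odd, with $\epsilon=-1$ at $3$ and $\epsilon=+1$ on the primes of $b$. As the signs are not all equal, Lemme \ref{39} ii) gives $l=2\,{\rm ppcm}(l_{i})=N$. I then solve $s\equiv0\ [b]$ and $s\equiv2\ [3^{a}]$, adjusting $s$ by a multiple of ${\rm ppcm}(l_{i})=3^{a}b$ so that $(s-2)/3^{a}$ is odd (forcing $s$ itself odd, and noting $s\equiv2\ [3]$ places the prime $2$ harmlessly in $J$); this yields $3\le s\le N-1$ and, by the sign formula of the previous paragraph, $M_{s}(\overline{a},\overline{k},\ldots,\overline{k},\overline{a})=Id$ for the $\overline{a}$ dictated by the partition. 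Exactly as in the proof de la Proposition \ref{313}, such an intermediate solution writes the size-$N$ monomial solution as a sum of two solutions of length $\ge3$, proving it reducible. For the special value $a=1$, where $N=6b$, one may instead simply invoke Proposition \ref{315} with $n=6$ and $m=b$, which already produces a reducible minimal monomial solution of size $6b=N$.
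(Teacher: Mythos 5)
Your part (ii) is correct and is essentially the paper's own construction (its Proposition \ref{323}), up to exchanging the signs: you impose $k\equiv-2\ [3^{a}]$ and $k\equiv2$ on the prime powers of $b$, where the paper imposes $k\equiv2\ [3^{a}]$ and $k\equiv-2\ [b]$. In both variants the théorème \ref{30}, la proposition \ref{37} and le lemme \ref{39} ii) give a minimal monomial solution of size exactly $2\cdot3^{a}b=N$, and the same CRT choice of an intermediate solution $(\overline{a},\overline{k},\ldots,\overline{k},\overline{a})$ of size $s$ with $s\equiv0\ [b]$, $s\equiv2\ [3^{a}]$, $(s-2)/3^{a}$ odd and $3\le s\le N-1$ reduces it; your shortcut via la proposition \ref{315} for $a=1$ is also valid (though, as you note, it says nothing for $a\ge2$).

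Part (i), however, has a genuine gap, and you name it yourself: proving that the sign obstruction operates for \emph{every} $N$ outside the family $2\cdot3^{a}\cdot b$ is, in your own words, \emph{the main obstacle of the proof} --- and it is left undone. That missing step is precisely the content of the paper's Propositions \ref{321} and \ref{322}, which occupy several pages of case analysis. Moreover, the parts of the plan you do sketch contain inaccuracies that the real proof must repair. First, the announced catalogue of profiles $(l_{i})$ is never established: your rigidity argument rests on $l_{i}\le p_{i}^{\alpha_{i}}$, which fails at the prime $2$ when $k$ is odd (there $l_{i}=3>2$), and it dismisses the branch $l=2\,{\rm ppcm}(l_{i})$ too quickly --- that branch genuinely occurs inside part (i), e.g.\ $N=20$, $k=12$, where $l_{1}=2$, $l_{2}=5$ and $l=20=2\,{\rm ppcm}(2,5)$; in that situation the local solution modulo $4$ is $(\overline{0},\overline{0})$, contradicting your assertion that this degeneration only arises when $p_{i}^{\alpha_{i}}=2$, so your $I$/$J$ dichotomy with Lemme \ref{plus} B) does not apply as stated (this is exactly what the fourth step of the paper's proof of Proposition \ref{321} is for). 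Second, your claim that $k$ odd forces $l_{1}=3$ is true only when $N\equiv2\ [4]$: modulo $8$ one has $l_{1}=6$ for $k\equiv3\ [8]$, consistent with le lemme \ref{361}, and the paper's analysis must accordingly allow $l_{1}=3\times2^{\alpha_{1}-2}$. Because of this, the ``parity-and-range check'' and the ``rigid sign constraint'' you invoke are not available shortcuts but precisely the computations that are missing: as written, part (i) is the same plan as the paper's, not a proof of it.
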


\noindent Comme pour le théorème \ref{prin}, on divise la preuve en plusieurs morceaux.

\begin{proposition}
\label{321}

Soit $N$ un entier naturel qui n'est pas de la forme $2m$, avec $m$ impair. Toutes les solutions monomiales minimales de $(E_{N})$ de taille $N$ sont irréductibles.

\end{proposition}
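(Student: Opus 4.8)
Le plan est de raisonner par l'absurde en exploitant la structure locale imposée par l'égalité $l=N$. On commencerait par traduire la réductibilité de la solution $\overline{k}$-monomiale minimale (de taille $N$) en l'existence d'une solution $(\overline{x},\overline{k},\ldots,\overline{k},\overline{y})$ de \eqref{p} de taille $l'$ avec $3 \leq l' \leq N-1$, écriture licite car toutes les composantes intermédiaires valent $\overline{k}$ (exactement comme dans les preuves précédentes). On écrit $N=p_{1}^{\alpha_{1}}\ldots p_{r}^{\alpha_{r}}$ et on note $l_{i}$ la taille de la solution $(k+p_{i}^{\alpha_{i}}\mathbb{Z})$-monomiale minimale de $(E_{p_{i}^{\alpha_{i}}})$.

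La première étape consisterait à appliquer la proposition \ref{37} : $l={\rm ppcm}(l_{i})$ ou $l=2\times{\rm ppcm}(l_{i})$. Dans le premier cas, les encadrements $N={\rm ppcm}(l_{i}) \leq \prod l_{i} \leq \prod p_{i}^{\alpha_{i}}=N$ forcent $l_{i}=p_{i}^{\alpha_{i}}$ pour tout $i$ (donc des $l_{i}$ deux à deux premiers entre eux). On en déduirait l'irréductibilité locale : pour $p_{i}$ impair, la proposition \ref{311} montre que $p_{i}\mid k$ entraînerait $l_{i}=2p_{i}^{\alpha_{i}-\beta_{i}}<p_{i}^{\alpha_{i}}$, d'où $p_{i}\nmid k$ et le théorème \ref{311b} s'applique ; pour $p_{i}=2$ (avec $4\mid N$, donc $\alpha_{1}\geq 2$), l'égalité $l_{1}=2^{\alpha_{1}}$ impose $v_{2}(k)=1$ par la proposition \ref{311}, cas lui aussi irréductible, et le lemme \ref{35} fournit de surcroît $M_{l_{1}}(\overline{k},\ldots,\overline{k})=Id$ modulo $2^{\alpha_{1}}$.

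Le cœur de l'argument (cas $l={\rm ppcm}$) serait alors une partition doublée d'une étude de parité. Chaque solution locale étant irréductible de taille $l_{i}=p_{i}^{\alpha_{i}}$, le lemme \ref{plus} impose, modulo chaque $p_{i}^{\alpha_{i}}$, soit $l'\equiv 0$ avec $\overline{x}=\overline{y}=\overline{k}$, soit $l'\equiv 2$ avec $\overline{x}=\overline{y}=\overline{0}$. Comme $3 \leq l' \leq N-1$, on ne peut avoir la même alternative pour tous les $i$ (sinon $l'\equiv 0$ ou $2$ modulo $N$), d'où une partition en deux ensembles non vides $I$ et $J$. En posant $M_{l'}(\overline{x},\ldots,\overline{y})=\overline{\beta}Id$ et en réduisant modulo $p_{i}^{\alpha_{i}}$, on obtiendrait $\beta=\epsilon^{l'/p_{i}^{\alpha_{i}}}$ pour $i\in I$ et $\beta=-\epsilon^{(l'-2)/p_{j}^{\alpha_{j}}}$ pour $j\in J$ (en utilisant $M_{1}(\overline{0})^{2}=-Id$). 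Si $N$ est impair, le lemme \ref{39} ii) fournit un signe $\epsilon$ commun à tous les $i$, et comme les $p_{i}^{\alpha_{i}}$ sont impairs les deux exposants ont la parité de $l'$ ; on aboutit à $\beta=-\beta$, absurde. Si $4\mid N$, on utilise $\epsilon_{1}=+1$ au premier facteur : selon que $1\in I$ ou $1\in J$ on tire $\beta=+1$ ou $\beta=-1$ (car $l'$ est alors pair), puis une contradiction de parité sur un facteur impair (un quotient forcé impair alors qu'il est pair).

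La difficulté principale sera le cas $l=2\times{\rm ppcm}(l_{i})=N$, qui ne survient que si $4\mid N$ et ${\rm ppcm}(l_{i})=N/2$. Les solutions locales n'y sont plus toutes de taille maximale : certaines peuvent être réductibles (par exemple $l_{1}=2$ si $2^{\alpha_{1}}\mid k$, ou $l_{i}=2p_{i}^{\alpha_{i}-\beta_{i}}$ si $p_{i}\mid k$), de sorte qu'il faut remplacer la partie B du lemme \ref{plus} par sa partie A, plus faible. L'idée serait d'exploiter que, dans ce cas, les signes locaux $\epsilon_{i}^{(N/2)/l_{i}}$ ne coïncident pas (c'est précisément ce qui produit le facteur $2$) : la branche \og $\overline{x}=\overline{y}=\overline{k}$\fg{} donnerait une solution monomiale de taille strictement inférieure à $N$, contredisant la minimalité, tandis que la branche \og $\overline{x}=\overline{y}=\overline{0}$\fg{} fournirait une matrice valant $+Id$ sur certains facteurs premiers et $-Id$ sur d'autres, donc différente de $\pm Id$ modulo $N$, et n'étant donc pas une solution. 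Le point technique délicat, qu'il faudra traiter avec soin, est de contrôler précisément les valeurs possibles de $l'$ modulo les $l_{i}$ (via les lemmes \ref{35}, \ref{36}, \ref{361} et la proposition \ref{311}) afin de garantir que ces deux branches épuisent bien toutes les possibilités.
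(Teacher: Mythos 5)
Votre cas i) ($l={\rm ppcm}(l_{i},\,1\leq i\leq r)$) est correct et recoupe la preuve de l'article : l'encadrement $N={\rm ppcm}(l_{i})\leq\prod_{i}l_{i}\leq\prod_{i}p_{i}^{\alpha_{i}}=N$ qui force $l_{i}=p_{i}^{\alpha_{i}}$, l'irréductibilité locale via la proposition \ref{311} et le théorème \ref{311b}, le signe $\epsilon_{1}=1$ fourni par le lemme \ref{35}, puis la partition $I/J$ issue du lemme \ref{plus} B et la contradiction de signes : c'est, à l'ordre des arguments près, les étapes 1 et 3 de l'article (qui, lui, expédie le cas $N$ impair en citant le théorème \ref{318} ; votre argument direct de parité est une variante valable). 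Petite imprécision sans conséquence : $l_{1}=2^{\alpha_{1}}$ n'impose pas $v_{2}(k)=1$, car $k$ peut aussi être impair ; mais le théorème \ref{311b} donne l'irréductibilité dans les deux cas.

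La lacune réelle est votre cas ii), $l=2\times{\rm ppcm}(l_{i})=N$, qui est précisément le cœur de la proposition. Votre plan repose sur la partie A du lemme \ref{plus} pour suppléer la partie B lorsque des solutions locales sont réductibles (par exemple $l_{1}=2$ si $2^{\alpha_{1}}\mid k$, ou $l_{i}=2p_{i}^{\alpha_{i}-\beta_{i}}$ si $p_{i}\mid k$) ; or la partie A ne fournit aucune contrainte sur $l'$ : elle ne dit quelque chose que si $l'$ est déjà congru à $0$, $1$ ou $2$ modulo $l_{i}$, et ne force nullement $l'$ dans ces classes, ni ne contrôle $\overline{x},\overline{y}$ en dehors d'elles. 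Vos deux branches pures (\og tout $\overline{k}$ \fg{} ou \og tout $\overline{0}$ \fg{}) n'épuisent donc pas les possibilités, et les configurations mixtes ne sont pas traitées du tout ; vous le reconnaissez d'ailleurs en qualifiant ce point de \og délicat \fg{} sans le résoudre. L'article procède tout autrement : une longue analyse d'inégalités (lemmes \ref{36}, \ref{361}, \ref{39}, \ref{312}) montre dans sa deuxième étape que l'égalité $l=2\times{\rm ppcm}(l_{i})=N$ est \emph{impossible} dès que $l_{1}\neq 2$, puis, dans sa quatrième étape, que le cas résiduel $l_{1}=2$ impose $\alpha_{1}=2$, $l_{i}=p_{i}^{\alpha_{i}}$ pour $i\geq 2$ et $r\geq 3$ ; l'irréductibilité locale, donc le lemme \ref{plus} B, est alors disponible en tout facteur premier impair, la partie A modulo $4$ ne servant qu'à donner la parité de $l'$, et l'argument de signes $I/J$ conclut. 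Sans cette élimination structurelle, votre cas ii) reste une affirmation non démontrée, et c'est là que se trouve l'essentiel du travail de la proposition.
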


\begin{proof}

Soit $N=p_{1}^{\alpha_{1}}\ldots p_{r}^{\alpha_{r}}$. Soit $k$ un entier naturel tel que la solution $\overline{k}$-monomiale minimale de $(E_{N})$ est de taille $N$ (notons que, par le théorème \ref{30}, un tel $k$ existe toujours). Pour tout $i$ dans $[\![1;r]\!]$, on note $l_{i}$ la taille de la solution $(k+p_{i}^{\alpha_{i}}\mathbb{Z})$-monomiale minimale de $(E_{p_{i}^{\alpha_{i}}})$ et $l$ la taille de la solution $\overline{k}$-monomiale minimale de $(E_{N})$. Par hypothèse, $l=N$. On procède par étapes.
\\
\\\uwave{$1^{{\rm \grave{e}re}}$ étape :} On montre qu'on peut supposer $N$ pair et $r \geq 2$.
\\
\\Si $N$ est impair alors la solution $\overline{k}$-monomiale minimale de $(E_{N})$ est irréductible par le théorème \ref{318}. On suppose maintenant que $N$ est pair. Si $r=1$ alors, par la proposition \ref{311}, $k$ est impair ou $k=2a$ avec $a$ impair. Par le théorème \ref{311b}, la solution $\overline{k}$-monomiale minimale de $(E_{N})$ est irréductible.
\\
\\On peut donc supposer que $N$ est pair et que $r \geq 2$. Quitte à modifier l'ordre des facteurs premiers, on suppose que $p_{1}=2$. Par hypothèse, $\alpha_{1} \geq 2$. On suppose pour le moment $l_{1} \neq 2$.
\\
\\\uwave{$2^{{\rm \grave{e}me}}$ étape :} On montre que pour tout $i$, $l_{i}=p_{i}^{\alpha_{i}}$.
\\
\\Par la proposition \ref{37}, $l={\rm ppcm}(l_{i},~1 \leq i \leq r)$ ou $l=2\times {\rm ppcm}(l_{i},~1 \leq i \leq r)$. Montrons que $l={\rm ppcm}(l_{i},~1 \leq i \leq r)$. Pour cela, supposons par l'absurde que $l=2\times {\rm ppcm}(l_{i},~1 \leq i \leq r)$. 
\\
\\Supposons par l'absurde qu'il existe $j$ dans $[\![2;r]\!]$ tel que $l_{j} \leq \frac{p_{j}-1}{2}p_{j}^{\alpha_{j}-1}$. Comme $(p_{j}-1)p_{j}^{\alpha_{j}-1}< p_{j}^{\alpha_{j}}$, on a :
\[l=2 \times {\rm ppcm}(l_{i},~1 \leq i \leq r) \leq 2 \prod_{i=1}^{r} l_{i} \leq (p_{j}-1)p_{j}^{\alpha_{j}-1} \prod_{i\neq j} p_{i}^{\alpha_{i}}<\prod_{i=1}^{r} p_{i}^{\alpha_{i}}=N.\]
\noindent Donc, pour tout $i \in [\![2;r]\!]$, on a $l_{i} > \frac{p_{i}-1}{2}p_{i}^{\alpha_{i}-1}$ et, par le lemme \ref{36}, on a $l_{i}=\frac{p_{i}+1}{2}p_{i}^{\alpha_{i}-1}$ ou $l_{i}=p_{i}^{\alpha_{i}}$. 
\\
\\Supposons par l'absurde que $r \geq 3$ et qu'il existe $j$ et $h$ dans $[\![2;r]\!]$, $j<h$, tel que $l_{j}=\frac{p_{j}+1}{2}p_{j}^{\alpha_{j}-1}$ et $l_{h}=\frac{p_{h}+1}{2}p_{h}^{\alpha_{h}-1}$. Par le lemme \ref{312}, on a, puisque $p_{j}\geq 3$ et $p_{h} \geq 5$ :
\[l \leq 2\left(\frac{p_{j}+1}{2}p_{j}^{\alpha_{j}-1}\right)\left(\frac{p_{h}+1}{2}p_{h}^{\alpha_{h}-1}\right)\prod_{i \neq j,h} p_{i}^{\alpha_{i}}=\frac{1}{2} \times \frac{p_{j}+1}{p_{j}} \times \frac{p_{h}+1}{p_{h}} \times N<\frac{1}{2} \times \frac{4}{3} \times \frac{6}{5} \times N=\frac{4}{5}N<N.\]

\noindent Ainsi, on a deux cas :
\begin{itemize}
\item pour tout $i$ dans $[\![2;r]\!]$ $l_{i}=p_{i}^{\alpha_{i}}$;
\item il existe $j_{0}$ dans $[\![2;r]\!]$ tel que $l_{j_{0}}=\frac{p_{j_{0}}+1}{2}p_{j_{0}}^{\alpha_{j_{0}}-1}$ et, pour tout $i$ dans $[\![2;r]\!]$ différent de $j_{0}$, $l_{i}=p_{i}^{\alpha_{i}}$.
\\
\end{itemize}

\noindent De plus, les $l_{i}$ n'ont pas de diviseur impair en commun. En effet, supposons par l'absurde qu'il existe $j$ et $h$ dans $[\![1;r]\!]$ et un nombre premier $p$ impair tel que $p$ divise $l_{j}$ et $l_{h}$. On a :
\[l \leq \frac{2}{p} \prod_{i=1}^{r} l_{i} \leq \frac{2}{3} \prod_{i=1}^{r} p_{i}^{\alpha_{i}}=\frac{2N}{3}<N.\]
\noindent En particulier, si, pour tout $j$ dans $[\![2;r]\!]$, $l_{j}$ est impair alors les $l_{i}$ sont deux à deux premiers entre eux.
\\
\\Concentrons-nous maintenant sur les valeurs possibles de $l_{1}$. Si $l_{1}<2^{\alpha_{1}-1}$ alors \[l\leq 2 \prod_{i=1}^{r} l_{i} < 2 \times 2^{\alpha_{1}-1} \prod_{i=2}^{r} p_{i}^{\alpha_{i}} =N.\]
\noindent Donc, $l_{1} \geq 2^{\alpha_{1}-1}$. On suppose que $l_{1}=2^{\alpha_{1}-1}$. Pour tout $i$ dans $[\![2;r]\!]$, on a $l_{i}=p_{i}^{\alpha_{i}}$ (puisque sinon il existe un $j$ tel que $l_{j} < p_{j}^{\alpha_{j}}$ et, en procédant comme ci-dessus, on obtient $l<N$). Donc, $l_{1}$ est pair différent de 2 et pour tout $i$ dans $[\![2;r]\!]$ $l_{i}$ est impair. Ainsi, par le lemme \ref{39}, on a $l={\rm ppcm}(l_{i},~1 \leq i \leq r)$, ce qui contredit l'hypothèse de départ. Par conséquent, on a $l_{1}=2^{\alpha_{1}}$ ou $l_{1}=3\times 2^{\alpha_{1}-2}$ (lemme \ref{361}).
\\
\\Supposons par l'absurde que $l_{1}=2^{\alpha_{1}}$. Si pour tout $i \in [\![2;r]\!]$ $l_{i}=p_{i}^{\alpha_{i}}$ alors $l=2N$. Donc, il existe $j_{0}$ dans $[\![2;r]\!]$ tel que $l_{j_{0}}=\frac{p_{j_{0}}+1}{2}p_{j_{0}}^{\alpha_{j_{0}}-1}$ et, pour tout $i$ dans $[\![2;r]\!]$ différent de $j_{0}$, $l_{i}=p_{i}^{\alpha_{i}}$. Si $l_{j_{0}}$ est pair, on a par le lemme \ref{312} (avec $p_{j_{0}} \geq 3$)
\[l \leq \frac{2}{2} \frac{p_{j_{0}}+1}{2}p_{j_{0}}^{\alpha_{j_{0}}-1} \prod_{i\neq j_{0}} p_{i}^{\alpha_{i}}=\frac{p_{j_{0}}+1}{2p_{j_{0}}} N \leq \frac{2N}{3}<N.\] 
\noindent Si $l_{j_{0}}$ est impair, on a $l=2\times \frac{p_{j_{0}}+1}{2}p_{j_{0}}^{\alpha_{j_{0}}-1} \prod_{i\neq j_{0}} p_{i}^{\alpha_{i}}=\frac{p_{j_{0}}+1}{p_{j_{0}}} N \neq N$.
\\
\\Donc, $l_{1}=3\times 2^{\alpha_{1}-2}$. Supposons que pour tout $i$ dans $[\![2;r]\!]$ $l_{i}=p_{i}^{\alpha_{i}}$. Comme pour tout $i$ $p_{i} \neq 3$ (puisque les $l_{i}$ n'ont pas de diviseur impair en commun), on a $l=2\times 3\times 2^{\alpha_{1}-2} \prod_{i=2}^{r} p_{i}^{\alpha_{i}}=\frac{3N}{2}$. Donc, il existe $j_{0}$ dans $[\![2;r]\!]$ tel que $l_{j_{0}}=\frac{p_{j_{0}}+1}{2}p_{j_{0}}^{\alpha_{j_{0}}-1}$ et, pour tout $i$ dans $[\![2;r]\!]$ différent de $j_{0}$, $l_{i}=p_{i}^{\alpha_{i}}$. Si $l_{j_{0}}$ est pair, on a par le lemme \ref{312} (avec $p_{j_{0}} \geq 3$)
\[l \leq \frac{2}{2} \frac{p_{j_{0}}+1}{2}p_{j_{0}}^{\alpha_{j_{0}}-1} \prod_{i\neq j_{0}} l_{i}=\frac{3}{4}\frac{p_{j_{0}}+1}{2p_{j_{0}}} N \leq \frac{N}{2}<N.\]
\noindent Si $l_{j_{0}}$ est impair, on a $l=2\times 3\times 2^{\alpha_{1}-2} \times \frac{p_{j_{0}}+1}{2}p_{j_{0}}^{\alpha_{j_{0}}-1} \prod_{i\neq 1,j_{0}} p_{i}^{\alpha_{i}}=\frac{3}{4}\frac{p_{j_{0}}+1}{p_{j_{0}}} N$ (car les $l_{i}$ sont premiers entre eux puisqu'ils n'ont pas de diviseur impair en commun et sont tous impairs sauf $l_{1}$). Or, $\frac{3}{4}\frac{p_{j_{0}}+1}{p_{j_{0}}}=1$ si et seulement si $p_{j_{0}}=3$. Si $p_{j_{0}}=3$ alors $\alpha_{j_{0}}=1$ (car $l_{1}$ et $l_{j_{0}}$ n'ont pas de diviseur impair en commun) et donc $l_{j_{0}}=2$, ce qui impossible puisqu'on a supposé $l_{j_{0}}$ impair.
\\
\\Ainsi, $l={\rm ppcm}(l_{i},~1 \leq i \leq r)$. S'il existe un $j$ dans $[\![1;r]\!]$ tel que $l_{j}<p_{j}^{\alpha_{j}}$ alors \[l\leq \prod_{i=1}^{r} l_{i} < p_{j}^{\alpha_{j}} \prod_{i \neq j} p_{i}^{\alpha_{i}} =N.\]
\noindent Donc, pour tout $i$, $l_{i}=p_{i}^{\alpha_{i}}$.
\\
\\\uwave{$3^{{\rm \grave{e}me}}$ étape :} On montre que la solution $\overline{k}$-monomiale minimale de $(E_{N})$ est irréductible.
\\
\\Supposons par l'absurde qu'il existe une solution $(\overline{x},\overline{k},\ldots,\overline{k},\overline{y})$ de $(E_{N})$ de taille $3 \leq l' \leq l-1$. Il existe $\epsilon \in \{-1,1\}$ tel que $M_{l'}(\overline{x},\overline{k},\ldots,\overline{k},\overline{y})=\overline{\epsilon} Id$. Pour tout $i$ dans $[\![1;r]\!]$, il existe $\epsilon_{i} \in \{-1,1\}$ tel que $M_{l_{i}}(x+p_{i}^{\alpha_{i}}\mathbb{Z},k+p_{i}^{\alpha_{i}}\mathbb{Z},\ldots,k+p_{i}^{\alpha_{i}}\mathbb{Z},y+p_{i}^{\alpha_{i}}\mathbb{Z})=(\epsilon_{i}+p_{i}^{\alpha_{i}}\mathbb{Z}) Id$. Par le lemme \ref{35}, $\epsilon_{1}=1$ (car $l_{1}$ est pair et $\overline{k} \neq \overline{0}$ puisque $l_{1} \neq 2$).
\\
\\Soit $i$ dans $[\![1;r]\!]$. Par ce qui précède, on a $l_{i}=p_{i}^{\alpha_{i}}$. Ainsi, la solution $(k+p_{i}^{\alpha_{i}}\mathbb{Z})$-monomiale minimale de $(E_{p_{i}^{\alpha_{i}}})$ est irréductible (Théorème \ref{311} et proposition \ref{311b}). Par le lemme \ref{plus}, on a donc $l' \equiv 0 [p_{i}^{\alpha_{i}}]$ et $x+p_{i}^{\alpha_{i}}\mathbb{Z}=y+p_{i}^{\alpha_{i}}\mathbb{Z}=k+p_{i}^{\alpha_{i}}\mathbb{Z}$ ou $l' \equiv 2 [p_{i}^{\alpha_{i}}]$ et $x+p_{i}^{\alpha_{i}}\mathbb{Z}=y+p_{i}^{\alpha_{i}}\mathbb{Z}=0+p_{i}^{\alpha_{i}}\mathbb{Z}$. En particulier, $l' \equiv 0,2 [2^{\alpha_{1}}]$ et donc $l'$ est pair.
\\
\\Notons $I=\{i \in [\![1;r]\!],~l' \equiv 0 [p_{i}^{\alpha_{i}}]\}$ et $J=\{j \in [\![1;r]\!],~l' \equiv 2 [p_{j}^{\alpha_{j}}]\}$. On a $I \cup J=[\![1;r]\!]$ et, comme pour tout $i$ $p_{i}^{\alpha_{i}} \neq 2$, on a $I \cap J=\emptyset$. De plus, $I, J \neq \emptyset$, car sinon on aurait, par le lemme chinois, $l' \equiv 0,2 [N]$ ce qui est impossible. On va distinguer deux cas :
\\
\\i) On suppose que $1 \in I$. Soit $j \in J$. Comme $\frac{l'-2}{l_{j}}$ est pair, on a :
\[M_{l'}(x+p_{1}^{\alpha_{1}}\mathbb{Z},k+p_{1}^{\alpha_{1}}\mathbb{Z},\ldots,k+p_{1}^{\alpha_{1}}\mathbb{Z},y+p_{1}^{\alpha_{1}}\mathbb{Z})=(M_{l_{1}}(k+p_{1}^{\alpha_{1}}\mathbb{Z},\ldots,k+p_{1}^{\alpha_{1}}\mathbb{Z}))^{\frac{l'}{l_{1}}}=(1+p_{1}^{\alpha_{1}}\mathbb{Z}) Id.\]
\begin{eqnarray*}
M &=& M_{l'}(x+p_{j}^{\alpha_{j}}\mathbb{Z},k+p_{j}^{\alpha_{j}}\mathbb{Z},\ldots,k+p_{j}^{\alpha_{j}}\mathbb{Z},y+p_{j}^{\alpha_{j}}\mathbb{Z}) \\
  &=& M_{1}(0+p_{j}^{\alpha_{j}}\mathbb{Z})(M_{l_{j}}(k+p_{j}^{\alpha_{j}}\mathbb{Z},\ldots,k+p_{j}^{\alpha_{j}}\mathbb{Z}))^{\frac{l'-2}{l_{j}}}M_{1}(0+p_{j}^{\alpha_{j}}\mathbb{Z}) \\
	&=& M_{1}(0+p_{j}^{\alpha_{j}}\mathbb{Z})((\epsilon_{j}+p_{j}^{\alpha_{j}}\mathbb{Z})^{\frac{l'-2}{l_{j}}}Id)M_{1}(0+p_{j}^{\alpha_{j}}\mathbb{Z}) \\
	&=& M_{2}(0+p_{j}^{\alpha_{j}}\mathbb{Z},0+p_{j}^{\alpha_{j}}\mathbb{Z}) \\
	&=& (-1+p_{j}^{\alpha_{j}}\mathbb{Z}) Id.
\end{eqnarray*}	

\noindent Si $\epsilon=-1$ alors $\epsilon+p_{1}^{\alpha_{1}}\mathbb{Z}=-1+p_{1}^{\alpha_{1}}\mathbb{Z}$. Or, par ce qui précède, $\epsilon+p_{1}^{\alpha_{1}}\mathbb{Z}=1+p_{1}^{\alpha_{1}}\mathbb{Z} \neq -1+p_{1}^{\alpha_{1}}\mathbb{Z}$ (puisque $p_{1}^{\alpha_{1}} \neq 2$). Si $\epsilon=1$ alors $\epsilon+p_{j}^{\alpha_{j}}\mathbb{Z}=1+p_{j}^{\alpha_{j}}\mathbb{Z}$. Or, par ce qui précède, $\epsilon+p_{j}^{\alpha_{j}}\mathbb{Z}=-1+p_{j}^{\alpha_{j}}\mathbb{Z} \neq 1+p_{j}^{\alpha_{j}}\mathbb{Z}$ (puisque $p_{j}^{\alpha_{j}} \neq 2$).
\\
\\ii) On suppose que $1 \in J$. Soit $i \in I$. Comme $\frac{l'}{l_{i}}$ est pair, on a :
\[M_{l'}(x+p_{i}^{\alpha_{i}}\mathbb{Z},k+p_{i}^{\alpha_{i}}\mathbb{Z},\ldots,k+p_{i}^{\alpha_{i}}\mathbb{Z},y+p_{i}^{\alpha_{i}}\mathbb{Z})=(M_{l_{i}}(k+p_{i}^{\alpha_{i}}\mathbb{Z},\ldots,k+p_{i}^{\alpha_{i}}\mathbb{Z}))^{\frac{l'}{l_{i}}}=(1+p_{i}^{\alpha_{i}}\mathbb{Z}) Id.\]
\begin{eqnarray*}
M &=& M_{l'}(x+p_{1}^{\alpha_{1}}\mathbb{Z},k+p_{1}^{\alpha_{1}}\mathbb{Z},\ldots,k+p_{1}^{\alpha_{1}}\mathbb{Z},y+p_{1}^{\alpha_{1}}\mathbb{Z}) \\
  &=& M_{1}(0+p_{1}^{\alpha_{1}}\mathbb{Z})(M_{l_{1}}(k+p_{1}^{\alpha_{1}}\mathbb{Z},\ldots,k+p_{1}^{\alpha_{1}}\mathbb{Z}))^{\frac{l'-2}{l_{1}}}M_{1}(0+p_{1}^{\alpha_{1}}\mathbb{Z}) \\
	&=& M_{2}(0+p_{1}^{\alpha_{1}}\mathbb{Z},0+p_{1}^{\alpha_{1}}\mathbb{Z}) \\
	&=& (-1+p_{1}^{\alpha_{1}}\mathbb{Z}) Id.
\end{eqnarray*}	

\noindent Si $\epsilon=1$ alors $\epsilon+p_{1}^{\alpha_{1}}\mathbb{Z}=1+p_{1}^{\alpha_{1}}\mathbb{Z}$. Or, par ce qui précède, $\epsilon+p_{1}^{\alpha_{1}}\mathbb{Z}=-1+p_{1}^{\alpha_{1}}\mathbb{Z} \neq 1+p_{1}^{\alpha_{1}}\mathbb{Z}$ (puisque $p_{1}^{\alpha_{1}} \neq 2$). Si $\epsilon=-1$ alors $\epsilon+p_{i}^{\alpha_{i}}\mathbb{Z}=-1+p_{i}^{\alpha_{i}}\mathbb{Z}$. Or, par ce qui précède, $\epsilon+p_{i}^{\alpha_{i}}\mathbb{Z}=1+p_{i}^{\alpha_{i}}\mathbb{Z} \neq -1+p_{i}^{\alpha_{i}}\mathbb{Z}$ (puisque $p_{i}^{\alpha_{i}} \neq 2$).
\\
\\Ainsi, on arrive à une absurdité dans les deux cas. Donc, la solution $\overline{k}$-monomiale minimale de $(E_{N})$ est irréductible.
\\
\\ \uwave{$4^{{\rm \grave{e}me}}$ étape :} On considère le cas $l_{1}=2$.
\\
\\Par la proposition \ref{37}, $l={\rm ppcm}(l_{i},~1 \leq i \leq r)$ ou $l=2\times {\rm ppcm}(l_{i},~1 \leq i \leq r)$. Supposons par l'absurde que $l={\rm ppcm}(l_{i},~1 \leq i \leq r)$. On a
\[l={\rm ppcm}(l_{i},~1 \leq i \leq r) \leq 2 \prod_{i=2}^{r} l_{i} \leq 2 \prod_{i=2}^{r} p_{i}^{\alpha_{i}}=\frac{N}{2^{\alpha_{1}-1}}<N.\]
\noindent Ainsi, $l=2\times {\rm ppcm}(l_{i},~1 \leq i \leq r)$. S'il existe $j$ dans $[\![2;r]\!]$ tel que $l_{j}<p_{j}^{\alpha_{j}}$ alors
\[l \leq 2 \times 2 \times \prod_{i=2}^{r} l_{i} \leq 4 \times l_{j} \times \prod_{i\neq 1,j} p_{i}^{\alpha_{i}} \leq 2^{\alpha_{1}} l_{j} \prod_{i\neq 1,j} p_{i}^{\alpha_{i}}<\prod_{i=1}^{r} p_{i}^{\alpha_{i}}=N.\]
\noindent Ainsi, $l_{1}=2$ et pour tout $i$ dans $[\![2;r]\!]$ $l_{i}=p_{i}^{\alpha_{i}}$ (notons en particulier que $\alpha_{1}=2$). Supposons par l'absurde qu'il existe une solution $(\overline{x},\overline{k},\ldots,\overline{k},\overline{y})$ de $(E_{N})$ de taille $3 \leq l' \leq l-1$. En procédant comme dans l'étape précédente, on a pour tout $i$ dans $[\![2;r]\!]$ $l_{i} \equiv 0,2 [p_{i}^{\alpha_{i}}]$ et par le lemme \ref{plus} $l' \equiv 0 [l_{1}]$, c'est-à-dire $l'$ est pair. Si $r=2$ on a nécessairement $l' \equiv 0,2 [l]$ ce qui est absurde. Donc, $r \geq 3$. On procède comme dans la première étape en définissant les ensembles $I$ et $J$. Comme $l' \not\equiv 0,2 [l]$, $I,J \neq \{1\}$. On choisit alors $u$ dans $I$ et $v$ dans $J$ différents de 1 et on procède comme dans le cas i) de l'étape précédente ($\frac{l'}{l_{u}}$ et $\frac{l'-2}{l_{v}}$ sont pairs).

\end{proof}

\noindent On va maintenant considérer le dernier cas nécessaire à la démonstration du point i) du théorème.

\begin{proposition}
\label{322}

Soit $N=2m$ avec $m \neq 1$ impair non divisible par 3 ou $m=3^{a}$ avec $a \geq 1$. Toutes les solutions monomiales minimales de $(E_{N})$ de taille $N$ sont irréductibles.

\end{proposition}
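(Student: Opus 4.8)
The plan is to mimic the structure of the proof of Proposition~\ref{321}, but with the decisive difference that here the $2$-part of $N$ is exactly $2$, so the local solution modulo $2$ is $(\overline{0},\overline{0})$ and is \emph{not} irreducible. First I would write $N=2m=2p_{2}^{\alpha_{2}}\ldots p_{r}^{\alpha_{r}}$ with the $p_{i}$ odd primes and $r\geq 2$ (indeed $m>1$ in both cases), fix $k$ such that the $\overline{k}$-monomial minimal solution of $(E_{N})$ has size $l=N$, and denote by $l_{1}$ the size modulo $2$, by $h$ the size modulo $m$, and by $l_{i}$ the size modulo $p_{i}^{\alpha_{i}}$ for $i\geq 2$. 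By Proposition~\ref{38} one has $l={\rm ppcm}(l_{1},h)$ with $l_{1}\in\{2,3\}$. If $k$ is odd then $l_{1}=3$, so $3$ divides $N=2m$, hence $3\mid m$; this is impossible in the case $3\nmid m$, and in the case $m=3^{a}$ a valuation argument (using $h\leq 3^{a}$ from Lemma~\ref{36}) shows it can only occur for $a=1$, i.e. $N=6$, where $h=2$ and $\overline{k}=\overline{3}=\overline{N/2}$, already irreducible by Theorem~\ref{32}. Thus I may assume $k$ even, i.e. $l_{1}=2$.

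Second, with $k$ even one has $l={\rm ppcm}(2,h)=N=2m$, which leaves exactly two possibilities: $h$ odd with $h=m$, or $h$ even with $h=2m$. In either case I would show $l_{i}=p_{i}^{\alpha_{i}}$ for every $i\geq 2$. Indeed ${\rm ppcm}(l_{i})\leq\prod l_{i}\leq\prod p_{i}^{\alpha_{i}}=m$ by Lemma~\ref{36}, so $h=m$ forces ${\rm ppcm}(l_{i})=m$, while $h=2m$ forces, via the doubling case of Proposition~\ref{37}, ${\rm ppcm}(l_{i})=m$ as well; equality in ${\rm ppcm}(l_{i})=\prod p_{i}^{\alpha_{i}}$ with $l_{i}\leq p_{i}^{\alpha_{i}}$ compels $l_{i}=p_{i}^{\alpha_{i}}$ for all $i$. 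From $l_{i}=p_{i}^{\alpha_{i}}$ I then deduce $p_{i}\nmid k$ for every $i\geq 2$, since otherwise Proposition~\ref{311} would give an even size $2p_{i}^{\alpha_{i}-\beta_{i}}$ modulo $p_{i}^{\alpha_{i}}$, contradicting the odd value $p_{i}^{\alpha_{i}}$. Hence, by Theorem~\ref{311b}, the $(k+p_{i}^{\alpha_{i}}\mathbb{Z})$-monomial minimal solution of $(E_{p_{i}^{\alpha_{i}}})$ is irreducible for each $i\geq 2$.

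Finally I would argue by contradiction: assume the solution is reducible, so there is a solution $(\overline{x},\overline{k},\ldots,\overline{k},\overline{y})$ of $(E_{N})$ of size $3\leq l'\leq N-1$, equal to some $\overline{\epsilon}\,Id$ with $\epsilon\in\{-1,1\}$. Reducing modulo $2$, where $\overline{k}=\overline{0}$ and the minimal size is $2$, the \emph{unconditional} part A of Lemma~\ref{plus} gives that $l'$ is even and $x\equiv y\equiv 0\ [2]$. Reducing modulo each $p_{i}^{\alpha_{i}}$ and using the local irreducibility with part B of Lemma~\ref{plus}, each index $i\geq 2$ lies in $I=\{i:\,l'\equiv 0\ [p_{i}^{\alpha_{i}}],\ x\equiv y\equiv k\}$ or in $J=\{i:\,l'\equiv 2\ [p_{i}^{\alpha_{i}}],\ x\equiv y\equiv 0\}$. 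Since $l'$ is even and each $l_{i}=p_{i}^{\alpha_{i}}$ is odd, the exponents $l'/l_{i}$ and $(l'-2)/l_{i}$ are even, so computing $M_{l'}$ modulo $p_{i}^{\alpha_{i}}$ yields $\epsilon\equiv 1$ for every $i\in I$ and $\epsilon\equiv -1$ for every $i\in J$ (using $M_{1}(\overline{0})^{2}=-Id$). As $p_{i}^{\alpha_{i}}>2$ this pins down $\epsilon$ and forces $I=\emptyset$ or $J=\emptyset$: if $J=\emptyset$ then $l'\equiv 0\ [m]$ and, $l'$ being even, $l'\equiv 0\ [N]$; if $I=\emptyset$ then $l'\equiv 2\ [m]$ and, by the Chinese remainder theorem with $l'$ even, $l'\equiv 2\ [N]$. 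Both contradict $3\leq l'\leq N-1$, so the solution is irreducible. The main obstacle is precisely the handling of the prime $2$: because $\alpha_{1}=1$ the local solution is reducible and Lemma~\ref{35} (which drove Proposition~\ref{321}) is unavailable, so the parity of $l'$ and the vanishing of $x,y$ modulo $2$ must instead be extracted from the unconditional statement A of Lemma~\ref{plus}.
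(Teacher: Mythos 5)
Your proposal is correct and follows essentially the same route as the paper's proof: reduce to $l_{1}=2$ (discarding or directly settling the odd-$k$/$N=6$ case), show $l_{i}=p_{i}^{\alpha_{i}}$ for all odd primes, invoke local irreducibility together with Lemma~\ref{plus}, and conclude via the sets $I$, $J$, the parity of $l'$, the sign of $\epsilon$ and the Chinese remainder theorem. The only deviations are cosmetic — you use Proposition~\ref{38} and a parity split on $k$ where the paper splits on whether $3$ divides $N$, you cite Proposition~\ref{311} and Théorème~\ref{311b} instead of Théorème~\ref{318} for local irreducibility, and you run the sign argument before the CRT contradiction rather than after.
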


\begin{proof}

Soit $N=p_{1}^{\alpha_{1}}\ldots p_{r}^{\alpha_{r}}$ ($r \geq 2$). Soit $k$ un entier naturel tel que la solution $\overline{k}$-monomiale minimale de $(E_{N})$ est de taille $N$ (notons que, par le théorème \ref{30}, un tel $k$ existe toujours). Pour tout $i$ dans $[\![1;r]\!]$, on note $l_{i}$ la taille de la solution $(k+p_{i}^{\alpha_{i}}\mathbb{Z})$-monomiale minimale de $(E_{p_{i}^{\alpha_{i}}})$ et $l$ la taille de la solution $\overline{k}$-monomiale minimale de $(E_{N})$. Par hypothèse, $l=N$. Quitte à modifier l'ordre des facteurs premiers, on suppose que $p_{1}=2$. Par hypothèse, $\alpha_{1}=1$. On procède par étapes.
\\
\\\uwave{$1^{{\rm \grave{e}re}}$ étape :} On montre que pour tout $i$, $l_{i}=p_{i}^{\alpha_{i}}$.
\\
\\Supposons que $p_{i} \neq 3$ pour tout $i$. On a $l={\rm ppcm}(l_{i},~1 \leq i \leq r)$ ou $l=2 \times {\rm ppcm}(l_{i},~1 \leq i \leq r)$ (proposition \ref{37}). On a $l_{1}=3$ ou $l_{1}=2$. Si $l_{1}=3$ alors nécessairement 3 divise $l=N$, ce qui contredit l'hypothèse de départ. Donc, $l_{1}=2$. Supposons par l'absurde que $l=2 \times {\rm ppcm}(l_{i},~1 \leq i \leq r)$. 2 divise ${\rm ppcm}(l_{i},~1 \leq i \leq r)$ et donc 4 divise $l=N$, ce qui est absurde. Ainsi, $l={\rm ppcm}(l_{i},~1 \leq i \leq r)$. 
\\
\\S'il existe un $j$ dans $[\![1;r]\!]$ tel que $l_{j}<p_{j}^{\alpha_{j}}$ alors $l\leq \prod_{i=1}^{r} l_{i} < p_{j}^{\alpha_{j}} \prod_{i \neq j} p_{i}^{\alpha_{i}} =N.$ Donc, pour tout $i$, $l_{i}=p_{i}^{\alpha_{i}}$.
\\
\\Supposons que $r=2$ et $p_{2}=3$. Si $\alpha_{2}=1$ alors $N=6$ et la proposition est vraie. On suppose donc $\alpha_{2} \geq 2$. Par la proposition \ref{38}, on a $l={\rm ppcm}(l_{1},l_{2})$.  Comme $3^{\alpha_{2}}$ divise $l$, $3^{\alpha_{2}}$ divise $l_{1}$ ou $l_{2}$. Puisque $\alpha_{2} \geq 2$ et $l_{1} \leq 3$, on a nécessairement que $3^{\alpha_{2}}$ divise $l_{2}$. Or, $l_{2} \leq 3^{\alpha_{2}}$ (lemme \ref{36}). Donc, $l_{2}=3^{\alpha_{2}}$ et nécessairement $l_{1}=2$.
\\
\\\uwave{$2^{{\rm \grave{e}me}}$ étape :} On montre que la solution $\overline{k}$-monomiale minimale de $(E_{N})$ est irréductible.
\\
\\Supposons par l'absurde qu'il existe une solution $(\overline{x},\overline{k},\ldots,\overline{k},\overline{y})$ de $(E_{N})$ de taille $3 \leq l' \leq l-1$. Il existe $\epsilon \in \{-1,1\}$ tel que $M_{l'}(\overline{x},\overline{k},\ldots,\overline{k},\overline{y})=\overline{\epsilon} Id$. Pour tout $i$ dans $[\![1;r]\!]$, il existe $\epsilon_{i} \in \{-1,1\}$ tel que $M_{l_{i}}(x+p_{i}^{\alpha_{i}}\mathbb{Z},k+p_{i}^{\alpha_{i}}\mathbb{Z},\ldots,k+p_{i}^{\alpha_{i}}\mathbb{Z},y+p_{i}^{\alpha_{i}}\mathbb{Z})=(\epsilon_{i}+p_{i}^{\alpha_{i}}\mathbb{Z}) Id$.
\\
\\Soit $i$ dans $[\![2;r]\!]$. Par ce qui précède, on a $l_{i}=p_{i}^{\alpha_{i}}$. Ainsi, la solution $(k+p_{i}^{\alpha_{i}}\mathbb{Z})$-monomiale minimale de $(E_{p_{i}^{\alpha_{i}}})$ est irréductible (Théorème \ref{318}). Par le lemme \ref{plus}, on a donc $l' \equiv 0 [p_{i}^{\alpha_{i}}]$ et $x+p_{i}^{\alpha_{i}}\mathbb{Z}=y+p_{i}^{\alpha_{i}}\mathbb{Z}=k+p_{i}^{\alpha_{i}}\mathbb{Z}$ ou $l' \equiv 2 [p_{i}^{\alpha_{i}}]$ et $x+p_{i}^{\alpha_{i}}\mathbb{Z}=y+p_{i}^{\alpha_{i}}\mathbb{Z}=0+p_{i}^{\alpha_{i}}\mathbb{Z}$. De plus, $k+p_{1}^{\alpha_{1}}\mathbb{Z}=0+2\mathbb{Z}$ et donc $l' \equiv 0 [2]$ (lemme \ref{plus}), c'est-à-dire $l'$ est pair.
\\
\\Si $r=2$ alors on a deux cas possibles :
\begin{itemize}
\item $l_{1} \equiv 0 [2]$ et $l_{2} \equiv 0 [p_{2}^{\alpha_{2}}]$ et donc $l' \equiv 0 [N]$ ce qui est impossible puisque $3 \leq l' \leq N-1$;
\item $l_{1} \equiv 0 [2]$ et $l_{2} \equiv 2 [p_{2}^{\alpha_{2}}]$ et donc $l' \equiv 2 [N]$ ce qui est impossible puisque $3 \leq l' \leq N-1$.
\end{itemize}

\noindent Donc, $r \geq 3$.
\\
\\Notons $I=\{i \in [\![2;r]\!],~l' \equiv 0 [p_{i}^{\alpha_{i}}]\}$ et $J=\{j \in [\![2;r]\!],~l' \equiv 2 [p_{j}^{\alpha_{j}}]\}$. On a $I \cup J=[\![2;r]\!]$ et, comme pour tout $i$ $p_{i}^{\alpha_{i}} \neq 2$, on a $I \cap J=\emptyset$. De plus, $I, J \neq \emptyset$, car sinon on aurait, par le lemme chinois, $l' \equiv 0,2 [N]$ ce qui est impossible.
\\
\\Soient $i \in I$ et $j \in J$. Comme $l_{i}$ est impair et $l'$ est pair, $\frac{l'}{l_{i}}$ est pair et on a :
\begin{eqnarray*}
M &=& M_{l'}(x+p_{i}^{\alpha_{i}}\mathbb{Z},k+p_{i}^{\alpha_{i}}\mathbb{Z},\ldots,k+p_{i}^{\alpha_{i}}\mathbb{Z},y+p_{i}^{\alpha_{i}}\mathbb{Z}) \\
  &=& (M_{l_{i}}(k+p_{i}^{\alpha_{i}}\mathbb{Z},\ldots,k+p_{i}^{\alpha_{i}}\mathbb{Z}))^{\frac{l'}{l_{i}}} \\
	&=& (\epsilon_{i}+p_{i}^{\alpha_{i}}\mathbb{Z})^{\frac{l'}{l_{i}}} Id \\
	&=& (1+p_{i}^{\alpha_{i}}\mathbb{Z}) Id.
\end{eqnarray*}

\noindent Comme $l_{j}$ est impair et $l'$ est pair, $\frac{l'-2}{l_{j}}$ est pair et on a :
\begin{eqnarray*}
N &=& M_{l'}(x+p_{j}^{\alpha_{j}}\mathbb{Z},k+p_{j}^{\alpha_{j}}\mathbb{Z},\ldots,k+p_{j}^{\alpha_{j}}\mathbb{Z},y+p_{j}^{\alpha_{j}}\mathbb{Z}) \\
  &=& M_{1}(0+p_{j}^{\alpha_{j}}\mathbb{Z})(M_{l_{j}}(k+p_{j}^{\alpha_{j}}\mathbb{Z},\ldots,k+p_{j}^{\alpha_{j}}\mathbb{Z}))^{\frac{l'-2}{l_{j}}}M_{1}(0+p_{j}^{\alpha_{j}}\mathbb{Z}) \\
	&=& M_{1}(0+p_{j}^{\alpha_{j}}\mathbb{Z})((\epsilon_{j}+p_{j}^{\alpha_{j}}\mathbb{Z})^{\frac{l'-2}{l_{j}}}Id)M_{1}(0+p_{j}^{\alpha_{j}}\mathbb{Z}) \\
	&=& M_{2}(0+p_{j}^{\alpha_{j}}\mathbb{Z},0+p_{j}^{\alpha_{j}}\mathbb{Z}) \\
	&=& (-1+p_{j}^{\alpha_{j}}\mathbb{Z}) Id.
\end{eqnarray*}	

\noindent Si $\epsilon=-1$ alors $\epsilon+p_{i}^{\alpha_{i}}\mathbb{Z}=-1+p_{i}^{\alpha_{i}}\mathbb{Z}$. Or, par ce qui précède, $\epsilon+p_{i}^{\alpha_{i}}\mathbb{Z}=1+p_{i}^{\alpha_{i}}\mathbb{Z} \neq -1+p_{i}^{\alpha_{i}}\mathbb{Z}$ (puisque $p_{i}^{\alpha_{i}} \neq 2$). Si $\epsilon=1$ alors $\epsilon+p_{j}^{\alpha_{j}}\mathbb{Z}=1+p_{j}^{\alpha_{j}}\mathbb{Z}$. Or, par ce qui précède, $\epsilon+p_{j}^{\alpha_{j}}\mathbb{Z}=-1+p_{j}^{\alpha_{j}}\mathbb{Z} \neq 1+p_{j}^{\alpha_{j}}\mathbb{Z}$ (puisque $p_{j}^{\alpha_{j}} \neq 2$).
\\
\\Ainsi, on arrive à une absurdité dans les deux cas. Donc, la solution $\overline{k}$-monomiale minimale de $(E_{N})$ est irréductible.

\end{proof}

\noindent On va maintenant démontrer le point ii) du théorème \ref{320}. 

\begin{proposition}
\label{323}

Si $N=2 \times 3^{a} \times m$, avec $a \geq 1$ et $m>1$ impair non divisible par 3, alors il existe des solutions monomiales minimales non nulles de $(E_{N})$ de taille $N$ réductibles.

\end{proposition}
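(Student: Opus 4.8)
The plan is to exhibit one non-zero reducible solution of size $N$ by a Chinese Remainder construction in which the local sign modulo $3^a$ and the local sign modulo $m$ are made to differ. Write $N = 2\cdot 3^a\cdot m$ with $m = p_3^{\alpha_3}\cdots p_r^{\alpha_r}$ (so $m\ge 5$ and $r\ge 3$). First I would fix $k$ by $k\equiv 1\ [2]$, $k\equiv 2\ [3^a]$ and $k\equiv -2\ [m]$, so that $\overline k\neq\overline 0$. By Theorem \ref{30}, the solution modulo $3^a$ has size $3^a$ with $M_{3^a}(\overline 2,\ldots,\overline 2)=Id$, and for each $i\ge 3$ the solution modulo $p_i^{\alpha_i}$ has size $p_i^{\alpha_i}$ with $M_{p_i^{\alpha_i}}(\overline{-2},\ldots,\overline{-2})=(-1)^{p_i^{\alpha_i}}Id=-Id$. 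These local sizes are all odd, but the signs differ, so Lemma \ref{39} ii) together with Proposition \ref{37} forces the size modulo $u:=3^a m$ to be $2u$; since $k$ is odd, Proposition \ref{38} then gives the global size $l={\rm ppcm}(3,2u)=2u=N$ (using $3\mid u$). Hence the $\overline k$-monomiale minimale solution has size exactly $N$.

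Next I would build an intermediate solution $(\overline a,\overline k,\ldots,\overline k,\overline a)$ of size $l'$ with $3\le l'\le N-1$; as in the proofs of Propositions \ref{313} and \ref{314}, producing such a solution reduces the size-$N$ monomial solution. I would set $a\equiv 0\ [2]$, $a\equiv 0\ [3^a]$, $a\equiv -2\ [m]$, and choose $l'$ by $l'\equiv 2\ [3^a]$, $l'\equiv 0\ [m]$ with $l'/m$ odd. Writing $l'=mt$, the first condition reads $t\equiv 2m^{-1}\ [3^a]$; letting $t_0\in\{1,\ldots,3^a-1\}$ be this residue (nonzero since $2m^{-1}$ is a unit), I take $t=t_0$ if $t_0$ is odd and $t=t_0+3^a$ otherwise, which stays odd because $3^a$ is odd and satisfies $t\le 2\cdot 3^a-1$. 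This yields an odd $t$ with $m\le l'=mt\le N-m<N$, so $l'$ lies in $[3,N-1]$.

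Then I would verify that this tuple is a genuine solution of $(E_N)$ with a single global sign, checking each prime power. Modulo $2$, since $l'\equiv 2\ [3]$ and $M_3(\overline 1,\overline 1,\overline 1)=-Id\equiv Id$, one gets $M_{l'}(\overline 0,\overline 1,\ldots,\overline 1,\overline 0)=S\cdot Id\cdot S=-Id\equiv Id$. Modulo $3^a$, since $l'\equiv 2\ [3^a]$ and $a\equiv 0$, one gets $S\,(M_{3^a}(\overline 2))^{(l'-2)/3^a}\,S=S^2=-Id$. Modulo each $p_i^{\alpha_i}$, since $l'\equiv 0$ with $l'/p_i^{\alpha_i}$ odd and $a\equiv -2\equiv k$, one gets $(M_{p_i^{\alpha_i}}(\overline{-2}))^{l'/p_i^{\alpha_i}}=(-Id)^{\rm odd}=-Id$. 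By the Chinese Remainder Theorem $M_{l'}(\overline a,\overline k,\ldots,\overline k,\overline a)=-Id$ in $\mathbb Z/N\mathbb Z$, so this is a solution of size $l'\in[3,N-1]$ reducing the $\overline k$-monomiale minimale solution, which is therefore reducible and non-zero of size $N$.

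The delicate point is the sign bookkeeping: one must arrange the local signs modulo $3^a$ and modulo $m$ to agree while keeping $l'$ strictly inside $[3,N-1]$. This is precisely what cannot be done in the irreducible cases (Propositions \ref{321} and \ref{322}), and it becomes possible here only because $N$ carries both the prime $3$ and a further odd prime factor $m>1$. Taking $k$ odd is essential: it makes the local size modulo $2$ equal to $3$ and the sign modulo $2$ degenerate, which lifts the parity restriction on $l'$ that would otherwise (for $k$ even, where $l'$ is forced even) conflict with the parities of $l'/p_i^{\alpha_i}$ needed to fix the sign modulo $m$.
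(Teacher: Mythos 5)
Your proof is correct and takes essentially the same approach as the paper's: the identical choice of $k$ (via $k\equiv 1\,[2]$, $k\equiv 2\,[3^{a}]$, $k\equiv -2\,[m]$), the same size computation through Théorème \ref{30}, le lemme \ref{39} ii) and les propositions \ref{37}--\ref{38}, and a CRT-built bridging solution $(\overline{a},\overline{k},\ldots,\overline{k},\overline{a})$ of intermediate size used to reduce the size-$N$ solution. The only (harmless) difference is a mirror-image choice in the bridge: the paper takes the length $s\equiv 0\,[3^{a}]$, $s\equiv 2\,[m]$ with boundary $\equiv 0\,[m]$, obtaining $Id$, while you take $l'\equiv 2\,[3^{a}]$, $l'\equiv 0\,[m]$ with boundary $\equiv 0\,[2\cdot 3^{a}]$, obtaining $-Id$; both verifications are sound.
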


\begin{proof}

Soit $k$ l'unique entier de $[\![0;N-1]\!]$ vérifiant $k \equiv 1 [2]$, $k \equiv 2 [3^{a}]$ et $k \equiv -2 [m]$. Par la proposition \ref{37}, le lemme \ref{39} ii) et le théorème \ref{30}, la taille de la solution $\overline{k}$-monoiale minimale de $(E_{N})$ est égale à $2\times {\rm ppcm}(3,3^{a},m)=2\times 3^{a}m=N$.
\\
\\Comme $m$ n'est pas divisible par 3, $3^{a}$ est inversible modulo $m$. Donc, il existe un unique $h$ dans $[\![0;m-1]\!]$ tel que $3^{a}h \equiv 2 [m]$. $h \neq 0$ car $m>2$. Si $h$ est impair, on pose $s=3^{a}h$. Si $h$ est pair, on note $s=3^{a}h+3^{a}m$. De plus, $3 \leq s \leq 3^{a}(m-1)+3^{a}m=2\times 3^{a}m-3^{a}<N$.
\\
\\On a $s \equiv 0 [3]$ et donc :
\[M_{s}(k+2\mathbb{Z},\ldots,k+2\mathbb{Z})=(M_{3}(k+2\mathbb{Z},\ldots,k+2\mathbb{Z}))^{\frac{s}{3}}=(1+2\mathbb{Z})Id.\]
\noindent Comme $s \equiv 0 [3^{a}]$, on a :
\[M_{s}(k+3^{a}\mathbb{Z},\ldots,k+3^{a}\mathbb{Z})=(M_{3^{a}}(k+3^{a}\mathbb{Z},\ldots,k+3^{a}\mathbb{Z})^{\frac{s}{3^{a}}}=(1+3^{a}\mathbb{Z})^{\frac{s}{3^{a}}}Id=(1+3^{a}\mathbb{Z})Id.\]
\noindent $s$ est impair et $s \equiv 2 [m]$. Donc, $\frac{s-2}{m}$ est impair et on a :
\begin{eqnarray*}
M &=& M_{s}(0+m\mathbb{Z},k+m\mathbb{Z},\ldots,k+m\mathbb{Z},0+m\mathbb{Z}) \\
  &=& M_{1}(0+m\mathbb{Z})M_{m}(k+m\mathbb{Z},\ldots,k+m\mathbb{Z})^{\frac{s-2}{m}}M_{1}(0+m\mathbb{Z}) \\
	&=& M_{1}(0+m\mathbb{Z})((-1+m\mathbb{Z})^{\frac{s-2}{m}}Id)M_{1}(0+m\mathbb{Z}) \\
	&=& -M_{2}(0+m\mathbb{Z},0+m\mathbb{Z}) \\
	&=& (1+m\mathbb{Z})Id.
\end{eqnarray*}

\noindent Notons $a$ l'unique entier compris entre 0 et $N-1$ tel que $a \equiv k [2]$, $a \equiv k [3^{a}]$ et $a \equiv 0 [m]$. Par le lemme chinois, on a $M_{s}(\overline{a},\overline{k},\ldots,\overline{k},\overline{a})=Id$. Comme $3 \leq s \leq N-1$, on peut utiliser cette solution pour réduire la solution $\overline{k}$-monomiale minimale de $(E_{N})$. Cette dernière est donc réductible.

\end{proof}

En combinant les propositions \ref{321}, \ref{322} et \ref{323}, on obtient le théorème \ref{320}. Notons également que la proposition \ref{323} et le théorème \ref{30} nous montrent qu'il est possible pour un $N$ et un $l$ fixé d'avoir certaines solutions monomiales minimales de $(E_{N})$ de taille $l$ réductibles et d'autres irréductibles.

\begin{examples}
{\rm On donne ci-dessous quelques applications immédiates des résultats qui précèdent.
\begin{itemize}
\item $N=245=5 \times 7^{2}$. La solution $\overline{37}$-monomiale minimale de $(E_{N})$ est de taille $245=N$ et est donc irréductible.
\item $N=70=2 \times 5 \times 7$. La solution $\overline{12}$-monomiale minimale de $(E_{N})$ est de taille $70=N$ et est donc irréductible.
\item $N=1100=2^{2} \times 5^{2} \times 11$. La solution $\overline{152}$-monomiale minimale de $(E_{N})$ est de taille $1100=N$ et est donc irréductible.
\item $N=90=2 \times 3^{2} \times 5$. La solution $\overline{-7}$-monomiale minimale de $(E_{N})$ est de taille $90=N$. On peut la réduire avec la solution $(\overline{65},\overline{-7},\ldots,\overline{-7},\overline{65}) \in (\mathbb{Z}/N\mathbb{Z})^{27}$.
\end{itemize}
}
\end{examples}

\subsection{Quelques résultats supplémentaires}

On considère dans cette dernière partie un certain nombre d'autres valeurs de taille, dépendantes de $N$, pour lesquels l'irréductibilité des solutions monomiales minimales est assurée.

\begin{proposition}
\label{324}

Soient $N>2$, $p$ un nombre premier et $n \in \mathbb{N}^{*}$. On suppose que $p^{n} \neq 2$. 
\\
\\i) Si $N$ est impair ou si $p$ est impair alors toutes les solutions monomiales minimales non nulles de $(E_{N})$ de taille $p^{n}$ sont irréductibles.
\\
\\ii) On suppose que $N=2^{\alpha_{1}}m$, avec $\alpha_{1} \geq 1$ et $m$ impair, et on suppose que $n=2$ ou que $n \geq \alpha_{1}$. Toutes les solutions monomiales minimales non nulles de $(E_{N})$ de taille $2^{n}$ sont irréductibles.

\end{proposition}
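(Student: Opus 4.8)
The overall plan is the one shared by the irreducibility arguments of this section. Writing $l$ for the minimal size of the $\overline{k}$-monomial solution, I would assume it reducible, so that by the reduction mechanism there is a solution $(\overline{x},\overline{k},\ldots,\overline{k},\overline{y})$ of \eqref{p} of size $l'$ with $3\leq l'\leq l-1$, and then reduce this relation modulo each prime power $p_i^{\alpha_i}$ dividing $N$ and apply lemme \ref{plus}. The decisive structural input is that $l=p^n$ (resp.\ $2^n$) is a prime power: lemme \ref{33} forces each local size $l_i$ (that of the $(k+p_i^{\alpha_i}\mathbb{Z})$-monomial minimal solution of $(E_{p_i^{\alpha_i}})$) to divide $p^n$, hence to be a power of $p$; moreover proposition \ref{311} shows that an odd $p_i$ dividing $k$ would give $l_i$ an odd factor unless $\overline{k}\equiv\overline{0}$, so each odd-prime local solution is either irreducible (théorème \ref{311b}) or is $(\overline{0},\overline{0})$ with $l_i=2$. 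Proposition \ref{37} then gives $l=\mathrm{ppcm}(l_i)$ or $l=2\,\mathrm{ppcm}(l_i)$, and since all $l_i$ are powers of the same prime this splits everything into two cases.

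For (i) I would split on the parity of $p$. If $p$ is odd the size $p^n$ is odd: when $N$ is not of the form $2m$ ($m$ odd) this is exactly théorème \ref{318} i), and when $N=2m$ with $m$ odd, proposition \ref{38} gives $l=\mathrm{ppcm}(l_1,h)$ with $l_1\in\{2,3\}$, so oddness forces $l_1=3$ and hence $p=3$, whence I conclude either because a size-$3$ solution is automatically irreducible ($n=1$) or by théorème \ref{318} ii) ($n\geq2$, $l=3^n$ divisible by $9$). If $p=2$ the hypothesis forces $N$ odd, so every $p_i$ is odd; after disposing of $n=2$ (size $4$, where a monomial minimal solution cannot contain $\pm\overline{1}$ and is therefore irreducible), I treat $n\geq3$. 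When $l=\mathrm{ppcm}(l_i)$ some $l_{i_0}=2^n\geq4$ is reached, at an odd prime with $p_{i_0}\nmid k$ (else $l_{i_0}$ would carry the factor $p_{i_0}$), so the local solution is irreducible and lemme \ref{plus} B gives $l'\equiv0,2\ [2^n]$, impossible for $3\leq l'\leq 2^n-1$. When $l=2\,\mathrm{ppcm}(l_i)$ the matrix $M_{2^{n-1}}(\overline{k},\ldots,\overline{k})$ has, by the chinese remainder theorem, signs $\delta_i\in\{\pm1\}$ at the $p_i^{\alpha_i}$ that are mixed (not all $+1$ since $l\neq\mathrm{ppcm}$, not all $-1$ since otherwise size $2^{n-1}$ would already solve \eqref{p}); choosing $i_0$ with $\delta_{i_0}=-1$, an odd prime with $l_{i_0}=2^{n-1}\geq4$, lemme \ref{plus} B pins $l'\in\{2^{n-1},2^{n-1}+2\}$, and a short computation via lemme \ref{plus} A shows the local sign of the candidate equals $\delta_i$ when $l'=2^{n-1}$ and $-\delta_i$ when $l'=2^{n-1}+2$; as the $\delta_i$ are not all equal, no global sign is admissible, a contradiction.

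For (ii) the prime $2$ really divides $N$, so the subtle point is the behaviour modulo $2^{\alpha_1}$. The case $n=2$ is again the size-$4$ case, so assume $n\geq3$ and $n\geq\alpha_1$ and run the same dichotomy. In the case $l=\mathrm{ppcm}(l_i)$ with maximum $2^n$ reached at an odd prime the argument is as in (i); the genuinely new possibility is that the maximum is reached only at the prime $2$, i.e.\ $l_1=2^n$. Then lemme \ref{361} gives $2^n=l_1\leq2^{\alpha_1}$, so $n\leq\alpha_1$, and the hypothesis $n\geq\alpha_1$ forces $n=\alpha_1$, hence $l_1=2^{\alpha_1}$; the key auxiliary fact, which I would extract from proposition \ref{311} and théorème \ref{311b}, is that every \emph{reducible} monomial minimal solution modulo $2^{\alpha_1}$ has size strictly below $2^{\alpha_1}$, so size $2^{\alpha_1}$ entails irreducibility and lemme \ref{plus} B applies. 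Finally, in the case $l=2\,\mathrm{ppcm}(l_i)$ I would use lemme \ref{35} (an even local size modulo $2^{\alpha_1}$ with $\overline{k}\neq\overline{0}$ forces $M=Id$) to show that for $n\geq3$ the sign $\delta_1$ at the prime $2$ is $+1$; hence any $i_0$ with $\delta_{i_0}=-1$ is again an odd prime with $l_{i_0}=2^{n-1}\geq4$, and the mixed-sign argument of (i) carries over verbatim.

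The main obstacle is the case $l=2\,\mathrm{ppcm}(l_i)$: one must verify that the sign incoherence of $M_{2^{n-1}}$ cannot be repaired by any candidate reducing block. This is exactly where the prime-power hypothesis is essential—because all $l_i$ are powers of a single prime, the coprime exponent-matching that manufactures a genuine reducing factor in the proofs of propositions \ref{313} and \ref{314} is unavailable, which is precisely why these solutions remain irreducible. A secondary but indispensable point is the auxiliary irreducibility statement modulo $2^{\alpha_1}$, whose failure when $n<\alpha_1$ (compare the reducible size-$8$ examples of proposition \ref{316}, where $16\mid N$) shows that the hypothesis ``$n=2$ or $n\geq\alpha_1$'' is sharp and cannot be omitted.
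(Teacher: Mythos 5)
Your plan reproduces the paper's proof almost step for step: each local size $l_i$ is a power of $p$ via Lemme \ref{33} and Proposition \ref{311}, the dichotomy $l=\mathrm{ppcm}(l_i)$ or $l=2\,\mathrm{ppcm}(l_i)$ comes from Proposition \ref{37}, local irreducibility from Théorème \ref{311b}, and Lemme \ref{plus} pins down $l'$ and the local signs. Part (i) is correct as you set it up (including the odd-$p$ reduction to Théorème \ref{318} and the treatment of $p=2$ with $N$ odd), and part (ii) is correct whenever $4\mid N$.

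The gap is in part (ii) for $\alpha_1=1$, i.e.\ $N\equiv 2\pmod 4$, in the sub-case $l=2\,\mathrm{ppcm}(l_i)$: there the mixed-sign argument does \emph{not} carry over verbatim. In (i), the clash between $\delta_{i_0}=-1$ and some $\delta_j=+1$ was contradictory because every modulus $p_j^{\alpha_j}$ is odd, hence $>2$, so a single $\epsilon\in\{\pm1\}$ cannot reduce to both signs. When the modulus carrying $\delta_1=+1$ is $2$ itself, the congruences $\epsilon\equiv +1\ [2]$ and $\epsilon\equiv -1\ [p_{i_0}^{\alpha_{i_0}}]$ are simultaneously satisfied by $\epsilon=-1$, so the pair $(\delta_1,\delta_{i_0})=(+1,-1)$ yields no contradiction; you must still exclude the configuration where $\delta_j=-1$ for \emph{every} odd $j$. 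That configuration is indeed impossible, but for a reason your sketch never invokes: since $+1\equiv -1\ [2]$, the Chinese remainder theorem would then give $M_{2^{n-1}}(\overline{k},\ldots,\overline{k})=-Id$ in $SL_{2}(\mathbb{Z}/N\mathbb{Z})$, contradicting the minimality $l=2^n$. (A secondary slip: Lemme \ref{35} requires $\alpha_1\geq 2$, so for $\alpha_1=1$ you should instead note $l_1=2$ and that the exponent $2^{n-1}/l_1=2^{n-2}$ is even.) So either add this CRT/minimality observation, which produces an odd prime with $\delta_j=+1$ and restores the sign clash between two odd prime powers, or follow the paper, which disposes of $\alpha_1=1$ at the outset of the even case: by Proposition \ref{38}, $2^n=\mathrm{ppcm}(2,s)$ forces $s=2^n$ for the whole odd part $m=N/2$, so any reduction of the global solution reduces the monomial minimal solution modulo $m$, which is irreducible by part (i) since $m$ is odd. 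Note this case is not marginal: $\alpha_1=1$ is exactly $N=2m$ with $m$ odd, where (ii) imposes no restriction on $n$ at all.
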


\begin{proof}

Soit $N=p_{1}^{\alpha_{1}} \ldots p_{r}^{\alpha_{r}}$. Supposons qu'il existe un entier naturel $k$ tel que la solution $\overline{k}$-monomiale minimale de $(E_{N})$ est de taille $p^{n}$. Pour tout $i$ dans $[\![1;r]\!]$, on note $l_{i}$ la taille de la solution $(k+p_{i}^{\alpha_{i}}\mathbb{Z})$-monomiale minimale de $(E_{p_{i}^{\alpha_{i}}})$ et $l$ la taille de la solution $\overline{k}$-monomiale minimale de $(E_{N})$. Par hypothèse, $l=p^{n}$. 
\\
\\Si $p$ est impair alors le résultat découle du théorème \ref{318} (si $p=3$ alors soit $l=3$ soit 9 divise $l$). On suppose donc maintenant que $p$ est pair. Si $l=4$ alors la solution $\overline{k}$-monomiale minimale de $(E_{N})$ est irréductible, puisqu'une solution de taille 4 est réductible si et seulement si elle contient $\pm \overline{1}$ et que les solutions $\pm \overline{1}$-monomiale minimale de $(E_{N})$ sont de taille 3. On suppose donc maintenant que $l \geq 8$.
\\
\\Par le lemme \ref{37}, on a $l=2^{n}={\rm ppcm}(l_{i},~1 \leq i \leq r)$ ou $l=2^{n}=2 \times {\rm ppcm}(l_{i},~1 \leq i \leq r)$. Ainsi, ${\rm ppcm}(l_{i},~1 \leq i \leq r)$ est une puissance de 2. Donc, pour tout $1 \leq i \leq r$, $l_{i}$ est une puissance de deux.
\\
\\Supposons par l'absurde que la solution $\overline{k}$-monomiale minimale de $(E_{N})$ est réductible. Il existe une solution $(\overline{x},\overline{k},\ldots,\overline{k},\overline{y})$ de $(E_{N})$ de taille $3 \leq l' \leq l-1$. Il existe $\epsilon \in \{-1,1\}$ tel que $M_{l'}(\overline{x},\overline{k},\ldots,\overline{k},\overline{y})=\overline{\epsilon} Id$. On distingue deux cas.
\\
\\ \uwave{i) $N$ impair :} Pour tout $i$, on a $M_{l_{i}}(k+p_{i}^{\alpha_{i}}\mathbb{Z},\ldots,k+p_{i}^{\alpha_{i}}\mathbb{Z})=(-1+p_{i}^{\alpha_{i}}\mathbb{Z}) Id$, puisque $p_{i}$ est impair et $l_{i}$ pair (lemme \ref{34}). On considère deux cas.
\\
\\A) Si $l={\rm ppcm}(l_{i},~1 \leq i \leq r)$. Il existe nécessairement un $i$ tel que $l_{i}=2^{n}$. Comme $p_{i}^{\alpha_{i}}$ divise $N$, $(x+p_{i}^{\alpha_{i}}\mathbb{Z},k+p_{i}^{\alpha_{i}}\mathbb{Z},\ldots,k+p_{i}^{\alpha_{i}}\mathbb{Z},y+p_{i}^{\alpha_{i}}\mathbb{Z})$ est une solution de $(E_{p_{i}}^{\alpha_{i}})$ de taille $l'$. Donc, la solution $(k+p_{i}^{\alpha_{i}}\mathbb{Z})$-monomiale minimale de $(E_{p_{i}}^{\alpha_{i}})$ est réductible. Or, ceci est absurde car les solutions monomiales minimales réductibles de $(E_{p_{i}^{\alpha_{i}}})$ sont de taille paire non divisible par 4 (théorème \ref{311b} et proposition \ref{311}) et $l$ est divisible par 8.
\\
\\B) Si $l=2 \times {\rm ppcm}(l_{i},~1 \leq i \leq r)$. On a $r \geq 2$ et il existe un $i$ tel que $l_{i}=2^{n-1}$. De plus, il existe nécessairement un $j$ tel que $l_{j}=2^{h}<l_{i}$. En effet, si on avait, pour tout $1 \leq j \leq r$, $l_{j}=2^{n-1}$ alors on aurait $l={\rm ppcm}(l_{i},~1 \leq i \leq r)=2^{n-1}$.
\\
\\Comme $p_{i}^{\alpha_{i}}$ divise $N$, $(x+p_{i}^{\alpha_{i}}\mathbb{Z},k+p_{i}^{\alpha_{i}}\mathbb{Z},\ldots,k+p_{i}^{\alpha_{i}}\mathbb{Z},y+p_{i}^{\alpha_{i}}\mathbb{Z})$ est une solution de $(E_{p_{i}^{\alpha_{i}}})$ de taille $l'$. Or, comme $l_{i}$ est divisible par 4, la solution $(k+p_{i}^{\alpha_{i}}\mathbb{Z})$-monomiale minimale de $(E_{p_{i}^{\alpha_{i}}})$ est irréductible. Donc, par le lemme \ref{plus}, $l' \equiv 0 [l_{i}]$ avec $x+p_{i}^{\alpha_{i}}\mathbb{Z}=y+p_{i}^{\alpha_{i}}\mathbb{Z}=k+p_{i}^{\alpha_{i}}\mathbb{Z}$ ou $l' \equiv 2 [l_{i}]$ avec $x+p_{i}^{\alpha_{i}}\mathbb{Z}=y+p_{i}^{\alpha_{i}}\mathbb{Z}=0+p_{i}^{\alpha_{i}}\mathbb{Z}$. Ainsi, comme $l_{i}=\frac{l}{2}$, la solution $\overline{k}$-monomiale minimale de $(E_{N})$ est nécessairement égale à la somme d'une solution de taille $l_{i}$ avec une solution de taille $l_{i}+2$ ou la somme d'une solution de taille $l_{i}+2$ avec une solution de taille $l_{i}$. Il existe donc une solution $(\overline{x},\overline{k},\ldots,\overline{k},\overline{y})$ de $(E_{N})$ de taille $l'=l_{i}=2^{n-1}$.
\\
\\Comme $p_{j}^{\alpha_{j}}$ divise $N$, $(x+p_{j}^{\alpha_{j}}\mathbb{Z},k+p_{j}^{\alpha_{j}}\mathbb{Z},\ldots,k+p_{j}^{\alpha_{j}}\mathbb{Z},y+p_{j}^{\alpha_{j}}\mathbb{Z})$ est une solution de $(E_{p_{j}^{\alpha_{j}}})$. De plus, $l' \equiv 0 [l_{j}]$. Donc, par le lemme \ref{plus}, $x+p_{j}^{\alpha_{j}}\mathbb{Z}=y+p_{j}^{\alpha_{j}}\mathbb{Z}=k+p_{j}^{\alpha_{j}}\mathbb{Z}$. On a donc :
\[M_{l'}(x+p_{i}^{\alpha_{i}}\mathbb{Z},k+p_{i}^{\alpha_{i}}\mathbb{Z},\ldots,k+p_{i}^{\alpha_{i}}\mathbb{Z},y+p_{i}^{\alpha_{i}}\mathbb{Z})=M_{l_{i}}(k+p_{i}^{\alpha_{i}}\mathbb{Z},\ldots,k+p_{i}^{\alpha_{i}}\mathbb{Z})=(-1+p_{i}^{\alpha_{i}}\mathbb{Z});\]
\[M_{l'}(x+p_{j}^{\alpha_{j}}\mathbb{Z},k+p_{j}^{\alpha_{j}}\mathbb{Z},\ldots,k+p_{j}^{\alpha_{j}}\mathbb{Z},y+p_{j}^{\alpha_{j}}\mathbb{Z})=M_{l_{j}}(k+p_{j}^{\alpha_{j}}\mathbb{Z},\ldots,k+p_{j}^{\alpha_{j}}\mathbb{Z})^{2^{n-1-h}}=(1+p_{j}^{\alpha_{j}}\mathbb{Z}).\]

\noindent Si $\epsilon=1$ alors $\epsilon+p_{i}^{\alpha_{i}}\mathbb{Z}=1+p_{i}^{\alpha_{i}}\mathbb{Z}$. Or, par ce qui précède, $\epsilon+p_{i}^{\alpha_{i}}\mathbb{Z}=-1+p_{i}^{\alpha_{i}}\mathbb{Z} \neq 1+p_{i}^{\alpha_{i}}\mathbb{Z}$ (puisque $p_{i}^{\alpha_{i}} \neq 2$). Si $\epsilon=-1$ alors $\epsilon+p_{j}^{\alpha_{j}}\mathbb{Z}=-1+p_{j}^{\alpha_{j}}\mathbb{Z}$. Or, par ce qui précède, $\epsilon+p_{j}^{\alpha_{j}}\mathbb{Z}=1+p_{j}^{\alpha_{j}}\mathbb{Z} \neq -1+p_{j}^{\alpha_{j}}\mathbb{Z}$ (puisque $p_{j}^{\alpha_{j}} \neq 2$).
\\
\\On arrive donc à une absurdité dans les deux cas. Ainsi, la solution $\overline{k}$-monomiale minimale de $(E_{N})$ est irréductible.
\\
\\ \uwave{ii) $N$ pair :} Quitte à modifier l'ordre des $p_{i}$, on suppose que $p_{1}=2$. Si $r=1$ alors le résultat est vrai puisque les solutions monomiales minimales réductible de $(E_{2^{\alpha_{1}}})$ sont de taille $2^{j}$ avec $j=1$ ou $3 \leq j \leq \alpha_{1}-1$ (théorème \ref{311b} et proposition \ref{311}). On suppose maintenant $r \geq 2$. On suppose également $n \geq \alpha_{1}$.
\\
\\On suppose pour commencer que $\alpha_{1}=1$. On a $l_{1}=2$. On note $s$ la taille de la solution $(k+\frac{N}{2})$-monomiale minimale de $(E_{\frac{N}{2}})$. Par la proposition \ref{38}, $2^{n}={\rm ppcm}(2,s)$. Ainsi, $s=2^{n}$. Comme $m=\frac{N}{2}$ divise $N$, $(x+m\mathbb{Z},k+m\mathbb{Z},\ldots,k+m\mathbb{Z},y+m\mathbb{Z})$ est une solution de $(E_{m})$ de taille $l'$. Donc, la solution $(k+m\mathbb{Z})$-monomiale minimale de $(E_{m})$ est réductible. Or, ceci est absurde par le cas A ($m$ impair). Donc, la solution $\overline{k}$-monomiale minimale de $(E_{N})$ est irréductible.
\\
\\On suppose donc maintenant $\alpha_{1}>1$. On considère deux cas.
\\
\\i) Si $l={\rm ppcm}(l_{i},~1 \leq i \leq r)$. Il existe nécessairement un $i$ tel que $l_{i}=2^{n}$. Comme $p_{i}^{\alpha_{i}}$ divise $N$, $(x+p_{i}^{\alpha_{i}}\mathbb{Z},k+p_{i}^{\alpha_{i}}\mathbb{Z},\ldots,k+p_{i}^{\alpha_{i}}\mathbb{Z},y+p_{i}^{\alpha_{i}}\mathbb{Z})$ est une solution de $(E_{p_{i}^{\alpha_{i}}})$ de taille $l'$. Donc, la solution $(k+p_{i}^{\alpha_{i}}\mathbb{Z})$-monomiale minimale de $(E_{p_{i}^{\alpha_{i}}})$ est réductible. Or, ceci est absurde car, si $i \geq 2$, une solution monomiale minimale de $(E_{p_{i}^{\alpha_{i}}})$ de taille $2^{n}$ est irréductible (puisque $l_{i}$ est divisible par 4, théorème \ref{311b} et proposition \ref{311}) et, si $i=1$, une solution monomiale minimale de $(E_{p_{i}^{\alpha_{i}}})$ de taille $2^{n}$ est irréductible (théorème \ref{311b} et proposition \ref{311} avec $n \geq \alpha_{i}$, et même $n=\alpha_{1}$ avec le lemme \ref{361}).
\\
\\ii) Si $l=2 \times {\rm ppcm}(l_{i},~1 \leq i \leq r)$. Il existe nécessairement un $i$ tel que $l_{i}=2^{n-1}$. Supposons pour commencer que $i \neq 1$. Si $l_{1}<2^{n-1}$ alors on peut faire la même chose que dans le point B) du cas $N$ impair en prenant $j=1$. Si $l_{1}=2^{n-1}$ alors on peut également procéder de la même façon que dans le point B) du cas $N$ impair en utilisant en plus que $M_{l_{1}}(k+p_{1}^{\alpha_{1}}\mathbb{Z},\ldots,k+p_{1}^{\alpha_{i}}\mathbb{Z})=(1+p_{1}^{\alpha_{1}}\mathbb{Z})Id$, puisque $l_{1} \neq 2$ (lemme \ref{35}). On suppose donc maintenant que $i=1$.
\\
\\Comme $M_{l_{1}}(k+p_{1}^{\alpha_{1}}\mathbb{Z},\ldots,k+p_{1}^{\alpha_{1}}\mathbb{Z})=(1+p_{1}^{\alpha_{1}}\mathbb{Z})Id$, il existe nécessairement un $j$ compris entre 2 et $r$ tel que $l_{j}=2^{n-1}$. En effet dans le cas contraire, on aurait $l={\rm ppcm}(l_{i},~1 \leq i \leq r)=2^{n-1}<2^{n}$. On peut donc procéder de la même façon qu'au dessus.
\\
\\On arrive donc à une absurdité dans les deux cas. Ainsi, la solution $\overline{k}$-monomiale minimale de $(E_{N})$ est irréductible.

\end{proof}

\begin{corollary}
\label{325}

Soient $N>2$, $n \geq 2$. On suppose que 16 ne divise pas $N$. Toutes les solutions monomiales minimales non nulles de $(E_{N})$ de taille $2^{n}$ sont irréductibles.

\end{corollary}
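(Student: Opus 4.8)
L'idée est de déduire ce corollaire directement de la proposition \ref{324} en distinguant les cas selon la valuation $2$-adique de $N$. Écrivons $N=2^{\alpha_{1}}m$ avec $m$ impair et $\alpha_{1} \geq 0$. Comme $16=2^{4}$ ne divise pas $N$, on a $\alpha_{1} \leq 3$. On travaille ici avec $p=2$ et $n \geq 2$, de sorte que $2^{n} \geq 4 \neq 2$ : l'hypothèse $p^{n} \neq 2$ de la proposition \ref{324} est donc automatiquement vérifiée.

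Premièrement, si $\alpha_{1}=0$, c'est-à-dire si $N$ est impair, alors le point i) de la proposition \ref{324} s'applique directement (avec $p=2$ et $N$ impair) et garantit l'irréductibilité de toute solution monomiale minimale non nulle de $(E_{N})$ de taille $2^{n}$. (Si de telles solutions n'existent pas, l'énoncé est vide, donc trivialement vrai.)

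Supposons maintenant $\alpha_{1} \geq 1$, de sorte que $N=2^{\alpha_{1}}m$ avec $1 \leq \alpha_{1} \leq 3$ et $m$ impair. On utilise alors le point ii) de la proposition \ref{324}, dont l'hypothèse sur l'exposant est \og $n=2$ ou $n \geq \alpha_{1}$ \fg. Si $n=2$, cette condition est satisfaite. Si $n \geq 3$, alors, puisque $\alpha_{1} \leq 3 \leq n$, on a $n \geq \alpha_{1}$, et la condition est de nouveau vérifiée. Dans les deux cas, le point ii) de la proposition \ref{324} fournit l'irréductibilité voulue.

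Il n'y a donc pas de véritable difficulté : tout le contenu réside dans l'observation que la contrainte $16 \nmid N$ impose $\alpha_{1} \leq 3$, ce qui, combiné à $n \geq 2$, rend automatique la condition \og $n=2$ ou $n \geq \alpha_{1}$ \fg\ de la proposition \ref{324} ii). Le seul point auquel prêter attention est le cas limite $\alpha_{1}=3$ (c'est-à-dire $8$ divise $N$ mais pas $16$), où l'on a besoin de l'implication $n \geq 3 \Rightarrow n \geq \alpha_{1}$, qui est immédiate.
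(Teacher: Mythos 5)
Votre preuve est correcte et suit essentiellement le même chemin que celle de l'article : distinction selon la parité de $N$, application de la proposition \ref{324} i) dans le cas impair, et observation que $16 \nmid N$ donne $1 \leq \alpha_{1} \leq 3$, d'où \og $n=2$ ou $n \geq 3 \geq \alpha_{1}$ \fg\ pour appliquer le point ii). Votre vérification explicite de l'hypothèse $p^{n} \neq 2$ est un petit plus de rigueur par rapport au texte original, mais ne change rien à l'argument.
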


\begin{proof}

On a deux cas, soit $N$ est impair et alors le résultat découle du théorème précédent, soit $N$ est pair non divisible par 16. Dans ce dernier cas, on a donc (en reprenant les notations du théorème précédent) $1 \leq \alpha_{1} \leq 3$ et donc $n=2$ ou $n \geq 3 \geq \alpha_{1}$. Le résultat découle alors du théorème précédent.

\end{proof}

\begin{examples}
{\rm On donne ci-dessous quelques applications immédiates des résultats qui précèdent.
\begin{itemize}
\item $N=51=3 \times 17$. La solution $\overline{6}$-monomiale minimale de $(E_{N})$ est de taille $8$ et est donc irréductible.
\item $N=62=2 \times 31$. La solution $\overline{40}$-monomiale minimale de $(E_{N})$ est de taille $16$ et est donc irréductible.
\item $N=56=2^{3} \times 7$. La solution $\overline{14}$-monomiale minimale de $(E_{N})$ est de taille $8$ et est donc irréductible.
\item $N=80=2^{4} \times 5$. La solution $\overline{50}$-monomiale minimale de $(E_{80})$ est de taille 16 et est donc irréductible.
\end{itemize}
}
\end{examples}

\begin{remark}
{\rm Le résultat du corollaire \ref{325} n'est plus vrai si $N$ est divisible par 16. En effet, si $N$ est divisible par 16 alors les solutions monomiales minimales de $(E_{N})$ dont la taille est une puissance de deux peuvent être réductibles ou irréductibles. On a par exemple :
\begin{itemize}
\item la solution $\overline{50}$-monomiale minimale de $(E_{80})$ est irréductible de taille 16;
\item la solution $\overline{20}$-monomiale minimale de $(E_{80})$ est réductible de taille 8 (voir proposition \ref{316}).
\end{itemize}
}
\end{remark}

\begin{proposition}
\label{325bis}

Soient $N \geq 2$ non divisible par 3. Toutes les solutions monomiales minimales de $(E_{N})$ de taille $6$ sont irréductibles.

\end{proposition}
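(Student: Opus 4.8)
Le plan est de raisonner par l'absurde : en supposant que la solution $\overline{k}$-monomiale minimale de $(E_N)$ est de taille $l=6$ (avec $3\nmid N$) et qu'elle est réductible, on produira une solution plus courte de la forme $(\overline{a},\overline{k},\ldots,\overline{k},\overline{b})$ que l'on éliminera en réduisant modulo les facteurs de $N$ de la forme puissance de premier. Écrivons $N=p_{1}^{\alpha_{1}}\cdots p_{r}^{\alpha_{r}}$ et notons $l_{i}$ la taille de la solution $(k+p_{i}^{\alpha_{i}}\mathbb{Z})$-monomiale minimale de $(E_{p_{i}^{\alpha_{i}}})$. Comme $M_{1}(\overline{k})\neq\pm Id$, on a $l_{i}\geq 2$, et la proposition \ref{37} donne $l={\rm ppcm}(l_{i})$ ou $l=2\,{\rm ppcm}(l_{i})$ ; avec $l=6$, ceci impose ${\rm ppcm}(l_{i})\in\{3,6\}$, donc chaque $l_{i}\in\{2,3,6\}$, et puisque $3\mid l$ tandis que $3\nmid 2$, il existe au moins un indice $j$ tel que $l_{j}\in\{3,6\}$. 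On commence par écarter le cas $r=1$ : alors $N=p^{\alpha}$ avec $p\neq 3$, et on vérifie que $l=6$ force les hypothèses d'irréductibilité du théorème \ref{311b}. En effet, par la proposition \ref{311}, un $k$ divisible par $p$ (resp.\ un $k$ pair lorsque $p=2$) donnerait une taille de la forme $2p^{t}$ (resp.\ une puissance de $2$), jamais égale à $6$ puisque $p\neq 3$ ; donc $p\nmid k$ (resp.\ $k$ impair), et le théorème \ref{311b} fournit directement l'irréductibilité.

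On suppose donc $r\geq 2$. Si la solution était réductible, en l'écrivant (à $\sim$ près) $(\overline{a_{1}},\ldots,\overline{a_{m}})\oplus(\overline{b_{1}},\ldots,\overline{b_{l}})$ avec $m,l\geq 3$, la propriété remarquable de $\oplus$ assure que les deux sommandes sont solutions, et la description de $\oplus$ leur impose la forme monomiale $(\ast,\overline{k},\ldots,\overline{k},\ast)$ : on obtient ainsi une solution $(\overline{a},\overline{k},\ldots,\overline{k},\overline{b})$ de $(E_{N})$ de taille $l'$ avec $3\leq l'\leq l-1=5$. En réduisant ce uplet modulo un $p_{i}^{\alpha_{i}}$ quelconque, on obtient une solution de $(E_{p_{i}^{\alpha_{i}}})$ de même forme et de taille $l'$, à laquelle on applique le lemme \ref{plus} avec la taille minimale locale $l_{i}$.

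La difficulté principale réside dans le cas où l'un des $l_{i}$ vaut $6$. Le lemme \ref{plus}.A est en effet muet sur les tailles $\equiv 3,4,5\ [6]$, et il faut recourir au lemme \ref{plus}.B. Or $p_{i}^{\alpha_{i}}$ est ici un diviseur strict de $N$ (car $r\geq 2$), et le raisonnement du premier paragraphe, appliqué à $p_{i}^{\alpha_{i}}$ au lieu de $N$, montre sans cercle vicieux que la solution $(k+p_{i}^{\alpha_{i}}\mathbb{Z})$-monomiale minimale est irréductible. Le lemme \ref{plus}.B interdit alors toute solution de $(E_{p_{i}^{\alpha_{i}}})$ de la forme $(\overline{a},\overline{k},\ldots,\overline{k},\overline{b})$ de taille $\not\equiv 0,2\ [6]$. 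Comme $l'\in\{3,4,5\}$ n'est jamais $\equiv 0,2\ [6]$, on obtient la contradiction voulue.

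Il reste le cas où $l_{i}\in\{2,3\}$ pour tout $i$, avec un $l_{j}=3$. Si $l'=4$, alors modulo $p_{j}^{\alpha_{j}}$ on a $l'\equiv 1\ [3]$, impossible par le lemme \ref{plus}.A.ii. Si $l'\in\{3,5\}$ (impair) : s'il existe un $l_{i}=2$, alors $l'\equiv 1\ [2]$ est de nouveau impossible par le lemme \ref{plus}.A.ii ; sinon tous les $l_{i}=3$, et le lemme \ref{plus}.A (cas $l'\equiv 0$, resp.\ $l'\equiv 2\ [3]$) joint au lemme chinois force $\overline{a}=\overline{b}=\overline{k}$ si $l'=3$ et $\overline{a}=\overline{b}=\overline{0}$ si $l'=5$. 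Dans le premier cas $(\overline{k},\overline{k},\overline{k})$ est une solution ; dans le second, $(\overline{0},\overline{k},\overline{k},\overline{k},\overline{0})$ en est une, d'où $M_{3}(\overline{k},\overline{k},\overline{k})=\pm Id$ en conjuguant par $M_{1}(\overline{0})$ (inversible, de carré $-Id$). Dans les deux cas il existe une solution $(3,\overline{k})$-monomiale, ce qui contredit la minimalité de $l=6$. Ceci épuise tous les cas et établit l'irréductibilité.
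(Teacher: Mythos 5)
Your proof is correct, but it takes a genuinely different route from the paper's. The paper's own argument is a short, self-contained computation: since $\overline{k}\neq\pm\overline{1}$ (otherwise the minimal size would be $3$), any reduction of the size-$6$ solution must be a sum of two size-$4$ solutions of the form $(\overline{a},\overline{k},\overline{k},\overline{b})$; writing out $M_{4}$ shows this forces $\overline{k}^{2}=\overline{0}$ or $\overline{k}^{2}=\overline{2}$, the latter being excluded because it would give a $(4,\overline{k})$-monomial solution; then the explicit computation of $M_{6}(\overline{k},\ldots,\overline{k})$ under $\overline{k}^{2}=\overline{0}$ shows it can equal $\pm Id$ only if $\overline{3k}=\overline{0}$, and the invertibility of $\overline{3}$ modulo $N$ (this is exactly where $3\nmid N$ enters) gives $\overline{k}=\overline{0}$, hence size $2$, a contradiction. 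You instead run the paper's general local-global machinery: Proposition \ref{37} to pin down the local sizes $l_{i}\in\{2,3,6\}$, Proposition \ref{311} and Théorème \ref{311b} to settle the prime-power case (this is where $3\nmid N$ enters for you, by excluding local sizes $2p^{t}=6$ and thus forcing local irreducibility), and Lemme \ref{plus} combined with the Chinese remainder theorem to eliminate every possible reduced size $l'\in\{3,4,5\}$. I checked your case analysis and it is complete: part B of Lemme \ref{plus} when some $l_{i}=6$ (legitimately, since the prime-power argument gives local irreducibility without circularity), part A.ii for the congruence obstructions when $l'=4$ or when some $l_{i}=2$ with $l'$ odd, and parts A.i/A.iii plus CRT when all $l_{i}=3$; your final step $M_{5}(\overline{0},\overline{k},\overline{k},\overline{k},\overline{0})=\pm Id\Rightarrow M_{3}(\overline{k},\overline{k},\overline{k})=\pm Id$ is also sound because $M_{1}(\overline{0})^{2}=-Id$. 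What the paper's computation buys is brevity and independence from the heavier earlier results; what yours buys is uniformity with the methodology used everywhere else in the paper and a clearer view of \emph{why} the hypothesis $3\nmid N$ matters (it guarantees the local solutions are irreducible), at the price of invoking Théorème \ref{311b}, Proposition \ref{311} and Lemme \ref{plus}.
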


\begin{proof}

Supposons qu'il existe un entier naturel $k$ tel que la solution $\overline{k}$-monomiale minimale de $(E_{N})$ est de taille $l=6$. On suppose par l'absurde que celle-ci est réductible. Comme $\overline{k} \neq \pm \overline{1}$ (puisque $l \neq 3$), la solution $\overline{k}$-monomiale minimale de $(E_{N})$ est la somme de deux solutions de taille 4, ce qui implique $\overline{k}^{2}=\overline{0}$ ou $\overline{k}^{2}=\overline{2}$. Si $\overline{k}^{2}=\overline{2}$ alors $l=4$. Donc, $\overline{k}^{2}=\overline{0}$. Ainsi, on a 
\[M_{6}(\overline{k},\ldots,\overline{k})=\begin{pmatrix}
   \overline{k^{6}}-5\overline{k}^{4}+6\overline{k}^{2}-\overline{1} & -\overline{k^{5}}+4\overline{k}^{3}-3\overline{k} \\
    \overline{k^{5}}-4\overline{k}^{3}+3\overline{k}    & -\overline{k^{4}}+3\overline{k}^{2}-\overline{1} 
   \end{pmatrix}=\begin{pmatrix}
    -\overline{1} & -3\overline{k} \\
    3\overline{k}    & -\overline{1} 
   \end{pmatrix}.\]

\noindent On a donc $\overline{3k}=\overline{0}$, ce qui implique $\overline{k}=\overline{0}$ (puisque $\overline{3}$ est inversible modulo $N$). On a donc $l=2$ ce qui est absurde. Donc, la solution $\overline{k}$-monomiale minimale de $(E_{N})$ est irréductible.

\end{proof}

\begin{proposition}
\label{326}

Soient $N \geq 2$ et $n \in \mathbb{N}^{*}$. Soit $p$ un nombre premier impair premier avec $N$. Toutes les solutions monomiales minimales non nulles de $(E_{N})$ de taille $2p^{n}$ sont irréductibles.

\end{proposition}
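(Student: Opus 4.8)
The plan is to reduce everything to the local sizes $l_i$ of the $(k+p_i^{\alpha_i}\mathbb{Z})$-monomial minimal solutions of $(E_{p_i^{\alpha_i}})$, where $N=p_1^{\alpha_1}\cdots p_r^{\alpha_r}$, and to exploit that $p\nmid N$ together with the exact shape $l=2p^n$ of the global size. First I would fix $\overline{k}$ admitting a minimal monomial solution of size $l=2p^n$ and observe, via Lemma~\ref{33}, that each $l_i$ divides $l$, so $l_i=2^{a_i}p^{b_i}$ with $a_i\leq 1$ (as $v_2(2p^n)=1$). The first key step is to show that for every $i$ the $(k+p_i^{\alpha_i}\mathbb{Z})$-monomial minimal solution of $(E_{p_i^{\alpha_i}})$ is either irreducible or equal to $(\overline{0},\overline{0})$ with $l_i=2$: indeed, if $p_i$ is odd with $p_i\mid k$ but $p_i^{\alpha_i}\nmid k$, Proposition~\ref{311} gives $l_i=2p_i^{\alpha_i-\beta_i}$, which carries the forbidden prime factor $p_i\neq 2,p$; and if $p_i=2$ with $k$ even and $\overline{k}\neq\overline{0}$, the same proposition yields $4\mid l_i$, against $v_2(l_i)\leq 1$. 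Hence $p_i\nmid k$ (so the local solution is irreducible by Theorem~\ref{311b}), or $\overline{k}\equiv\overline{0}\ [p_i^{\alpha_i}]$ (and then $l_i=2$). Since $p$ is odd, $p^n\mid{\rm ppcm}(l_i)$, so some index $i_0$ has $p^n\mid l_{i_0}$; as $p^n\nmid 2$ this $i_0$ cannot be of the second type, so its local solution is irreducible and $l_{i_0}\in\{p^n,2p^n\}$.

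Next I would argue by contradiction: assume the solution is reducible, i.e. there is a solution $(\overline{x},\overline{k},\ldots,\overline{k},\overline{y})$ of $(E_N)$ of size $3\leq l'\leq 2p^n-1$ with $M_{l'}(\overline{x},\overline{k},\ldots,\overline{k},\overline{y})=\overline{\epsilon}\,Id$. Reducing modulo each $p_i^{\alpha_i}$ and applying Lemma~\ref{plus} (part B for the irreducible indices, part A for the indices where $\overline{k}\equiv\overline{0}$), I obtain that modulo $p_i^{\alpha_i}$ either $l'\equiv 0\ [l_i]$ with $\overline{x}\equiv\overline{y}\equiv\overline{k}$, or $l'\equiv 2\ [l_i]$ with $\overline{x}\equiv\overline{y}\equiv\overline{0}$. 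Proposition~\ref{37} then splits the analysis according to whether $l={\rm ppcm}(l_i)$ or $l=2\,{\rm ppcm}(l_i)$.

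In the case $l={\rm ppcm}(l_i)$ some $l_i$ is even, which by the dichotomy above forces $l'$ to be even. Looking at the index $i_0$, the congruence $l'\equiv 0$ or $2\ [l_{i_0}]$ with $l_{i_0}\in\{p^n,2p^n\}$ has, in the range $3\leq l'\leq 2p^n-1$, only the candidates $l'\in\{p^n,p^n+2\}$ (when $l_{i_0}=p^n$), or none at all (when $l_{i_0}=2p^n$); since $p^n$ is odd, both candidates contradict the parity of $l'$, so no admissible $l'$ exists. In the case $l=2\,{\rm ppcm}(l_i)$, all $l_i$ are odd, hence $l_{i_0}=p^n$ and every $l_i$ divides $p^n$. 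Lemma~\ref{39}~ii) then tells us that the signs $\epsilon_i$ defined by $M_{l_i}(k+p_i^{\alpha_i}\mathbb{Z},\ldots,k+p_i^{\alpha_i}\mathbb{Z})=(\epsilon_i+p_i^{\alpha_i}\mathbb{Z})Id$ are not all equal. The constraint coming from $i_0$ forces $l'\in\{p^n,p^n+2\}$; since every $l_i$ divides $p^n$, all indices land in the same branch, and a direct computation of $M_{l'}$ modulo each $p_i^{\alpha_i}$ (using that the exponents $p^n/l_i$ are odd, and that conjugating $\overline{\epsilon_i}Id$ by $M_1(\overline{0})=S$ multiplies it by $-1$) gives $\epsilon=\epsilon_i$ for all $i$ when $l'=p^n$, and $\epsilon=-\epsilon_i$ for all $i$ when $l'=p^n+2$. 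Either way all $\epsilon_i$ coincide, contradicting Lemma~\ref{39}~ii).

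I expect the genuine difficulty to lie in the second case: there the parity-and-range argument alone fails, since $p^n$ and $p^n+2$ are both legitimate odd sizes, so the contradiction must be extracted from the incompatibility of the local signs. This forces one to track, for every prime power $p_i^{\alpha_i}$, whether the reduced solution has the form $(\ldots,\overline{k},\ldots)$ or $(\overline{0},\ldots,\overline{0})$ and to compute the resulting scalar; the bookkeeping of these signs across all primes, together with the precise use of Lemma~\ref{39}~ii), is the delicate part. The first case, by contrast, is essentially combinatorial and follows purely from the parity of $l'$ and the divisibility $p^n\mid l_{i_0}$.
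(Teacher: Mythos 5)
Your proof is correct, and its skeleton is the same as the paper's: localize via Lemma~\ref{33} and Proposition~\ref{37}, split according to $l=\mathrm{ppcm}(l_i)$ or $l=2\,\mathrm{ppcm}(l_i)$, kill the first case by a parity contradiction at the index $i_0$ carrying $p^n$, and kill the second by the sign bookkeeping of Lemma~\ref{39}~ii) combined with Lemma~\ref{plus}. Two organizational differences are worth noting. First, your upfront dichotomy --- under $l_i\mid 2p^n$, every local minimal monomial solution is either irreducible or equal to $(\overline{0},\overline{0})$ with $l_i=2$ --- packages once and for all the irreducibility facts that the paper re-derives ad hoc at each step from Theorem~\ref{311b} and Proposition~\ref{311}. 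Second, and more substantively: in the case $l=2\,\mathrm{ppcm}(l_i)$ the paper reduces to $l'=p^n$, picks one index $i$ with $l_i=p^n$ and, via Lemma~\ref{39}~ii), a second index $j$ carrying the opposite sign, and it needs both moduli $p_i^{\alpha_i},p_j^{\alpha_j}\neq 2$; this is exactly why it must assume $p^n>3$ and dispatch the case $p=3$, $n=1$ separately through Proposition~\ref{325bis}. Your version instead computes the sign of $M_{l'}$ modulo \emph{every} $p_i^{\alpha_i}$ (in both branches $l'=p^n$ and $l'=p^n+2$, using parts A~i) and A~iii) of Lemma~\ref{plus}) and contradicts Lemma~\ref{39}~ii) globally; since that lemma's criterion is a congruence condition, an index with $p_i^{\alpha_i}=2$ (possible when $p=3$, where $l_i=3$ can occur modulo $2$) causes no harm, and the special case $p^n=3$ disappears from the argument. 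The only detail you leave implicit is that $r\geq 2$ in the second case, which is required to invoke Lemma~\ref{39}; this is immediate, since $r=1$ would force $l=l_1=\mathrm{ppcm}(l_i)$ rather than $2\,\mathrm{ppcm}(l_i)$.
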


\begin{proof}

Soit $N=p_{1}^{\alpha_{1}} \ldots p_{r}^{\alpha_{r}}$. Supposons qu'il existe un entier naturel $k$ tel que la solution $\overline{k}$-monomiale minimale de $(E_{N})$ est de taille $2p^{n}$. Si $p=3$ et $n=1$ alors le résultat découle de la proposition précédente. On suppose donc maintenant $p^{n}>3$. Pour tout $i$ dans $[\![1;r]\!]$, on note $l_{i}$ la taille de la solution $(k+p_{i}^{\alpha_{i}}\mathbb{Z})$-monomiale minimale de $(E_{p_{i}^{\alpha_{i}}})$ et $l$ la taille de la solution $\overline{k}$-monomiale minimale de $(E_{N})$. Par hypothèse, $l=2p^{n}$. 
\\
\\Par la proposition \ref{37}, on a $l=2p^{n}={\rm ppcm}(l_{i},~1 \leq i \leq r)$ ou à $l=2p^{n}=2 \times {\rm ppcm}(l_{i},~1 \leq i \leq r)$.
\\
\\Supposons par l'absurde que la solution $\overline{k}$-monomiale minimale de $(E_{N})$ est réductible. Il existe une solution $(\overline{x},\overline{k},\ldots,\overline{k},\overline{y})$ de $(E_{N})$ de taille $3 \leq l' \leq l-1$. Il existe $\epsilon \in \{-1,1\}$ tel que $M_{l'}(\overline{x},\overline{k},\ldots,\overline{k},\overline{y})=\overline{\epsilon} Id$. On distingue deux cas.
\\
\\ \uwave{i) $l=2p^{n}={\rm ppcm}(l_{i},~1 \leq i \leq r)$ :} Supposons qu'il existe $i$ dans $[\![1;r]\!]$ tel que $l_{i}=2p^{n}$. Comme $p$ est impair et ne divise pas $N$, la solution $(k+p_{i}^{\alpha_{i}}\mathbb{Z})$-monomiale minimale de $(E_{p_{i}^{\alpha_{i}}})$ est irréductible (théorème \ref{311b} et proposition \ref{311}). Or, $(x+p_{i}^{\alpha_{i}}\mathbb{Z},k+p_{i}^{\alpha_{i}}\mathbb{Z},\ldots,k+p_{i}^{\alpha_{i}}\mathbb{Z},y+p_{i}^{\alpha_{i}}\mathbb{Z})$ est une solution de $(E_{p_{i}^{\alpha_{i}}})$ de taille $3 \leq l' \leq l-1$. On peut donc l'utiliser pour réduire la solution $(k+p_{i}^{\alpha_{i}}\mathbb{Z})$-monomiale minimale de $(E_{p_{i}^{\alpha_{i}}})$, ce qui est une contradiction.
\\
\\Ainsi, $r \geq 2$ et, pour tout $h$ dans $[\![1;r]\!]$, $l_{h}=2p^{\beta_{h}}$ avec $0 \leq \beta_{h} \leq n-1$ ou $l_{h}$ est une puissance de $p$. De plus, il existe $i$ et $j$ dans $[\![1;r]\!]$ tel que $l_{i}=p^{n}$ et $l_{j}$ est pair. 
\\
\\$(x+p_{j}^{\alpha_{j}}\mathbb{Z},k+p_{j}^{\alpha_{j}}\mathbb{Z},\ldots,k+p_{j}^{\alpha_{j}}\mathbb{Z},y+p_{j}^{\alpha_{j}}\mathbb{Z})$ est une solution de $(E_{p_{j}^{\alpha_{j}}})$. Si $l_{j}=2$ alors, par le lemme \ref{plus}, $l'$ est pair. Si $l_{j}$ est divisible par $p$ alors la solution $(k+p_{j}^{\alpha_{j}}\mathbb{Z})$-monomiale minimale de $(E_{p_{j}^{\alpha_{j}}})$ est irréductible (théorème \ref{311b} et proposition \ref{311}). Ainsi, par le lemme \ref{plus}, $l' \equiv 0,2 [l_{j}]$. En particulier, $l'$ est pair.
\\
\\Comme $l_{i}$ est impair, la solution $(k+p_{i}^{\alpha_{i}}\mathbb{Z})$-monomiale minimale de $(E_{p_{i}^{\alpha_{i}}})$ est irréductible. Comme $(x+p_{i}^{\alpha_{i}}\mathbb{Z},k+p_{i}^{\alpha_{j}}\mathbb{Z},\ldots,k+p_{i}^{\alpha_{j}}\mathbb{Z},y+p_{i}^{\alpha_{i}}\mathbb{Z})$ est une solution de $(E_{p_{i}^{\alpha_{i}}})$, on a, par le lemme \ref{plus}, $l' \equiv 0 [p^{n}]$ ou $l' \equiv 2 [p^{n}]$, c'est-à-dire $l'=p^{n}$ ou $l'=p^{n}+2$ (puisque $l'<2p^{n}$). On a donc $l'$ impair. On aboutit ainsi à une contradiction.
\\
\\\uwave{ii) $l=2p^{n}=2 \times {\rm ppcm}(l_{i},~1 \leq i \leq r)$ :} On a $r \geq 2$ et ${\rm ppcm}(l_{i},~1 \leq i \leq r)=p^{n}$. Ainsi, pour tout $h$ dans $[\![1;r]\!]$, $l_{h}$ est une puissance de $p$ et $p \leq l_{h} \leq p^{n}$. De plus, il existe $i$ dans $[\![1;r]\!]$ tel que $l_{i}=p^{n}$. En particulier, $p_{i}^{\alpha_{i}} \neq 2$ car $l_{i} \neq 2,3$. Comme $l_{i}$ est impair, la solution $(k+p_{i}^{\alpha_{i}}\mathbb{Z})$-monomiale minimale de $(E_{p_{i}^{\alpha_{i}}})$ est irréductible. Puisque $(x+p_{i}^{\alpha_{i}}\mathbb{Z},k+p_{i}^{\alpha_{i}}\mathbb{Z},\ldots,k+p_{i}^{\alpha_{i}}\mathbb{Z},y+p_{i}^{\alpha_{i}}\mathbb{Z})$ est une solution de $(E_{p_{i}^{\alpha_{i}}})$, on a, par le lemme \ref{plus}, $l' \equiv 0 [p^{n}]$ ou $l' \equiv 2 [p^{n}]$, c'est-à-dire $l'=p^{n}$ ou $l'=p^{n}+2$ (puisque $l'<2p^{n}$). Donc, la solution $\overline{k}$-monomiale minimale de $(E_{N})$ est nécessairement égale à la somme d'une solution de taille $p^{n}$ avec une solution de taille $p^{n}+2$ ou à la somme d'une solution de taille $p^{n}+2$ avec une solution de taille $p^{n}$. On suppose que $l'=l_{i}=p^{n}$. Par le lemme \ref{plus}, $x+p_{i}^{\alpha_{i}}\mathbb{Z}=y+p_{i}^{\alpha_{i}}\mathbb{Z}=k+p_{i}^{\alpha_{i}}\mathbb{Z}$. Il existe $\alpha \in \{-1,1\}$ tel que $M_{l_{i}}(k+p_{i}^{\alpha_{i}}\mathbb{Z},\ldots,k+p_{i}^{\alpha_{i}}\mathbb{Z})=(\alpha+p_{i}^{\alpha_{i}}\mathbb{Z})Id$.
\\
\\De plus, comme tous les $l_{h}$ sont impairs, il existe nécessairement un $j \neq i$ dans $[\![1;r]\!]$ tel que $p_{j}^{\alpha_{j}} \neq 2$ et $M_{l_{j}}(k+p_{j}^{\alpha_{j}}\mathbb{Z},\ldots,k+p_{j}^{\alpha_{j}}\mathbb{Z})=(-\alpha+p_{j}^{\alpha_{j}}\mathbb{Z})Id$ (car sinon on aurait $l={\rm ppcm}(l_{h},~1 \leq h \leq r)$ par le lemme \ref{39}). On note $l_{j}=p^{\beta_{j}}$. Puisque $(x+p_{j}^{\alpha_{j}}\mathbb{Z},k+p_{j}^{\alpha_{j}}\mathbb{Z},\ldots,k+p_{j}^{\alpha_{j}}\mathbb{Z},y+p_{j}^{\alpha_{j}}\mathbb{Z})$ est une solution de $(E_{p_{j}^{\alpha_{j}}})$ de taille $l' \equiv 0 [p^{\beta_{j}}]$, on a, par le lemme \ref{plus}, $x+p_{j}^{\alpha_{j}}\mathbb{Z}=y+p_{j}^{\alpha_{j}}\mathbb{Z}=k+p_{j}^{\alpha_{j}}\mathbb{Z}$. On a alors 
\begin{eqnarray*}
M &=& M_{l'}(x+p_{j}^{\alpha_{j}}\mathbb{Z},k+p_{j}^{\alpha_{j}}\mathbb{Z},\ldots,k+p_{j}^{\alpha_{j}}\mathbb{Z},y+p_{j}^{\alpha_{j}}\mathbb{Z}) \\
  &=& M_{l_{j}}(k+p_{j}^{\alpha_{j}}\mathbb{Z},\ldots,k+p_{j}^{\alpha_{j}}\mathbb{Z})^{p^{n-\beta_{j}}} \\
	&=& (-\alpha+p_{j}^{\alpha_{j}}\mathbb{Z})^{p^{n-\beta_{j}}}Id \\
	&=& (-\alpha+p_{j}^{\alpha_{j}}\mathbb{Z})Id~~~~(p~{\rm impair}).
\end{eqnarray*}

\noindent Si $\epsilon=-\alpha$ alors $\epsilon+p_{i}^{\alpha_{i}}\mathbb{Z}=-\alpha+p_{i}^{\alpha_{i}}\mathbb{Z}$. Or, par ce qui précède, $\epsilon+p_{i}^{\alpha_{i}}\mathbb{Z}=\alpha+p_{i}^{\alpha_{i}}\mathbb{Z} \neq -\alpha+p_{i}^{\alpha_{i}}\mathbb{Z}$ (puisque $p_{i}^{\alpha_{i}} \neq 2$). Si $\epsilon=\alpha$ alors $\epsilon+p_{j}^{\alpha_{h}}\mathbb{Z}=\alpha+p_{j}^{\alpha_{j}}\mathbb{Z}$. Or, par ce qui précède, $\epsilon+p_{j}^{\alpha_{j}}\mathbb{Z}=-\alpha+p_{j}^{\alpha_{j}}\mathbb{Z} \neq \alpha+p_{j}^{\alpha_{j}}\mathbb{Z}$ (puisque $p_{j}^{\alpha_{j}} \neq 2$). On arrive ainsi à une absurdité.
\\
\\On aboutit donc à une absurdité dans tous les cas. Ainsi, la solution $\overline{k}$-monomiale minimale de $(E_{N})$ est irréductible.

\end{proof}

\begin{examples}
{\rm On donne ci-dessous quelques applications immédiates des résultats qui précèdent.
\begin{itemize}
\item $N=34=2 \times 17$. La solution $\overline{20}$-monomiale minimale de $(E_{N})$ est de taille $18=2\times 3^{2}$ et est donc irréductible.
\item $N=46=2 \times 23$. La solution $\overline{34}$-monomiale minimale de $(E_{N})$ est de taille $22=2 \times 11$ et est donc irréductible.
\item $N=185=5 \times 37$. La solution $\overline{20}$-monomiale minimale de $(E_{N})$ est de taille $18=2\times 3^{2}$ et est donc irréductible.
\item $N=5757=3 \times 19 \times 101$. La solution $\overline{30}$-monomiale minimale de $(E_{N})$ est de taille $50=2\times 5^{2}$ et est donc irréductible.
\end{itemize}
}
\end{examples}

\begin{remarks}
{\rm i) Le résultat n'est plus vrai en général si on considère des solutions de taille $2p^{n}$ avec $p$ qui divise $N$. En effet, prenons par exemple $N=25$ et $k=5$. La solution $\overline{5}$-monomiale minimale de $(E_{N})$ est de taille $l=10$ (proposition \ref{316}) et on peut la réduire avec la solution $(-\overline{5},\overline{5},\overline{5},-\overline{5})$. Cela dit, on peut également trouver des solutions de taille $2p^{n}$ avec $p$ qui divise $N$ qui sont irréductibles. En effet, prenons par exemple $N=30$ et $k=15$. La solution $\overline{15}$-monomiale minimale de $(E_{N})$ est de taille $l=6$ et pourtant celle-ci est irréductible (Théorème \ref{32}).
\\
\\ii) Le résultat n'est plus vrai en général si on considère des solutions de taille $2^{m}p^{n}$, et cela même si $p$ ne divise pas $N$. En effet, prenons par exemple $N=77=7 \times 11$ et $k=3$. La solution $\overline{k}$-monomiale minimale de $(E_{N})$ est de taille $l=40=2^{3} \times 5$ et on peut la réduire avec la solution $(\overline{66},\overline{3},\overline{3},\overline{3},\overline{3},\overline{3},\overline{3},\overline{3},\overline{3},\overline{3},\overline{3},\overline{66})$.
\\
\\iii) Le résultat n'est plus vrai en général si on considère des solutions de taille $2p^{n}q^{m}$. En effet, prenons par exemple $N=38$ et $k=11$. La solution $\overline{k}$-monomiale minimale de $(E_{N})$ est de taille $l=30=2\times 3 \times 5$ et on peut la réduire avec la solution $(\overline{19},\overline{11},\overline{11},\overline{11},\overline{11},\overline{11},\overline{11},\overline{11},\overline{11},\overline{11},\overline{11},\overline{19})$.
}
\end{remarks}

\noindent On termine avec le résultat ci-dessous :

\begin{proposition}
\label{327}

Soient $N \geq 2$ impair et $n \in \mathbb{N}^{*}$. Soit $p$ un nombre premier impair premier avec $N$. Toutes les solutions monomiales minimales non nulles de $(E_{N})$ de taille $4p^{n}$ sont irréductibles.

\end{proposition}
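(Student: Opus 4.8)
On raisonne par contraposée : soit $k$ un entier tel que la solution $\overline{k}$-monomiale minimale de $(E_{N})$ soit de taille $l=4p^{n}$, et montrons qu'elle est irréductible. Écrivons $N=p_{1}^{\alpha_{1}}\cdots p_{r}^{\alpha_{r}}$ ; comme $N$ est impair, tous les $p_{i}$ sont impairs, et comme $p$ est premier avec $N$ on a $p\neq p_{i}$ pour tout $i$. Pour chaque $i$, notons $l_{i}$ la taille de la solution $(k+p_{i}^{\alpha_{i}}\mathbb{Z})$-monomiale minimale de $(E_{p_{i}^{\alpha_{i}}})$. L'observation de départ est que, $l_{i}$ divisant $l$ (lemme \ref{33}), la partie impaire de chaque $l_{i}$ divise la partie impaire $p^{n}$ de $l$ ; or si $p_{i}\mid k$ alors, par la proposition \ref{311}, $l_{i}=2p_{i}^{\alpha_{i}-\beta_{i}}$, dont la partie impaire est une puissance de $p_{i}\neq p$, ce qui force $\alpha_{i}=\beta_{i}$, soit $l_{i}=2$ et $k\equiv 0\ [p_{i}^{\alpha_{i}}]$. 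Ainsi chaque indice est soit \emph{irréductible} (lorsque $p_{i}\nmid k$, la solution locale est irréductible par le théorème \ref{311b}), soit \emph{trivial} ($l_{i}=2$).

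Comme $p^{n}$ est une puissance de premier divisant $l$, il existe $i_{0}$ tel que $p^{n}\mid l_{i_{0}}$ ; la partie impaire de $l_{i_{0}}$ valant exactement $p^{n}>2$, l'indice $i_{0}$ est irréductible et $l_{i_{0}}\in\{p^{n},2p^{n},4p^{n}\}$. De plus, comme $4\mid l$, au moins un des $l_{i}$ est pair. Supposons par l'absurde la solution réductible : il existe une solution $(\overline{x},\overline{k},\ldots,\overline{k},\overline{y})$ de $(E_{N})$ de taille $3\leq l'\leq l-1$, avec $M_{l'}(\overline{x},\overline{k},\ldots,\overline{k},\overline{y})=\overline{\epsilon}\,Id$ pour un $\epsilon\in\{-1,1\}$. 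On projette cette égalité modulo chaque $p_{i}^{\alpha_{i}}$ et on applique le lemme \ref{plus}, comme dans les preuves des propositions \ref{321} et \ref{326}.

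La réduction modulo un indice pair impose $l'$ pair : si cet indice est irréductible c'est le lemme \ref{plus} B) (qui donne $l'\equiv 0,2\ [l_{i}]$ avec $l_{i}$ pair), et s'il est trivial c'est le lemme \ref{plus} A) ii) (qui interdit les tailles impaires). La réduction modulo $p_{i_{0}}^{\alpha_{i_{0}}}$ donne $l'\equiv 0,2\ [p^{n}]$. On partitionne alors les indices irréductibles en $I=\{i:\ l'\equiv 0\ [l_{i}]\}$, où localement $\overline{x}=\overline{y}=\overline{k}$ et $M_{l'}\equiv\epsilon_{i}^{\,l'/l_{i}}\,Id$, et $J=\{i:\ l'\equiv 2\ [l_{i}]\}$, où localement $\overline{x}=\overline{y}=\overline{0}$ et $M_{l'}\equiv-\epsilon_{i}^{\,(l'-2)/l_{i}}\,Id$ (les indices triviaux se comportant comme ceux de $J$), $\epsilon_{i}$ désignant le signe de $M_{l_{i}}(\overline{k},\ldots,\overline{k})$, égal à $-1$ dès que $l_{i}$ est pair (lemme \ref{34} ii)). Comme $l'$ est pair, $l'/l_{i}$ et $(l'-2)/l_{i}$ sont pairs pour tout indice \emph{impair} $i$, de sorte qu'un indice impair dans $I$ force $\epsilon=1$ et un indice impair dans $J$ force $\epsilon=-1$.

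Il reste à produire une contradiction sur $\epsilon$. D'abord $I$ et $J$ ne peuvent épuiser tous les indices irréductibles : si $l'\equiv 0\ [l_{i}]$ pour tout $i$ alors $l'\equiv 0\ [{\rm ppcm}(l_{i})]$ et le lemme chinois donne $\overline{x}=\overline{y}=\overline{k}$, fournissant une solution monomiale $(\overline{k},\ldots,\overline{k})$ de taille $l'<l$, ce qui contredit la minimalité de $l$ ; de même $l'\equiv 2\ [l_{i}]$ pour tout $i$ donnerait $\overline{x}=\overline{y}=\overline{0}$, donc $M_{l'-2}(\overline{k},\ldots,\overline{k})=\pm Id$ avec $1\leq l'-2<l$, exclu de nouveau. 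On conclut alors par l'analyse des signes ci-dessus en exhibant deux indices imposant des valeurs opposées de $\epsilon$ modulo des $p_{i}^{\alpha_{i}}\neq 2$. Le point délicat, exactement comme dans la proposition \ref{326}, est la comptabilité $2$-adique : il faut distinguer selon que $l={\rm ppcm}(l_{i})$ ou $l=2\,{\rm ppcm}(l_{i})$ (proposition \ref{37}), et traiter le cas où $I$ (ou $J$) ne contient que des indices pairs en suivant les parités de $l'/l_{i}$ et $(l'-2)/l_{i}$ imposées par $i_{0}$ et par les indices triviaux ; c'est l'observation que tout indice non irréductible vérifie $l_{i}=2$ qui rend cette analyse gérable.
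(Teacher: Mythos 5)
Votre mise en place est correcte et même élégante : la dichotomie \og indice trivial ($l_{i}=2$, $k\equiv 0\ [p_{i}^{\alpha_{i}}]$) ou indice irréductible ($p_{i}\nmid k$) \fg{} obtenue en combinant le lemme \ref{33}, la proposition \ref{311} et le théorème \ref{311b} est exactement ce qui rend l'énoncé vrai, et vos étapes intermédiaires (parité de $l'$, congruence $l'\equiv 0,2\ [p^{n}]$ via l'indice $i_{0}$, impossibilité que tous les indices irréductibles soient dans $I$ ou tous dans $J$) sont justes. Mais la preuve n'est pas achevée : tout le c\oe ur de l'argument est renvoyé à une analyse que vous qualifiez de \og gérable \fg{} sans la faire (\og il faut distinguer\ldots\fg, \og traiter le cas où\ldots\fg). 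Ce n'est pas un simple détail de rédaction, car la stratégie que vous proposez pour conclure --- exhiber deux indices imposant des valeurs opposées de $\epsilon$ --- est insuffisante dans le cas $l={\rm ppcm}(l_{i},\ 1\leq i\leq r)$.

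Voici une configuration concrète où votre machinerie ne produit aucune contradiction : $l_{i_{0}}=p^{n}$ (indice impair irréductible) et $l_{j}=4$ (indice pair irréductible), par exemple $p=3$, $n=1$, $l=12$. Pour $l'=2p^{n}=6$, l'indice $i_{0}$ est dans $I$ avec $l'/l_{i_{0}}=2$ pair, donc $\epsilon=1$ ; l'indice $j$ est dans $J$ avec $(l'-2)/l_{j}=1$ impair, donc $\epsilon\equiv-\epsilon_{j}=-(-1)=1$ : les deux projections sont compatibles. La contradiction n'est pas à chercher dans le signe $\epsilon$ de la solution réduisante, mais dans le signe $\alpha$ de la solution monomiale elle-même, $M_{l}(\overline{k},\ldots,\overline{k})=\overline{\alpha}\,Id$ : un indice de type $l_{i}=4p^{a}$ impose $\alpha\equiv-1\ [p_{i}^{\alpha_{i}}]$ (exposant $l/l_{i}=p^{n-a}$ impair et $M_{l_{i}}=-Id$ par le lemme \ref{34} ii)), tandis que tout indice de type $p^{a}$, $2p^{a}$ ou $2$ impose $\alpha\equiv 1$ (exposant pair). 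C'est cet argument --- celui du cas i) de la preuve de l'article --- qui montre que les configurations mixtes sont impossibles lorsque $4\mid{\rm ppcm}(l_{i})$, de sorte que tous les $l_{i}$ valent $4p^{a_{i}}$, que l'un d'eux vaut $4p^{n}=l$, et que le lemme \ref{plus} B) appliqué à cet indice interdit alors toute taille $3\leq l'\leq l-1$ (contradiction d'une autre nature que la vôtre). Cet ingrédient de \og signe global \fg{} est totalement absent de votre esquisse et ne peut pas être reconstruit à partir de la seule analyse des $\epsilon$ ; sans lui, le cas $l={\rm ppcm}(l_{i})$ reste ouvert, et la proposition n'est pas démontrée.
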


\begin{proof}

Soit $N=p_{1}^{\alpha_{1}} \ldots p_{r}^{\alpha_{r}}$ impair. Supposons qu'il existe un entier naturel $k$ tel que la solution $\overline{k}$-monomiale minimale de $(E_{N})$ est de taille $4p^{n}$. Pour tout $i$ dans $[\![1;r]\!]$, on note $l_{i}$ la taille de la solution $(k+p_{i}^{\alpha_{i}}\mathbb{Z})$-monomiale minimale de $(E_{p_{i}^{\alpha_{i}}})$ et $l$ la taille de la solution $\overline{k}$-monomiale minimale de $(E_{N})$. Par hypothèse, $l=4p^{n}$. Il existe $\alpha \in \{-1,1\}$ tel que $M_{l}(\overline{k},\ldots,\overline{k})=\overline{\alpha} Id$.
\\
\\Par la proposition \ref{37}, on a $l={\rm ppcm}(l_{i},~1 \leq i \leq r)$ ou à $l=2 \times {\rm ppcm}(l_{i},~1 \leq i \leq r)$.
\\
\\Supposons par l'absurde que la solution $\overline{k}$-monomiale minimale de $(E_{N})$ est réductible. Il existe une solution $(\overline{x},\overline{k},\ldots,\overline{k},\overline{y})$ de $(E_{N})$ de taille $3 \leq l' \leq l-1$. Il existe $\epsilon \in \{-1,1\}$ tel que $M_{l'}(\overline{x},\overline{k},\ldots,\overline{k},\overline{y})=\overline{\epsilon} Id$. On distingue deux cas.
\\
\\ \uwave{i) $l=4p^{n}={\rm ppcm}(l_{i},~1 \leq i \leq r)$ :} Pour tout $h$ dans $[\![1;r]\!]$, $l_{h}$ est de la forme $p^{a}$, $2p^{a}$ou $4p^{a}$.
\\
\\Supposons qu'il existe $i$ dans $[\![1;r]\!]$ tel que $l_{i}=4p^{n}$. Comme $p \neq p_{i}$, la solution $(k+p_{i}^{\alpha_{i}}\mathbb{Z})$-monomiale minimale de $(E_{p_{i}^{\alpha_{i}}})$ est irréductible. Or, $(x+p_{i}^{\alpha_{i}}\mathbb{Z},k+p_{i}^{\alpha_{i}}\mathbb{Z},\ldots,k+p_{i}^{\alpha_{i}}\mathbb{Z},y+p_{i}^{\alpha_{i}}\mathbb{Z})$ est une solution de $(E_{p_{i}^{\alpha_{i}}})$ de taille $3 \leq l' \leq l-1$. On peut donc l'utiliser pour réduire la solution $(k+p_{i}^{\alpha_{i}}\mathbb{Z})$-monomiale minimale de $(E_{p_{i}^{\alpha_{i}}})$, ce qui est une contradiction.
\\
\\Ainsi, $r \geq 2$ et pour tout $h$ dans $[\![1;r]\!]$ on a $l_{h} \neq 4p^{n}$. Par conséquent, il existe nécessairement deux entiers $i$ et $j$ tels que $l_{i}=4p^{a}$ avec $a<n$ et $l_{j}=2^{b}p^{n} \in \{p^{n},2p^{n}\}$.
\\
\\Par le lemme \ref{34}, $M_{l_{i}}(k+p_{i}^{\alpha_{i}}\mathbb{Z},\ldots,k+p_{i}^{\alpha_{i}}\mathbb{Z})=(-1+p_{i}^{\alpha_{i}}\mathbb{Z}) Id$, puisque $p_{i}$ est impair. Donc, 
\[M_{l}(k+p_{i}^{\alpha_{i}}\mathbb{Z},\ldots,k+p_{i}^{\alpha_{i}}\mathbb{Z})=M_{l_{i}}(k+p_{i}^{\alpha_{i}}\mathbb{Z},\ldots,k+p_{i}^{\alpha_{i}}\mathbb{Z})^{p^{n-a}}=(-1+p_{i}^{\alpha_{i}}\mathbb{Z}) Id~(p~{\rm impair)}.\]
\noindent En particulier, $\alpha+p_{i}^{\alpha_{i}}\mathbb{Z}=-1+p_{i}^{\alpha_{i}}\mathbb{Z}$. De plus, 
\[M_{l}(k+p_{j}^{\alpha_{j}}\mathbb{Z},\ldots,k+p_{j}^{\alpha_{j}}\mathbb{Z})=M_{l_{j}}(k+p_{j}^{\alpha_{j}}\mathbb{Z},\ldots,k+p_{j}^{\alpha_{j}}\mathbb{Z})^{2^{2-b}}=(1+p_{j}^{\alpha_{j}}\mathbb{Z}) Id~(2-b \geq 1).\]
\noindent En particulier, $\alpha+p_{j}^{\alpha_{j}}\mathbb{Z}=1+p_{j}^{\alpha_{j}}\mathbb{Z}$. Comme $p_{i}^{\alpha_{i}}, p_{j}^{\alpha_{j}} \neq 2$, on arrive à une absurdité.
\\
\\\uwave{ii) $l=4p^{n}=2 \times {\rm ppcm}(l_{i},~1 \leq i \leq r)$ :} On a $r \geq 2$ et ${\rm ppcm}(l_{i},~1 \leq i \leq r)=2p^{n}$. Ainsi, pour tout $h$ dans $[\![1;r]\!]$, $l_{h}$ est de la forme $p^{a}$ ou $2p^{a}$.
\\
\\Supposons qu'il existe un $i$ dans $[\![1;r]\!]$ tel que $l_{i}=2p^{n}$. 
\\
\\Supposons par l'absurde que  pour tout $h$ dans $[\![1;r]\!]$ $l_{h}=2p^{a_{h}}$. Par le lemme \ref{34}, on a pour tout $h$ dans $[\![1;r]\!]$, 
\[M_{2p^{n}}(k+p_{h}^{\alpha_{h}}\mathbb{Z},\ldots,k+p_{h}^{\alpha_{h}}\mathbb{Z})=(M_{l_{h}}(k+p_{h}^{\alpha_{h}}\mathbb{Z},\ldots,k+p_{h}^{\alpha_{h}}\mathbb{Z}))^{p^{n-a_{h}}}=(-1+p_{h}^{\alpha_{h}}\mathbb{Z})^{p^{n-a_{h}}} Id=(-1+p_{h}^{\alpha_{h}}\mathbb{Z}) Id.\]
\noindent Par le lemme chinois, on a $l$ divise $2p^{n}$ ce qui est absurde. Donc, il existe $j$ dans $[\![1;r]\!]$ tel que $l_{j}=p^{a_{j}}$.
\\
\\Comme $p \neq p_{i}$, la solution $(k+p_{i}^{\alpha_{i}}\mathbb{Z})$-monomiale minimale de $(E_{p_{i}^{\alpha_{i}}})$ est irréductible. Puisque le $l'$-uplet $(x+p_{i}^{\alpha_{i}}\mathbb{Z},k+p_{i}^{\alpha_{i}}\mathbb{Z},\ldots,k+p_{i}^{\alpha_{i}}\mathbb{Z},y+p_{i}^{\alpha_{i}}\mathbb{Z})$ est une solution de $(E_{p_{i}^{\alpha_{i}}})$, on a, par le lemme \ref{plus}, $l' \equiv 0 [2p^{n}]$ ou $l' \equiv 2 [2p^{n}]$, c'est-à-dire $l'=2p^{n}$ ou $l'=2p^{n}+2$ (puisque $l'<4p^{n}$). Donc, la solution $\overline{k}$-monomiale minimale de $(E_{N})$ est nécessairement égale à la somme d'une solution de taille $2p^{n}$ avec une solution de taille $2p^{n}+2$ ou à la somme d'une solution de taille $2p^{n}+2$ avec une solution de taille $2p^{n}$. On peut donc supposer que $l'=2p^{n}$. Par le lemme \ref{plus}, $x+p_{i}^{\alpha_{i}}\mathbb{Z}=y+p_{i}^{\alpha_{i}}\mathbb{Z}=k+p_{i}^{\alpha_{i}}\mathbb{Z}$. Ainsi, par le lemme \ref{34}, $M_{l'}(x+p_{i}^{\alpha_{i}}\mathbb{Z},k+p_{i}^{\alpha_{j}}\mathbb{Z},\ldots,k+p_{i}^{\alpha_{i}}\mathbb{Z},y+p_{i}^{\alpha_{i}}\mathbb{Z})=(-1+p_{i}^{\alpha_{i}}\mathbb{Z})Id$.
\\
\\Puisque $(x+p_{j}^{\alpha_{j}}\mathbb{Z},k+p_{j}^{\alpha_{j}}\mathbb{Z},\ldots,k+p_{j}^{\alpha_{j}}\mathbb{Z},y+p_{j}^{\alpha_{j}}\mathbb{Z})$ est une solution de $(E_{p_{j}^{\alpha_{j}}})$ et $l' \equiv 0 [l_{j}]$, on a, par le lemme \ref{plus}, $x+p_{j}^{\alpha_{j}}\mathbb{Z}=y+p_{j}^{\alpha_{j}}\mathbb{Z}=k+p_{j}^{\alpha_{j}}\mathbb{Z}$. Ainsi,  puisque $p_{j}$ est impair, on a, par le lemme \ref{34}, 
\[M_{l'}(x+p_{i}^{\alpha_{i}}\mathbb{Z},k+p_{i}^{\alpha_{i}}\mathbb{Z},\ldots,k+p_{i}^{\alpha_{i}}\mathbb{Z},y+p_{i}^{\alpha_{i}}\mathbb{Z})=M_{l_{j}}(k+p_{j}^{\alpha_{j}}\mathbb{Z},\ldots,k+p_{j}^{\alpha_{j}}\mathbb{Z})^{2p^{n-a_{j}}}=(1+p_{j}^{\alpha_{j}}\mathbb{Z})Id.\]

\noindent Comme $p_{i}^{\alpha_{i}}, p_{j}^{\alpha_{j}} \neq 2$, on arrive à une absurdité.
\\
\\Supposons maintenant que pour tout $h$ dans $[\![1;r]\!]$ $l_{h} \neq 2p^{n}$. Il existe $i,j$ dans $[\![1;r]\!]$ tels que $l_{i}=p^{n}$ et $l_{j}=2p^{a_{j}}$ ($a_{j}<n$). Comme $l_{i}$ est impair, la solution $(k+p_{i}^{\alpha_{i}}\mathbb{Z})$-monomiale minimale de $(E_{p_{i}^{\alpha_{i}}})$ est irréductible. Puisque $(x+p_{i}^{\alpha_{i}}\mathbb{Z},k+p_{i}^{\alpha_{i}}\mathbb{Z},\ldots,k+p_{i}^{\alpha_{i}}\mathbb{Z},y+p_{i}^{\alpha_{i}}\mathbb{Z})$ est une solution de $(E_{p_{i}^{\alpha_{i}}})$, on a, par le lemme \ref{plus}, $l' \equiv 0 [p^{n}]$ ou $l' \equiv 2 [p^{n}]$.
\\
\\De plus, comme $p \neq p_{j}$, la solution $(k+p_{j}^{\alpha_{j}}\mathbb{Z})$-monomiale minimale de $(E_{p_{j}^{\alpha_{j}}})$ est irréductible. Puisque $(x+p_{j}^{\alpha_{j}}\mathbb{Z},k+p_{j}^{\alpha_{j}}\mathbb{Z},\ldots,k+p_{j}^{\alpha_{j}}\mathbb{Z},y+p_{j}^{\alpha_{j}}\mathbb{Z})$ est une solution de $(E_{p_{j}^{\alpha_{j}}})$, on a, par le lemme \ref{plus}, $l' \equiv 0 [l_{j}]$ ou $l' \equiv 2 [l_{j}]$. Ainsi, $l'$ est pair et donc $l' \in \{2p^{n},2p^{n}+2\}$ (puisque $p$ est impair). Donc, la solution $\overline{k}$-monomiale minimale de $(E_{N})$ est nécessairement égale à la somme d'une solution de taille $2p^{n}$ avec une solution de taille $2p^{n}+2$ ou à la somme d'une solution de taille $2p^{n}+2$ avec une solution de taille $2p^{n}$. On peut donc supposer que $l'=2p^{n}$.
\\
\\Par le lemme \ref{plus}, $x+p_{i}^{\alpha_{i}}\mathbb{Z}=y+p_{i}^{\alpha_{i}}\mathbb{Z}=k+p_{i}^{\alpha_{i}}\mathbb{Z}$. Ainsi, par le lemme \ref{34}, 
\[M_{l'}(x+p_{i}^{\alpha_{i}}\mathbb{Z},k+p_{i}^{\alpha_{i}}\mathbb{Z},\ldots,k+p_{i}^{\alpha_{i}}\mathbb{Z},y+p_{i}^{\alpha_{i}}\mathbb{Z})=M_{p^{n}}(k+p_{i}^{\alpha_{j}}\mathbb{Z},\ldots,k+p_{i}^{\alpha_{i}}\mathbb{Z})^{2}=(1+p_{i}^{\alpha_{i}}\mathbb{Z})Id.\] \noindent Comme $l' \equiv 0 [l_{j}]$, on a, par le lemme \ref{plus}, $x+p_{j}^{\alpha_{j}}\mathbb{Z}=y+p_{j}^{\alpha_{j}}\mathbb{Z}=k+p_{j}^{\alpha_{j}}\mathbb{Z}$. Ainsi, puisque $p_{j}$ est impair, on a, par le lemme \ref{34} : 
\begin{eqnarray*}
M &=& M_{l'}(x+p_{j}^{\alpha_{j}}\mathbb{Z},k+p_{j}^{\alpha_{j}}\mathbb{Z},\ldots,k+p_{j}^{\alpha_{j}}\mathbb{Z},y+p_{j}^{\alpha_{j}}\mathbb{Z}) \\
  &=& M_{l_{j}}(k+p_{j}^{\alpha_{j}}\mathbb{Z},\ldots,k+p_{j}^{\alpha_{j}}\mathbb{Z})^{p^{n-a_{j}}} \\
	&=& (-1+p_{j}^{\alpha_{j}}\mathbb{Z})^{p^{n-a_{j}}}Id \\
	&=&(-1+p_{j}^{\alpha_{j}}\mathbb{Z}) Id.
\end{eqnarray*}

\noindent Comme $p_{i}^{\alpha_{i}}, p_{j}^{\alpha_{j}} \neq 2$, on arrive à une absurdité.
\\
\\On aboutit donc à une absurdité dans tous les cas. Ainsi, la solution $\overline{k}$-monomiale minimale de $(E_{N})$ est irréductible.

\end{proof}

\begin{examples}
{\rm On donne ci-dessous quelques applications immédiates des résultats qui précèdent.
\begin{itemize}
\item $N=69=3 \times 23$. La solution $\overline{12}$-monomiale minimale de $(E_{N})$ est de taille $44=4 \times 11$ et est donc irréductible.
\item $N=117=3^{2} \times 13$. La solution $\overline{18}$-monomiale minimale de $(E_{N})$ est de taille $28=4 \times 7$ et est donc irréductible.
\end{itemize}
}
\end{examples}

\begin{remarks}
{\rm i) Le résultat n'est plus vrai en général si on considère des entiers $N$ pairs. En effet, prenons par exemple $N=14=2 \times 7$ et $k=3$. La solution $\overline{3}$-monomiale minimale de $(E_{N})$ est de taille $l=12=4 \times 3$ et on peut la réduire avec la solution $(\overline{7},\overline{3},\overline{3},\overline{3},\overline{3},\overline{7})$.
\\
\\ii) Le résultat n'est plus vrai en général si on considère des solutions de taille $2^{m}p^{n}$ avec $m \geq 3$. En effet, considérons $N=35=5 \times 7$ et $k=4$. La solution $\overline{k}$-monomiale minimale de $(E_{N})$ est de taille $l=24=2^{3} \times 3$ et on peut la réduire avec la solution $(\overline{14},\overline{4},\overline{4},\overline{4},\overline{4},\overline{14})$.
}
\end{remarks}

\end{document}